\definecolor{gray6}{gray}{0.6}
\newcommand{\hsp}{\hspace{10pt}}
\titleformat{\chapter}[hang]{\LARGE\bfseries}{\textcolor{gray6}{[}\hsp\thechapter\hsp\textcolor{gray6}{]}\hsp}{0pt}{\LARGE\bfseries\textsc}
\renewcommand{\thechapter}{\Roman{chapter}}
\date{}
\newlength\nomunitwidh
\renewcommand\nomgroup[1]{ \item[\textbf{Symbol}]\textbf{Meaning/Description}\hfill} 
\newtheorem{thm}{Theorem}
\newtheorem*{thmu}{Theorem}
\newtheorem*{defiu}{Definition}
\newtheorem{lem}{Lemma}
\newtheorem{prop}{Proposition}
\newtheorem{coro}{Corollary}
\newtheorem{defi}{Definition}
\newtheorem{rmk}{Remark}
\newtheorem{note}{Note}
\newtheorem*{notations}{Notations}
\newtheorem*{notation}{Notation}
\newtheorem*{claim}{Claim}
\newtheorem{asump}{Assumptions}
\def\A{\mathcal{A}}
\def\half{\frac{1}{2}}
\def\quater{\frac{1}{4}}
\def\res{\text{Res}}
\def\Co{\mathscr{C}}
\def\Con{\mathscr{C}_N}
\def\xset{\textbf{X}\text{-Set}}
\def\d{\mathrm{d}}
\def\mulint{\underset{l~\mathrm{ times }\quad}{\int_0^{\delta T}\cdots\int_0^{\delta T}}}
\def\A{\mathcal{A}}
\def\M{\mathcal{M}}
\def\set{\mathcal{S}}
\begin{document}
\newpage
\clearpage\pagenumbering{roman}
\contourlength{0.4pt}
\contournumber{50}
\thispagestyle{empty}
\begin{center}

\vspace{1mm}

\textbf{ \Large \sc{Measure Theoretic Aspects Of Error Terms}}\\

\vspace{2cm}

By\\

\vspace{.5cm}

\textbf{ \large Kamalakshya Mahatab}

\vspace{.5cm}
\textbf { MATH10201005001}

\vspace{.75cm}
\textbf{\large The Institute of Mathematical Sciences}

\vspace{2.5cm}

\textbf{\textit{
A thesis  submitted to the\\
Board of Studies in Mathematical Sciences\\
}}
\vspace{.5cm}
\textbf{\textit{
In partial fulfillment of requirements\\
for the Degree of\\
}}
\vspace{.5cm}

\textbf{ DOCTOR OF PHILOSOPHY\\
of\\
HOMI BHABHA NATIONAL INSTITUTE}\\

\vspace{1.5cm}

\includegraphics[scale=0.4]{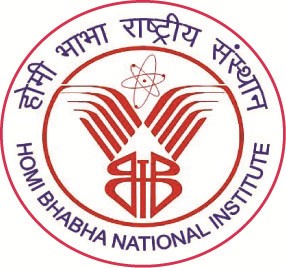}\\
\vspace{1cm}

{\bf April, 2016}\\
\end{center}
\newpage
\thispagestyle{empty}
\mbox{}
\newpage

\begin{doublespacing}

\begin{center}
{\Large \bf STATEMENT BY THE AUTHOR}
\end{center}

\vspace{1cm}

\noindent
This dissertation has been submitted in partial fulfillment of requirements for an
advanced degree at Homi Bhabha National Institute (HBNI) and is deposited in the
Library to be made available to borrowers under rules of the HBNI.\\

\noindent
Brief quotations from this dissertation are allowable without special permission,
provided that accurate acknowledgement of source is made. Requests for permission
for extended quotation from or reproduction of this manuscript in whole or in part
may be granted by the Competent Authority of HBNI when in his or her judgement the
proposed use of the material is in the interests of scholarship. In all other instances,
however, permission must be obtained from the author.

\vspace{2cm}

\noindent
\textbf{Date:} \hfill \textbf{Kamalakshya Mahatab}

\newpage
\thispagestyle{empty}
\mbox{}
\newpage

\begin{center}
{\bf \Large DECLARATION}
\end{center}

\vspace{1cm}

\noindent
I, Kamalakshya Mahatab, hereby declare that the investigation presented in this thesis has been carried out by
me. The work is original and has not been submitted earlier as a whole or in part for a
degree or diploma at this or any other Institution or University.

\vspace{2cm}

\noindent
\textbf{Date:} \hfill \textbf{Kamalakshya Mahatab}

\newpage
\thispagestyle{empty}
\mbox{}
\newpage

\begin{center}
{\Large \bf List of Publications}
\end{center}
\vspace{5mm}

{\large \textbf{Journal}}
\noindent
\begin{enumerate}
\item[1.] Kamalakshya Mahatab  and Kannappan Sampath, Chinese Remainder Theorem for Cyclotomic Polynomials in $\mathbb{Z}[X]$. 
Journal of Algebra, 435 (2015), Pages 223-262. doi:10.1016/j.jalgebra.2015.04.006.
\item[2.] Kamalakshya Mahatab, Number of Prime Factors of an Integer. Mathematics News Letter, Ramanujan Mathematical Society, 
volume 24 (2013).
\end{enumerate}

{\large \textbf{Others}}
\begin{enumerate}
\item[1.] Kamalakshya Mahatab and Anirban Mukhopadhyay, Measure Theoretic Aspects of Oscillations of Error Terms.
arXiv:1512.03144v1 (2015). \\ Available at: http://arxiv.org/pdf/1512.03144v1.pdf
\end{enumerate}

\vspace{2cm}

\noindent
\hfill \begin{tabular}{c} 
\\
\bf{Kamalakshya Mahatab} \\
\end{tabular}

\newpage
\thispagestyle{empty}
\mbox{}
\newpage

\thispagestyle{empty}

\begin{center}
{\huge \bf Acknowledgments}
\end{center}
\vspace{.4cm}
First and foremost, I would like to express my sincere gratitude to my adviser Prof. Anirban Mukhopadhyay for his continuous 
support and insightful guidance throughout my thesis work. I have greatly benefited from his patience, many a times he has listened to my naive 
ideas carefully and corrected my mistakes. In all my needs, I always found him as a kind and helpful person. I am indebted to him for
all the care and help that I got from him during my Ph.D. time.

\vspace{4mm}
\noindent
I am indebted  to Prof. Amritanshu Prasad for initiating me into research. 
I have learnt many beautiful mathematics while working under him on my M.Sc. thesis. 
He has always encouraged me to think freely and has guided me on several research projects.

\vspace{4mm}
\noindent
Prof. Srinivas Kotyada was always available whenever I had any doubts. 
I sincerely thank him for reading my mathematical writings patiently and giving his valuable suggestions. 
In addition to being my teacher, he has been a loving and caring friend.

\vspace{4mm}
\noindent
I am grateful to Prof. R. Balasubramanian for giving his valuable time to help me 
understand several difficult concepts in number theory,  Prof. Aleksandar Ivi{\'c} and Prof. Olivier Ramar{\'e} for their 
valuable suggestions while writing \cite{MeasureOmega}, Prof. Gautami Bhowmik for initiating me to work on Omega theorems, and 
Prof. Vikram Sharma for his help in writing \cite{Kann}.

\vspace{4mm}
\noindent
I am also grateful to Prof. Partha Sarathi Chakraborty, Prof. D. S. Nagaraj, Prof. S. Kesavan, Prof. K. N. Raghavan,
Prof. S. Viswanath,  Prof. Vijay Kodiyalam, Prof. V. S. Sunder, Prof. Murali Srinivasan, Prof. Xavier Viennot, Prof. P. Sankaran, Prof. Sanoli Gun, 
Prof. Kaneenika Sinha, Prof. Shanta Laishram, Prof. Stephan Baier, Prof. A. Sankaranarayanan, Prof. Ritabrata Munshi,
Prof. R. Thangadurai, Prof. D. Surya Ramana, Prof. Gyan Prakash and many others 
for sharing their mathematical insights through their beautiful lectures and courses.

\vspace{4mm}
\noindent
I thank Dr. C. P. Anil Kumar for being a dear friend. 
During my first three years in IMSc, I have always enjoyed my discussions with him which used to last for several hours at a stretch.

\vspace{4mm}
\noindent
I would also like to thank Kannappan Sampath, my first co-author \cite{Kann} and a great friend, 
for the many insightful mathematical discussions I had with him.

\vspace{4mm}
\noindent
I would like to appreciate the inputs of Prateep Chakraborty, 
Krishanu Dan, Bhavin Kumar Mourya, Prem Prakash Pandey,  
Senthil Kumar, Jaban Meher, Binod Kumar Sahoo, Sachin Sharma,
Kamal Lochan Patra, Neeraj Kumar, Geetha Thangavelu, Sumit Giri, B. Ravinder, 
Uday Bhaskar Sharma, Akshaa Vatwani, Sudhir Pujahari and  Anish Mallick during the mathematical discussions that I had with them.

\vspace{4mm}
\noindent
I am also indebted to my parents and my sister for their unconditional love and emotional support, especially during the times of difficulties.

\vspace{4mm}
\noindent
My stay at IMSc would not have been pleasant without my friends Sandipan De, 
Mitali Routaray, Arghya Mondal, Chandan Maity, Issan Patri, Dibyakrupa Sahoo, Archana Mishra, 
Mamta Balodi, Zodinmawia, Vivek M. Vyas, Ankit Agrawal, Biswajit Ransingh, Ria Ghosh, Devanand T, Sneh Sinha, 
Kasi VIswanadham, Jay Mehta,
Dhriti Ranjan Dolai, Maguni Mahakhud, Keshab Bakshi, 
Priyamvad Srivastav, Jyothsnaa Sivaraman, Narayanan P., G. Arun Kumar, Pranabesh Das, Meesum Syed, Sridhar Narayanan and others. 
They have been a part of several beautiful memories during my Ph.D.

\vspace{4mm}
\noindent
Last, but not the least, I would like to thank my institute 
\lq The Institute of Mathematical Sciences\rq \ for providing me a vibrant research environment. 
I have enjoyed excellent computer facility, a well managed library, clean and well furnished hostel and office rooms, and catering services at my institute.
I would like to express my special gratitude to the library and administrative staffs of my institute for their efficient service.

\vspace{4mm}
\noindent
\hfill \begin{tabular}{c} 
\\
{\bf Kamalakshya Mahatab} \\
\end{tabular}
\newpage
\thispagestyle{empty}
\mbox{}
\newpage

\end{doublespacing}
\singlespacing
\tableofcontents
\cleardoublepage
\newpage
\chapter*{Notations}
\addcontentsline{toc}{chapter}{Notations}

We denote the set of natural numbers by $\mathbb N$, the set of integers by $\mathbb Z$, the set of real numbers by $\mathbb R$, the set of 
positive real numbers by $\mathbb R^+$,
and the set of complex numbers by $\mathbb C$.

\vspace{0.3cm}
\noindent
The notaion $i$ stands for $\sqrt{-1}$, the square root of $-1$ that belongs to the upper half plane in $\mathbb C$.

\vspace{0.3cm}
\noindent
We denote the Lebesgue mesure on the real line $\mathbb R$ by $\mu$.

\vspace{0.3cm}
\noindent
For $z=\sigma+it\in \mathbb C$, we denote $\sigma$ by $\Re(z)$ and $t$ by $\Im(z)$.

\vspace{0.3cm}
\noindent
Let $f(x)$ and $g(x)$ be a complex valued function on $\mathbb R^+$. 
As $x\rightarrow\infty$, we write
\begin{itemize}
 \item[] $f(x)=O(g(x))$, \ if \ $\lim_{x\rightarrow\infty}\left|\frac{f(x)}{g(x)}\right|>0$; 
 \item[] $f(x)=o(g(x))$, \ if \ $\lim_{x\rightarrow\infty}\left|\frac{f(x)}{g(x)}\right|=0$;
 \item[] $f(x)\ll g(x)$, \ if \ $f(x)=O(g(x))$;
 \item[] $f(x)\gg g(x)$, \ if \ $g(x)=O(f(x))$;
 \item[] $f(x)\sim g(x)$, \ if \ $\lim_{x\rightarrow\infty}\frac{f(x)}{g(x)}=1$;
 \item[] $f(x)\asymp g(x)$, \ if \ $0<\lim_{x\rightarrow\infty}\left| \frac{f(x)}{g(x)}\right| <\infty$.
\end{itemize}

\vspace{0.3cm}
\noindent
Let $f(x)$ be a complex valued function on $\mathbb R^+$, and let $g(x)$ be a positive monotonic function on $\mathbb R^+$. 
As $x\rightarrow\infty$, we write
\begin{itemize}
 \item[] $f(x)=\Omega(g(x))$, \ if \ $\limsup_{x\rightarrow \infty}\frac{|f(x)|}{g(x)}>0$; 
 \item[] $f(x)=\Omega_+(g(x))$, \ if \ $\limsup_{x\rightarrow \infty}\frac{f(x)}{g(x)}>0$;
 \item[] $f(x)=\Omega_-(g(x))$, \ if \ $\liminf_{x\rightarrow \infty}\frac{f(x)}{g(x)}<0$;
 \item[] $f(x)=\Omega_\pm(g(x))$, \ if \ $f(x)=\Omega_+(g(x))$ and $f(x)=\Omega_-(g(x))$.
\end{itemize}

\newpage
\clearpage
\clearpage\thispagestyle{empty}\addtocounter{page}{0}
\clearpage
\afterpage{\null\newpage}

\chapter*{Synopsis}\label{chap:synopsis}
\addcontentsline{toc}{chapter}{Synopsis}

This thesis studies fluctuation of error terms that appears in various asymptotic formulas and size of the sets where these fluctuations occur.
As a consequence, this approach replaces Landau's criterion on oscillation of error terms.

\begin{center}
\textbf{\large{General Theory}}
\end{center}

Consider a sequence of real numbers $\{a_n\}_{n=1}^{\infty}$ having Dirichlet series 
\[D(s)=\sum_{n=1}^{\infty}\frac{a_n}{n^s},\] 
which is convergent in some half-plane. As in Perron summation formula \cite[II.2.1]{TenenAnPr}, we write
\[\sum^*_{n\leq x}a_n=\M(x)+\Delta(x),\]
where $\M(x)$ is the main term, $\Delta(x)$ is the error term and $\sum^*$ is defined as 
\begin{equation*}
\sum^*_{n\leq x} a_n =
\begin{cases}
\sum_{n\leq x} a_n \ & \text{if } x\notin \mathbb N, \\
\sum_{n< x} a_n +\half a_x  \ &\text{if } x\in\mathbb N.
\end{cases}
\end{equation*}
In this thesis, we obtain $\Omega$ and $\Omega_\pm$ estimates for $\Delta(x)$. 
We shall use the Mellin transform of $\Delta(x)$ (defined below) to obtain such estimates.
\begin{defiu}
 The Mellin transform of $\Delta(x)$ be $A(s)$, defined as
 \[A(s)=\int_{1}^{\infty}\frac{\Delta(x)}{x^{s+1}}\d x.\]
\end{defiu}

In this direction, under some natural assumptions and for a suitably defined contour $\Co$, we shall show that
\[A(s)=\int_{\Co}\frac{D(\eta)}{\eta(s-\eta)}\d \eta.\]
In the above formula, the poles of $D(s)$ that lie left to $\Co$ are all the poles that contributes to the main term $\M(x)$.
Landau \cite{Landau} used the meromorphic continuation of $A(s)$ to obtain $\Omega_\pm$ results for $\Delta(x)$. He proved that 
if $A(s)$ has a pole at $\sigma_0+it_0$ for some $t_0\neq 0$ and has no real pole for $s\ge \sigma_0$, then 
\[ \Delta(x)=\Omega_{\pm} (x^{\sigma_0}). \]
We shall show a quantitative version of Landau's theorem, which also generalizes a theorem of Gautami, Ramar\'e and Schlage-Puchta \cite{gautami}. 
Below we state this theorem in a simplified way. We introduce the following notations to state these theorems.
\begin{defiu}
 Let 
\begin{align*}
\mathcal{A}^+_T(x^{\sigma_0})&:=\{T\leq x \leq 2T: \Delta(x)> \lambda x^{\sigma_0}\},\\ 
\mathcal{A}^-_T(x^{\sigma_0})&:=\{T\leq x \leq 2T: \Delta(x)< -\lambda x^{\sigma_0}\},\\
\mathcal{A}_T (x^{\sigma_0})&:= \mathcal{A}^+_T(x^{\sigma_0})\cup \mathcal{A}^-_T(x^{\sigma_0}),
\end{align*}
for some $\lambda, \sigma_0 >0$.
\end{defiu}

\begin{thmu}
Let $\sigma_0>0$, and let the following conditions hold:
\begin{itemize}
 \item[(1)] $A(s)$ has no real pole for $\Re(s)\geq\sigma_0$,
 \item[(2)] there is a complex pole $s_0=\sigma_0+it_0$, $t_0\neq 0$, of $A(s)$, and 
 \item[(3)] for positive functions $h^\pm(x)$ such that $h\pm(x)\rightarrow \infty$ as $x\rightarrow \infty$, we have
 $$\int_{\A^\pm_T(x^{\sigma_0})}\frac{\Delta^2(x)}{x^{2\sigma_0+1}}\d x\ll h^\pm(T).$$ 
\end{itemize}
Then 
\[\mu(\mathcal{A}^\pm_{T}(x^{\sigma_0})) = \Omega\left(\frac{T}{h^\pm(T)}\right),\]
where $\mu$ denotes the Lebesgue measure. 
\end{thmu}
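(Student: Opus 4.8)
The plan is to argue by contradiction. Since replacing $\{a_n\}$ by $\{-a_n\}$ replaces $\Delta$ by $-\Delta$, interchanges $\A^+_T(x^{\sigma_0})$ with $\A^-_T(x^{\sigma_0})$ and $h^+$ with $h^-$, and preserves hypotheses (1)--(3) together with the pole of $A$ at $s_0$, it suffices to treat $\A^+$; write $\A^+_T$ for $\A^+_T(x^{\sigma_0})$. Assume, for contradiction, that $\mu(\A^+_T)=o(T/h^+(T))$ as $T\to\infty$; I will deduce that $A(s)$ is analytic at $s_0=\sigma_0+it_0$, contradicting (2).

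First I would convert the measure bound into an $L^1$-bound on $\Delta^+:=\max(\Delta,0)$. On $[T,2T]\setminus\A^+_T$ one has $\Delta(x)\le\lambda x^{\sigma_0}$, so $\int_{[T,2T]\setminus\A^+_T}\Delta^+(x)x^{-\sigma_0-1}\,\d x\le\lambda\log 2$; on $\A^+_T$, Cauchy--Schwarz, hypothesis (3), and $x\ge T$ give
\[
\int_{\A^+_T}\frac{\Delta(x)}{x^{\sigma_0+1}}\,\d x\le\Bigl(\int_{\A^+_T}\frac{\Delta^2(x)}{x^{2\sigma_0+1}}\,\d x\Bigr)^{1/2}\Bigl(\int_{\A^+_T}\frac{\d x}{x}\Bigr)^{1/2}\ll h^+(T)^{1/2}\Bigl(\frac{\mu(\A^+_T)}{T}\Bigr)^{1/2}=o(1).
\]
Adding the two, $\int_T^{2T}\Delta^+(x)\,x^{-\sigma_0-1}\,\d x\le\lambda\log 2+o(1)$ as $T\to\infty$.

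Next I would pass to a Laplace transform. Put $\psi(u):=\Delta(e^u)e^{-\sigma_0 u}$, so that $A(\sigma_0+\delta+it)=\int_0^\infty\psi(u)e^{-(\delta+it)u}\,\d u$ for $\delta>0$ (this uses the standing growth bound $\Delta(x)\ll x^{\sigma_0+\varepsilon}$, which keeps the abscissa of absolute convergence at most $\sigma_0$). The estimate above reads $\int_V^{V+\log 2}\psi^+(u)\,\d u\le\lambda\log 2+o(1)$ as $V\to\infty$, and a routine Abelian lemma upgrades this to $\limsup_{\delta\to0^+}\delta\int_0^\infty\psi^+(u)e^{-\delta u}\,\d u\le\lambda$. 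Here hypothesis (1) enters in an essential way: since $A$ has no real pole in $\Re(s)\ge\sigma_0$ it is analytic, hence bounded, in a neighbourhood of $\sigma_0$, so $A(\sigma_0+\delta)=\int_0^\infty\psi(u)e^{-\delta u}\,\d u=O(1)$; because $\int_0^\infty\psi^-(u)e^{-\delta u}\,\d u=\int_0^\infty\psi^+(u)e^{-\delta u}\,\d u-A(\sigma_0+\delta)$, the bound passes to $\psi^-:=\max(-\psi,0)$ too, and adding gives
\[
|A(\sigma_0+\delta+it_0)|\le\int_0^\infty|\psi(u)|e^{-\delta u}\,\d u\le\frac{2\lambda+o(1)}{\delta}\qquad(\delta\to0^+).
\]

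Finally I would contradict (2). A pole of order $m\ge1$ of $A$ at $s_0$, with nonzero leading Laurent coefficient $a_{-m}$, gives $|A(\sigma_0+\delta+it_0)|\sim|a_{-m}|\,\delta^{-m}\ge\tfrac12|a_{-m}|\,\delta^{-1}$ for small $\delta$; combined with the displayed upper bound and letting $\delta\to0^+$ this forces $\lambda\ge|a_{-m}|/4$ (for $m\ge2$ it is contradictory for every $\lambda>0$). Taking $\lambda$ small enough to violate this --- which is legitimate since the statement only asserts the existence of such a $\lambda>0$ --- produces the contradiction, whence $\mu(\A^+_T)=\Omega(T/h^+(T))$; the $\A^-$ assertion follows by the first-paragraph reduction. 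The step I expect to require the most care is exactly this use of hypothesis (1): the triangle-inequality bound $|A(\sigma_0+\delta+it_0)|\le\int_0^\infty|\psi|e^{-\delta u}\,\d u$ is worthless unless $\psi^-$ --- the set where $\Delta$ is very negative --- is controlled, and it is the absence of a real pole (which forbids a secular drift of $\Delta$ of size $x^{\sigma_0}$) that supplies this; the Abelian passage from block averages to the Laplace transform, and verifying $A(\sigma_0+\delta)=O(1)$, are the remaining technical points.
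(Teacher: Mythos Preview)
Your argument has a genuine gap at the step you yourself flag as delicate. You write
\[
A(\sigma_0+\delta+it)=\int_0^\infty\psi(u)e^{-(\delta+it)u}\,\d u\quad(\delta>0),
\]
justifying convergence by a ``standing growth bound $\Delta(x)\ll x^{\sigma_0+\varepsilon}$''. No such bound is among the hypotheses, and in the paper's main applications it is \emph{false}: for the twisted divisor function one has $\sigma_0=1/4$ while only $\Delta(x)\ll x^{1/2}\log^6 x$ is known. Without it, the integral defining $A(s)$ is only known to converge for $\Re(s)>\sigma_2>\sigma_0$, and analyticity of $A$ near $\sigma_0$ (hypothesis~(1)) says nothing about convergence of the integral there. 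Consequently the identity $\int\psi^-e^{-\delta u}\,\d u=\int\psi^+e^{-\delta u}\,\d u-A(\sigma_0+\delta)$, and hence the triangle-inequality bound $|A(\sigma_0+\delta+it_0)|\le\int|\psi|e^{-\delta u}\,\d u$, are unjustified.

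The paper closes this gap differently. Rather than splitting $\Delta$ itself, it sets $g(x)=\Delta(x)-c\,x^{\sigma_0}$ with $c$ the threshold defining $\A^+$, so that $g^+$ is supported \emph{exactly} on $\A^+$ and your Cauchy--Schwarz step gives directly $(\sigma-\sigma_0)G^+(\sigma)\to 0$. Once $G^+(s)=\int g^+x^{-s-1}\d x$ is thereby shown to converge for $\Re(s)>\sigma_0$, one observes that $G^-(s)=G^+(s)-G(s)$ is analytic on the real ray $(\sigma_0,\infty)$ (using hypothesis~(1)), and then applies Landau's theorem to the nonnegative function $g^-$ to force convergence of its Mellin integral for $\Re(s)>\sigma_0$. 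This is the missing ingredient replacing your unwarranted growth assumption. A secondary benefit of the subtraction $g=\Delta-c\,x^{\sigma_0}$ is that the contradiction with the pole at $s_0$ no longer depends on taking $\lambda$ small: your route only yields $|a_{-1}|\le 2\lambda$, which is no contradiction when $\lambda$ is the fixed threshold given in the hypotheses, whereas the paper obtains $(\sigma-\sigma_0)|G(\sigma+it_0)|\to 0$ against $(\sigma-\sigma_0)|G(\sigma+it_0)|\to\lambda>0$.
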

In the above theorem, Condition~2 is a very strong criterion. In the following theorem, we replace Condition~2 by an $\Omega$-bound of 
$\mu(\mathcal{A}_T (x^{\sigma_0}))$ and obtain an $\Omega_{\pm}$-result from the given $\Omega$-bound. 
\begin{thmu}
Let $\sigma_0>0$, and let the following conditions hold: 
\begin{itemize}
 \item[(1)] $A(s)$ has no real pole for $\Re(s)\geq\sigma_0$, and 
 \item[(2)]  $\mu( \mathcal{A}_T(x^{\sigma_0}))=\Omega(T^{1-\delta})$ for $0<\delta<\sigma_0$.
\end{itemize}
Then
\[\Delta(x)=\Omega_\pm(T^{\sigma_0-\delta'})\] for any $\delta'$ such that $0<\delta'<\delta$.
\end{thmu}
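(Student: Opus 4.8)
The plan is to argue by contradiction: suppose $\Delta(x) = o(T^{\sigma_0-\delta'})$ fails to be an $\Omega_\pm$-bound, so that (say) $\Delta(x) \ge -\lambda x^{\sigma_0-\delta'}$ for all large $x$ and a fixed small $\lambda>0$ (the $\Omega_+$ case is symmetric). I want to show this forces $A(s)$ to have a real pole for $\Re(s) \ge \sigma_0$, contradicting Condition~(1). The bridge between the pointwise lower bound on $\Delta$ and the analytic behaviour of $A(s)$ is the defining integral $A(s) = \int_1^\infty \Delta(x) x^{-s-1}\,\d x$, so the first step is to exploit that under the negation, $\Delta(x) + \lambda x^{\sigma_0-\delta'}$ is an eventually nonnegative function whose Mellin transform differs from $A(s)$ by the elementary term $\lambda/(s-\sigma_0+\delta')$, analytic and nonvanishing near $s=\sigma_0$. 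A Mellin transform of a nonnegative function has its abscissa of convergence as a singularity (the Landau–Pringsheim phenomenon for integrals), so either $A(s)$ already has a real singularity at or to the right of $\sigma_0$ — done — or the transform of $\Delta(x)+\lambda x^{\sigma_0-\delta'}$ converges in a half-plane strictly to the right of $\sigma_0 - \delta'$ only up to $\Re(s) = \sigma_0$, again producing a real pole.

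The subtlety is that the negation I get for free is only the \emph{one-sided} failure on all of $[T,\infty)$, whereas the hypothesis I am handed is the measure lower bound $\mu(\A_T(x^{\sigma_0})) = \Omega(T^{1-\delta})$, which says $|\Delta(x)| > \lambda x^{\sigma_0}$ on a set that is large but not the whole interval. So the real content is: the measure bound forces $\Delta$ to be genuinely large (size $x^{\sigma_0}$, not merely $x^{\sigma_0-\delta'}$) on a set of size $\gg T^{1-\delta}$ in each dyadic block $[T,2T]$, and I must convert this into a lower bound for $\int_{T}^{2T}\Delta(x)\,x^{-s-1}\,\d x$ along a suitable real $s$. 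The mechanism: split $\A_T = \A_T^+ \cup \A_T^-$; on a dyadic block at least one of the two has measure $\gg T^{1-\delta}$, but the sign can alternate between blocks. If infinitely many blocks have the $\A^+$ part large, then on those blocks $\int_T^{2T} \Delta(x) x^{-s-1}\d x \gg T^{1-\delta}\cdot T^{\sigma_0} \cdot T^{-s-1} = T^{\sigma_0-\delta-s}$, which for real $s$ slightly below $\sigma_0-\delta$... wait — this is exactly where the exponent $\sigma_0-\delta'$ with $\delta'<\delta$ must enter: summing these block contributions, $\sum_k (2^k)^{\sigma_0-\delta-s}$ diverges precisely when $s \le \sigma_0-\delta$, so the honest abscissa where positivity-type growth kicks in is $\sigma_0-\delta$, and I only claim $\Omega_\pm(T^{\sigma_0-\delta'})$ with strict inequality to leave room for the contribution of the \emph{other} (negative) blocks, which under the negation hypothesis $\Delta \ge -\lambda x^{\sigma_0-\delta'}$ are controlled of size only $T^{\sigma_0-\delta'-s}$ and hence negligible against $T^{\sigma_0-\delta-s}$ when $\delta'<\delta$.

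So the detailed steps are: (i) negate the conclusion, fixing WLOG $\Delta(x) \ge -\lambda x^{\sigma_0-\delta'}$ for $x \ge x_0$; (ii) write $A(s) = \int_1^{x_0} + \int_{x_0}^\infty$, the first piece entire, and for the second use the decomposition into dyadic blocks and within each block separate $\A_T^+$, $\A_T^-$, and the complement; (iii) on the complement $|\Delta|\le\lambda x^{\sigma_0}$ trivially, on $\A_T^-$ use the negation to get $|\Delta| \le \lambda x^{\sigma_0-\delta'}$, and on $\A_T^+$ use $\Delta > \lambda x^{\sigma_0}$ together with $\mu(\A_T^+) \gg T^{1-\delta}$ (passing to the subsequence of blocks where $\A_T^+$ is the large half — if instead $\A_T^-$ is infinitely often the large half, run the mirror argument to get the $\Omega_-$ contradiction); (iv) assemble the real Dirichlet-type integral $\int_{x_0}^\infty \Delta(x) x^{-s-1}\d x$ for real $s \in (\sigma_0-\delta, \sigma_0)$ and show the $\A_T^+$ contributions over the good blocks diverge as $s \downarrow \sigma_0-\delta$ while all other contributions converge, so the integral tends to $+\infty$; (v) conclude by the Landau–Pringsheim principle that $A(s)$ has a real singularity in $[\sigma_0-\delta,\sigma_0)$ — but then by Condition~(1), since $\sigma_0-\delta < \sigma_0$...

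Hmm, I need to be careful: Condition~(1) only forbids real poles with $\Re(s)\ge\sigma_0$, so a real pole strictly below $\sigma_0$ is allowed. The resolution must be that the divergence forces the abscissa to be exactly $\sigma_0$, not $\sigma_0-\delta$ — i.e., the measure lower bound $\Omega(T^{1-\delta})$ is an infinitely-often statement, and combined with the pointwise size $x^{\sigma_0}$ on that set, the correct conclusion is that the Mellin integral diverges for every real $s<\sigma_0$, putting the real singularity at $\sigma_0$ itself. The role of $\delta'$ is then only to absorb the $\A_T^-$ error term under the negation; the $\delta$ in the hypothesis governs how sparse the good blocks may be, and since $1-\delta>0$ still the block sum $\sum_k (2^k)^{1-\delta}(2^k)^{\sigma_0-s-1} = \sum_k (2^k)^{\sigma_0-\delta-s}$ diverges for $s\le\sigma_0-\delta$ — so I get a singularity at $\sigma_0-\delta$, which is \emph{not} yet a contradiction. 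I expect the main obstacle is precisely this gap: one must either strengthen the use of the $\Omega$-hypothesis (e.g. the set where $|\Delta| > \lambda x^{\sigma_0}$ near a given scale cannot be confined to a sub-block but must recur with positive frequency up to scale $\sigma_0$), or — more likely, and I believe this is the intended route — use Landau's original theorem as a black box: the negation gives that $A(s)$ plus an explicit meromorphic correction is the Mellin transform of an eventually one-signed function, so its only singularities on the real axis to the right of $\sigma_0-\delta'$ would be real; combined with the complex-pole / oscillation machinery of the first displayed theorem (which the measure hypothesis $\Omega(T^{1-\delta})$ is designed to feed into, playing the role of a quantitative complex pole), one derives that $A(s)$ must in fact have a complex pole on $\Re(s)=\sigma_0$, hence by Landau's theorem $\Delta(x) = \Omega_\pm(x^{\sigma_0})$, which is even stronger than claimed and certainly contradicts the negation $\Delta \ge -\lambda x^{\sigma_0-\delta'}$. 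Reconciling the bookkeeping of the exponents $\delta$ versus $\delta'$ in this last step is the delicate point I would spend the most care on.
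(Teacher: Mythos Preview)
Your Mellin-transform route has a genuine gap, which you correctly diagnose but do not close. Under the negation (say) $\Delta(x)\ge-\lambda x^{\sigma_0-\delta'}$ for all large $x$, the function $f(x)=\Delta(x)+\lambda x^{\sigma_0-\delta'}$ is eventually nonnegative, and Landau--Pringsheim places a real singularity of $F(s)=A(s)+\lambda/(s-\sigma_0+\delta')$ at its abscissa of convergence $\sigma^*$. But the measure hypothesis is only an $\Omega$-statement along an unbounded set of $T$, so the positivity of $f$ yields merely $\int_T^{2T}f(x)x^{-\sigma-1}\,\d x\gg T^{\sigma_0-\delta-\sigma}$ on that set, forcing $\sigma^*\ge\sigma_0-\delta$; Condition~(1) meanwhile gives only $\sigma^*<\sigma_0$. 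A real singularity of $A$ somewhere in $[\sigma_0-\delta,\sigma_0)$ is not forbidden by Condition~(1), so there is no contradiction. Your proposed rescue---feeding the measure bound back into the ``first displayed theorem'' to manufacture a complex pole on $\Re(s)=\sigma_0$---is circular: that theorem \emph{assumes} a complex pole as input, and the whole point of the present statement is to replace that hypothesis by the measure bound.

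The paper proceeds quite differently and never passes through $A(s)$. Negating $\Omega_\pm(h_1)$ with $h_1(x)=x^{\sigma_0-\delta'}$ gives that $\Delta$ has constant sign on $\{|\Delta(x)|>h_1(x)\}\cap[x_0,\infty)$. From this one proves an \emph{upper} bound on the measure: starting from the trivial inequality $\mu(\A_T(x^{\sigma_0}))\le\int_T^{2T}|\Delta(x)|x^{-\sigma_0}\,\d x$, the sign hypothesis converts $|\Delta|$ into $\pm\Delta$ on the large set (at the cost of a term $\ll h_1(T)T^{1-\sigma_0}$ from the complement), and the signed integral $\int_T^{2T}\Delta(x)x^{-\sigma_0}\,\d x$ is then estimated directly via Perron's formula. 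One shifts the contour for $\Delta$ to $\Re(\eta)=\alpha_1$ with $\alpha_1$ slightly below $\sigma_0-\delta$ and applies an $l$-fold Balasubramanian--Ramachandra smoothing in $x$, which inserts factors $\prod_{j=1}^{l+1}(\eta-\sigma_0+j)^{-1}$ into the $\eta$-integral; against polynomial growth $|D(\alpha_1+it)|\ll(1+|t|)^{l-1}$ this makes the integral $\ll T^{1-\sigma_0+\alpha_1}$. Altogether $\mu(\A_T)\ll T^{1-\delta'}+T^{1-\sigma_0+\alpha_1}=o(T^{1-\delta})$, contradicting Condition~(2). It is this contour-shift cancellation in the \emph{first} moment of $\Delta$ over a single dyadic block---not any property of the abscissa of $A(s)$---that makes the argument work.
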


The above two theorems are applicable to a wide class of arithmetic functions.
Now we mention some results obtained by applying these theorems.

\begin{center}
\textbf{\large{A Twisted Divisor Function}}
\end{center}
Given $\theta\neq 0$, define 
\[\tau(n, \theta)=\sum_{d|n}d^{i\theta}.\]
The Dirichlet series of $|\tau(n, \theta)|^2$ can be expressed in terms of Riemann zeta function as
\begin{equation*}
 D(s)=\sum_{n=1}^{\infty}\frac{|\tau(n, \theta)|^2}{n^s}=\frac{\zeta^2(s)\zeta(s+i\theta)\zeta(s-i\theta)}{\zeta(2s)}
 \quad  \text{for}\quad \Re(s)>1.
\end{equation*}
In \cite[Theorem 33]{DivisorsHallTenen}, Hall and Tenenbaum proved that
\begin{equation*}
 \sum_{n\leq x}^*|\tau(n, \theta)|^2=\omega_1(\theta)x\log x + \omega_2(\theta)x\cos(\theta\log x)
+\omega_3(\theta)x + \Delta(x),
\end{equation*}
where $\omega_i(\theta)$s are explicit constants depending only on $\theta$. They also showed that 
\begin{equation*}
 \Delta(x)=O_\theta(x^{1/2}\log^6x).
\end{equation*}
Here the main term comes from the residues of  $D(s)$ at $s=1, 1\pm i\theta $.
All other poles of $D(s)$ come from zeros of $\zeta(2s)$. Using a pole on the line $\Re(s)=1/4$, 
Landau's method gives
\[\Delta(x)=\Omega_{\pm}(x^{1/4}).\]
We prove the following bounds for a computable $\lambda(\theta)>0$ and for any $\epsilon>0$:
\begin{align*}
 &\mu\left(\{T \leq  x \leq 2T: \Delta(x)>(\lambda(\theta)-\epsilon)x^{1/4}\}\right)
=\Omega\left(T^{1/2}(\log T)^{-12}\right),\\
&\mu\left(\{T \leq  x \leq 2T: \Delta(x)<(-\lambda(\theta)+\epsilon)x^{1/4}\}\right)
=\Omega\left(T^{1/2}(\log T)^{-12}\right).
\end{align*}
For a constant $c>0$, define
\[\alpha(T) =\frac{3}{8}-\frac{c}{(\log T)^{1/8}}.\]
Applying a method due to Balasubramanian, Ramachandra and Subbarao \cite{BaluRamachandraSubbarao}, we prove
\[\Delta(T)=\Omega\left(T^{\alpha(T)}\right).\]
In fact, this method gives $\Omega$-estimate for the measure of the sets involved: 
\[ \mu(\A\cap [T,2T])=\Omega\left(T^{2\alpha(T)}\right),\] 
where
\[\A=\{x: |\Delta(x)|\ge Mx^{\alpha(x)} \}\] 
and $M>0$ is a positive constant. 
We also show that
\[ \text{either } \ \Delta(x)=\Omega\left(x^{ \alpha(x)+\delta/2}\right)
\ \text{ or } \
 \Delta(x)=\Omega_{\pm}\left(x^{3/8-\delta'}\right),\]
for $0<\delta<\delta'<1/8$.
For any $\epsilon>0$, this result and the conjecture  
\[ \Delta(x)=O(x^{3/8+\epsilon})\]
proves that
\[\Delta(x)=\Omega_{\pm}(x^{3/8-\epsilon}).\]

\begin{center}
\textbf{\large{Prime Number Theorem Error}}
\end{center}
Let $a_n$ be the von Mandoldt function $\Lambda(n)$:
\[\Lambda(n):=\begin{cases}
                  \log p \quad &\mbox{ if } n=p^r, \ r\geq 1, \ p \text{ prime },\\
                  0 & \mbox{ otherwise.}
                  \end{cases}.\]
Let                  
\[\sum_{n\le x}^* \Lambda_n = x + \Delta(x). \]
From the Vinogradov's zero free region for Riemann zeta function, one gets \cite[Theorem~12.2]{IvicBook}
\begin{equation*}
 \Delta(x)=O\left( x\exp\left(-c(\log x)^{3/5}(\log\log x)^{-1/5}\right)\right)
\end{equation*}
for some constant $c>0$.

Hardy and Littlewood \cite{HardyLittlewoodPNTOmegapm} proved that 
\[\Delta(x)=\Omega_\pm\left(x^{1/2}\log\log\log x\right).\]
But this result does not say about the measure of the sets, where the above $\Omega_\pm$ bounds are attained by $\Delta(x)$.
We obtain the following weaker result, but with an $\Omega$-estimates for the measure of the corresponding sets.\\
\noindent
Let $\lambda_1>0$ denotes a computable constant. For a fixed $\epsilon$, $0<\epsilon<\lambda_1$, we write
\begin{align*}
 \A_1&:=\left\{x: \Delta(x)>(\lambda_1-\epsilon)x^{1/2}\right\},\\
 \A_2&:=\left\{ x : \Delta(x)<(-\lambda_1+\epsilon)x^{1/2}\right\}.\vspace{5mm}~\\
\end{align*}
Then
\begin{align*}
 \mu([T, 2T]\cap\A_j)
=\Omega\left(T^{1-\epsilon}\right), \text{ for } j=1, 2 \ \text{ and for any } \ \epsilon>0.
\end{align*}
Under Riemann Hypothesis, we have
\begin{align*}
 \mu([T, 2T]\cap A_j)=\Omega\left(\frac{T}{(\log T)^4}\right) \text{ for } j= 1, 2. 
\end{align*}
We also show the following unconditional 
$\Omega$-bounds for the second moment of $\Delta$:
\begin{equation*}
 \int_{[T, 2T]\cap \A_j}\Delta^2(x)\d x = \Omega(T^2) \quad \text{ for } j=1, 2.
\end{equation*}
\begin{center}
\textbf{\large{Non-isomorphic Abelian Groups}}
\end{center}

Let $a_n$ denote the number of non-isomorphic abelian groups of order $n$. We write 
\begin{equation*}
\sum_{n\leq x}^*a_n = \sum_{k=1}^{6}b_kx^{1/k} +  \Delta(x).
\end{equation*}
In the above formula, we define $b_k$ as
\[b_k:=\prod_{j=1, j\neq k}^\infty \zeta(j/k).\]
It is an open problem to show that 
\begin{equation}\label{conj1s}
\Delta(x)\ll x^{1/6+\delta} \ \text{ for any } \ \delta>0.
\end{equation}
The best result on upper bound of $\Delta(x)$ is 
due to O. Robert and P. Sargos \cite{sargos}, which gives 
\[\Delta(x)\ll x^{1/4+\epsilon} \ \text{ for any } \ \epsilon>0.\]
Also Balasubramanian and Ramachandra \cite{BaluRamachandra2} proved that
\begin{equation*}
 \Delta(x)=\Omega\left(x^{1/6}\sqrt{\log x}\right).
\end{equation*}
Following their method, we prove 
\begin{equation*}
\mu\left( \{ T\le x\le 2T: |\Delta(x)|\ge \lambda_2 x^{1/6}(\log x)^{1/2}\} \right)
=\Omega(T^{5/6-\epsilon}),
\end{equation*}
for some $\lambda_2>0$ and for any $\epsilon>0$.
They also obtained
\[ \Delta(x)=\Omega_{\pm}(x^{92/1221}),\]
while it has been conjectured that
\[\Delta(x)=\Omega_\pm(x^{1/6-\delta}),\]
for any $\delta>0$.
We shall show that either
\[\int_T^{2T}\Delta^4(x)\d x=\Omega( T^{5/3+\delta} )\text{ or }\Delta(x)=\Omega_\pm(x^{1/6-\delta}),\]
for any $0<\delta< 1/42$.
The conjectured upper bound (\ref{conj1s}) of $\Delta(x)$ gives 
 \[\int_T^{2T}\Delta^4(x)\d x \ll T^{5/3+\delta}.\]
This along with our result implies that
\[\Delta(x)=\Omega_\pm(x^{1/6-\delta})\ \text{ for any } \ 0<\delta <1/42.\]


\clearpage\thispagestyle{empty}\addtocounter{page}{-1}
\clearpage
\afterpage{\null\newpage}
\clearpage\thispagestyle{empty}\addtocounter{page}{-1}
\clearpage
\afterpage{\null\newpage}
\newpage
\setcounter{page}{1}
\clearpage\pagenumbering{arabic}
\chapter{Introduction}\label{chap:intro}
In 1896, Jacques Hadamard and 
Charles Jean de la Vall\'{e}e-Poussin proved that 
the number of primes upto $x$ is asymptotic to $x/\log x$. This result is well known as the Prime Number Theorem (PNT). 
Below we state a version of this theorem (PNT*) in terms of the von-Mangoldt function.
\begin{defi} 
For $n\in \mathbb{N}$, 
the von-Mangoldt function $\Lambda(n)$ is defined as
 \[\Lambda(n)=\begin{cases}
                   \log p \quad &\mbox{ if } \ n=p^r, \ r\in \mathbb{N}\text{ and } p \text{ prime,}\\
                   0 & \mbox{ otherwise .}
                   \end{cases}
 \]
\end{defi}
\noindent 
\begin{thmu}[PNT*]
For a constant $c_1>0$, we have 
 \[\sum_{n\leq x}^*\Lambda(n)=x + O\left( x\exp\left(-c_1(\log x)^{3/5}(\log\log x)^{-1/5}\right)\right),\]
where 
\[\sum_{n\leq x}^*\Lambda(n)=\begin{cases}
                     \sum_{n\leq x}\Lambda(n) \quad & \mathrm{ if } \ x\notin \mathbb{N},\\
                   \sum_{n\leq x}\Lambda(n)-\Lambda(x)/2 \quad& \mbox{ otherwise .}
                   \end{cases}
\]
\end{thmu}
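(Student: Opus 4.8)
\medskip
\noindent\textbf{Proof strategy.} The plan is to establish PNT* by the classical contour-integration method, with the Vinogradov--Korobov zero-free region for $\zeta(s)$ as the essential analytic input. Observe first that the starred sum $\sum_{n\le x}^*\Lambda(n)$ is exactly the quantity produced by the symmetric Perron inversion of the Dirichlet series $-\zeta'(s)/\zeta(s)=\sum_{n\ge1}\Lambda(n)n^{-s}$, convergent for $\Re(s)>1$; in particular the $-\Lambda(x)/2$ correction at integers $x$ needs no separate treatment. First I would apply a truncated Perron formula: for $x\ge2$, with $\kappa=1+1/\log x$ and a parameter $T\ge2$ to be chosen,
\[
\sum_{n\le x}^*\Lambda(n)=\frac{1}{2\pi i}\int_{\kappa-iT}^{\kappa+iT}\left(-\frac{\zeta'(s)}{\zeta(s)}\right)\frac{x^{s}}{s}\,\d s+O\!\left(\frac{x(\log x)^{2}}{T}\right),
\]
the error term arising from the standard tail estimate together with the bound $|\zeta'(s)/\zeta(s)|\ll\log x$ on the line $\Re(s)=\kappa$ and the usual treatment of the terms with $n$ close to $x$.

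\medskip
\noindent The heart of the argument is the following analytic input, which I would cite rather than reprove: (i) there is an absolute constant $c>0$ with $\zeta(\sigma+it)\neq0$ whenever
\[
\sigma\ge1-\frac{c}{(\log(|t|+3))^{2/3}(\log\log(|t|+3))^{1/3}},
\]
and (ii) in a slightly smaller region $|\zeta'(s)/\zeta(s)|\ll(\log(|t|+3))^{2/3}(\log\log(|t|+3))^{1/3}$. Fact (i) is the Vinogradov--Korobov zero-free region, derived from subconvex bounds of the shape $\zeta(1+it)\ll(\log|t|)^{2/3}$ near the line $\Re(s)=1$, which themselves rest on Vinogradov's method for the exponential sums $\sum_{n\le N}n^{-it}$ (via the Vinogradov mean value theorem); fact (ii) then follows from (i) by the Borel--Carath\'eodory theorem applied to the Hadamard-type representation of $\zeta'/\zeta$. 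Essentially all of the genuine difficulty of the theorem is concentrated in this step, and I would treat it as a black box.

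\medskip
\noindent It then remains to shift the segment of integration to the vertical line $\Re(s)=1-\eta(T)$, where $\eta(T)=\tfrac{c}{2}(\log T)^{-2/3}(\log\log T)^{-1/3}$ lies strictly inside the zero-free region, closing the contour with horizontal segments at $\Im(s)=\pm T$ (a curved de la Vall\'ee Poussin contour would give the same exponent and is not needed). The only singularity crossed is the simple pole of $-\zeta'/\zeta$ at $s=1$, of residue $x$, which supplies the main term. On the shifted line, using (ii) and $|s|\gg|t|+1$, the integral is $\ll x^{1-\eta(T)}\int_{-T}^{T}(|t|+1)^{-1}(\log(|t|+3))^{2/3}(\log\log(|t|+3))^{1/3}\,\d t\ll x^{1-\eta(T)}(\log T)^{2}$, while the horizontal segments contribute $\ll x(\log T)^{O(1)}/T$ once $T$ is chosen so that no ordinate of a zero lies too near it. Collecting terms gives $\sum_{n\le x}^*\Lambda(n)=x+O\!\left(x^{1-\eta(T)}(\log T)^{2}+x(\log x)^{2}/T\right)$, and balancing the two error terms amounts to $\eta(T)\log x\asymp\log T$, i.e.\ $(\log T)^{5/3}(\log\log T)^{1/3}\asymp\log x$, which to leading order forces $\log T\asymp(\log x)^{3/5}(\log\log x)^{-1/5}$; with this choice both error terms are $\ll x\exp\bigl(-c_{1}(\log x)^{3/5}(\log\log x)^{-1/5}\bigr)$ for suitable $c_{1}>0$, the polynomial-in-$\log T$ factors being absorbed by a slight decrease of $c_{1}$. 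The main obstacle is the analytic input of the previous paragraph; secondarily, the final optimization must be carried out carefully enough that the exponent $3/5$ genuinely emerges (rather than the $1/2$ produced by a classical zero-free region $\eta(t)\asymp1/\log t$), but beyond that the argument is routine Perron-formula bookkeeping.
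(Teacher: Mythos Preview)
Your sketch is correct and follows the standard route---truncated Perron inversion of $-\zeta'/\zeta$, contour shift into the Vinogradov--Korobov zero-free region, residue at $s=1$, and optimisation of $T$ to balance the two error terms. The paper does not actually supply a proof of this statement: it simply cites \cite[Theorem~12.2]{IvicBook}, and the argument there is precisely the one you outline, so your approach coincides with what the paper defers to.
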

\noindent
For a proof of the above theorem see \cite[Theorem~12.2]{IvicBook}.
Proof of PNT* uses analytic continuation of the function
\[\zeta(s)=\sum_{n=1}^{\infty}\frac{1}{n^s},\]
defined for $\Re(s)>1$.
The function $\zeta(s)$ is called the \lq Riemann zeta function\rq, named after the famous German mathematician 
Bernhard Riemann. In 1859, Riemann showed that this
has a meromorphic continuation to the whole complex plane. He also showed
PNT by assuming that the meromorphic continuation of $\zeta(s)$ 
does not have zeros for $\Re(s)>\half$. This conjecture of Riemann is popularly known 
as the \lq Riemann Hypothesis\rq \ (RH), and is an unsolved problem. Under RH, the upper bound for $\Delta(x)$ in PNT* 
can be improved as in the following theorem:
\begin{thmu}[PNT**]
Let $\Delta(x)$ be defined as in PNT*. Further, if we assume RH, then
\[\Delta(x)=O\left(x^{\half}\log^2 x\right).\]
\end{thmu}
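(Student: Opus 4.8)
The plan is to derive the classical Riemann--von Mangoldt explicit formula for $\psi(x)=\sum_{n\le x}\Lambda(n)$ and then control the resulting sum over nontrivial zeros using RH. Since $\psi(x)$ and $\sum_{n\le x}^{*}\Lambda(n)$ differ by at most $\half\Lambda(x)\ll\log x$, it is enough to show $\psi(x)-x\ll x^{1/2}\log^{2}x$. First I would apply the truncated Perron formula to the Dirichlet series $-\zeta'/\zeta(s)=\sum_{n\ge 1}\Lambda(n)n^{-s}$ along $\Re(s)=c$ with $c=1+1/\log x$ and a cutoff height $T$ (to be chosen $\asymp x$), getting
\[
\psi(x)=\frac{1}{2\pi i}\int_{c-iT}^{c+iT}\left(-\frac{\zeta'}{\zeta}(s)\right)\frac{x^{s}}{s}\,\d s+O\!\left(\frac{x\log^{2}x}{T}+\log x\right).
\]

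Next I would shift the contour to the left past $s=1$ and $s=0$, picking up residues: the simple pole of $\zeta$ at $s=1$ contributes $x$, the pole of $x^{s}/s$ at $s=0$ together with the trivial zeros contributes a term that is $O(1)$, and each nontrivial zero $\rho$ with $|\Im\rho|\le T$ contributes $-x^{\rho}/\rho$. Using the standard bound $\zeta'/\zeta(s)\ll\log^{2}T$ on the horizontal segments $\Im(s)=\pm T$ and on the left vertical edge, valid once $T$ is chosen so that the horizontal lines stay a distance $\gg1/\log T$ from the ordinates of all zeros, the contribution of those three sides of the rectangle is $O\!\left(x\log^{2}T/T\right)=O(\log^{2}x)$. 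This yields
\[
\psi(x)-x=-\!\!\sum_{|\Im\rho|\le T}\frac{x^{\rho}}{\rho}+O(\log^{2}x).
\]

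Now I would invoke the Riemann Hypothesis: every nontrivial zero satisfies $\Re\rho=\half$, so $|x^{\rho}|=x^{1/2}$, and writing $\rho=\half+i\gamma$,
\[
\left|\sum_{|\Im\rho|\le T}\frac{x^{\rho}}{\rho}\right|\le x^{1/2}\sum_{0<\gamma\le T}\frac{2}{\gamma}\ll x^{1/2}\log^{2}T ,
\]
where the last step uses the Riemann--von Mangoldt counting estimate $N(t):=\#\{\rho:0<\gamma\le t\}\ll t\log t$ together with partial summation. Taking $T=x$ makes both the Perron error and the boundary integrals $O(\log^{2}x)$ while the zero sum is $O(x^{1/2}\log^{2}x)$, so $\psi(x)-x\ll x^{1/2}\log^{2}x$; restoring the $\half\Lambda(x)$ term gives $\Delta(x)=O\!\left(x^{1/2}\log^{2}x\right)$.

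The difficulty here is entirely technical: one must choose $T$ to avoid the ordinates of the zeros so that $\zeta'/\zeta$ is under control on the shifted contour, and one must verify that the Perron truncation error really is $O(x\log^{2}x/T)$ --- for $x$ close to a prime power this needs the sharper weight $\min\!\left(1,(T\,\|x\|)^{-1}\right)$ on the tail terms and a separate argument when $x$ lies within $O(1/T)$ of a prime power, or one instead proves the bound first for the smoother average $\psi_{1}(x)=\int_{1}^{x}\psi(t)\,\d t$ and recovers it for $\psi$ by differencing. All of this is classical; see \cite[Theorem~12.2]{IvicBook} and the surrounding discussion.
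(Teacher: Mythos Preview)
Your argument is the standard classical route via the explicit formula and is correct. The paper itself does not supply a proof here at all: its entire proof reads ``See \cite{PNT_Under_RH}.'' So there is nothing to compare approaches against; you have simply filled in the details that the cited reference contains, and your outline (truncated Perron, contour shift with $T$ chosen away from the ordinates of zeros, the bound $\sum_{0<\gamma\le T}\gamma^{-1}\ll\log^{2}T$ from $N(t)\ll t\log t$) is exactly the canonical one.
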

\begin{proof}
 See \cite{PNT_Under_RH}.
\end{proof}
\noindent
In fact, we shall see in Theorem~\ref{thm:landu_omegapm} that PNT** is equivalent to RH.
At this point, it is natural to ask the following questions:
\texttt{
\begin{itemize}
 \item[-] Can we obtain a bound for $\Delta(x)$, better than the bound in PNT**?
 \item[-] Is $\Delta(x)$ an increasing or a decreasing function?
 \item[-] Can $\Delta(x)$ be both positive and negative depending on $x$?
 \item[-] How large are positive and negative values of $\Delta(x)$?
\end{itemize}}
\noindent
We shall make an attempt to answer these question by obtaining $\Omega$ and $\Omega_\pm$ results. 
The following result was obtained by 
Hardy and Littlewood \cite{HardyLittlewoodPNTOmegapm} in the year 1916:
\begin{align}
 \Delta(x)=\Omega_\pm\left(x^\half\log\log\log x\right).\
\end{align}
The above $\Omega_\pm$ bound on $\Delta(x)$ gives some answer to our earlier questions. 
It says that we can not have an upper bound for $\Delta(x)$ which is smaller than $x^\half\log\log\log x$.
It also says that $\Delta(x)$ often takes both positive and negative values with magnitude of order $x^\half\log\log\log x$.
This suggests, it is important to obtain $\Omega$ and $\Omega_\pm$ bounds for various other error terms. In this direction,  Landau's
theorem \cite{Landau} (see Theorem~\ref{thm:landu_omegapm} below) gives an elegant tool to obtain $\Omega_\pm$ results. Applying this theorem, we have
\begin{align*}
 \Delta(x)=\Omega_\pm\left(x^\half\right).
\end{align*}
The advantage of Landau's method as compared to Hardy and Littlewood's method is in its
applicability to a wide class of error terms of various summatory functions. In Landau's method, the existence of a complex pole 
with real part $\half$ serves as a criterion for the existence of above limits. In this thesis, we shall investigate on a 
quantitative version of Landau's result by obtaining the Lebesgue measure of the sets where $\Delta(x)>\lambda x^{1/2}$ and 
$\Delta(x)<-\lambda x^{\half}$, for some $\lambda>0$. We shall show that the large Lebesgue measure of the set where
$|\Delta(x)|>\lambda x^{\half}$, for some $\lambda>0$ replaces the criterion of existence of a complex pole in Landau's method.
This approach has the advantage of getting $\Omega_\pm$ results even when no such complex pole exists. This is evident from some 
applications which we discuss in this thesis.

\section{Framework}
In this thesis, we consider a sequence of real numbers $\{a_n\}_{n=1}^{\infty}$ having Dirichlet series
\[D(s)=\sum_{n=1}^{\infty}\frac{a_n}{n^s}\] 
that converges in some half-plane. 
The Perron summation formula (see Theorem~\ref{thm:perron_formula}) uses analytic properties of $D(s)$ to give
\[\sum^*_{n\leq x}a_n=\M(x)+\Delta(x),\]
where $\M(x)$ is the main term, $\Delta(x)$ is the error term ( which would be specified later ) and $\sum^*$ is defined as 
\begin{equation*}
\sum^*_{n\leq x} a_n =
\begin{cases}
\sum_{n\leq x} a_n \ & \text{ if } x\notin \mathbb N \\
\sum_{n\leq x} a_n -\half a_x  \ &\text{if } x\in\mathbb N.
\end{cases}
\end{equation*}

In Chapter~\ref{chap:analytic_continuation}, we analyze the Mellin transform $A(s)$ of $\Delta(x)$, which is defined as:
\begin{defi}\label{def:mellin_transform}
For a complex variable $s$, the Mellin transform $A(s)$ of $\Delta(x)$ is defined by
 \[A(s)=\int_{1}^{\infty}\frac{\Delta(x)}{x^{s+1}}\d x.\]
\end{defi}
\noindent
In general, $A(s)$ is holomorphic in some half plane.
We shall discuss a method to obtain a meromorphic continuation of $A(s)$ from the meromorphic continuation of $D(s)$.
In particular, we shall prove in Theorem~\ref{thm:analytic_continuation_mellin_transform} that under some natural assumptions
\[A(s)=\int_{\Co}\frac{D(\eta)}{\eta(s-\eta)}\d \eta,\]
where the contour $\Co$ is as in Definition \ref{def:contour} and $s$ lies 
to the right of $\Co$. Later, this result will complement Theorem~\ref{thm:omega_pm_main} and  Theorem~\ref{thm:omega_pm_main_new} in their applications.

In Chapter~\ref{chap:landau_theorem}, we revisit Landau's method and obtain measure theoretic results.
Also we generalize a theorem of Kaczorowski and Szyd{\l}o \cite{KaczMeasure}, and a theorem of Bhowmik, Ramar{\'e} and Schlage-Puchta \cite{gautami}
in Theorem~\ref{thm:omega_pm_main_new}.

Let
\[\A(\alpha, T):=\{x: x\in[T, 2T], |\Delta(x)|>x^{\alpha}\},\]
and let $\mu$ denotes the Lebesgue measure on $\mathbb R$. 
In Chapter~\ref{chap:measure_analysis}, we establish a connection between  $\mu(\A(\alpha, T))$ and 
fluctuations of $\Delta(x)$. In Proposition~\ref{prop:refine_omega_from_measure}, we see 
that 
\[\mu(\A(\alpha, T))\ll T^{1-\delta}  \ \text{ implies } \ \Delta(x)=\Omega(x^{\alpha + \delta/2}).\]
However, Theorem~{\ref{thm:omega_pm_measure}}
gives that
\[ \mu(\A(\alpha, T))=\Omega(T^{1-\delta}) \ \text{ implies } \ \Delta(x)
=\Omega_\pm(x^{\alpha-\delta}),\]
provided $A(s)$ does not have a real pole for $\Re (s) \geq \alpha-\delta$. In particular, 
this says that either we can improve on the $\Omega$ result or we can obtain a tight $\Omega_\pm$ result for $\Delta(x)$.

In Chapter~\ref{chap:twisted_divisor} we study a twisted divisor function defined as follows:
\begin{equation}\label{eq:tau-n-theta_def}
 \tau(n, \theta)=\sum_{d\mid n}d^{i\theta}\ \text{ for } \ \theta\neq 0.
\end{equation}
This function is used in \cite[Chapter 4]{DivisorsHallTenen} to measure the clustering of divisors.
We give a brief note on some applications of $\tau(n, \theta)$ in Section~\ref{chap:twisted_divisor}.\ref{sec:applications_tau_n_theta}.
In \cite[Theorem 33]{DivisorsHallTenen}, Hall and Tenenbaum proved that
\begin{equation}\label{eq:formmula_tau_ntheta}
 \sum_{n\leq x}^*|\tau(n, \theta)|^2=\omega_1(\theta)x\log x + \omega_2(\theta)x\cos(\theta\log x)
+\omega_3(\theta)x + \Delta(x),
\end{equation}
where $\omega_i(\theta)$s are explicit constants depending only on $\theta$. They also showed that 
\begin{equation}\label{eq:upper_bound_delta}
 \Delta(x)=O_\theta(x^{1/2}\log^6x).
\end{equation}
We give a proof of this formula in Theorem~\ref{thm:asymp_formula_tau_n_theta}. Also, we derive 
$\Omega$ and $\Omega_\pm$ bounds for $\Delta(x)$ using techniques from previous chapters.
In Theorem~\ref{omega_integral}, we obtain an $\Omega$ bound for the second moment of $\Delta(x)$ by adopting
a technique due to Balasubramanian, Ramachandra and Subbarao \cite{BaluRamachandraSubbarao}.

The main theorems of this thesis, except Theorem~\ref{thm:omega_pm_main_new}, are from \cite{MeasureOmega},
which is a joint work of the author with A. Mukhopadhyay.

\section{Applications}
Now we conclude the introduction by mentioning few applications of the methods given in this thesis. 

\subsection{Twisted Divisors}
Consider the twisted divisor function $\tau(n, \theta)$ defined in the previous section.
The Dirichlet series of $|\tau(n, \theta)|^2$ can be expressed in terms of the Riemann zeta function as:
\begin{equation}\label{eq:dirichlet_series_tauntheta}
 D(s)=\sum_{n=1}^{\infty}\frac{|\tau(n, \theta)|^2}{n^s}=\frac{\zeta^2(s)\zeta(s+i\theta)\zeta(s-i\theta)}{\zeta(2s)}
 \quad\quad  \text{for}\quad \Re(s)>1.
\end{equation}
In Theorem~\ref{thm:asymp_formula_tau_n_theta}, we shall show
\begin{equation*}
\sum_{n\leq x}^*|\tau(n, \theta)|^2=\omega_1(\theta)x\log x + \omega_2(\theta)x\cos(\theta\log x)
 +\omega_3(\theta)x + \Delta(x),
\end{equation*}
where $\omega_i(\theta)$s are explicit constants depending only on $\theta$ 
and
\begin{equation*}
\Delta(x)=O_\theta(x^{1/2}\log^6x).
\end{equation*}
The Dirichlet series $D(s)$ has poles at $s=1, 1\pm i\theta $ and at the zeros of $\zeta(2s)$. 
Using a complex pole on the line $\Re(s)=1/4$, 
Landau's method gives
\[\Delta(x)=\Omega_{\pm}(x^{1/4}).\]
In order to apply the method of Bhowmik, Ramar{\'e} and Schlage-Puchta, we need
\[ \int_T^{2T} \Delta^2(x) \d x \ll T^{2\sigma_0+1+\epsilon}, \]
for any $\epsilon >0$ and  $\sigma_0=1/4$; such an estimate is not possible due to
Corollary~\ref{coro:balu_ramachandra1}.
Generalization of this method in Theorem~\ref{thm:omega_pm_main} can be applied to get
\begin{align*}
 \mu \left( \A_j\cap [T, 2T]\right)=\Omega\left(T^{1/2}(\log T)^{-12}\right) \quad \text{ for } j=1, 2,
\end{align*}
and here $\A_j$s' for $\Delta(x)$ are defined as
\begin{align*}
\A_1=\left\{x: \Delta(x)>(\lambda(\theta)-\epsilon)x^{1/4}\right\}
 \ \text{and} \ \A_2=\left\{ x : \Delta(x)<(-\lambda(\theta)+\epsilon)x^{1/4}\right\},
\end{align*}
for any $\epsilon>0$ and $\lambda(\theta)>0$ as in (\ref{eqn:lambda_theta}). But under Riemann Hypothesis, we show in  
(\ref{result:tau_n_theta_omegapm_underRH}) that the above $\Omega$ bounds can be improved to
\begin{align*}
\mu\left(\A_j\right) =\Omega\left(T^{3/4-\epsilon}\right),\ \text{ for } j=1, 2 \ \text{ and for any } \ \epsilon>0.
\end{align*}

Fix a constant $c_2>0$ and define
\[\alpha(T) =\frac{3}{8}-\frac{c_2}{(\log T)^{1/8}}.\]
In Corollary~\ref{coro:balu_ramachandra2}, we prove that 
\[\Delta(T)=\Omega\left(T^{\alpha(T)}\right).\]
In Proposition~\ref{Balu-Ramachandra-measure}, we give an $\Omega$ estimate for the measure of the sets involved in the above bound:
\[ \mu(\A\cap [T,2T])=\Omega\left(T^{2\alpha(T)}\right),\]
where
\[\A=\{x: |\Delta(x)|\ge Mx^{\alpha}\}\] 
for a positive constant $M>0$. 
In Theorem~\ref{thm:tau_theta_omega_pm}, we show
that
\[ \text{either } \ \Delta(x)=\Omega\left(x^{ \alpha(x)+\delta/2}\right)
\ \text{ or } \
 \Delta(x)=\Omega_{\pm}\left(x^{3/8-\delta'}\right), \]
for $0<\delta<\delta'<1/8$.
We may conjecture that 
\[ \Delta(x)=O(x^{3/8+\epsilon}) \ \text{ for any } \epsilon>0.\]
Theorem~\ref{thm:tau_theta_omega_pm} and this conjecture imply that 
\[\Delta(x)=\Omega_{\pm}(x^{3/8-\epsilon})\ \text{ for any } \epsilon>0.\]

\subsection{Square Free Divisors}
Let $\Delta(x)$ be the error term in the asymptotic formula for partial sums of the square free divisors:
\begin{align*}
\Delta(x)=\sum_{n\leq x}^* 2^{\omega(n)}-\frac{x\log x}{\zeta(2)}+\left(-\frac{2\zeta'(2)}{\zeta^2(2)} + \frac{2\gamma - 1}{\zeta(2)}\right)x,
\end{align*}
where $\omega(n)$ denotes the number of distinct primes divisors of $n$. It is known that $\Delta(x)\ll x^{1/2}$ (see \cite{holder}).
Let $\lambda_1>0$ and the sets $\A_j$ for $j=1, 2$ be defined as in Section~\ref{subsec:sqfree_divisors}:
\begin{align*}
 \A_1=\left\{x: \Delta(x)>(\lambda_1-\epsilon)x^{1/4}\right\},\ \text{ and } \ 
 \A_2=\left\{ x : \Delta(x)<(-\lambda_1+\epsilon)x^{1/4}\right\}.
\end{align*}
In (\ref{eq:sqfree_divisors_omegaset_uc}), we show that
\begin{equation*}
 \mu\left(\A_j\cap[T, 2T]\right)=\Omega\left(T^{1/2}\right) \text{ for } j=1, 2.
\end{equation*}
But under Riemann Hypothesis, we prove the following $\Omega$ bounds in (\ref{eq:sqfree_divisors_omegaset}):
\begin{equation*}
 \mu\left(\A_j\cap[T, 2T]\right)=\Omega\left(T^{1-\epsilon}\right), \text{ for } j=1, 2 \ \text{ and for any } \ \epsilon>0. 
\end{equation*}

\subsection{Divisors}
Let $d(n)$ denotes the number of divisors of $n$:
\[d(n)=\sum_{d|n}1.\]
Dirichlet \cite[Theorem~320]{HardyWright} showed that
\[\sum_{n\leq x}^*d(n) = x\log x + (2\gamma -1)x + \Delta(x), \]
where $\gamma$ is the Euler constant and 
\[\Delta(x)=O(\sqrt{x}).\]
Latest result on $\Delta(x)$ is due to Huxley \cite{HuxleyDivisorProblem}, which is
\[\Delta(x)=O(x^{131/416}).\]
On the other hand, Hardy \cite{HardyDirichletDivisor} showed that 
\begin{align*}
 \Delta(x)&=\Omega_+((x\log x)^{1/4}\log\log x),\\
 &=\Omega_-(x^{1/4}).
\end{align*}
There are many improvements on Hardy's result due to K. Corr{\'a}di and I. K{\'a}tai \cite{CorradiKatai},
J. L. Hafner \cite{Hafner} and K. Sounderarajan \cite{Sound}. As a consequence of Theorem~\ref{thm:omega_pm_measure},
we shall show in Chapter~\ref{chap:measure_analysis} that 
for all sufficiently large $T$ and for a constant $c_3>0$, there exist $x_1, x_2 \in [T, 2T]$ such that
\begin{align*}
 \Delta(x_1)> c_3x_1 \ \text{ and } \ \Delta(x_2)< - c_3x_2.
\end{align*}
In particular, we get 
\begin{align*}
\Delta(x)=\Omega_\pm(x^{1/4}).
\end{align*}

\subsection{Error Term in the Prime Number Theorem}
Let $\Delta(x)$ be the error term in the Prime Number Theorem:
\[\Delta(x)=\sum_{n\leq x}^*\Lambda(n)-x.\]
We know from Landau's theorem \cite{Landau} that 
\[\Delta(x)=\Omega_\pm\left(x^{1/2}\right)\]
and from the theorem of Hardy and Littlewood \cite{HardyLittlewoodPNTOmegapm} that
\[\Delta(x)=\Omega_\pm\left(x^{1/2}\log\log x\right).\]
We define
\begin{align*}
 \A_1=\left\{x: \Delta(x)>(\lambda_2-\epsilon)x^{1/2}\right\} \
 \text{ and } \ \A_2=\left\{ x : \Delta(x)<(-\lambda_2+\epsilon)x^{1/2}\right\},\\
\end{align*}
where $\lambda_2>0$ be as in Section~\ref{subsec:pnt_error}.
If we assume Riemann Hypothesis, then the theorem of Kaczorowski and Szyd{\l}o ( see Theorem~\ref{thm:kaczorowski} below )
along with PNT** gives 
\[ \mu\left(\A_j\cap[T, 2T]\right)=\Omega\left(\frac{T}{\log^4 T}\right) \text{ for } j=1, 2.\]
However, as an application of Corollary~\ref{cor:measure_omega_pm_from_upper_bound}, we prove the following weaker bound unconditionally:
\[ \mu\left(\A_j\cap[T, 2T]\right)=\Omega\left(T^{1-\epsilon}\right), \text{ for } j=1, 2\ \text{ and for any }\epsilon>0.\]

\subsection{Non-isomorphic Abelian Groups}
Let $a_n$ be the number of non-isomorphic abelian groups of order $n$, and 
the corresponding Dirichlet series is given by
\[ \sum_{n=1}^{\infty}\frac{a(n)}{n^s} = \prod_{k=1}^{\infty}\zeta(ks)\  \text{  for } \Re(s)>1. \] 
Let $\Delta(x)$ be defined as 
\begin{equation*}
 \Delta(x)=\sum_{n\leq x}^*a_n - \sum_{k=1}^{6}\Big(\prod_{j \neq k}\zeta(j/k)\Big) x^{1/k}.
\end{equation*}
It is an open problem to show that 
\begin{equation}\label{conj1}
\Delta(x)\ll x^{1/6+\epsilon} \ \text{ for any }\epsilon>0.
\end{equation}
The best result on upper bound of $\Delta(x)$ is 
due to O. Robert and P. Sargos \cite{sargos}, which gives 
\[\Delta(x)\ll x^{1/4+\epsilon}\ \text{ for any }\epsilon>0.\]
Balasubramanian and Ramachandra \cite{BaluRamachandra2} proved that
\begin{equation*}
 \int_T^{2T}\Delta^2(x)\d x=\Omega(T^{4/3}\log T).
\end{equation*}
Following the proof of Proposition \ref{Balu-Ramachandra-measure}, we get
\begin{equation*}
\mu\left( \{ T\le x\le 2T: |\Delta(x)|\ge \lambda_3 x^{1/6}(\log x)^{1/2}\} \right)
=\Omega(T^{5/6-\epsilon}),
\end{equation*}
for some $\lambda_2>0$ and for any $\epsilon>0$.
Sankaranarayanan and Srinivas \cite{srini} proved that
\[ \Delta(x)=\Omega_{\pm}\left(x^{1/10}\exp\left(c\sqrt{\log x}\right)\right)\]
for some constant $c>0$.
It has been conjectured that
\[\Delta(x)=\Omega_\pm(x^{1/6-\delta})\ \text{ for any } \delta>0.\]
In Proposition~\ref{prop:abelian_group}, we prove that either
\[\int_T^{2T}\Delta^4(x)\d x=\Omega( T^{5/3+\delta} )\text{ or }\Delta(x)=\Omega_\pm(x^{1/6-\delta}),\]
for any $0<\delta< 1/42$.
The conjectured upper bound (\ref{conj1}) of $\Delta(x)$ gives 
 \[\int_T^{2T}\Delta^4(x)\d x \ll T^{5/3+\delta}.\]
This along with Proposition~\ref{prop:abelian_group} implies that 
\[\Delta(x)=\Omega_\pm(x^{1/6-\delta})\ \text{ for any }\ 0<\delta <1/42.\]
\newpage
\chapter{Analytic Continuation Of The Mellin Transform}\label{chap:analytic_continuation}
In this chapter, we express the error term $\Delta(x)$ as a contour integral using the Perron's formula. This allows us to obtain a 
meromorphic continuation of $A(s)$ (see Definition~\ref{def:mellin_transform}) in terms of the meromorphic continuation of $D(s)$, which is the main 
theorem of this chapter ( Theorem~\ref{thm:analytic_continuation_mellin_transform} ). This theorem will be used in the next chapter to obtain 
$\Omega_\pm$ results for $\Delta(x)$.
\section{Perron's Formula}
Recall that we have a sequence of real numbers $\{a_n\}_{n=1}^{\infty}$, with its 
Dirichlet series $D(s)$. The Perron summation formula approximates the partial sums of $a_n$ by expressing it as a contour integral
involving $D(s)$.
\begin{thm}[Perron's Formula, Theorem~II.2.1 \cite{TenenAnPr}]\label{thm:perron_formula} 
Let $D(s)$ be absolutely convergent for $\Re(s)>\sigma_c$, and let $\kappa>\max(0, \sigma_c)$. Then for $x\geq 1$, we have
\[\sum_{n\leq x}^*a_n=\int_{\kappa-i\infty}^{\kappa+i\infty}\frac{D(s)x^s}{s}\d s.\] 
\end{thm}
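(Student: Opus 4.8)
The plan is to use the classical Perron kernel, namely the discontinuous integral which for $\kappa>0$ evaluates
\[
\frac{1}{2\pi i}\int_{\kappa-i\infty}^{\kappa+i\infty}\frac{y^{s}}{s}\,\d s
=\begin{cases}1 & y>1,\\ \tfrac12 & y=1,\\ 0 & 0<y<1,\end{cases}
\]
applied with $y=x/n$. First I would record and prove this kernel identity: for $y>1$ one shifts the contour to $\Re(s)\to-\infty$ and picks up the residue $1$ at $s=0$ (the integrand decays because $y^{s}=e^{(\log y)\Re(s)}\to 0$); for $0<y<1$ one shifts to $\Re(s)\to+\infty$ and picks up nothing; for $y=1$ one computes the principal value $\frac{1}{2\pi i}\int_{\kappa-iT}^{\kappa+iT}\frac{\d s}{s}\to\tfrac12$ directly by parametrising the vertical segment and letting $T\to\infty$. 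Care is needed on the horizontal truncations at height $T$, where one uses the standard bound $\left|\int_{\kappa\pm iT}^{-\infty\pm iT}\frac{y^{s}}{s}\d s\right|\ll \frac{y^{\kappa}}{T|\log y|}$.

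Next I would substitute $D(s)=\sum_{n\ge1}a_n n^{-s}$, which converges absolutely on the line $\Re(s)=\kappa>\max(0,\sigma_c)$, into the right-hand side and interchange summation and integration:
\[
\int_{\kappa-i\infty}^{\kappa+i\infty}\frac{D(s)x^{s}}{s}\,\d s
=\sum_{n=1}^{\infty}a_n\cdot\frac{1}{2\pi i}\int_{\kappa-i\infty}^{\kappa+i\infty}\frac{(x/n)^{s}}{s}\,\d s
=\sum_{n=1}^{\infty}a_n\cdot c(x/n),
\]
where $c(\cdot)$ is the Perron kernel above. By the kernel evaluation, $c(x/n)=1$ when $n<x$, equals $0$ when $n>x$, and equals $\tfrac12$ when $n=x$, so the right-hand side collapses to exactly $\sum^*_{n\le x}a_n$, which is the assertion. (I note the normalisation of the integral: as written in the statement the factor $\frac{1}{2\pi i}$ is to be understood as part of $\int_{\kappa-i\infty}^{\kappa+i\infty}$, matching the convention already used throughout the excerpt.)

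The one genuine subtlety — and the step I expect to be the main obstacle — is justifying the interchange of the infinite sum over $n$ with the improper contour integral, since the kernel $\frac{y^{s}}{s}$ is only conditionally integrable along the vertical line (it decays like $1/|t|$, not absolutely). The clean way around this is to truncate: work first with $\frac{1}{2\pi i}\int_{\kappa-iT}^{\kappa+iT}\frac{D(s)x^{s}}{s}\d s$, where absolute convergence of $D$ on the line lets one freely swap the finite-height integral with $\sum_n a_n$, then let $T\to\infty$. One must check that the tail contributions $\sum_n a_n\bigl(c(x/n)-c_T(x/n)\bigr)$ vanish in the limit, splitting into the ranges $n\le x/2$, $x/2<n<x$, $n=x$, $n>2x$, and the delicate middle range $x/2<n<x$ or $x<n<2x$ (where $|\log(x/n)|$ can be small); here one uses the refined truncation estimate together with absolute convergence $\sum_n|a_n|n^{-\kappa}<\infty$. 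Once this limiting argument is in place the identity follows. Since the excerpt cites Tenenbaum for the full details, I would present the kernel computation and the formal collapse, and indicate the truncation argument for the interchange without grinding through every range.
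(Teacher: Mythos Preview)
Your proposal is correct and follows the standard textbook route (Perron kernel identity plus truncation to justify the interchange). Note, however, that the paper does not actually prove this statement: it is quoted as Theorem~II.2.1 of Tenenbaum with no argument given, so there is no in-paper proof to compare against; your write-up is essentially the proof one finds in the cited reference.
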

\noindent
But in practice, we use the following effective version of the Perron's formula.
\begin{thm}[Effective Perron's Formula, Theorem~II.2.1 \cite{TenenAnPr}]\label{thm:effective_perron_formula}
 Let $\{a_n\}_{n=1}^\infty, \ D(s)$ and $\kappa$ be defined as in Theorem~\ref{thm:perron_formula}. Then for $T\geq 1$ and $x\geq 1$, we have
 \[\sum_{n\leq x}^*a_n=\int_{\kappa-iT}^{\kappa+iT}\frac{D(s)x^s}{s}\d s + O\left(x^\kappa\sum_{n=1}^\infty 
 \frac{|a_n|}{n^\kappa(1+T|\log(x/n)|)}\right).\]
\end{thm}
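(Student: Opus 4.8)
The plan is to derive the formula from a pointwise estimate for the truncated Perron kernel
\[
I(y,T):=\frac{1}{2\pi i}\int_{\kappa-iT}^{\kappa+iT}\frac{y^s}{s}\,\d s\qquad(y>0),
\]
after interchanging the summation defining $D(s)$ with the integration. (One could alternatively start from the untruncated Theorem~\ref{thm:perron_formula} and bound the tail $\int_{|t|>T}$ by integration by parts in $t$, but that route meets the same delicate point and is less self-contained.) First I would insert $D(s)=\sum_{n\ge1}a_n n^{-s}$ into the integral. On the segment $\Re(s)=\kappa$ one has $|s|\ge\kappa$, so $\sum_{n\ge1}|a_n|\,\bigl|(x/n)^s/s\bigr|\le(x^\kappa/\kappa)\sum_{n\ge1}|a_n|n^{-\kappa}$, which is finite because $\kappa>\sigma_c$ and $D$ converges absolutely there; integrating this bound over the finite segment $[\kappa-iT,\kappa+iT]$ shows the iterated integral converges absolutely, so Fubini's theorem applies and
\[
\frac{1}{2\pi i}\int_{\kappa-iT}^{\kappa+iT}\frac{D(s)x^s}{s}\,\d s=\sum_{n=1}^{\infty}a_n\,I\!\left(\frac{x}{n},T\right).
\]

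The second ingredient is the kernel estimate: for $y>0$ and $T\ge1$,
\[
I(y,T)=\delta(y)+O\!\left(\frac{y^\kappa}{1+T\,|\log y|}\right),
\]
where $\delta(y)=1$ if $y>1$, $\delta(y)=\tfrac12$ if $y=1$, and $\delta(y)=0$ if $y<1$. For $y>1$ I would shift the contour to $\Re(s)=-U$ and let $U\to\infty$: the integrand $y^s/s$ has its only pole at $s=0$ with residue $1$, the left vertical segment tends to $0$ because $y^{-U}\to0$, and each horizontal segment at height $\pm T$ is $\ll\int_{-\infty}^{\kappa}y^\sigma\,\d\sigma/T=y^\kappa/(T|\log y|)$. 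For $0<y<1$ the same argument, shifting to the right, picks up no residue and gives $|I(y,T)|\ll y^\kappa/(T|\log y|)$. For $y=1$ one computes $I(1,T)=\tfrac1\pi\arctan(T/\kappa)$ directly, so $|I(1,T)-\tfrac12|\le\tfrac12$. Finally, in the range where $T|\log y|\le1$ (so that the contour-shift bound is vacuous), I would write $I(y,T)-I(1,T)=\frac{1}{2\pi i}\int_{\kappa-iT}^{\kappa+iT}\frac{y^s-1}{s}\,\d s$ and use $|y^s-1|\le|s\log y|\,e^{|s||\log y|}$ on the segment to bound this by $\ll T|\log y|\ll1$; since also $|\delta(y)-\tfrac12|\le\tfrac12$, this gives $|I(y,T)-\delta(y)|\ll1\asymp y^\kappa/(1+T|\log y|)$ there, completing the estimate.

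Putting the two ingredients together,
\[
\frac{1}{2\pi i}\int_{\kappa-iT}^{\kappa+iT}\frac{D(s)x^s}{s}\,\d s=\sum_{n=1}^{\infty}a_n\,\delta\!\left(\frac{x}{n}\right)+O\!\left(\sum_{n=1}^{\infty}\frac{|a_n|\,(x/n)^\kappa}{1+T|\log(x/n)|}\right),
\]
and since $\delta(x/n)$ equals $1$, $\tfrac12$ or $0$ according as $n<x$, $n=x$ or $n>x$, the main sum is exactly $\sum_{n\le x}^{*}a_n$; rearranging and writing $(x/n)^\kappa=x^\kappa/n^\kappa$ yields the stated identity (with the standard $1/(2\pi i)$ normalization of the vertical integral). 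The contour shifts and the Fubini justification are routine; the one point that requires genuine care — and the main obstacle — is the behaviour of $I(y,T)$ for $y$ near $1$ (and at $y=1$), where the denominator $1+T|\log(x/n)|$ is tailored precisely to absorb the terms with $n$ close to $x$, and where a crude bound on the vertical segment would introduce a spurious factor $\log T$ that must be avoided.
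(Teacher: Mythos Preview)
The paper does not prove this theorem at all: it is simply quoted from Tenenbaum's book \cite{TenenAnPr} as a standard tool, with no argument given. So there is no ``paper's own proof'' to compare against.

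Your proof is the standard one and is correct. The two ingredients --- Fubini to reduce to the truncated kernel $I(y,T)$, and the three-case estimate for $I(y,T)-\delta(y)$ via contour shifting (for $T|\log y|>1$) and the $|e^z-1|\le|z|e^{|z|}$ bound (for $T|\log y|\le1$) --- are exactly how this result is established in the reference you are reconstructing. Your remark about the $1/(2\pi i)$ normalization is well taken: the paper's display omits it (as it does in the untruncated version just above), but this is a cosmetic convention rather than a mathematical issue.
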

\noindent
The above formulas are used by shifting the line of integration, and thus by collecting the residues of $D(s)x^s/s$ at its poles lying to 
the right of the shifted contour. 
The residues contribute to the main term $\M(x)$, leaving an expression for $\Delta(x)$ as a contour integral.
So we write
\[\sum_{n\leq x}^*a_n=\M(x)+\Delta(x),\]
where $\M(x)$ is the main term and $\Delta(x)$ is the error term. We make the following natural assumptions on $D(s), \M(x)$ and $\Delta(x)$.
\begin{asump}\label{as:for_continuation_mellintran}
Suppose there exist real numbers $\sigma_1$ and $\sigma_2$ satisfying $0<\sigma_1<\sigma_2$,  
such that
\begin{enumerate}
 \item[(i)]
$D(s)$ is absolutely convergent for $\Re(s)> \sigma_2$.
 \item[(ii)]
$D(s)$ can be meromorphically continued to the half plane $\Re(s)>\sigma_1$ with only finitely many poles 
$\rho$ of $D(s)$  satisfying 
 \[ \sigma_1<\Re(\rho)\leq\sigma_2. \] 
 We shall denote this set of poles by $\mathcal{P}$.
\item[(iii)]
The main term $\M(x)$ is sum of residues of $\frac{D(s)x^s}{s}$ at poles in $\mathcal P$:
 \[\M(x)=\sum_{\rho\in \mathcal{P}}Res_{s=\rho}\left( \frac{D(s)x^s}{s}\right).\] 
\end{enumerate}
\end{asump}
The above assumptions also imply:
\begin{note}\label{note:initial_assumption_consequences}
We may also observe:
\begin{enumerate}
\item[(i)]  For any $\epsilon>0$, we have
  \[|a_n|, |\M(x)|, |\Delta(x)|, \left|\sum_{n\leq x}a_n\right| \ll x^{\sigma_2+\epsilon}. \]
\item[(ii)] The main term $\M(x)$ is a polynomial in $x$, and $\log x$:
  \[\M(x)=\sum_{j\in\mathscr{J}}\nu_{1, j}x^{\nu_{2, j}}(\log x)^{\nu_{3, j}},\]
  where $\nu_{1, j}$ are complex numbers, $\nu_{2, j}$ are real numbers with $\sigma_1<\nu_{2, j}\leq \sigma_2$, $\nu_{3, j}$ are positive integers, and $\mathscr J$ is a finite index set.
\end{enumerate}
\end{note}
\noindent
To express $\Delta(x)$ in terms of a contour integration, we define the following contour.
\begin{defi}\label{def:contour}
Let $\sigma_1, \sigma_2$ be as defined in Assumptions~\ref{as:for_continuation_mellintran}.
Choose a positive real number  $\sigma_3$ such that
$\sigma_3>\sigma_2.$
We define the contour $\mathscr{C}$, as in Figure~\ref{fg:contour_c0}, as the union of the following five
line segments: 
\[\mathscr{C}=L_1\cup L_2\cup L_3 \cup L_4 \cup L_5 ,\]
 where
\begin{align*}
L_1=&\{\sigma_3+iv: T_0 \leq v < \infty\}, 
&L_2=\{u+iT_0: \sigma_1\leq u\leq \sigma_3 \}, \\
L_3=&\{\sigma_1+iv: -T_0 \leq v \leq T_0\}, 
&L_4=\{u-iT_0: \sigma_1\leq u\leq \sigma_3 \}, \\
L_5=&\{\sigma_3+iv: -\infty< v \leq -T_0\}. \\
\end{align*}
\end{defi}

 \begin{figure}
 \begin{tikzpicture}[yscale=0.8]
\draw [<->][dotted] (0, -4.4)--(0, 4.4);
\node at (-0.5, 2) {$T_0$}; 
\draw [thick] (-0.1, 2 )--(0.1,2);
\node at (-0.3, 0.3) {$0$};
\node at (-0.5, -2) {$-T_0$};
\draw [thick] (-0.1,-2 )--(0.1, -2);
\draw [<->][dotted] (5, 0)--(-2, 0);

\draw [dotted] (2, -4.4)--(2, 4.4);
\node at (1.7, 0.3) {$\sigma_1$};
\draw [dotted] (3, -4.4)--(3, 4.4);
\node at (2.7, 0.3) {$\sigma_2$};
\draw [dotted] (4, -4.4)--(4, 4.4);
\node at (3.7, 0.3) {$\sigma_3$};

\draw [thick] [postaction={decorate, decoration={ markings,
mark= between positions 0.1 and 0.99 step 0.2 with {\arrow[line width=1.2pt]{>},}}}]
(4, -4.4)--(4, -2)--(2, -2)--(2, 2)--(4, 2)--(4, 4.4);
\node [below left] at (1.8, 1.6) {$\mathscr{C}$};
\end{tikzpicture}
\caption{Contour $\mathscr{C}$}\label{fg:contour_c0}
\end{figure}
\noindent
Now, we write $\Delta(x)$ as an integration over $\Co$ in the following lemma.
\begin{lem}\label{lem:integral_exp_delta}
Under Assumptions~\ref{as:for_continuation_mellintran}, the error term $\Delta(x)$ can be expressed as:
\begin{equation*}
\Delta(x) = \int_{\mathscr{C}}\frac{D(\eta)x^{\eta}}{\eta}{\d \eta}. 
\end{equation*}
\end{lem}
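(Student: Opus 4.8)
The goal is to show $\Delta(x) = \int_{\mathscr{C}} \frac{D(\eta) x^\eta}{\eta}\,\d\eta$. The plan is to start from the effective Perron formula (Theorem~\ref{thm:effective_perron_formula}), which gives $\sum_{n\le x}^* a_n$ as a truncated vertical integral over $\Re(\eta)=\sigma_3$ from $\sigma_3-iT$ to $\sigma_3+iT$, plus an error term that I will show tends to $0$ as $T\to\infty$. (Alternatively, one may invoke the non-effective Perron formula, Theorem~\ref{thm:perron_formula}, directly to get the full vertical line integral, but controlling convergence via the effective version is cleaner.) Thus $\sum_{n\le x}^* a_n = \int_{\sigma_3-i\infty}^{\sigma_3+i\infty} \frac{D(\eta)x^\eta}{\eta}\,\d\eta$, the integral over the vertical line $\Re(\eta)=\sigma_3$.

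Next I would shift the contour from the line $\Re(\eta)=\sigma_3$ leftward to the contour $\mathscr{C}$ of Definition~\ref{def:contour}. Consider the closed rectangular contour with vertical sides at $\Re(\eta)=\sigma_1$ and $\Re(\eta)=\sigma_3$ and horizontal sides at $\Im(\eta)=\pm V$ for large $V>T_0$; by Assumptions~\ref{as:for_continuation_mellintran}, $D(\eta)$ is meromorphic for $\Re(\eta)>\sigma_1$ with poles exactly the set $\mathcal{P}$, all lying in $\sigma_1 < \Re(\rho) \le \sigma_2$. Applying the residue theorem to $\frac{D(\eta)x^\eta}{\eta}$ on this rectangle, the integral over $\Re(\eta)=\sigma_3$ equals the integral over the left side $\Re(\eta)=\sigma_1$ plus the two horizontal connectors plus $2\pi i$ times the sum of residues at the poles enclosed — which, together with the possible pole at $\eta=0$ contributing if $0$ were in the region (it is not, since $\sigma_1>0$), is exactly $\sum_{\rho\in\mathcal{P}} \mathrm{Res}_{\eta=\rho}\big(\frac{D(\eta)x^\eta}{\eta}\big) = \M(x)$. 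Letting $V\to\infty$, the horizontal connectors at heights $\pm V$ vanish provided $D(\sigma+iV)/V \to 0$ uniformly for $\sigma_1\le\sigma\le\sigma_3$; this is where I expect the main technical work. One reassembles the left vertical line $\Re(\eta)=\sigma_1$ together with the finite horizontal segments $L_2, L_4$ near height $\pm T_0$ into precisely the contour $\mathscr{C} = L_1\cup L_2\cup L_3\cup L_4\cup L_5$ as drawn in Figure~\ref{fg:contour_c0}. Since $\sum_{n\le x}^* a_n = \M(x) + \Delta(x)$ by definition, subtracting $\M(x)$ leaves $\Delta(x) = \int_{\mathscr{C}} \frac{D(\eta)x^\eta}{\eta}\,\d\eta$.

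The main obstacle is justifying the two limiting steps — that the Perron error term vanishes as $T\to\infty$, and that the horizontal segments of the rectangle at height $\pm V$ contribute nothing in the limit. Both require growth bounds on $D(\eta)$ in the strip $\sigma_1 < \Re(\eta) \le \sigma_3$, which are implicit in Assumptions~\ref{as:for_continuation_mellintran}: absolute convergence for $\Re(\eta)>\sigma_2$ gives boundedness there, and meromorphic continuation with only finitely many poles typically comes packaged with polynomial-in-$\Im(\eta)$ growth on vertical lines (a Phragmén–Lindelöf / convexity argument across the strip). I would either cite such a bound as part of the standing hypotheses or note that in all applications in this thesis $D$ is built from $\zeta$ and hence satisfies polynomial growth, so that $D(\sigma+iV)/V \to 0$ uniformly in $\sigma\in[\sigma_1,\sigma_3]$. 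Everything else — the residue computation identifying the enclosed residues with $\M(x)$, and the bookkeeping that reassembles the shifted contour as $\mathscr{C}$ — is routine and follows directly from the definitions.
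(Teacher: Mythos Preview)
Your overall strategy---Perron's formula followed by a contour shift that picks up the residues constituting $\M(x)$---is the right one, and indeed the paper's own proof is simply ``Follows from Theorem~\ref{thm:perron_formula}.'' However, you have misread the contour $\mathscr{C}$. By Definition~\ref{def:contour}, the infinite vertical tails $L_1$ and $L_5$ lie on $\Re(\eta)=\sigma_3$, not on $\Re(\eta)=\sigma_1$; the detour to $\sigma_1$ occurs only on the bounded portion $|\Im(\eta)|\le T_0$.

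Because of this, your proposed shift is both more complicated than necessary and, as written, does not produce $\mathscr{C}$. You push the entire line $\Re(\eta)=\sigma_3$ over to $\Re(\eta)=\sigma_1$ via rectangles of height $\pm V$ with $V\to\infty$; this requires uniform decay of $D(\sigma+iV)/V$ on horizontal segments, a growth bound that is \emph{not} part of Assumptions~\ref{as:for_continuation_mellintran} (you yourself flag this as the ``main obstacle''). Worse, the subsequent ``reassembly'' of the line $\Re(\eta)=\sigma_1$ together with $L_2,L_4$ into $\mathscr{C}$ cannot succeed, since $\mathscr{C}$ carries its infinite tails on $\sigma_3$, not $\sigma_1$. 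The clean argument is this: starting from the Perron integral over the full line $\Re(\eta)=\sigma_3$, deform only the \emph{finite} segment $[\sigma_3-iT_0,\,\sigma_3+iT_0]$ leftward, replacing it by $L_4\cup L_3\cup L_2$. The residue theorem on the bounded rectangle with corners $\sigma_1\pm iT_0$, $\sigma_3\pm iT_0$ then yields exactly $\M(x)$ (all poles in $\mathcal P$ lie inside, $T_0$ being chosen for this purpose), and no limit $V\to\infty$ or growth hypothesis at infinity is required.
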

\begin{proof}
Follows from Theorem~\ref{thm:perron_formula}.
\end{proof}

\section{Analytic continuation of $A(s)$}
Now, we shall discuss a method to 
obtain a meromorphic continuation of $A(s)$, which will serve as an important tool to obtain $\Omega_\pm$ results for $\Delta(x)$ in the 
following chapter. 

\noindent
Below we present the main theorem of this chapter.
\begin{thm}\label{thm:analytic_continuation_mellin_transform}
Under Assumptions-\ref{as:for_continuation_mellintran}, we have
\[A(s)=\int_{\Co}\frac{D(\eta)}{\eta(s-\eta)}\d\eta,\]
when $s$ lies right to $\Co$.
\end{thm}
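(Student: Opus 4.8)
The plan is to compute the Mellin transform $A(s) = \int_1^\infty \Delta(x) x^{-s-1}\,\d x$ directly by substituting the contour-integral representation of $\Delta(x)$ from Lemma~\ref{lem:integral_exp_delta}, namely $\Delta(x) = \int_{\Co} D(\eta) x^{\eta}/\eta \, \d\eta$, and then interchanging the order of integration. Formally,
\[
A(s) = \int_1^\infty \frac{1}{x^{s+1}}\left(\int_{\Co}\frac{D(\eta)x^\eta}{\eta}\d\eta\right)\d x = \int_{\Co}\frac{D(\eta)}{\eta}\left(\int_1^\infty x^{\eta - s - 1}\d x\right)\d\eta.
\]
For $s$ lying strictly to the right of $\Co$, every $\eta$ on $\Co$ satisfies $\Re(\eta) < \Re(s)$, so the inner integral converges and equals $\int_1^\infty x^{\eta - s - 1}\d x = \frac{1}{s-\eta}$. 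This immediately yields $A(s) = \int_{\Co} \frac{D(\eta)}{\eta(s-\eta)}\d\eta$, which is the claimed identity. So the entire content of the proof is the justification of the interchange of the two integrals.

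First I would fix $s$ with $\Re(s) = \sigma$ lying to the right of $\Co$, meaning $\sigma > \sigma_3$ on the vertical tails $L_1, L_5$ and $\sigma$ exceeds the real parts occurring on the bounded pieces $L_2, L_3, L_4$; set $\beta := \sigma - \sup_{\eta\in\Co}\Re(\eta) > 0$. By Fubini--Tonelli it suffices to check absolute integrability of the double integrand $D(\eta)x^{\eta-s-1}/\eta$ over $\Co \times [1,\infty)$. Splitting $\Co$ into its five segments, the bounded segments $L_2 \cup L_3 \cup L_4$ contribute a compact set on which $D(\eta)/\eta$ is bounded (no poles of $D$ lie on $\Co$ by construction, since $\sigma_1 < \Re(\rho)$ for $\rho\in\mathcal P$ and $\sigma_1$ is the left edge of $\Co$), and $\int_1^\infty x^{\Re(\eta) - \sigma - 1}\d x \le \int_1^\infty x^{-\beta-1}\d x < \infty$ uniformly; so that part is finite. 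On the tails $L_1$ and $L_5$, parametrize $\eta = \sigma_3 + iv$ with $|v| \ge T_0$; here $\Re(\eta) = \sigma_3$ is constant, and since $\sigma_3 > \sigma_2$ the Dirichlet series is absolutely convergent there, giving $|D(\sigma_3 + iv)| \le \sum_n |a_n| n^{-\sigma_3} =: C < \infty$ uniformly in $v$, while $|\eta|^{-1} \le |v|^{-1}$. Thus the tail contribution is bounded by
\[
C\left(\int_{|v|\ge T_0}\frac{\d v}{|v|}\right)\left(\int_1^\infty x^{\sigma_3 - \sigma - 1}\d x\right),
\]
and here the $v$-integral diverges — so a naive bound is insufficient and this is exactly the main obstacle.

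The hard part will be handling the vertical tails $L_1, L_5$, where absolute convergence in $v$ fails for the crude bound $|D|/|\eta| \ll 1/|v|$. I would resolve this in one of two standard ways. The cleaner option: on the tails, integrate by parts in $v$ once (or move the contour slightly and use that $\int_{\Co} D(\eta)x^\eta/\eta\,\d\eta$ already converges as an improper integral because $x^\eta = x^{\sigma_3}x^{iv}$ oscillates), to gain an extra factor of $1/|v|$, making the combined integrand genuinely absolutely integrable and licensing Fubini; then undo the integration by parts after the swap. Alternatively, work with a truncated contour $\Co_V$ (tails cut at $|v| = V$), apply Fubini on the compact region where everything is trivially integrable, obtaining $\int_{\Co_V} D(\eta)x^{\eta-s-1}/\eta \cdot (\text{inner integral})$, and then pass to the limit $V\to\infty$ on both sides, using the effective Perron bound (Theorem~\ref{thm:effective_perron_formula}) and Note~\ref{note:initial_assumption_consequences}(i) to control $\Delta(x)$ uniformly and dominate the $x$-integral, so that the limit and the $\int_1^\infty$ commute. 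Either route reduces the whole theorem to Lemma~\ref{lem:integral_exp_delta} plus the elementary evaluation $\int_1^\infty x^{\eta-s-1}\d x = 1/(s-\eta)$; the rest is bookkeeping on the five segments of $\Co$.
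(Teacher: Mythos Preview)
Your route~(b) --- truncate the contour to $\Co_V$, apply Fubini on the compact piece, then let $V\to\infty$ --- is exactly the paper's strategy, and the paper's proof confirms that this is the right architecture. What you underestimate is the final limiting step. You write ``use the effective Perron bound \ldots\ to control $\Delta(x)$ uniformly and dominate the $x$-integral,'' but the effective Perron error
\[
\Delta(x)-\Delta_V(x)\ \ll\ x^{\sigma_3}\sum_{n}\frac{|a_n|}{n^{\sigma_3}\bigl(1+V|\log(x/n)|\bigr)}
\]
is \emph{not} uniformly small in $x$: when $x$ is an integer $m$, the term $n=m$ contributes $|a_m|$ with no decay in $V$. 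This is precisely the obstruction the paper spends three lemmas on. Their fix is to split $[1,\infty)$ into $\mathcal S(\delta)=\bigcup_m[m-\delta,m+\delta]$ (with $\delta=N^{-1/2}$) and its complement: on $\mathcal S(\delta)^c$ the Perron error is genuinely $\ll x^{\sigma_3+1+\epsilon}/\sqrt N$, while on $\mathcal S(\delta)$ they abandon the error estimate altogether and instead bound the full Perron integral and the truncated one separately (Lemmas on $J_{2,N}$ and $J_{3,N}$), using that $\mathcal S(\delta)$ has small weighted measure. Your sketch skips this decomposition, and a reader following the word ``uniformly'' would get stuck. (One can alternatively integrate the Perron error in $x$ first and check that $\int_1^\infty x^{\sigma_3-\sigma-1}\sum_n\frac{|a_n|}{n^{\sigma_3}(1+V|\log(x/n)|)}\,\d x\ll (\log V)/V$ for $\sigma>\sigma_3$; this also works and is arguably cleaner, but it still requires the near-integer analysis you omit.)

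Your route~(a) is less promising than you suggest. One integration by parts in $v$ on the tail gives $\frac{\d}{\d v}\bigl(D(\sigma_3+iv)/(\sigma_3+iv)\bigr)=iD'/\eta - iD/\eta^2$, and since $D'(\sigma_3+iv)$ is merely bounded (not decaying) the first term still carries only $1/|v|$ decay; iterating does not help because $D^{(k)}/\eta$ persists with the same decay. So the ``gain an extra factor of $1/|v|$'' does not materialize without further input.
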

\subsection{Preparatory Lemmas}
We shall need the following preparatory lemmas to prove the above theorem. 

From Lemma~\ref{lem:integral_exp_delta}, we have:
\begin{equation}\label{eq:A_in_eta_x}
A(s)= \int_1^{\infty}\int_{\mathscr{C}}\frac{D(\eta)x^{\eta}}{\eta}{\d\eta}\frac{\d x}{x^{s+1}}.
\end{equation}
To justify Theorem~\ref{thm:analytic_continuation_mellin_transform}, we need to justify the interchange of 
the integrals of $\eta$ and $x$ in (\ref{eq:A_in_eta_x}).

\begin{defi}
Define the following complex valued function $B(s)$:
\begin{align*}
B(s)&:=\int_{\mathscr{C}}\frac{D(\eta)}{\eta}\int_1^{\infty}\frac{\d x}{x^{s-\eta+1}}{\d\eta}\\ 
&=\int_{\mathscr{C}}\frac{D(\eta){\d\eta}}{(s-\eta)\eta}\quad \quad \text{ for } \Re(s)>\Re(\eta).
\end{align*}
\end{defi}
The integral defining $B(s)$ being absolutely convergent, we have $B(s)$ is 
well defined and analytic.

\begin{defi}
 For a positive integer $N$, define the contour $\mathscr{C}(N)$ as:
 \[\mathscr{C}(N)=\{\eta \in \mathscr{C}: |\Im(\eta)|\leq N\}.\]
\end{defi}
\begin{defi}
Integrating the integrals of $\eta$ and $x$, define $B_N(s)$ as:
\begin{align*}
 B_N(s)&=\int_{\mathscr{C}(N)}\frac{D(\eta){\d \eta}}{\eta}\int_1^{\infty}\frac{\d x}{x^{s-\eta+1}}\\ 
&=\int_{\mathscr{C}(N)}\frac{D(\eta){\d\eta}}{(s-\eta)\eta}\quad \quad \text{ for } \Re(s)>\Re(\eta).
\end{align*}
\end{defi}
With above definitions we prove:
\begin{lem}\label{lem:FubiniForExchange}
The functions $B$ and $B_N$ satisfy the following identities:
\begin{align}
\label{eq:limitAn_prime}
B(s)&= \lim_{N\rightarrow \infty}B_N(s)\\
\label{eq:fubini_onAn_prime}
&=\lim_{N\rightarrow \infty}\int_1^{\infty}\int_{\mathscr{C}(N)}\frac{D(\eta)x^{\eta}}{\eta}{\d\eta}\frac{\d x}{x^{s+1}}.
\end{align}
\end{lem}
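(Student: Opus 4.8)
The plan is to prove the two displayed identities in Lemma~\ref{lem:FubiniForExchange} essentially by elementary manipulations, using that each finite-height piece of the contour is compact and that the tails $L_1$ and $L_5$ are controlled by the absolute convergence of $D$ on the line $\Re(s)=\sigma_3>\sigma_2$.

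\textbf{Step 1: reduce to a tail estimate.} For the first identity \eqref{eq:limitAn_prime}, write
\[
B(s)-B_N(s)=\int_{\mathscr{C}\setminus\mathscr{C}(N)}\frac{D(\eta)\,\d\eta}{(s-\eta)\eta},
\]
which makes sense once we fix $s$ with $\Re(s)$ strictly larger than $\sigma_3$ (so that $\Re(s)>\Re(\eta)$ holds on all of $\mathscr{C}$, and the factor $1/(s-\eta)$ never blows up). On $\mathscr{C}\setminus\mathscr{C}(N)$ we have $\eta=\sigma_3+iv$ with $|v|\ge N$, so $|D(\eta)|$ is bounded by $\sum|a_n|n^{-\sigma_3}<\infty$ (absolute convergence on $\Re(s)=\sigma_3>\sigma_2$, by Assumptions~\ref{as:for_continuation_mellintran}(i)), while $|(s-\eta)\eta|\gg v^2$. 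Hence the tail integral is $\ll \int_N^\infty v^{-2}\,\d v = O(1/N)\to 0$, which gives \eqref{eq:limitAn_prime}.

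\textbf{Step 2: justify the inner $x$-integration on the truncated contour.} For \eqref{eq:fubini_onAn_prime} I want to move the $\d x$ integral inside on $\mathscr{C}(N)$. Since $\mathscr{C}(N)$ is a compact arc, $D(\eta)/\eta$ is bounded on it, and on $\mathscr{C}(N)$ we have $\Re(\eta)\le\sigma_3<\Re(s)$, so $\int_1^\infty x^{\eta-s-1}\,\d x$ converges absolutely and uniformly in $\eta\in\mathscr{C}(N)$. Therefore the double integral $\int_1^\infty\int_{\mathscr{C}(N)}\frac{D(\eta)x^\eta}{\eta}\,\d\eta\,\frac{\d x}{x^{s+1}}$ is absolutely convergent, Fubini's theorem applies on this bounded-height region, and the iterated integral equals
\[
\int_{\mathscr{C}(N)}\frac{D(\eta)}{\eta}\int_1^\infty\frac{\d x}{x^{s-\eta+1}}\,\d\eta
=\int_{\mathscr{C}(N)}\frac{D(\eta)\,\d\eta}{(s-\eta)\eta}=B_N(s).
\]
Combining with Step~1 and letting $N\to\infty$ yields \eqref{eq:fubini_onAn_prime}, and then analytic continuation extends the identity from the region $\Re(s)>\sigma_3$ to wherever $B(s)$ is analytic, i.e. to all $s$ right of $\mathscr{C}$.

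\textbf{Main obstacle.} The routine part is the Fubini/tail bookkeeping; the genuine subtlety is bookkeeping the region of validity in $s$. One must be careful that the ``$\Re(s)>\Re(\eta)$'' condition is uniform over the unbounded contour, which forces $\Re(s)>\sigma_3$ for the naive argument, and only afterwards can one invoke analyticity of $B$ (already noted in the excerpt to follow from absolute convergence) to push the identity to all $s$ lying to the right of $\mathscr{C}$. A secondary point to watch is that the convergence in \eqref{eq:fubini_onAn_prime} is the limit of the truncated \emph{iterated} integrals, not an assertion that the full iterated integral over all of $\mathscr{C}$ converges in that order — that stronger statement is precisely what Theorem~\ref{thm:analytic_continuation_mellin_transform} will extract from this lemma, so the proof here should stop at the limiting form as stated.
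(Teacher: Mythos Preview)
Your proposal is correct and follows essentially the same route as the paper: the tail estimate $|B(s)-B_N(s)|\ll\int_N^\infty v^{-2}\,\d v$ for \eqref{eq:limitAn_prime}, and an absolute-convergence check to invoke Fubini--Tonelli on the compact piece $\mathscr{C}(N)$ for \eqref{eq:fubini_onAn_prime}. Your added care about first working in $\Re(s)>\sigma_3$ and then analytically continuing, and your caveat that this lemma only asserts the \emph{limit} of truncated iterated integrals, are both accurate and slightly sharper in presentation than the paper's version.
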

\begin{proof}
Assume $N>T_0$. To show (\ref{eq:limitAn_prime}), note:
 \begin{align*}
 |B(s)-B_N(s)|&\leq\left|\int_{\mathscr{C}-\mathscr{C}(n)} \frac{D(\eta){\d\eta}}{(s-\eta)\eta}\right| \\
 &\ll \left| \int_{\sigma_3+iN}^{\sigma_3+i\infty}\frac{D(\eta){\d\eta}}{(s-\eta)\eta} + \int_{\sigma_3-i\infty}^{\sigma_3-iN}\frac{D(\eta){\d\eta}}{(s-\eta)\eta}\right|\\
 & \ll \int_N^{\infty}\frac{\d v}{v^2} \ll \frac{1}{N}.\quad (\text{ substituting }\eta=\sigma_3+iv)
 \end{align*}
This completes proof of (\ref{eq:limitAn_prime}). 

We shall prove (\ref{eq:fubini_onAn_prime}) using a theorem of Fubini and Tonelli \cite[Theorem~B.3.1,~(b)]{DeitmarHarmonic}. To show that the integrals commute, we need to show that
one of the iterated integrals in (\ref{eq:fubini_onAn_prime}) converges absolutely. We note:
\begin{align*}
&\int_{\mathscr{C}(N)}\int_1^{\infty}\left|\frac{D(\eta)}{\eta x^{s-\eta+1}}\right|  \d x|\d\eta|\\
&\ll \int_{\mathscr{C}(N)}\left|\frac{D(\eta)}{\eta (\Re(s)-\Re(\eta))}\right| |\d\eta|< \infty.
\end{align*}
This implies (\ref{eq:fubini_onAn_prime}).
\end{proof}
Let
\begin{equation}\label{eq:B_N_s_defi}
 B'_N(s):= \int_1^{\infty}\int_{\mathscr{C}(N)}\frac{D(\eta)x^{\eta}}{\eta}{\d\eta}\frac{\d x}{x^{s+1}}.
\end{equation}
We re-write (\ref{eq:fubini_onAn_prime}) of Lemma~\ref{lem:FubiniForExchange} as:
\[\lim_{N\rightarrow\infty}B_N'(s)=B(s).\]
Observe that $A(s)=B(s)$, if
\[\lim_{N\rightarrow \infty} \int_1^{\infty}\int_{\mathscr{C}-\mathscr{C}(N)}
\frac{D(\eta)x^{\eta}}{\eta}{\d\eta}\frac{\d x}{x^{s+1}}=0;\]
can be shown by interchanging the integral of $x$ with the limit. For this, 
we need the uniform convergence of the integrand, which we do not have. It is easy to see from Theorem~\ref{thm:effective_perron_formula}
that the problem arises when $x$ is an integer. To handle this problem, we shall divide the integral in two parts,
with one part having neighborhoods of integers.
\begin{defi}
 For $\delta=\frac{1}{\sqrt N}$ ( where $N\geq 2$ ),  we construct the following set as a neighborhood of integers:
\[\mathcal{S(\delta)}:=[1, 1+\delta]\cup(\cup_{m\geq 2}[m-\delta, m+\delta]).\]
\end{defi}
\noindent
Write
\begin{equation}\label{eq:A_min_BN}
A(s)-B'_N(s)=J_{1, N}(s) + J_{2, N}(s) - J_{3, N}(s),\\
\end{equation}
where
\begin{align*}
&J_{1, N}(s)= \int_{\mathcal{S}(\delta)^c}\int_{\Co-\Con}\frac{D(\eta)x^\eta}{\eta}\d\eta \frac{\d x}{x^{s+1}},&&\\
&J_{2, N}(s)= \int_{\mathcal{S}(\delta)}\int_{\sigma_3-i\infty}^{\sigma_3+i\infty}\frac{D(\eta)x^\eta}{\eta}\d\eta \frac{\d x}{x^{s+1}},&&\\
&J_{3, N}(s)= \int_{\mathcal{S}(\delta)}\int_{\sigma_3-iN}^{\sigma_3+iN}\frac{D(\eta)x^\eta}{\eta}\d\eta \frac{\d x}{x^{s+1}}.&&\\
\end{align*}
In the next three lemmas, we shall show that each of $J_{i, N}(s) \rightarrow 0$ as 
$N \rightarrow \infty$. 
\begin{lem}\label{lem:J1lim}
For $\Re(s)=\sigma>\sigma_3+1$, we have the limit
\begin{equation*}
\lim_{N\rightarrow \infty} J_{1, N}(s)= 0. 
\end{equation*}
\end{lem}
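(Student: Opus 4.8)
The goal is to show $J_{1,N}(s)\to 0$ as $N\to\infty$ for $\Re(s)=\sigma>\sigma_3+1$, where
\[
J_{1,N}(s)=\int_{\mathcal{S}(\delta)^c}\int_{\Co-\Con}\frac{D(\eta)x^\eta}{\eta}\,\d\eta\,\frac{\d x}{x^{s+1}}.
\]
The plan is to bound the inner $\eta$-integral first and then integrate in $x$. On $\Co-\Con$, by the definition of the contour, $\eta=\sigma_3+iv$ with $|v|\ge N$, so $|x^\eta|=x^{\sigma_3}$, $|\eta|\asymp|v|$, and $|D(\eta)|$ is bounded (indeed $D$ converges absolutely on $\Re(s)=\sigma_3>\sigma_2$). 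Hence
\[
\left|\int_{\Co-\Con}\frac{D(\eta)x^\eta}{\eta}\,\d\eta\right|\ll x^{\sigma_3}\int_N^\infty\frac{\d v}{v}\ ,
\]
which diverges — so a naive absolute bound on the $\eta$-integral is not enough. The right move is to integrate by parts in $v$ (or equivalently use the decay coming from $x^{iv}=e^{iv\log x}$ against the slowly varying factor $D(\eta)/\eta$): writing $x^{iv}=\frac{1}{i\log x}\frac{d}{dv}x^{iv}$ and integrating by parts, one gains a factor $\frac{1}{\log x}$ together with the boundary term at $v=\pm N$ of size $\ll x^{\sigma_3}/(N\log x)$ and a remaining integral $\ll x^{\sigma_3}/(\log x)\int_N^\infty v^{-2}\d v\ll x^{\sigma_3}/(N\log x)$. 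This is where the restriction to $\mathcal{S}(\delta)^c$ enters: on the complement of the $\delta$-neighbourhood of the integers we do not have $\log x$ arbitrarily close to $0$, but for $x$ near $1$ the factor $1/\log x$ still blows up, so one must treat the range $x\in[1,1+\delta]^c$ carefully — for $x$ bounded away from $1$ we simply have $|\log x|\gg 1$, and for $x$ slightly larger than $1+\delta$ we have $|\log x|\gg\delta=1/\sqrt N$.

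Combining, for $x\in\mathcal{S}(\delta)^c$ we get
\[
\left|\int_{\Co-\Con}\frac{D(\eta)x^\eta}{\eta}\,\d\eta\right|\ll \frac{x^{\sigma_3}}{N|\log x|}\ll \frac{x^{\sigma_3}\sqrt N}{N}=\frac{x^{\sigma_3}}{\sqrt N}
\]
uniformly (using $|\log x|\gg 1/\sqrt N$ on the relevant part of $\mathcal{S}(\delta)^c$ near $1$, and $|\log x|\gg 1$ away from $1$, the latter giving an even better bound $\ll x^{\sigma_3}/N$). Then
\[
|J_{1,N}(s)|\ll \frac{1}{\sqrt N}\int_1^\infty\frac{x^{\sigma_3}}{x^{\sigma+1}}\,\d x=\frac{1}{\sqrt N}\int_1^\infty x^{\sigma_3-\sigma-1}\,\d x\ll\frac{1}{\sqrt N},
\]
where the $x$-integral converges precisely because $\sigma>\sigma_3+1$ forces $\sigma_3-\sigma-1<-1$ — wait, more carefully $\sigma_3-\sigma-1<-2<-1$, so convergence is clear. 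Letting $N\to\infty$ gives $J_{1,N}(s)\to 0$.

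The main obstacle is the oscillatory cancellation step: one cannot bound the $\eta$-integral absolutely because $\int_N^\infty v^{-1}\,\d v$ diverges, so the proof genuinely relies on integration by parts in $v$ exploiting the oscillation of $x^{iv}$, and on the quantitative lower bound $|\log x|\gg\delta=1/\sqrt N$ valid on $\mathcal{S}(\delta)^c$ to control the resulting $1/\log x$ factor for $x$ close to $1$. Once that balance between the gain $1/N$ from the boundary term and the loss $\sqrt N$ from $1/\log x$ is set up correctly, the remaining $x$-integration is routine given $\sigma>\sigma_3+1$. One should also note that the interchange of the $\eta$-integral over the truncated-away piece with the integration by parts is legitimate since on $\Co-\Con$ the integrand is absolutely integrable in $v$ after one integration by parts.
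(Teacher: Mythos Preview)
Your integration-by-parts argument has a genuine gap. After one integration by parts in $v$, the remaining integral involves
\[
\frac{d}{dv}\!\left(\frac{D(\sigma_3+iv)}{\sigma_3+iv}\right)
=\frac{iD'(\sigma_3+iv)}{\sigma_3+iv}-\frac{iD(\sigma_3+iv)}{(\sigma_3+iv)^2}.
\]
Since $D'(\sigma_3+iv)=-\sum a_n(\log n)n^{-\sigma_3-iv}$ is merely \emph{bounded} on $\Re(\eta)=\sigma_3$, the first term is only $O(v^{-1})$, not $O(v^{-2})$ as you claim. Thus the remaining integral $\int_N^\infty v^{-1}\,\d v$ diverges, and repeated integration by parts does not help: each $D^{(k)}$ is again only bounded, so the leading term $D^{(k)}/\eta$ stays $O(v^{-1})$. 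Your bound $\ll x^{\sigma_3}/(N\log x)$ for the inner integral is therefore unjustified.

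Relatedly, you have misidentified the role of $\mathcal{S}(\delta)^c$. It is not about keeping $x$ away from $1$ so that $|\log x|\gg\delta$; it keeps $x$ away from \emph{every} integer $n$, so that $|\log(x/n)|\gg\delta/x$ for the nearest integer. The paper exploits this by invoking the Effective Perron Formula (Theorem~\ref{thm:effective_perron_formula}): the tail $\int_{\Co-\Con}D(\eta)x^\eta\eta^{-1}\d\eta$ is bounded by
\[
x^{\sigma_3}\sum_{n\ge1}\frac{|a_n|}{n^{\sigma_3}(1+N|\log(x/n)|)}
\ll \frac{x^{\sigma_3}}{N}+\frac{1}{N}\sum_{x/2\le n\le 2x}\frac{x|a_n|}{|x-n|}
\ll \frac{x^{\sigma_3+1+\epsilon}}{\delta N}=\frac{x^{\sigma_3+1+\epsilon}}{\sqrt N},
\]
and the extra factor $x^{1+\epsilon}$ is exactly why the hypothesis $\sigma>\sigma_3+1$ is needed (your claimed bound $x^{\sigma_3}/\sqrt N$ would only require $\sigma>\sigma_3$, which should already have signalled a problem). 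If you want to salvage an oscillatory argument, you must first expand $D(\eta)=\sum a_n n^{-\eta}$ and integrate by parts \emph{term by term} against $(x/n)^{iv}$; the slowly varying factor is then $1/(\sigma_3+iv)$ alone, whose derivative is genuinely $O(v^{-2})$, and you recover precisely the Perron error term.
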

\begin{proof}
 Using Theorem~\ref{thm:effective_perron_formula} with $x\in \mathcal{S}(\delta)^c$, we have
\begin{align*}
 \left| \int_{\Co-\Con}\frac{D(\eta)x^\eta}{\eta}\d\eta \right|& \ll x^{\sigma_3}\sum_{n=1}^{\infty}
\frac{|a_n|}{n^{\sigma_3}(1+N|\log(x/n)|)} \\
&\ll \frac{x^{\sigma_3}}{N}\sum_{n=1}^{\infty}\frac{|a_n|}{n^{\sigma_3}}
+ \frac{1}{N}\sum_{x/2\leq n \leq 2x}\frac{x|a_n|}{|x-n|}\left(\frac{x}{n}\right)^{\sigma_3}\\
&\ll \frac{x^{\sigma_3}}{N} + \frac{x^{\sigma_3+1+\epsilon}}{\delta N} 
\ll \frac{x^{\sigma_3+1+\epsilon}}{\sqrt N} \quad (\text{ as } \delta= N^{-\half}).
\end{align*}
From the above calculation, we see that
\[|J_{1, N}|\ll \frac{1}{\sqrt N}\int_{1}^{\infty}x^{\sigma_3-\sigma+\epsilon}dx\ll \frac{1}{\sqrt N}\]
for $\sigma=\Re(s)>\sigma_3+1+\epsilon$. This proves our required result.
\end{proof}
\begin{lem}\label{lem:J2lim}
For $\Re(s)=\sigma>\sigma_3$,
\[\lim_{N\rightarrow \infty} J_{2, N}(s) = 0. \]
\end{lem}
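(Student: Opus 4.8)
The plan is to bound $J_{2,N}(s)$ directly by controlling the inner integral $\int_{\sigma_3-i\infty}^{\sigma_3+i\infty} D(\eta)x^\eta\eta^{-1}\,\d\eta$ uniformly for $x$ in a bounded range, and then to exploit that $\mathcal{S}(\delta)$ has total measure $O(\delta\cdot(\text{length}))=O(N^{-1/2}\cdot(\text{length}))$ on any finite window, the tail in $x$ being absorbed by $x^{-(s+1)}$ since $\Re(s)=\sigma>\sigma_3$. First I would observe that since $D$ is absolutely convergent on $\Re(\eta)=\sigma_3>\sigma_2$, the Perron integral equals the truncated sum: by Theorem~\ref{thm:perron_formula} applied on the line $\Re(\eta)=\sigma_3$, for every $x\ge 1$,
\[
\int_{\sigma_3-i\infty}^{\sigma_3+i\infty}\frac{D(\eta)x^\eta}{\eta}\,\d\eta = \sum_{n\le x}^* a_n \ll x^{\sigma_3+\epsilon},
\]
the last bound coming from Note~\ref{note:initial_assumption_consequences}(i). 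Thus the inner integral is $\ll x^{\sigma_3+\epsilon}$ uniformly.

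Next I would estimate
\[
|J_{2,N}(s)| \ll \int_{\mathcal{S}(\delta)} \frac{x^{\sigma_3+\epsilon}}{x^{\sigma+1}}\,\d x = \int_{\mathcal{S}(\delta)} x^{\sigma_3-\sigma-1+\epsilon}\,\d x.
\]
Since $\sigma_3-\sigma-1+\epsilon<-1$ for $\sigma>\sigma_3+\epsilon$ (and we may always shrink $\epsilon$), the integrand $x^{\sigma_3-\sigma-1+\epsilon}$ is integrable on $[1,\infty)$; and $\mathcal{S}(\delta)=[1,1+\delta]\cup\bigcup_{m\ge2}[m-\delta,m+\delta]$ is an open-dense-like set whose intersection with $[1,\infty)$ has the property that $\mu(\mathcal{S}(\delta)\cap[1,R])\ll \delta R$. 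Combining with the decay of the integrand: split at, say, a large parameter $R$; on $[1,R]$ use $\mu(\mathcal{S}(\delta)\cap[1,R])\ll \delta R=R/\sqrt N$ together with the bound $x^{\sigma_3-\sigma-1+\epsilon}\le 1$ to get a contribution $\ll R/\sqrt N$; on $[R,\infty)$ bound $\mathbf 1_{\mathcal S(\delta)}\le 1$ and use integrability to get a contribution $\ll R^{\sigma_3-\sigma+\epsilon}\to 0$ as $R\to\infty$. Hence $\limsup_{N\to\infty}|J_{2,N}(s)| \ll R^{\sigma_3-\sigma+\epsilon}$ for every $R$, so letting $R\to\infty$ gives $\lim_{N\to\infty} J_{2,N}(s)=0$.

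The one technical point that needs care — and the likely main obstacle — is the uniformity of the Perron identity $\int_{\sigma_3-i\infty}^{\sigma_3+i\infty} D(\eta)x^\eta\eta^{-1}\,\d\eta=\sum_{n\le x}^*a_n$ right at integer points $x$, and more importantly the absolute convergence of this vertical integral: on $\Re(\eta)=\sigma_3$ we have $D(\eta)\ll 1$ but $\eta^{-1}$ only gives $|v|^{-1}$ decay, so the integral is merely conditionally convergent. To make the manipulation rigorous one should either invoke the effective Perron formula (Theorem~\ref{thm:effective_perron_formula}) with an auxiliary truncation height and let it tend to infinity, exactly as in the proof of Lemma~\ref{lem:J1lim} (noting that on $\mathcal S(\delta)$ the factor $1/(1+T|\log(x/n)|)$ is harmless because the error term there is multiplied by $x^{-(\sigma+1)}$ and $\sigma>\sigma_3$, and the $x$-integral over $\mathcal S(\delta)$ supplies the extra $\delta=N^{-1/2}$ saving), or simply cite that Theorem~\ref{thm:perron_formula} already furnishes the convergent value. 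Either way the estimate $|J_{2,N}(s)|\ll \delta = N^{-1/2}$ drops out once the $x$-integral over the full half-line $[1,\infty)$ is handled by the decay $x^{\sigma_3-\sigma}$, and the lemma follows.
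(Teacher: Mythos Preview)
Your proposal is correct and follows essentially the same route as the paper: identify the inner vertical integral as $\sum_{n\le x}^* a_n$ via Perron's formula, bound it by $x^{\sigma_3}$ using Note~\ref{note:initial_assumption_consequences}, and then control $\int_{\mathcal S(\delta)} x^{\sigma_3-\sigma-1}\,\d x$ using the smallness of $\delta=N^{-1/2}$. The only cosmetic difference is that the paper estimates this last integral directly interval-by-interval (mean value theorem on each $[m-\delta,m+\delta]$ and summing the convergent series $\sum_m m^{-(\sigma-\sigma_3+1)}$) to obtain $|J_{2,N}(s)|\ll\delta$, whereas you use a split at an auxiliary $R$; both arguments are equivalent and equally elementary.
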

\begin{proof}
Recall that
\begin{equation*}
\sum_{n\leq x}^*a_n 
= \begin{cases}
 \sum_{n<x} a_n + a_x/2 &\mbox{ if } x \in \mathbb{N}, \\
 \sum_{n\leq x}a_n &\mbox{ if } x \notin \mathbb{N}.
\end{cases}
\end{equation*}
By Note~\ref{note:initial_assumption_consequences}, 
$$\sum_{n\leq x}^*a_n  \ll x^{\sigma_3}.$$
Using this bound, we calculate an upper bound for $J_{2, N}$ as follows:
\begin{align*}
&\left|\int_{\mathcal{S}(\delta)}\int_{\sigma_3-i\infty}^{\sigma_3+i\infty}\frac{D(\eta)x^\eta}{\eta}\d\eta\frac{\d x}{x^{s+1}}\right| 
\leq\int_{\mathcal{S}(\delta)}\frac{\left|\sum_{n\leq x}^* a_n\right|}{x^{\sigma+1}}\d x\\
&\ll \int_{\mathcal{S}(\delta)} x^{\sigma_3-\sigma-1}\d x
\ll \int_1^{1+\delta}x^{\sigma_3-\sigma-1} + \sum_{m=2}^{\infty} \int_{m-\delta}^{m+\delta}x^{\sigma_3-\sigma-1}\d x .
\end{align*}
This gives
\[|J_{2, N}(s)|\ll \delta +  \sum_{m\geq 2}\left(\frac{1}{(m-\delta)^{\sigma-\sigma_3}}-\frac{1}{(m+\delta)^{\sigma-\sigma_3}}\right).\]
Using the mean value theorem, for all $m\geq 2$ there exists a real number $\overline{m}\in[m-\delta, m+\delta]$ such that
\begin{align*}
|J_{2, N}(s)| \ll \delta + \sum_{m\geq 2}\frac{\delta}{\overline{m}^{\sigma-\sigma_3+1}}
\ll \delta=\frac{1}{\sqrt N} \quad \text{ by choosing } \sigma> \sigma_3.
\end{align*}
This implies that $J_{2, N}\rightarrow 0$ as $N\rightarrow\infty$. 
\end{proof}

\begin{lem}\label{lem:J3lim}
For $\sigma>\sigma_3$, we have
\[\lim_{N\rightarrow \infty} J_{3, N}(s) = 0.\]
\end{lem}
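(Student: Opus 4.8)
\textbf{Plan of proof for Lemma~\ref{lem:J3lim}.}
The strategy mirrors the proof of Lemma~\ref{lem:J2lim}, but now the inner $\eta$-integral is truncated at height $N$, so I cannot directly recognize it as $2\pi i\sum_{n\le x}^* a_n$. Instead I will use the Effective Perron's Formula (Theorem~\ref{thm:effective_perron_formula}) to replace the truncated integral $\int_{\sigma_3-iN}^{\sigma_3+iN}\frac{D(\eta)x^\eta}{\eta}\d\eta$ by $2\pi i\sum_{n\le x}^* a_n$ plus an explicit error term. First I would write
\[
J_{3, N}(s)=\int_{\mathcal{S}(\delta)}\left(2\pi i\sum_{n\le x}^* a_n + E_N(x)\right)\frac{\d x}{x^{s+1}},
\]
where $E_N(x)\ll x^{\sigma_3}\sum_{n=1}^\infty \frac{|a_n|}{n^{\sigma_3}(1+N|\log(x/n)|)}$ is the Perron error term. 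The contribution of the main piece $2\pi i\sum_{n\le x}^* a_n$ is handled exactly as in Lemma~\ref{lem:J2lim}: using $\sum_{n\le x}^* a_n\ll x^{\sigma_3}$ (from Note~\ref{note:initial_assumption_consequences}) and splitting $\mathcal{S}(\delta)$ into $[1,1+\delta]$ and the intervals $[m-\delta,m+\delta]$, the mean value theorem gives a bound $\ll\delta=N^{-1/2}\to 0$ for $\sigma>\sigma_3$.

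The remaining task is to bound the contribution of $E_N(x)$ integrated over $\mathcal{S}(\delta)$. Here, crucially, the estimate differs from Lemma~\ref{lem:J1lim}: in Lemma~\ref{lem:J1lim} one worked on $\mathcal{S}(\delta)^c$, where $|\log(x/n)|$ stays bounded away from $0$ for the nearest integer $n$; but on $\mathcal{S}(\delta)$ the point $x$ lies within $\delta$ of an integer, so the term $n$ nearest to $x$ can make $1+N|\log(x/n)|$ as small as $O(1)$. The plan is to single out, for each $x\in\mathcal{S}(\delta)$, the closest integer $n_0=n_0(x)$ and treat that one term separately, estimating $|a_{n_0}|\ll x^{\sigma_2+\epsilon}\ll x^{\sigma_3}$ and $\frac{x^{\sigma_3}}{n_0^{\sigma_3}}\ll 1$, giving a contribution $\ll x^{\sigma_3}$ from the single term $n=n_0$. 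For all other $n$ one has $|x-n|\ge 1-\delta\gg 1$, hence $|\log(x/n)|\gg |x-n|/x$, and the usual Perron bookkeeping (as in Lemma~\ref{lem:J1lim}, but now with $\delta$ absent from the denominator since we are away from $n_0$) yields $\ll x^{\sigma_3}/N + x^{\sigma_3+\epsilon}$. Altogether $E_N(x)\ll x^{\sigma_3+\epsilon}$ uniformly on $\mathcal{S}(\delta)$, with no inverse power of $\delta$.

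Then
\[
\left|\int_{\mathcal{S}(\delta)} E_N(x)\,\frac{\d x}{x^{s+1}}\right|\ll \int_{\mathcal{S}(\delta)} x^{\sigma_3-\sigma+\epsilon-1}\,\d x,
\]
and since $\mathcal{S}(\delta)$ has total measure $\ll\sum_{m\ge 1}\delta/m^{?}$... more carefully, the integral over each interval $[m-\delta,m+\delta]$ is $\ll \delta\, m^{\sigma_3-\sigma+\epsilon-1}$, and summing over $m\ge 1$ converges for $\sigma>\sigma_3+\epsilon$, giving a total $\ll\delta=N^{-1/2}\to 0$. Combining this with the vanishing of the main piece completes the proof. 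I expect the main obstacle to be the careful treatment of the near-diagonal term $n=n_0(x)$ in the Perron error: one must verify that isolating this single term really does remove the problematic $1/\delta$ factor that appeared in Lemma~\ref{lem:J1lim}, and that the bound is genuinely uniform over $x\in\mathcal{S}(\delta)$ (including near $x=1$, where the interval is one-sided). A minor point to check is that the interchange used to write $J_{3,N}$ as an iterated integral in this order is legitimate — but since the $\eta$-contour is now of finite length $2N$, absolute convergence is immediate.
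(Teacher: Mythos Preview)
Your plan works in spirit but differs from the paper's argument and contains a bookkeeping slip that weakens the conclusion.

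The paper's proof is more direct and avoids Perron altogether. You yourself observe at the end that, because the $\eta$-contour has finite length $2N$ and $|D(\sigma_3+iv)|\le\sum_n|a_n|n^{-\sigma_3}<\infty$, the double integral defining $J_{3,N}(s)$ is absolutely convergent for $\Re(s)>\sigma_3$. The paper exploits exactly this: swap the $x$- and $\eta$-integrals by Fubini, evaluate $\int_{\mathcal S(\delta)}x^{\eta-s-1}\,\d x$ on each subinterval of $\mathcal S(\delta)$ by the mean value theorem as $2\delta/\bar m^{\,s-\eta+1}$, and then bound
\[
|J_{3,N}(s)|\ \ll\ \delta\sum_{m\ge 1}\int_{-N}^{N}\frac{\d v}{(1+|v|)\,\bar m^{\,\sigma-\sigma_3+1}}
\ \ll\ \delta\log N\sum_{m\ge 1}\bar m^{-(\sigma-\sigma_3+1)}\ \ll\ \frac{\log N}{\sqrt N},
\]
valid for all $\sigma>\sigma_3$. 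No splitting into a Perron main term and error is needed.

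In your route, the bound ``$E_N(x)\ll x^{\sigma_3+\epsilon}$ uniformly'' is not correct as written. When you redo the near-diagonal Perron bookkeeping of Lemma~\ref{lem:J1lim} with $n_0$ removed, replacing $1/\delta$ by $1/(1-\delta)\asymp 1$ turns the bound $x^{\sigma_3+1+\epsilon}/(\delta N)$ into $x^{\sigma_3+1+\epsilon}/N$, \emph{not} into $x^{\sigma_3+\epsilon}$; you have dropped a factor of $x$. Thus the honest uniform bound is $E_N(x)\ll x^{\sigma_3}+x^{\sigma_3+1+\epsilon}/N$, and integrating the second term over $\mathcal S(\delta)$ gives $\frac{\delta}{N}\sum_{m\ge 1}m^{\sigma_3-\sigma+\epsilon}$, which converges only for $\sigma>\sigma_3+1+\epsilon$. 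So your argument, carried out correctly, proves the lemma only in the smaller range $\sigma>\sigma_3+1$. That happens to be enough for the proof of Theorem~\ref{thm:analytic_continuation_mellin_transform} (which combines the three lemmas for $\Re(s)>\sigma_3+1$ anyway), but it does not establish Lemma~\ref{lem:J3lim} as stated.
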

\begin{proof}
Consider
\begin{align*}
J_{3, N}(s) &= \int_{\mathcal{S}(\delta)} \int_{\sigma_3-iN}^{\sigma_3+iN}\frac{D(\eta)x^\eta}{\eta}\d\eta \frac{\d x}{x^{s+1}}.
\end{align*}
This double integral is absolutely convergent for $\Re(s)>\sigma_3$. Using the Theorem of Fubini and Tonelli \cite[Theorem~B.3.1,~(b)]{DeitmarHarmonic}, 
we can interchange the integrals: 
\begin{align*}
J_{3, N}(s)&=\int_{\sigma_3-iN}^{\sigma_3+iN}\frac{D(\eta)}{\eta}\int_{\mathcal{S}(\delta)}x^{\eta-s-1}{\d x}~{\d\eta}\\
&=\int_{\sigma_3-iN}^{\sigma_3+iN}
\frac{D(\eta)}{\eta}\left\{\int_1^{1+\delta}\frac{x^{\eta}}{x^{s+1}}\d x
+\sum_{m\geq 2}\int_{m-\delta}^{m+\delta}\frac{x^{\eta}}{x^{s+1}}\d x\right\}\d\eta.
\end{align*}
For any $\theta_1, \theta_2 $ such that $0<\theta_1<\theta_2<\infty$, we have
\begin{align*}
\int_{\theta_1}^{\theta_2}x^{\eta-s-1}\d x = \frac{1}{s-\eta}\left\{ \frac{1}{\theta_1^{s-\eta}}-\frac{1}{\theta_2^{s-\eta}} \right\}
= \frac{\theta_2-\theta_1}{\overline{\theta}^{s-\eta+1}},
\end{align*}
for some $\overline{\theta}\in [\theta_1,\theta_2]$.
Applying the above formula to $J_{3, N}(s)$, we get
\begin{align*}
J_{3, N}(s)&=\int_{\sigma_3-iN}^{\sigma_3+iN}
\frac{D(\eta)}{\eta}\sum_{m\geq 1} 
\frac{2\delta}{\overline{m}^{s-\eta+1}}\d\eta
=2\delta\sum_{m\geq 1} \int_{\sigma_3-iN}^{\sigma_3+iN}\frac{D(\eta)}{\overline{m}^{s-\eta+1}\eta}\d \eta,
\end{align*}
 where $\overline{1/2}\in [1, 1+\delta]$ and $\overline{m}\in [m-\delta, m+\delta]$ for all integers $m\geq2$.
In the above calculation, we can interchange the series and the integral as the series is absolutely convergent. 
So we have 
\begin{align*}
J_{3, N}(s)&\ll \delta \sum_{m\geq 1} \int_{-N}^{N}\frac{1}{(1+|v|)\overline{m}^{\sigma-\sigma_3+1}}\d v \quad
\text{( substituting }\eta=\sigma_3+iv\text{ )}\\
&\ll \delta \log N \sum_{m\geq 1} \frac{1}{\overline{m}^{\sigma-\sigma_3+1}} \ll \frac{\log N}{\sqrt N}.
\end{align*}
Here we used the fact that for $\sigma>\sigma_3$, the series
\[\sum_{m\geq 1}\frac{1}{\overline{m}^{s-\eta+1}}\]
is absolutely convergent.
This proves our required result.
\end{proof}

\subsection{Proof of  Theorem \ref{thm:analytic_continuation_mellin_transform}}
\begin{proof}
 From equation (\ref{eq:A_min_BN}) and Lemma \ref{lem:J1lim}, \ref{lem:J2lim} and \ref{lem:J3lim}, we get 
 \[A(s)=\lim_{N\rightarrow \infty}B'_N(s)\]
 for  $\Re(s)>\sigma_3+1$, and where $B'_N(s)$ is defined by (\ref{eq:B_N_s_defi}).
 From Lemma~\ref{lem:FubiniForExchange}, we have
 \[B(s)=\lim_{N\rightarrow \infty}B'_N(s).\]
 This gives $A(s)$ and $B(s)$ are equal for $\Re(s)>\sigma_3+1$. By analytic continuation,
 $A(s)$ and $B(s)$ are equal for any $s$ that lies right to $\Co$.
\end{proof}

In this chapter, we shall use the meromorphic continuation of $A(s)$ 
derived in Theorem~\ref{thm:analytic_continuation_mellin_transform}
to obtain mesure theoretic $\Omega_\pm$ results for $\Delta(x)$.

\section{Alternative Approches}
Theorem~\ref{thm:analytic_continuation_mellin_transform} gives a way for meromorphic continuation of $A(s)$ by formulating 
it as a contour integral. This theorem has its significance in terms of elegance and generality. 
However, there are alternative and easier ways in many cases. Below we give an example.

Note that
\[\sum_{n=1}^{\infty}\frac{a_n}{n^s}=-\int_{1}^{\infty}\Big(\sum_{n\leq x}a_n\Big){\d x^{-s}}\quad\text{ for }\Re(s)>\sigma_2.\]
This gives
\[\frac{D(s)}{s}=\int_{1}^{\infty}\Big(\sum_{n\leq x}a_n\Big)x^{-s-1}\d x\quad\text{ for }\Re(s)>\sigma_2.\]
So we can express $A(s)$ as
\begin{equation}
A(s)=\frac{D(s)}{s}-\int_1^{\infty}M(x)x^{-s-1}\d x\quad\text{ for }\Re(s)>\sigma_2. 
\end{equation}

The above formula reduces the problem of meromorphically continuing $A(s)$ to that of
\[\int_1^{\infty}M(x)x^{-s-1}\d x.\]

To demonstrate this method, we consider the case when $D(\eta)$ has a pole at $\eta=1$ and 
residue at this pole gives the main term $M(x)$, i.e $\mathcal P=\{1\}$. The following meromorphic functions may serve as 
examples of $D(\eta)$ in this situation:
\[\frac{\zeta(s)}{\zeta(2s)}, \zeta^2(s), \frac{\zeta^2(s)}{\zeta(2s)}, -\frac{\zeta'(s)}{\zeta(s)}, \ldots .\]
For a small positive real number $r$, we can 
write $M(x)$ as
\[M(x)=\frac{1}{2\pi i}\int_{|\eta-1|=r}\frac{D(\eta)x^\eta}{\eta}\d\eta.\]
Thus
\begin{align}
\notag
 \int_{1}^{\infty}\frac{M(x)}{x^{s+1}}\d x
 &=\int_1^{\infty}\frac{1}{2\pi i}\int_{|\eta-1|=r}\frac{D(\eta)x^\eta}{\eta}d\eta\frac{\d x}{x^{s+1}}\\
 \notag
 &=\frac{1}{2\pi i}\int_{|\eta-1|=r}\frac{D(\eta)}{\eta}\left(\int_1^{\infty}\frac{\d x}{x^{s-\eta+1}}\right) \d\eta\\
 \notag
 & \text{( using \cite[Theorem~B.3.1,~(b)]{DeitmarHarmonic} )}\\ \label{eq:analytic_continuation_main_term}
 &=\frac{1}{2\pi i}\int_{|\eta-1|=r}\frac{D(\eta)}{\eta(s-\eta)}\d\eta.
\end{align}
Let the Laurent series expansion of $D(\eta)$ at $\eta=1$ be 
\[\frac{D(\eta)}{\eta}=\sum_{n\leq N} \frac{b_n}{(\eta-1)^n}+H(\eta),\]
where $H(\eta)$ is holomorphic for $\Re(\eta)>\sigma_1$. Plugging in this expression for $D(\eta)$ in (\ref{eq:analytic_continuation_main_term}),
we get
\begin{equation}\label{eq:analytic_continuation_main_term2}
\int_{1}^{\infty}\frac{M(x)}{x^{s+1}}\d x = \sum_{n\leq N} b_n \frac{1}{2\pi i}\int_{|\eta-1|=r}\frac{\d\eta}{(\eta-1)^n(s-\eta)}.
\end{equation}
Let $\Re(s)\geq1+2r$, then
\[\frac{|\eta-1|}{|s-1|}\leq \frac{1}{2}\quad\text{ for }|\eta-1|=r.\]
This gives
\[\frac{1}{s-\eta}=\sum_{n=0}^{\infty}\frac{(\eta-1)^n}{(s-1)^{n+1}}\]
is an absolutely convergent series. Using the above expansion of $(s-\eta)^{-1}$ 
in (\ref{eq:analytic_continuation_main_term2}), we have
\begin{align*}
\int_{1}^{\infty}\frac{M(x)}{x^{s+1}}\d x &=  
\sum_{n\leq N} b_n \frac{1}{2\pi i}\int_{|\eta-1|=r}\left\{\sum_{m=0}^{\infty}\frac{(\eta-1)^m}{(s-1)^{m+1}}\right\}
\frac{\d\eta}{(\eta-1)^n}\\
&=\sum_{n\leq N}\frac{b_n}{(s-1)^n} \quad (\text{ by \cite[Theorem~6.1]{ComplexAnKra}})\\
&=\frac{D(s)}{s}-H(s).
\end{align*}
Thus we got
\[A(s)=H(s)\ \text{ for } \ \Re(s)\ge 1+2r.\]
But the right hand side is holomorphic for $\Re(s)>\sigma_1$ hence the formula
gives analytic continuation of $A(s)$ in the half plane $Re(s)>\sigma_1$.

Similar calculations can be done when the main term $M(x)$ is more complicated. 

\newpage
\chapter{Landau's Oscillation Theorem}\label{chap:landau_theorem}
In this chapter, we revisit a result due to Landau and obtain $\Omega_\pm$ results for $\Delta(x)$ using certain singularities of $D(s)$.
Also we shall measure the fluctuations of $\Delta(x)$ in terms of $\Omega$ bounds, which generalizes a result of Kaczorowski and Szyd{\l}o \cite{KaczMeasure},
and a result of Bhowmik, Schlage-Puchta and Ramar{\'e} \cite{gautami}. 

\section{Landau's Criterion for Sign Change}
We begin with a result on real valued functions that do not change sign. This appears in a paper of Landau \cite{Landau}, attributed 
to Phragm{\'e}n and stated without a proof. Here we present a proof of this result following \cite[II.1.3, Theorem~6]{TenenAnPr}. 
\begin{thm}[Phragm{\'e}n-Landau]\label{thm:phragmen_landau}
 Let $f(x)$ be a real valued piecewise continuous function defined for $x\geq 1$, and bounded on every compact intervals. Let $F(s)$ be 
 its Mellin transform:
 \[F(s)=\int_1^\infty \frac{f(x)}{x^{s+1}}\d x,\]
 converges absolutely in some complex right half plane. Also assume that $f(x)$ does not change sign for $x\geq x_0$, for some $x_0\geq 1$. 
 If $F(s)$ diverges for some real $s$, then there exist a real number $\sigma_0$ satisfying the following properties: 
 \begin{enumerate}
\item[(1)] the integral defining $F(s)$ is divergent for $s<\sigma_0$ and convergent for $s>\sigma_0$,
\item[(2)] $s=\sigma_0$ is a singularity of $F(s)$,
\item[(3)] and $F(s)$ is analytic for $\Re(s)>\sigma_0$.
\end{enumerate}
\end{thm}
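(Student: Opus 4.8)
The plan is to run the classical Landau argument: identify $\sigma_0$ as the abscissa of convergence, deduce (1) and (3) from the sign hypothesis together with monotonicity, and prove (2) by contradiction via a Taylor-expansion trick. First I would make two harmless reductions. Replacing $f$ by $-f$ if necessary changes neither the convergence of $F$ nor its analyticity, so we may assume $f(x)\ge 0$ for $x\ge x_0$. Writing $F=F_1+F_2$ with $F_1(s)=\int_1^{x_0}f(x)x^{-s-1}\,\d x$ and $F_2(s)=\int_{x_0}^{\infty}f(x)x^{-s-1}\,\d x$, the function $F_1$ is entire (a proper integral of a function bounded on $[1,x_0]$), so $F$ and $F_2$ have the same real points of convergence and $\sigma_0$ is a singularity of $F$ exactly when it is one of $F_2$; hence it suffices to argue with $F_2$, whose integrand is nonnegative throughout. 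Put $\sigma_0:=\inf\{\sigma\in\R:\ \int_{x_0}^{\infty}f(x)x^{-\sigma-1}\,\d x<\infty\}$. For $x\ge x_0\ge 1$ the map $\sigma\mapsto x^{-\sigma-1}$ is nonincreasing on $\R$, so convergence of the integral at one real point forces convergence at every larger real point, and divergence at one real point forces divergence at every smaller one; this is exactly property (1), and the hypotheses (divergence at some real point, absolute convergence in a right half-plane) force $\sigma_0$ to be finite.

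For property (3), note that when $\Re(s)=\sigma>\sigma_0$ we have $|f(x)x^{-s-1}|=f(x)x^{-\sigma-1}$ (again using that $f$ has a fixed sign on $[x_0,\infty)$), and $\int_{x_0}^{\infty}f(x)x^{-\sigma-1}\,\d x<\infty$ since $\sigma>\sigma_0$; thus the integral defining $F_2$ converges absolutely throughout $\Re(s)>\sigma_0$. The truncations $s\mapsto\int_{x_0}^{R}f(x)x^{-s-1}\,\d x$ are entire and, on each half-plane $\Re(s)\ge\sigma_1>\sigma_0$, converge to $F_2$ uniformly (the tail is bounded by $\int_R^{\infty}f(x)x^{-\sigma_1-1}\,\d x\to 0$), so $F_2$, and hence $F$, is holomorphic in $\Re(s)>\sigma_0$.

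The substance is property (2). Suppose for contradiction that $F$, equivalently $F_2$, extends holomorphically to a disc of radius $\epsilon>0$ about $\sigma_0$; then $F_2$ is holomorphic on a region containing $\{\Re(s)>\sigma_0\}$ together with this disc. Fix a real $\sigma_1\in(\sigma_0,\sigma_0+\epsilon)$; the Taylor series of $F_2$ about $\sigma_1$ has radius of convergence exceeding $\sigma_1-\sigma_0$, so it converges at some real $s_*=\sigma_0-\delta$ with $\delta>0$. Differentiating under the integral sign (valid for $\Re(s)>\sigma_0$ because $\int_{x_0}^{\infty}f(x)(\log x)^n x^{-\sigma-1}\,\d x<\infty$ there, comparing $(\log x)^n$ with a small power of $x$) yields $F_2^{(n)}(\sigma_1)=(-1)^n\int_{x_0}^{\infty}f(x)(\log x)^n x^{-\sigma_1-1}\,\d x$. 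Substituting this into the Taylor series at $s_*$ and setting $c:=\sigma_1-s_*=\sigma_1-\sigma_0+\delta>0$, every term becomes nonnegative ($f\ge 0$ and $\log x\ge 0$ on $[x_0,\infty)$), so Tonelli's theorem permits interchanging the sum over $n$ with the integral:
\begin{align*}
F_2(s_*)&=\sum_{n\ge 0}\frac{c^n}{n!}\int_{x_0}^{\infty}\frac{f(x)(\log x)^n}{x^{\sigma_1+1}}\,\d x\\
&=\int_{x_0}^{\infty}\frac{f(x)}{x^{\sigma_1+1}}\,e^{c\log x}\,\d x=\int_{x_0}^{\infty}\frac{f(x)}{x^{s_*+1}}\,\d x.
\end{align*}
The left-hand side is finite, so the integral on the right converges at $s_*=\sigma_0-\delta<\sigma_0$, contradicting property (1). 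Therefore $\sigma_0$ is a singularity of $F_2$, hence of $F$.

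I expect the main obstacle to be the bookkeeping in this last step: verifying that $\sigma_1$ and $\delta$ can be chosen so that the Taylor expansion about $\sigma_1$ actually reaches a point to the left of $\sigma_0$ (a short estimate on the distance from $\sigma_1$ to the boundary of the domain of holomorphy), and carefully justifying both the differentiation under the integral and the Tonelli interchange — the latter being precisely where the hypothesis that $f$ is ultimately of one sign is indispensable.
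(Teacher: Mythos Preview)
Your proof is correct and follows essentially the same classical Landau argument as the paper: define $\sigma_0$ as the abscissa of convergence, obtain (1) and (3) from the fixed-sign hypothesis, and derive (2) by contradiction, expanding $F$ in a Taylor series about a nearby real point $\sigma_1>\sigma_0$, identifying the coefficients as $\int f(x)(\log x)^k x^{-\sigma_1-1}\,\d x$, and interchanging sum and integral to produce convergence at some real $s_*<\sigma_0$. The only cosmetic differences are that you reduce to $f\ge 0$ and split off the entire piece $F_1$, and you invoke Tonelli explicitly for the interchange, whereas the paper computes the Taylor coefficients via Cauchy's integral formula and justifies the swap by absolute convergence of the series.
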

\begin{proof}
Let $\sigma_0$ be:
 \[\sigma_0=\inf\{\sigma\in \mathbb R:F(\sigma)\ \text{ converges}\}.\]
We shall show that $\sigma_0$ satisfies the properties given in the theorem. 

As $f(x)$ does not change 
sign for $x\geq x_0$, convergence of $F(\sigma)$ implies the absolute convergence of $F(s)$ for $\Re(s)\geq\sigma$. This proves (1) and (3).
To prove (2), we proceed by method of contradiction. Assume that $s=\sigma_0$ is not a singularity of $F(s)$. Then there exist 
$\sigma_0'>\sigma_0$ and $r>\sigma_0'-\sigma_0$ such that $F(s)$ has the following Taylor series expansion: 
\[F(s)=\sum_{k=0}^\infty\frac{1}{k!}F^{(k)}(\sigma_0')(s-\sigma_0')^k,\]
for all $s$ satisfying $|s-\sigma_0'|<r$.
\begin{claim}[1] For $\sigma_0'$ as above, we have
\[F(s)=\sum_{k=0}^{\infty}\frac{1}{k!}(s-\sigma_0')^k\int_1^{\infty}(-\log x)^k\frac{f(x)}{x^{\sigma_0'+1}}\d x.\] 
\end{claim}
\textit{Proof of Claim (1).}
By Cauchy's integral formula, we can write 
\[F^{(k)}(\sigma_0')=  \frac{k!}{2\pi i}\int_{\mathcal C}\frac{F(z)}{(z-\sigma_0')^{k+1}}\d z,\]
where $\mathcal C$ is a circle with a small enough radius having its center at $\sigma_0'$.
So we have
\[F(s)=\sum_{k=0}^{\infty}\frac{(s-\sigma_0')^k}{2\pi i }\int_{\mathcal C}\frac{1}{(z-\sigma_0')^{k+1}}\int_1^{\infty}\frac{f(x)}{x^{z+1}}\d x~\d z.\]
Suppose we can exchange the integrals of $x$ and $z$, then
\begin{align*}
F(s)&=\sum_{k=0}^{\infty}\frac{(s-\sigma_0')^k}{k!}\int_1^{\infty}\frac{f(x)}{x}
\frac{k!}{2\pi i}\int_{\mathcal C}\frac{x^{-z}\d z}{(z-\sigma_0')^{k+1}}\d x\\
&=\sum_{k=0}^{\infty}\frac{1}{k!}(s-\sigma_0')^k\int_1^{\infty}(-\log x)^k\frac{f(x)}{x^{\sigma_0'+1}}\d x,
\end{align*}
which proves Claim~1 conditionally. The only thing remains is to show that we can exchange integrals of $x$ and $z$.
If we choose $\mathcal C$ with a small enough radius, then 
\[\int_1^\infty\frac{f(x)}{x^{\Re(z) + 1}}\d x\]
is absolutely convergent and so is the 
double integral 
\[\int_{\mathcal C}\frac{1}{(z-\sigma_0')^{k+1}}\int_1^{\infty}\frac{f(x)}{x^{z+1}}\d x~\d z.\]
By the theorem of 
Fubinni and Tonelli \cite[Theorem~B.3.1,~(b)]{DeitmarHarmonic},
we can exchange these two iterated integrals. This completes the proof of Claim~1.
\begin{claim}[2]
For $|s-\sigma_0'|<r$, the integral
\[F(s)=\int_{1}^{\infty}\frac{f(x)}{x^{s+1}}\d x\]
converges.
\end{claim}
\textit{Proof of Claim (2).}
We shall simplify $F(s)$ using Claim~1. We write
\[ F(s)=\sum_{k=0}^{\infty}\frac{(\sigma_0'-s)^k}{k!}\int_{1}^{\infty}\frac{(\log x)^kf(x)}{x^{\sigma_0' +1}}\d x. \]
 In the above identity, we can exchange the series and the integral as the series is absolutely convergent. So we have
\begin{align*} 
 &\int_{1}^{\infty}\frac{f(x)}{x^{\sigma_0'+1}}\left(\sum_{k=0}^{\infty}\frac{(\sigma_0'-s)^k}{k!}(\log x)^k\right)\d x\\
  &=\int_{1}^{\infty}\frac{f(x)}{x^{\sigma_0'+1}}\exp((\sigma_0'-s)\log x)\d x
 =\int_{1}^{\infty}\frac{f(x)}{x^{s+1}}\d x.
\end{align*}
This completes the proof of Claim~2.

But Claim~2 implies that we have a real number smaller than $\sigma_0$, say $\sigma_0''$, such that the integral of $F(\sigma_0'')$ converges. This
is a contradiction to the definition of $\sigma_0$. So $\sigma_0$ is a singularity of $F(s)$, which proves (2).
\end{proof}

The following theorem appears in \cite[Section~2]{AnderOsci} without a proof and is attributed to Landau. We shall 
prove this theorem using Theorem~\ref{thm:phragmen_landau}.

\begin{thm}[Phragm{\'e}n-Landau-Anderson-Stark \label{thm:landau_representation_integral}]
 Let $f(x)$ be a real valued piecewise continuous function defined on $[1, \infty)$,
 bounded on every compact intervals, and does not change sign when $x>x_0$ for some $1<x_0<\infty$. Define 
 \[F(s):=\int_{1}^{\infty}\frac{f(x)}{x^{s+1}}\d x,\]
and assume that the above integral is absolutely convergent in some half plane.
Further, assume that we have an analytic continuation of $F(s)$ in a region containing a part of the real line
\[l(\sigma_0, \infty):=\{\sigma + i0: \sigma > \sigma_0\}.\]
Then the integral representing $F(s)$ is absolutely convergent for $\Re(s)>\sigma_0$, and hence $F(s)$ is an analytic function in this region. 
\end{thm}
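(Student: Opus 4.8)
The plan is to read the statement off from Theorem~\ref{thm:phragmen_landau}. Throughout I use the elementary observation already made in the proof of that theorem: since $f$ has constant sign on $[x_0,\infty)$ and is bounded on the compact interval $[1,x_0]$, convergence of $F(\sigma)$ at a \emph{real} point $\sigma$ is equivalent to absolute convergence of $F(s)$ for every $s$ with $\Re(s)\ge\sigma$. In particular the abscissa of convergence and the abscissa of absolute convergence of $F$ coincide, so it suffices to show that $F(\sigma)$ converges for every real $\sigma>\sigma_0$; analyticity of $F$ on $\Re(s)>\sigma_0$ then follows from the ensuing locally uniform absolute convergence of the integral.

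First dispose of the trivial case: if $F(\sigma)$ converges for all real $\sigma$, the observation above gives absolute convergence of $F(s)$ throughout $\mathbb C$, in particular for $\Re(s)>\sigma_0$, and there is nothing to prove. Otherwise $F$ diverges at some real point, so Theorem~\ref{thm:phragmen_landau} applies and produces a real number $\sigma_c$ with: (1) $F(s)$ diverges for $s<\sigma_c$ and converges for $s>\sigma_c$; (2) $s=\sigma_c$ is a singularity of $F$; (3) $F$ is analytic on $\Re(s)>\sigma_c$.

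The heart of the matter is the inequality $\sigma_c\le\sigma_0$. Suppose instead $\sigma_c>\sigma_0$. By hypothesis $F$ admits an analytic continuation to a region, i.e. an open set, containing the ray $l(\sigma_0,\infty)=\{\sigma+i0:\sigma>\sigma_0\}$. Since $\sigma_c>\sigma_0$, the point $\sigma_c$ lies in this open set, so the continuation is holomorphic on a full neighbourhood of $\sigma_c$; moreover, as it agrees with the function defined by the convergent integral on $(\sigma_c,\infty)$, it agrees with it on the whole common domain by the identity theorem. Hence $F$ is regular at $\sigma_c$, contradicting property~(2). Therefore $\sigma_c\le\sigma_0$.

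Finally, given any $s$ with $\Re(s)>\sigma_0$, choose a real $\sigma$ with $\sigma_0<\sigma<\Re(s)$; then $\sigma>\sigma_0\ge\sigma_c$, so $F(\sigma)$ converges by (1), whence $F(s)$ converges absolutely by the opening observation. Thus the integral representing $F$ is absolutely convergent on $\Re(s)>\sigma_0$ and defines an analytic function there, necessarily coinciding with the given continuation. I expect the only genuinely delicate step to be the use of the identity theorem to identify the abstract analytic continuation with the integral representation in the overlap near $\sigma_c$; the remainder is bookkeeping around Theorem~\ref{thm:phragmen_landau} and the constant-sign remark.
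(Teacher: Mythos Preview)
Your proof is correct and follows essentially the same route as the paper: both arguments invoke Theorem~\ref{thm:phragmen_landau} to place a real singularity at the abscissa of convergence and derive a contradiction with the assumed analytic continuation along $l(\sigma_0,\infty)$. You are slightly more explicit than the paper in separating out the trivial case (convergence for all real $\sigma$) and in naming the identity theorem to match the abstract continuation with the integral near $\sigma_c$, but these are expository refinements rather than a different method.
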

\begin{proof}
By Theorem~\ref{thm:phragmen_landau}, if 
\[\int_1^\infty \frac{f(x)}{x^{\sigma'+1}}\d x\]
diverges for some $\sigma'>\sigma_0,$ 
then there exist a real number $\sigma_0'\geq\sigma'>\sigma_0$ such that $F$ is not analytic at $\sigma_0'$. But this contradicts our assumption 
that $F$ is analytic on $l(\sigma_0, \infty)$. So the integral
\[\int_1^\infty \frac{f(x)}{x^{\sigma'+1}}\d x \ \text{ converges } \  \forall \ \sigma'>\sigma_0,\]
and since $f$ does not change
sign for $x\geq x_0$, $F(s)$ converges absolutely for $\Re(s)>\sigma_0$. This also gives that $F(s)$ is analytic for $\Re(s)>\sigma_0$.
\end{proof}

The above two theorems give some criteria when a function does not change sign. In the next section we will use these results to
show the sign changes of $\Delta(x)$.

\section{$\Omega_\pm$ Results}
Consider the Mellin transform $A(s)$ of $\Delta(x)$. We need the following assumptions to apply Theorem~\ref{thm:landau_representation_integral}.

\begin{asump}\label{as:for_landau}
 Suppose there exists a real number $\sigma_0$, $0<\sigma_0<\sigma_1$,
 such that $A(s)$ has the following properties.
\begin{itemize}
\item[(i)] There exists $t_0\neq 0$ such that 
 \begin{equation*}
 \lambda:=\limsup\limits_{\sigma\searrow\sigma_0}(\sigma-\sigma_0)|A(\sigma+it_0)|>0 .
 \end{equation*}
\item[(ii)] At $\sigma_0$ we have 
\begin{align*}
l_s&:=\limsup\limits_{\sigma\searrow\sigma_0}(\sigma-\sigma_0)A(\sigma) < \infty,\\
l_i&:= \liminf\limits_{\sigma\searrow\sigma_0}(\sigma-\sigma_0)A(\sigma) > -\infty.
\end{align*}
\item[(iii)]
The limits $l_i, l_s$ and $\lambda$ satisfy
\[l_i+\lambda>0\quad \text{ and } \quad l_s-\lambda<0.\]
\item[(iv)] We can analytically continue $A(s)$ in a region containing the real line
$l(\sigma_0, \infty)$.
\end{itemize} 
\end{asump}

\begin{rmk}
From Assumptions~\ref{as:for_landau},(i), we see that $\sigma_0+it_0$ is a singularity of $A(s)$.
\end{rmk}
We construct the following sets for further use.
\begin{defi}\label{def:A1A2}
With $l_s, l_i$ and $\lambda$ as in Assumptions~\ref{as:for_landau} and for an $\epsilon$ such that $0<\epsilon<\min(l_i+\lambda, \lambda-l_s)$, define
 \begin{align*}
 &\mathcal{A}_1:=\{x:x \in [1, \infty), \Delta(x)>(l_i+\lambda-\epsilon)x^{\sigma_0}\},\\
\text{ and }\quad&\mathcal{A}_2:=\{x:x \in [1, \infty), \Delta(x)<(l_s-\lambda+\epsilon)x^{\sigma_0}\}.
\end{align*}
\end{defi}

Under Assumptions~\ref{as:for_landau} and using methods from \cite{KaczMeasure}, we can derive the following measure theoretic theorem.

\begin{thm}\label{thm:landu_omegapm}
Let the conditions in Assumptions~\ref{as:for_landau} hold. Then for any 
 real number $M>1$, we have
 \begin{align*}
 \mu(\mathcal{A}_1\cap[M, \infty])>0,\\
 \text{ and }\quad \mu(\mathcal{A}_2\cap[M, \infty])>0.
 \end{align*}
This implies
 \begin{align*}
\Delta(x)=\Omega_\pm(x^{\sigma_0}). 
\end{align*}
\end{thm}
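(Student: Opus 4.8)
The plan is to apply Theorem~\ref{thm:landau_representation_integral} (Phragm\'en--Landau--Anderson--Stark) to two auxiliary functions built from $\Delta(x)$, one tailored to catch positive oscillations and one for negative oscillations, and then to translate the conclusion about divergence of a Mellin integral into a statement about the Lebesgue measure of $\mathcal{A}_1$ and $\mathcal{A}_2$. Concretely, I would set
\[
g_1(x) := \Delta(x) - (l_i+\lambda-\epsilon)x^{\sigma_0}, \qquad g_2(x) := (l_s-\lambda+\epsilon)x^{\sigma_0} - \Delta(x),
\]
so that $\mathcal{A}_1 = \{x : g_1(x) > 0\}$ and $\mathcal{A}_2 = \{x : g_2(x) > 0\}$. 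Suppose, for contradiction, that $\mu(\mathcal{A}_1 \cap [M,\infty]) = 0$ for some $M > 1$; then $g_1(x) \le 0$ for almost every $x \ge M$, i.e. $g_1$ does not change sign (up to a null set, which is harmless since we only integrate it). The Mellin transform of $g_1$ is
\[
G_1(s) = A(s) - \frac{l_i+\lambda-\epsilon}{s-\sigma_0} + (\text{entire correction from the lower limit } 1),
\]
which by Assumptions~\ref{as:for_landau}(iv) is analytic on $l(\sigma_0,\infty)$. So Theorem~\ref{thm:landau_representation_integral} forces the defining integral $\int_1^\infty g_1(x)x^{-s-1}\,\mathrm dx$ to converge absolutely for all $\Re(s) > \sigma_0$, hence $G_1(s)$ is analytic there, in particular bounded near $s = \sigma_0 + it_0$ along $\Re(s) \to \sigma_0$.

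The contradiction is then extracted from Assumptions~\ref{as:for_landau}(i)--(iii). Since the main term subtracted off in $g_1$ is real (a multiple of $x^{\sigma_0}$), the Mellin transforms of $\Delta$ and $g_1$ differ, at $s = \sigma+it_0$ with $t_0\ne 0$, only by the meromorphic piece $\tfrac{l_i+\lambda-\epsilon}{\,\sigma + it_0 - \sigma_0\,}$, which stays bounded as $\sigma \searrow \sigma_0$ because $t_0 \ne 0$. Hence
\[
\limsup_{\sigma\searrow\sigma_0}(\sigma-\sigma_0)\,|G_1(\sigma+it_0)| \;=\; \limsup_{\sigma\searrow\sigma_0}(\sigma-\sigma_0)\,|A(\sigma+it_0)| \;=\; \lambda \;>\; 0,
\]
so $\sigma_0 + it_0$ is a genuine singularity of $G_1$ on the line $\Re(s)=\sigma_0$. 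That contradicts analyticity of $G_1$ for $\Re(s)>\sigma_0$ only once we know the singularity is not \emph{removable} from the right; here is where one must argue carefully that convergence of the Mellin integral of a non-sign-changing function on $\Re(s)>\sigma_0$ propagates to convergence (hence analyticity) on a neighbourhood of $\sigma_0+it_0$ — precisely the content of the Phragm\'en--Landau machinery applied with the abscissa $\sigma_0$. The quantitative conditions $l_i + \lambda - \epsilon > 0$ and $l_s - \lambda + \epsilon < 0$ (guaranteed by (iii) and the choice of $\epsilon$) are what make $g_1$, resp. $g_2$, actually change sign infinitely often rather than being trivially sign-definite for a spurious reason: they ensure that the real behaviour at $\sigma_0$, captured by $l_i$ and $l_s$, cannot by itself absorb the oscillation of size $\lambda$ coming from $t_0$. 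The argument for $\mathcal{A}_2$ and $G_2$ is identical with signs reversed, using $l_s$ in place of $l_i$.

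Finally, the $\Omega_\pm$ conclusion is immediate: $\mu(\mathcal{A}_1 \cap [M,\infty]) > 0$ for every $M$ means there are arbitrarily large $x$ with $\Delta(x) > (l_i+\lambda-\epsilon)x^{\sigma_0}$ and the constant $l_i+\lambda-\epsilon$ is positive, so $\limsup_{x\to\infty}\Delta(x)/x^{\sigma_0} > 0$; symmetrically $\liminf_{x\to\infty}\Delta(x)/x^{\sigma_0} < 0$ from $\mathcal{A}_2$. I expect the main obstacle to be the bookkeeping in the first step: verifying that subtracting the explicit main-term multiple of $x^{\sigma_0}$ produces a function whose Mellin transform is exactly $A(s)$ plus an elementary meromorphic term analytic on $l(\sigma_0,\infty)$ and with a controlled (bounded, since $t_0\ne0$) contribution at $\sigma_0+it_0$, and that the "does not change sign a.e." hypothesis is strong enough to feed Theorem~\ref{thm:landau_representation_integral} — i.e. reconciling the measure-zero exceptional set with the pointwise sign hypothesis in that theorem, perhaps by working with $\Delta$ modified on a null set or invoking that the Mellin integral only sees $g_1$ up to null sets anyway.
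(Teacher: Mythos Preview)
Your setup is exactly the paper's: define $g_1(x)=\Delta(x)-(l_i+\lambda-\epsilon)x^{\sigma_0}$, assume $\mu(\mathcal A_1\cap[M,\infty))=0$ so that $g_1\le 0$ a.e.\ on $[M,\infty)$, and apply Theorem~\ref{thm:landau_representation_integral} to get absolute convergence of the Mellin integral for $\Re(s)>\sigma_0$. The paper phrases this via the decomposition $g=g^+-g^-$, but under the contradiction hypothesis $g^+$ is supported on $[1,M]$ up to a null set, so the two formulations coincide.

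The gap is in how you extract the contradiction. You write that absolute convergence for $\Re(s)>\sigma_0$ makes $G_1$ ``analytic there, in particular bounded near $s=\sigma_0+it_0$''. Analyticity in an open half-plane says nothing about boundary behaviour; $1/(s-\sigma_0-it_0)$ is analytic for $\Re(s)>\sigma_0$ and unbounded there. You then speculate about propagating analyticity to a neighbourhood of $\sigma_0+it_0$, but that is not what happens either. The actual mechanism is the elementary domination
\[
|G_1(\sigma+it_0)|\le \int_1^\infty |g_1(x)|\,x^{-\sigma-1}\,\d x = -G_1(\sigma)+O(1),
\]
the $O(1)$ coming from $[1,M]$ where the sign is uncontrolled. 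Since $G_1(\sigma)=A(\sigma)-\dfrac{l_i+\lambda-\epsilon}{\sigma-\sigma_0}$, multiplying by $(\sigma-\sigma_0)$ and taking $\limsup_{\sigma\searrow\sigma_0}$ gives
\[
\lambda \le -l_i + (l_i+\lambda-\epsilon)=\lambda-\epsilon,
\]
the desired contradiction. This is precisely where condition (ii) on $l_i$ enters, and it is the concrete content behind your informal remark that $l_i$ ``cannot by itself absorb the oscillation of size $\lambda$''. The paper organises the same computation as a two-case argument (its Case~2 just isolates the bounded $[1,M]$ contribution), but the analytic core is this single inequality, which your proposal never states.
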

 
 \begin{proof}
We prove the Theorem only for $\mathcal{A}_1$ as the other part is similar.
 
 Define 
 \begin{align*}
  &g(x):=\Delta(x)-(l_i+\lambda-\epsilon)x^{\sigma_0}, \quad &&G(s):=\int_{1}^{\infty}\frac{g(x)}{x^{s+1}}{\d x};\\
  &g^+(x):=\max(g(x), 0), \quad &&G^+(s):= \int_{1}^{\infty}\frac{g^+(x)}{x^{s+1}}{\d x};\\
  &g^-(x):=\max(-g(x), 0), \quad &&G^-(s):= \int_{1}^{\infty}\frac{g^-(x)}{x^{s+1}}{\d x}.
 \end{align*}
With the above notations, we have
\begin{align*}
 &g(x)=g^+(x)-g^-(x), \\
 \text{ and }\quad &G(s)=G^+(s)-G^-(s).
\end{align*}
Note that
\begin{align*}
G(s)&=A(s) - \int_1^{\infty}(l_i+\lambda-\epsilon)x^{\sigma_0-s-1}{\d x} \\
&=A(s) + \frac{l_i+\lambda-\epsilon}{\sigma_0-s},\quad \text{for }  \Re(s)> \sigma_0.
\end{align*}
So $G(s)$ is analytic wherever $A(s)$ is, except possibly for a pole at $\sigma_0$.
This gives
\begin{align}\label{eq:G_lim_sigma0t0}
 \limsup\limits_{\sigma\searrow\sigma_0}(\sigma-\sigma_0)|G(\sigma+it_0)|
 =\limsup\limits_{\sigma\searrow\sigma_0}(\sigma-\sigma_0)|A(\sigma+it_0)|=\lambda.
\end{align}
We shall use the above limit to prove our theorem. 
We proceed by method of contradiction. 
Assume that there exists an $M>1$ such that 
\[\mu(\mathcal{A}_1\cap[M, \infty))=0.\] 
This implies
\[G^+(\sigma)=\int_{1}^{\infty}\frac{g^+(x)}{x^{s+1}}{\d x} = \int_{1}^{M}\frac{g^+(x)}{x^{s+1}}{\d x} \] 
is bounded for any $s$, and so is an entire function.
By Assumptions~\ref{as:for_landau}, 
$A(s)$ and $G(s)$ can be analytically continued on the line $l(\sigma_0, \infty)$.
As $G(s)$ and $G^+(s)$ are analytic on $l(\sigma_0, \infty)$, $G^-(s)$ is also analytic on $l(\sigma_0, \infty)$.
The integral for $G^-(s)$ is absolutely convergent for $\Re(s)>\sigma_3+1$, and $g^-(x)$ is a piecewise continuous function bounded 
on every compact sets. This suggests that we can apply Theorem~\ref{thm:landau_representation_integral} to 
$G^-(s)$, and conclude that 
\[G^-(s)=\int_{1}^{\infty}\frac{g^-(x)}{x^{s+1}}{\d x} \]
is absolutely convergent for $\Re(s)>\sigma_0$.

From the above discussion, we summarize that the Mellin transforms of $g, g^+$ and $g^-$ converge absolutely 
for $\Re(s)>\sigma_0$. As a consequence, we see that $G(\sigma), G^+(\sigma)$ and $G^-(\sigma)$ 
are finite real numbers for $\sigma>\sigma_0$. For $\sigma>\sigma_0$, we compare $G^+(\sigma)$ and $G^-(\sigma)$ in the following two cases.

\begin{itemize}
 \item [Case 1: ] $G^+(\sigma)<G^-(\sigma)$.
 
 In this case,
 \begin{align*}
 (\sigma-\sigma_0)|G(\sigma+ it_0)|&\leq (\sigma-\sigma_0)|G(\sigma)|\\
 &=-(\sigma-\sigma_0)G(\sigma)\\
 &=-(\sigma-\sigma_0)A(\sigma) + l_i+\lambda-\epsilon.
\end{align*}
So we have
\begin{equation*}
 \limsup\limits_{\sigma\searrow\sigma_0}(\sigma-\sigma_0)|G(\sigma+it_0)|
 \leq  l_i+\lambda-\epsilon - \liminf\limits_{\sigma\searrow\sigma_0}(\sigma-\sigma_0)A(\sigma)
 \leq \lambda-\epsilon.
\end{equation*}
This contradicts (\ref{eq:G_lim_sigma0t0}). 

\item[Case 2: ] $G^+(\sigma)\geq G^-(\sigma)$.

We have,
\begin{align*}
 (\sigma-\sigma_0)|G(\sigma+it_0)|&\leq (\sigma-\sigma_0)G^+(\sigma)\\
 &=O(\sigma-\sigma_0) \quad (G^+(\sigma)\text{ being a bounded integral}).
\end{align*}
Thus
\[ \limsup\limits_{\sigma\searrow\sigma_0}(\sigma-\sigma_0)|G(\sigma+it_0)|=0.\]
This contradicts (\ref{eq:G_lim_sigma0t0}) again.
\end{itemize}

Thus $\mu(\mathcal{A}_1\cap[M, \infty))>0$ for any $M>1$, which completes the proof.
\end{proof}

\section{Measure Theoretic $\Omega_\pm$ Results}
Now we know that $\mathcal{A}_1$ and $\mathcal{A}_2$ 
are unbounded. But we do not know how the size of these sets grow. 
An answer to this question was given by Kaczorowski and Szyd{\l}o in \cite[Theorem~4]{KaczMeasure}.
\begin{thm}[Kaczorowski and Szyd\l o \cite{KaczMeasure}]\label{thm:kaczorowski_correct}
Let the conditions in Assumptions~\ref{as:for_landau} hold.
Also assume that for a non-decreasing positive continuous function $h$ satisfying
\[h(x)\ll x^{\epsilon},\]
we have
\begin{equation}\label{eq:normDelta}
\int_{T}^{2T}\Delta^2(x){\d x}\ll T^{2\sigma_0 + 1}h(T).
\end{equation}
Then as $T\rightarrow \infty$,
\[\mu\left( \mathcal{A}_j\cap[1, T]\right)=\Omega\Big(\frac{T}{h(T)}\Big)\quad \text{ for } j=1, 2.\]
\end{thm}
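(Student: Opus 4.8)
The plan is to upgrade the contradiction argument in the proof of Theorem~\ref{thm:landu_omegapm} from a qualitative to a quantitative statement, by replacing the hypothesis ``$\mu(\mathcal{A}_1\cap[M,\infty))=0$'' with the negation of the desired conclusion and feeding the second moment bound (\ref{eq:normDelta}) into a Cauchy--Schwarz estimate run over dyadic blocks. It suffices to treat $\mathcal{A}_1$; the bound for $\mathcal{A}_2$ follows by the symmetric argument, exactly as in Theorem~\ref{thm:landu_omegapm}. Arguing by contradiction, suppose
\[
\mu(\mathcal{A}_1\cap[1,T])=o\!\left(\frac{T}{h(T)}\right)\qquad(T\to\infty),
\]
which is precisely the failure of $\mu(\mathcal{A}_1\cap[1,T])=\Omega(T/h(T))$ in the paper's notation. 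Keep $g$, $g^{\pm}$, $G$, $G^{\pm}$ as in the proof of Theorem~\ref{thm:landu_omegapm}, with $\epsilon$ fixed as in Definition~\ref{def:A1A2}; recall $G(s)=A(s)+(l_i+\lambda-\epsilon)/(\sigma_0-s)$ and the identity (\ref{eq:G_lim_sigma0t0}).

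First I would estimate $G^+(\sigma)$ for $\sigma>\sigma_0$. Since $g^+$ is supported on $\mathcal{A}_1$, and there $0<g^+(x)=\Delta(x)-(l_i+\lambda-\epsilon)x^{\sigma_0}<|\Delta(x)|$, I split $[1,\infty)$ into dyadic blocks $[2^k,2^{k+1}]$, bound $x^{-\sigma-1}\le 2^{-k(\sigma+1)}$ on the $k$-th block, apply Cauchy--Schwarz, and invoke (\ref{eq:normDelta}) with $T=2^k$ together with the monotonicity of $h$, obtaining
\[
\int_{\mathcal{A}_1\cap[2^k,2^{k+1}]}\frac{|\Delta(x)|}{x^{\sigma+1}}\,\d x\ll 2^{-k(\sigma-\sigma_0)}\,\varepsilon_k^{1/2},\qquad \varepsilon_k:=\frac{h(2^k)}{2^k}\,\mu\!\left(\mathcal{A}_1\cap[2^k,2^{k+1}]\right),
\]
where the contradiction hypothesis and $h$ non-decreasing force $\varepsilon_k\to0$. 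Summing over $k\ge0$ shows that the integral defining $G^+(\sigma)$ converges for every $\sigma>\sigma_0$, so $G^+$ is holomorphic on $l(\sigma_0,\infty)$, and $(\sigma-\sigma_0)G^+(\sigma)\ll(\sigma-\sigma_0)\sum_{k\ge0}2^{-k(\sigma-\sigma_0)}\varepsilon_k^{1/2}$. An Abelian-type lemma --- if $\varepsilon_k\to0$ then $\delta\sum_{k\ge0}2^{-k\delta}\varepsilon_k^{1/2}\to0$ as $\delta\searrow0$, proved by splitting the sum at an index beyond which $\varepsilon_k^{1/2}<\eta$, bounding the tail by $\eta\,\delta/(1-2^{-\delta})\to\eta/\log2$ and the finite head by $O(\delta)$, then letting $\eta\to0$ --- then yields
\[
\limsup_{\sigma\searrow\sigma_0}(\sigma-\sigma_0)G^+(\sigma)=0.
\]

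Next, since $G^-=G^+-G$ and $G$ is holomorphic on $l(\sigma_0,\infty)$ by Assumptions~\ref{as:for_landau}(iv), $G^-$ is holomorphic there as well; as $g^-\ge0$ and its Mellin transform converges in a right half-plane, Theorem~\ref{thm:landau_representation_integral} gives that $G^-(\sigma)$ converges for all $\sigma>\sigma_0$. Using $(\sigma-\sigma_0)G(\sigma)=(\sigma-\sigma_0)A(\sigma)-(l_i+\lambda-\epsilon)$ and Assumptions~\ref{as:for_landau}(ii) (finiteness of $l_i$),
\[
\limsup_{\sigma\searrow\sigma_0}(\sigma-\sigma_0)G^-(\sigma)\le\limsup_{\sigma\searrow\sigma_0}(\sigma-\sigma_0)G^+(\sigma)-\liminf_{\sigma\searrow\sigma_0}(\sigma-\sigma_0)A(\sigma)+(l_i+\lambda-\epsilon)=\lambda-\epsilon.
\]
Finally, since $g^+$ and $g^-$ have disjoint supports, $|G(\sigma+it_0)|\le G^+(\sigma)+G^-(\sigma)$ for $\sigma>\sigma_0$, whence
\[
\limsup_{\sigma\searrow\sigma_0}(\sigma-\sigma_0)|G(\sigma+it_0)|\le\limsup_{\sigma\searrow\sigma_0}(\sigma-\sigma_0)G^+(\sigma)+\limsup_{\sigma\searrow\sigma_0}(\sigma-\sigma_0)G^-(\sigma)\le\lambda-\epsilon<\lambda,
\]
contradicting (\ref{eq:G_lim_sigma0t0}). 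Therefore $\mu(\mathcal{A}_1\cap[1,T])=\Omega(T/h(T))$, and the symmetric argument gives the bound for $\mathcal{A}_2$.

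I expect the main obstacle to be the Abelian-type lemma together with the careful orientation of all the $\limsup$/$\liminf$ manipulations: one must observe that the weights $2^{-k\delta}$ place essentially all their mass on indices $k\asymp1/\delta\to\infty$ --- exactly the range where $\varepsilon_k$ is already small --- in order to convert the pointwise decay $\varepsilon_k\to0$ into the uniform statement $\limsup_{\sigma\searrow\sigma_0}(\sigma-\sigma_0)G^+(\sigma)=0$, and one must use only the correct one-sided estimates ($\limsup$ of a sum $\le$ sum of $\limsup$'s, $\limsup(-f)=-\liminf f$) so that the final inequality genuinely beats $\lambda$. A secondary point is the bookkeeping converting the blockwise measure bound into $\varepsilon_k\to0$, where the monotonicity of $h$ is used (and the growth restriction $h(x)\ll x^{\epsilon}$ both guarantees the relevant Mellin integrals converge in a right half-plane and keeps $T/h(T)\to\infty$, so that the conclusion is a genuine growth statement).
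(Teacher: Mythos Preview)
Your proof is correct and follows essentially the same route as the paper's own argument. The paper does not prove Theorem~\ref{thm:kaczorowski_correct} directly but establishes it as a special case of Theorem~\ref{thm:omega_pm_main_new}, whose proof is the same machinery you use: a dyadic decomposition of $G^+(\sigma)$, Cauchy--Schwarz against the second moment bound (\ref{eq:normDelta}) to produce the factor $\varepsilon_k^{1/2}$ (the paper's $m^\#(2^k)$ is your $\varepsilon_k$), an Abelian argument to get $(\sigma-\sigma_0)G^+(\sigma)\to0$, and then Theorem~\ref{thm:landau_representation_integral} to control $G^-$.

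The only organizational difference is in the endgame. The paper invokes the Case~1/Case~2 dichotomy from the proof of Theorem~\ref{thm:landu_omegapm} to conclude $G^+(\sigma)\ge G^-(\sigma)$ and hence $|G(\sigma+it_0)|\le 2G^+(\sigma)$, while you bypass the case split by bounding $(\sigma-\sigma_0)G^-(\sigma)$ directly via $G^-=G^+-G$ and the definition of $l_i$, obtaining $\limsup(\sigma-\sigma_0)G^-(\sigma)\le\lambda-\epsilon$. Your route is slightly more streamlined for this statement; the paper's formulation is tailored to the integral $\Omega(1/\delta)$ conclusion of Theorem~\ref{thm:omega_pm_main_new}.
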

In \cite{KaczMeasure}, Kaczorowski and Szyd{\l}o applied this theorem to the error term appearing in the asymptotic formula for the fourth power moment
of Riemann zeta function. We write this error term as $E_2(x)$: 
\[ \int_0^x \left|\zeta\left(\half+it\right)\right|^4 \d t = x P(\log x) + E_2(x),\]
where $P$ is a polynomial of degree $4$. Motohashi \cite{Motohashi1} proved that 
\[ E_2(x) \ll x^{2/3+\epsilon}, \]
and further in \cite{Motohashi2} he showed that 
\[ E_2(x)=\Omega_{\pm}(\sqrt x). \]
Theorem of  Kaczorowski and Szyd{\l}o ( Theorem~\ref{thm:kaczorowski} ) gives that there exist $\lambda_0, \nu>0$ such that 
\[\mu\{1\le x\le T: E_2(x)>\lambda_0\sqrt x \} = \Omega(T/{(\log T)^{\nu}}) \]
and 
\[\mu\{1\le x\le T: E_2(x)<-\lambda_0\sqrt x \} = \Omega(T/{(\log T)^{\nu}})\]
as $T\rightarrow \infty$.
These results not only prove $\Omega_{\pm}$-results, but also give quantitative estimates for the occurrences 
of such fluctuations.
The above theorem of Kaczorowski and Szyd\l o has been generalized by Bhowmik, Ramar\'e and Schlage-Puchta by localizing the fluctuations
of $\Delta(x)$ to $[T, 2T]$.
Proof of this theorem follows from \cite[Theorem~2]{gautami} (also see Theorem~\ref{thm:gautami} below).
\begin{thm}[Bhowmik, Ramar\'e and Schlage-Puchta \cite{gautami}]\label{thm:kaczorowski}
Let the assumptions in Theorem~\ref{thm:kaczorowski_correct} hold.
Then as $T\rightarrow \infty$,
\[\mu\left( \mathcal{A}_j\cap[T, 2T]\right)=\Omega\Big(\frac{T}{h(T)}\Big)\quad \text{ for } j=1, 2.\]
\end{thm}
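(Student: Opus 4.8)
The plan is to localize, to a dyadic block $[T,2T]$, the contradiction argument used in the proof of Theorem~\ref{thm:landu_omegapm}, feeding in the second-moment bound (\ref{eq:normDelta}) through Cauchy--Schwarz. I argue for $\mathcal{A}_1$; the case of $\mathcal{A}_2$ is entirely symmetric. Retain the notation of the proof of Theorem~\ref{thm:landu_omegapm}: put $c:=l_i+\lambda-\epsilon>0$, $g(x):=\Delta(x)-cx^{\sigma_0}$, $g^{\pm}(x):=\max(\pm g(x),0)$, and let $G,G^{\pm}$ be the associated Mellin transforms, so that $G(s)=A(s)+c/(\sigma_0-s)$. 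By Definition~\ref{def:A1A2} the function $g^+$ is supported exactly on $\mathcal{A}_1$, and on $\mathcal{A}_1$ one has $0\le g^+(x)=g(x)\le\Delta(x)=|\Delta(x)|$. Suppose, for contradiction, that $\mu(\mathcal{A}_1\cap[T,2T])=o(T/h(T))$ as $T\to\infty$; restricting to the dyadic points $T_k:=2^k$ we may write $\mu(\mathcal{A}_1\cap[T_k,2T_k])=\phi_k\,T_k/h(T_k)$ with $\phi_k\to 0$.

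First I would bound $G^+$ block by block. On $[T_k,2T_k]$, Cauchy--Schwarz and (\ref{eq:normDelta}) give
\[
\int_{T_k}^{2T_k}g^+(x)\,\d x\le\mu\big(\mathcal{A}_1\cap[T_k,2T_k]\big)^{1/2}\Big(\int_{T_k}^{2T_k}\Delta^2(x)\,\d x\Big)^{1/2}\ll\phi_k^{1/2}\,T_k^{\sigma_0+1},
\]
the powers of $h$ cancelling. Dividing by $T_k^{\sigma+1}$ and summing over $k\ge 0$,
\[
G^+(\sigma)\ll\sum_{k\ge 0}\phi_k^{1/2}\,2^{-k(\sigma-\sigma_0)}<\infty\qquad(\sigma>\sigma_0).
\]
As $g^+\ge 0$, this makes the integral defining $G^+(s)$ absolutely convergent — hence $G^+$ analytic — on the whole half-plane $\Re(s)>\sigma_0$; and since $\phi_k\to 0$, the elementary fact that $\delta\sum_{k\ge0}\phi_k^{1/2}2^{-k\delta}\to 0$ as $\delta\to 0^+$ yields
\[
\lim_{\sigma\searrow\sigma_0}(\sigma-\sigma_0)G^+(\sigma)=0.
\]

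Next I would recover the convergence of the remaining transforms on $\Re(s)>\sigma_0$. By Assumptions~\ref{as:for_landau}(iv) the function $G(s)=A(s)+c/(\sigma_0-s)$ continues analytically to a region containing $l(\sigma_0,\infty)$, so $G^-=G^+-G$ also continues analytically to such a region; since $g^-\ge 0$ does not change sign and its Mellin integral converges absolutely for $\Re(s)>\sigma_3+1$ (Note~\ref{note:initial_assumption_consequences}), Theorem~\ref{thm:landau_representation_integral} upgrades this to absolute convergence of $\int_1^\infty g^-(x)x^{-s-1}\,\d x$ for $\Re(s)>\sigma_0$. Consequently $\int_1^\infty|g(x)|x^{-\sigma-1}\,\d x=G^+(\sigma)+G^-(\sigma)<\infty$ for $\sigma>\sigma_0$, so there $G(s)$ is given by its absolutely convergent integral and $|G(\sigma+it_0)|\le G^+(\sigma)+G^-(\sigma)=2G^+(\sigma)-G(\sigma)$. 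Using $(\sigma-\sigma_0)G^+(\sigma)\to 0$ together with $\limsup_{\sigma\searrow\sigma_0}\big(-(\sigma-\sigma_0)G(\sigma)\big)=c-l_i$, this gives
\[
\limsup_{\sigma\searrow\sigma_0}(\sigma-\sigma_0)|G(\sigma+it_0)|\le c-l_i=\lambda-\epsilon<\lambda
\]
(equivalently, split into the cases $G^+(\sigma)<G^-(\sigma)$ and $G^+(\sigma)\ge G^-(\sigma)$ exactly as in the proof of Theorem~\ref{thm:landu_omegapm}), which contradicts (\ref{eq:G_lim_sigma0t0}), since that limit superior equals $\lambda>0$. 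Hence $\mu(\mathcal{A}_1\cap[T,2T])=\Omega(T/h(T))$, and the symmetric argument gives the same for $\mathcal{A}_2$; this recovers the statement directly, in the spirit of \cite{gautami}.

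The main obstacle is the bookkeeping around the analytic-continuation hypotheses rather than any single estimate: under the contradiction hypothesis one must first establish convergence and analyticity of $G^+$ on the half-plane $\Re(s)>\sigma_0$ (this is exactly where the dyadic second-moment bound enters), and only then can the Phragm\'en--Landau theorem be invoked to force $G^-$, and hence $G$, to converge there, after which the limit comparison at $\sigma_0+it_0$ becomes meaningful. A secondary subtlety is that negating the $\Omega$-statement yields only an ``$o$'' as $T\to\infty$ through all reals, which must be transferred to the dyadic scale $T_k=2^k$; the gain over Theorem~\ref{thm:landu_omegapm} comes precisely from replacing ``$G^+$ is a bounded entire function'' there by the sharper ``$(\sigma-\sigma_0)G^+(\sigma)\to 0$'' here.
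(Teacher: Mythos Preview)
Your proof is correct and follows essentially the same approach as the paper: the paper does not prove Theorem~\ref{thm:kaczorowski} directly but derives it as a special case of Theorem~\ref{thm:omega_pm_main_new}, whose proof uses precisely your dyadic Cauchy--Schwarz bound on $G^+$ (with the $h$-powers cancelling), followed by the Phragm\'en--Landau step for $G^-$ and the contradiction at $\sigma_0+it_0$. Your write-up is a clean specialization of that argument to the setting where $h_j=h$ arises from the second-moment hypothesis~(\ref{eq:normDelta}).
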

An application of the above theorem to Goldbach's problem is given in \cite{gautami}.
Let
\begin{align*}
&\sum_{n\leq x} G_k(n) = \frac{x^k}{k!} -k \sum_\rho \frac{x^{k-1+\rho}}{\rho(1+\rho)\cdots(k-1+\rho)} + \Delta_k(x), 
\end{align*}
where the Goldbach numbers $G_k(n)$ are defined as
\[\quad G_k(n)=\sum_{\substack{n_1,\ldots n_k \\ n_1+\cdots+n_k=n}}\Lambda(n_1)\cdots\Lambda(n_k),\]
and $\rho$ runs over nontrivial zeros of the Riemann zeta function $\zeta(s)$.
Bhowmik, Ramar\'e and Schlage-Puchta proved that under Riemann Hypothesis
\begin{align*}
&&\mu\{T\le x\le 2T: \Delta_k(x)>(\mathfrak c_k + \mathfrak c_k')x^{k-1} \} = \Omega(T/{(\log T)^{6}})& \\
&\text{ and }&\mu\{T\le x\le 2T: \Delta_k(x)<(\mathfrak c_k-\mathfrak c_k') x^{k-1} \} = \Omega(T/{(\log T)^{6}})& \ \text{ as } T\rightarrow \infty,
\end{align*}
where $k\geq 2$ and $\mathfrak{c}_k, \mathfrak{c}_k'$ are well defined real number depending on $k$ with $\mathfrak{c}_k'>0$.

Note that Theorem~\ref{thm:kaczorowski_correct} implies Theorem~\ref{thm:kaczorowski}, but both the theorems are applicable to the same set of examples.
The main obstacle in applicability of these theorems is the condition (\ref{eq:normDelta}).
For example, if $\Delta(x)$ is the error term in approximating
$\sum_{n\leq x}|\tau(n, \theta)|^2$, we can not apply Theorem~\ref{thm:kaczorowski_correct} and Theorem~\ref{thm:kaczorowski}. However, the following theorem due to
the author and A. Mukhopadhyay \cite[Theorem~3]{MeasureOmega} overcomes this obstacle by replacing the condition (\ref{eq:normDelta}). 

\begin{thm}\label{thm:omega_pm_main}
Let the conditions in Assumptions~\ref{as:for_landau} hold.
 Assume that there is an 
analytic continuation of $A(s)$ in a region containing the real line $l(\sigma_0, \infty)$.
Let $h_1$ and $h_2$ be two positive functions
such that
\begin{equation}\label{eq:second_moment_error}
\int_{[T, 2T]\cap \mathcal{A}_j}
\frac{\Delta^2(x)}{x^{2\sigma_0+1}}\d x\ll h_j(T)\quad\text{ for } j=1, 2.
\end{equation}
Then as $T \longrightarrow \infty$, 
\begin{equation}\label{eq:omega_pm_measure}
 \mu(\mathcal{A}_j\cap[T, 2T])=\Omega\Big(\frac{T}{h_j(T)}\Big)\quad\text{ for } j=1, 2.
\end{equation}
\end{thm}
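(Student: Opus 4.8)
The plan is to mimic the contradiction argument used in the proof of Theorem~\ref{thm:landu_omegapm}, but now extract a \emph{quantitative} lower bound on $\mu(\mathcal{A}_j\cap[T,2T])$ from the hypothesis \eqref{eq:second_moment_error} instead of merely concluding that this measure is positive. Fix $j=1$ (the case $j=2$ is symmetric). As before, set $g(x)=\Delta(x)-(l_i+\lambda-\epsilon)x^{\sigma_0}$ with Mellin transform $G(s)=A(s)+\tfrac{l_i+\lambda-\epsilon}{\sigma_0-s}$, and split $g=g^+-g^-$, $G=G^+-G^-$. The key limit to contradict is again $\limsup_{\sigma\searrow\sigma_0}(\sigma-\sigma_0)|G(\sigma+it_0)|=\lambda>0$ from \eqref{eq:G_lim_sigma0t0}. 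The new input is that we no longer assume $\mu(\mathcal{A}_1\cap[M,\infty))=0$; instead we suppose, for contradiction, that $\mu(\mathcal{A}_1\cap[T,2T])=o(T/h_1(T))$ along some sequence $T\to\infty$, and we must derive that $G^+$ is ``small enough'' to kill the $\limsup$.

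The core estimate: for $\sigma>\sigma_0$ write $G^+(\sigma)=\int_1^\infty g^+(x)x^{-\sigma-1}\d x$, and note $g^+(x)$ is supported on $\mathcal{A}_1$ (where $\Delta(x)>(l_i+\lambda-\epsilon)x^{\sigma_0}$), so on its support $g^+(x)\le|\Delta(x)|$. Breaking $[1,\infty)$ into dyadic blocks $[T_k,2T_k]$ with $T_k=2^k$ and applying Cauchy--Schwarz on each block,
\[
\int_{\mathcal{A}_1\cap[T_k,2T_k]}\frac{g^+(x)}{x^{\sigma+1}}\d x
\le \left(\int_{\mathcal{A}_1\cap[T_k,2T_k]}\frac{\Delta^2(x)}{x^{2\sigma_0+1}}\d x\right)^{1/2}
\left(\int_{\mathcal{A}_1\cap[T_k,2T_k]}\frac{\d x}{x^{2\sigma-2\sigma_0+1}}\right)^{1/2}.
\]
The first factor is $\ll h_1(T_k)^{1/2}$ by \eqref{eq:second_moment_error}; the second factor is $\ll \mu(\mathcal{A}_1\cap[T_k,2T_k])^{1/2}\,T_k^{\sigma_0-\sigma}$. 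Summing the resulting bound $\ll h_1(T_k)^{1/2}\mu(\mathcal{A}_1\cap[T_k,2T_k])^{1/2}T_k^{\sigma_0-\sigma}$ over $k$ and letting $\sigma\searrow\sigma_0$, one sees that $(\sigma-\sigma_0)G^+(\sigma)$ is controlled by a weighted average of the quantities $h_1(T_k)^{1/2}\mu(\mathcal{A}_1\cap[T_k,2T_k])^{1/2}/T_k^{1/2}$; if $\mu(\mathcal{A}_1\cap[T_k,2T_k])=o(T_k/h_1(T_k))$ then each such term is $o(1)$, forcing $(\sigma-\sigma_0)|G(\sigma+it_0)|\le(\sigma-\sigma_0)G^+(\sigma)\to 0$ in the ``Case 2'' branch, contradicting \eqref{eq:G_lim_sigma0t0}. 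In the ``Case 1'' branch $G^+(\sigma)<G^-(\sigma)$ one instead bounds $(\sigma-\sigma_0)|G(\sigma+it_0)|\le -(\sigma-\sigma_0)G(\sigma)=-(\sigma-\sigma_0)A(\sigma)+(l_i+\lambda-\epsilon)$, and the $\liminf$ hypothesis in Assumptions~\ref{as:for_landau}(ii) gives a $\limsup$ of at most $\lambda-\epsilon<\lambda$, again contradicting \eqref{eq:G_lim_sigma0t0}. One still needs, as in the earlier proof, to apply Theorem~\ref{thm:landau_representation_integral} to $G^-$ to know its abscissa of convergence is $\le\sigma_0$ so that all the $G^\pm(\sigma)$ are genuine finite real numbers for $\sigma>\sigma_0$; here the sign condition on $g^-$ and the absolute convergence of its Mellin transform for $\Re(s)>\sigma_3+1$ are exactly what is needed.

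The main obstacle I anticipate is making the dyadic summation over $k$ rigorous while keeping the $\sigma\searrow\sigma_0$ limit under control: one is trading the divergence of $\sum_k T_k^{\sigma_0-\sigma}\cdot(\text{stuff})$ as $\sigma\to\sigma_0$ against the smallness coming from $\mu(\mathcal{A}_1\cap[T_k,2T_k])/T_k$, and these depend on $k$ and $\sigma$ simultaneously. The clean way to organize this is to define $\Phi(\sigma):=(\sigma-\sigma_0)\sum_k h_1(T_k)^{1/2}\mu(\mathcal{A}_1\cap[T_k,2T_k])^{1/2}T_k^{1/2}\cdot T_k^{-(\sigma-\sigma_0)}$ (an ``Abel-type'' average), observe $\Phi(\sigma)\to 0$ as $\sigma\searrow\sigma_0$ precisely when the terms $h_1(T_k)\mu(\mathcal{A}_1\cap[T_k,2T_k])/T_k\to 0$, i.e. when \eqref{eq:omega_pm_measure} \emph{fails}, and then check $(\sigma-\sigma_0)G^+(\sigma)\ll\Phi(\sigma)$. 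One harmless technical point, inherited from the growth restriction $h(x)\ll x^\epsilon$ in Theorem~\ref{thm:kaczorowski_correct}: one should assume (or note it may be assumed WLOG) that $h_j(T)\ll T^\epsilon$ for every $\epsilon>0$, so that the averaging weights behave well and the bound is not vacuous; this is exactly the regime in which the statement is used in the applications.
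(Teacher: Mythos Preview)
Your approach is essentially the paper's own: the paper proves the stronger Theorem~\ref{thm:omega_pm_main_new}, whose Claim~(1) is established by exactly the dyadic Cauchy--Schwarz bound on $G^+(\sigma)$ that you describe, followed by the same Case~1/Case~2 dichotomy and the appeal to Theorem~\ref{thm:landau_representation_integral} for $G^-$. Two small corrections: the negation of \eqref{eq:omega_pm_measure} is that $\mu(\mathcal{A}_1\cap[T,2T])h_1(T)/T\to 0$ for \emph{all} $T\to\infty$ (not merely along a subsequence), and your second Cauchy--Schwarz factor should carry an extra $T_k^{-1/2}$, i.e.\ it is $\ll\mu(\mathcal{A}_1\cap[T_k,2T_k])^{1/2}T_k^{-(\sigma-\sigma_0)-1/2}$. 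The paper resolves your ``main obstacle'' by a second Cauchy--Schwarz on the dyadic sum,
\[
\sum_{k\ge 0}\big(m^\#(2^k)\big)^{1/2}2^{-k\delta}\le\Big(\sum_{k\ge 0}2^{-k\delta}\Big)^{1/2}\Big(\sum_{k\ge 0}m^\#(2^k)2^{-k\delta}\Big)^{1/2},
\qquad m^\#(2^k)=\frac{h_1(2^k)\,\mu(\mathcal{A}_1\cap[2^k,2^{k+1}])}{2^k},
\]
which gives $\delta G^+(\sigma_0+\delta)=o(1)$ directly once $m^\#(2^k)\to 0$; this is slightly cleaner than the Abel-average formulation and does not require any growth hypothesis $h_j\ll T^\epsilon$.
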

Below we state an integral version of Theorem~\ref{thm:kaczorowski} as in \cite{gautami}. 
\begin{thm}[Bhowmik, Ramar\'e and Schlage-Puchta \cite{gautami}]\label{thm:gautami}
Suppose the conditions in Assumptions~\ref{as:for_landau} hold, and let $h(x)$ be as in Theorem~\ref{thm:kaczorowski}. Then as $\delta\rightarrow 0^+$,
\[\int_1^\infty\frac{\mu(\A_j\cap[x, 2x])h(4x)}{x^{2+\delta}}\d x=\Omega\Big(\frac{1}{\delta}\Big),\ \text{ for } j=1, 2.\] 
\end{thm}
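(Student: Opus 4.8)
\emph{Overview.} The plan is to run the argument of Theorem~\ref{thm:landu_omegapm} quantitatively. There the singularity of $A(s)$ at $\sigma_0+it_0$ was shown to be incompatible with the positive part of a shifted error term having a bounded Mellin transform; here I want to extract that the same singularity forces the truncated Mellin transform $G^+(\sigma_0+\delta)$ (resp.\ $G^-(\sigma_0+\delta)$) to be $\gg 1/\delta$ along a sequence $\delta\to0^+$, and a two–fold Cauchy--Schwarz converts this lower bound into the asserted lower bound for $I(\delta):=\int_1^\infty \mu(\A_1\cap[x,2x])\,h(4x)\,x^{-2-\delta}\,\d x$ (and similarly for $\A_2$). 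I treat $j=1$; the case $j=2$ is obtained by interchanging the roles of $g^+$ and $g^-$ and of $l_i$ and $l_s$, and applying Theorem~\ref{thm:landau_representation_integral} to $g^+$ instead of $g^-$.

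\emph{Setup and the upper bound for $G^+$.} As in the proof of Theorem~\ref{thm:landu_omegapm}, put $g(x)=\Delta(x)-(l_i+\lambda-\epsilon)x^{\sigma_0}$, let $g^\pm$ be its positive and negative parts, and let $G,G^\pm$ be their Mellin transforms; recall $G(s)=A(s)+\frac{l_i+\lambda-\epsilon}{\sigma_0-s}$, so $\limsup_{\sigma\searrow\sigma_0}(\sigma-\sigma_0)|G(\sigma+it_0)|=\lambda$, while $g^+$ is supported on $\A_1$ with $g^+(x)\ll|\Delta(x)|+x^{\sigma_0}$ there. For each $y\ge1$ I apply Cauchy--Schwarz to $\int_y^{2y}g^+(x)x^{-\sigma_0-1-\delta}\,\d x$, splitting the integrand as $\big(g^+(x)x^{-\sigma_0-\frac12}\big)\big(\mathbf 1_{\A_1}(x)x^{-\frac12-\delta}\big)$: by hypothesis (\ref{eq:normDelta}) and $g^+\ll|\Delta|+x^{\sigma_0}$ one gets $\int_y^{2y}(g^+)^2x^{-2\sigma_0-1}\,\d x\ll h(4y)$, and the second factor gives $\le y^{-1-2\delta}\mu(\A_1\cap[y,2y])$. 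Integrating the resulting bound against $\d y/y$ and using Fubini (each $x\ge2$ is covered by $\{[y,2y]:y\ge1\}$ with logarithmic mass $\log2$) reproduces $G^+(\sigma_0+\delta)$ up to $O(1)$, and a second Cauchy--Schwarz in $y$ — in which the leftover weight $y^{-1/2-\delta/2}$ contributes $\big(\int_1^\infty y^{-1-\delta}\,\d y\big)^{1/2}=\delta^{-1/2}$ — yields
\[
G^+(\sigma_0+\delta)\ll \delta^{-1/2}I(\delta)^{1/2}+O(1),\qquad\text{so}\qquad \delta\,G^+(\sigma_0+\delta)\ll \big(\delta I(\delta)\big)^{1/2}+\delta .
\]

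\emph{The contradiction.} Suppose $\delta I(\delta)\to0$ as $\delta\to0^+$. Then $I(\delta)<\infty$ for all small $\delta$, so by the displayed bound $G^+(\sigma_0+\delta)<\infty$ for all small $\delta$; since $\sigma\mapsto G^+(\sigma)$ is non-increasing, $G^+$ converges absolutely — hence is holomorphic — on $\Re(s)>\sigma_0$. Consequently $G^-=G^+-G$ is holomorphic on $l(\sigma_0,\infty)$ (using that $A$, hence $G$, is holomorphic there by Assumptions~\ref{as:for_landau}(iv)), so Theorem~\ref{thm:landau_representation_integral} applied to the non-negative function $g^-$ shows $G^-(\sigma)$ converges for all $\sigma>\sigma_0$ and that there $(\sigma-\sigma_0)\big(G^-(\sigma)-G^+(\sigma)\big)=(l_i+\lambda-\epsilon)-(\sigma-\sigma_0)A(\sigma)$. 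Now choose $\tau_n\searrow\sigma_0$ with $(\tau_n-\sigma_0)|G(\tau_n+it_0)|\to\lambda$. From $|G(\tau_n+it_0)|\le G^+(\tau_n)+G^-(\tau_n)$ we get $\liminf_n(\tau_n-\sigma_0)\big(G^++G^-\big)(\tau_n)\ge\lambda$, and from the displayed identity together with $\liminf_{\sigma\searrow\sigma_0}(\sigma-\sigma_0)A(\sigma)=l_i$ (Assumptions~\ref{as:for_landau}(ii)) we get $\limsup_n(\tau_n-\sigma_0)\big(G^--G^+\big)(\tau_n)\le\lambda-\epsilon$; subtracting gives $(\tau_n-\sigma_0)G^+(\tau_n)\ge\epsilon/3$ for large $n$. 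But $(\tau_n-\sigma_0)G^+(\tau_n)\to0$ by Step 1 and the contradiction hypothesis. Hence $\delta I(\delta)\not\to0$, i.e.\ $I(\delta)=\Omega(1/\delta)$; the same reasoning with $g^-$ in place of $g^+$ gives the statement for $\A_2$.

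\emph{Main obstacle.} The delicate part is the upper bound for $G^+$: the two Cauchy--Schwarz inequalities have to be arranged so that (i) the weight $h(4x)$ appears \emph{exactly} — this is precisely what the dyadic phrasing of (\ref{eq:normDelta}) and the enlargement of the argument of $h$ are for — and (ii) the leftover power of $x$ integrates to the divergent quantity $\int_1^\infty x^{-1-\delta}\,\d x\asymp1/\delta$, which is the source of the $1/\delta$ in the conclusion. A secondary but essential point is the bootstrap in the last step: under the contradiction hypothesis one must know that $G^+$ and $G^-$ genuinely converge for $\sigma>\sigma_0$ and agree with the analytic continuations, so that the identity relating them to $A$ holds down to $\sigma_0$; this is exactly where Theorem~\ref{thm:landau_representation_integral} is invoked, as in the proof of Theorem~\ref{thm:landu_omegapm}.
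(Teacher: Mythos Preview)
Your proof is correct and follows essentially the same route as the paper. Note that the paper does not prove Theorem~\ref{thm:gautami} itself (it is quoted from \cite{gautami}); the paper instead proves the generalization Theorem~\ref{thm:omega_pm_main_new}, and your argument is that proof specialized to the hypothesis~(\ref{eq:normDelta}). The only cosmetic differences are: (i) you run the dyadic Cauchy--Schwarz with the continuous average $\int \d y/y$ and Fubini, whereas the paper sums over $[2^k,2^{k+1}]$ and converts to the integral at the end; and (ii) your endgame extracts $(\tau_n-\sigma_0)G^+(\tau_n)\ge \epsilon/3$ by subtracting the $\limsup$/$\liminf$ of $G^-\pm G^+$, while the paper refers back to Case~1 of Theorem~\ref{thm:landu_omegapm} to force $G^+\ge G^-$ and then uses $|G(\sigma+it_0)|\le 2G^+(\sigma)$ together with $(\sigma-\sigma_0)G^+(\sigma)=o(1)$. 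Both reach the same contradiction with $\limsup_{\sigma\searrow\sigma_0}(\sigma-\sigma_0)|G(\sigma+it_0)|=\lambda$.
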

In our next theorem, we generalize Theorem~\ref{thm:kaczorowski_correct}, \ref{thm:kaczorowski},  \ref{thm:omega_pm_main} and \ref{thm:gautami}.

\begin{thm}\label{thm:omega_pm_main_new}
Let the conditions in Theorem~\ref{thm:omega_pm_main} hold. Then as $\delta\rightarrow 0^+$,
\begin{equation}
 \int_1^\infty\frac{\mu(\mathcal{A}_j\cap[x, 2x])h_j(x)}{x^{2+\delta}}\d x=\Omega\Big(\frac{1}{\delta}\Big)\quad\text{ for }j=1, 2.
\end{equation}
\end{thm}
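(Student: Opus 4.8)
The plan is to run the Landau-type argument from the proof of Theorem~\ref{thm:landu_omegapm} in quantitative form, the new point being that the hypothesis (\ref{eq:second_moment_error}) lets one bound the relevant auxiliary Mellin integral by exactly the quantity we want to estimate from below. I would treat $j=1$; the case $j=2$ is symmetric, since by Assumptions~\ref{as:for_landau}(iii) one has $\Delta(x)<0$ on $\mathcal{A}_2$, so the analogous truncation is again $\le|\Delta(x)|$. Following Theorem~\ref{thm:landu_omegapm}, put $g(x)=\Delta(x)-(l_i+\lambda-\epsilon)x^{\sigma_0}$ and let $g^{\pm}$, $G$, $G^{\pm}$ be as there, so that $G(s)=A(s)+\frac{l_i+\lambda-\epsilon}{\sigma_0-s}$ and $G=G^+-G^-$; since $\epsilon<l_i+\lambda$, the constant $l_i+\lambda-\epsilon$ is positive, so $g^+$ is supported on $\mathcal{A}_1$ and satisfies $0\le g^+(x)\le|\Delta(x)|$ there. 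Write $I_1(\delta)=\int_1^\infty \mu(\mathcal{A}_1\cap[x,2x])\,h_1(x)\,x^{-2-\delta}\,\d x$; since $I_1$ is nonincreasing in $\delta$ we may assume $I_1(\delta)<\infty$ for all small $\delta>0$, as otherwise $I_1(\delta)=\infty$ near $0$ and there is nothing to prove.

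The first and main step is the upper bound $G^+(\sigma_0+\delta)\ll 1+\sqrt{I_1(\delta)/\delta}$ for $\delta>0$. I would smooth the Mellin integral by inserting $1=\frac1{\log 2}\int_{x/2}^{x}\frac{\d y}{y}$ for $x\ge 2$ and exchanging the order of integration (the range $1\le x<2$ contributing $O(1)$, as $g^+$ is bounded there), which gives $G^+(\sigma)\ll 1+\int_1^\infty y^{-\sigma-2}\bigl(\int_{[y,2y]\cap\mathcal{A}_1}|\Delta(x)|\,\d x\bigr)\d y$. On each interval $[y,2y]$, Cauchy--Schwarz together with $x\le 2y$ and (\ref{eq:second_moment_error}) applied with $T=y$ yields $\int_{[y,2y]\cap\mathcal{A}_1}|\Delta(x)|\,\d x\ll y^{\sigma_0+1/2}\sqrt{h_1(y)\,\mu(\mathcal{A}_1\cap[y,2y])}$; substituting and putting $\sigma=\sigma_0+\delta$ gives $G^+(\sigma_0+\delta)\ll 1+\int_1^\infty y^{-3/2-\delta}\sqrt{h_1(y)\,\mu(\mathcal{A}_1\cap[y,2y])}\,\d y$. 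A second Cauchy--Schwarz in $y$, pairing $y^{-1-\delta/2}\sqrt{h_1(y)\,\mu(\mathcal{A}_1\cap[y,2y])}$ against $y^{-1/2-\delta/2}$, bounds this by $I_1(\delta)^{1/2}\bigl(\int_1^\infty y^{-1-\delta}\,\d y\bigr)^{1/2}=\sqrt{I_1(\delta)/\delta}$, which is the claim. In particular $G^+$ converges for $\Re(s)>\sigma_0$; consequently $G^-=G^+-G$ is analytic in a region containing $l(\sigma_0,\infty)$, and Theorem~\ref{thm:landau_representation_integral} applied to $g^-\ge 0$ (exactly as in Theorem~\ref{thm:landu_omegapm}) shows that $G^-$ too converges for $\Re(s)>\sigma_0$.

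The second step is the matching lower bound $G^+(\sigma_0+\delta_n)\gg 1/\delta_n$ along a suitable sequence $\delta_n\downarrow 0$. Since $(\sigma-\sigma_0)A(\sigma)$ stays bounded as $\sigma\searrow\sigma_0$ by Assumptions~\ref{as:for_landau}(ii), so does $(\sigma-\sigma_0)G(\sigma)$, whence $(\sigma-\sigma_0)G^-(\sigma)\le (\sigma-\sigma_0)G^+(\sigma)+O(1)$. By Assumptions~\ref{as:for_landau}(i) there is a sequence $\delta_n\downarrow 0$ with $\delta_n|A(\sigma_0+\delta_n+it_0)|\to\lambda>0$; since $\bigl|\frac{l_i+\lambda-\epsilon}{\sigma_0-s}\bigr|$ is bounded at $s=\sigma_0+\delta_n+it_0$ (here $t_0\ne 0$ is essential), we get $\delta_n|G(\sigma_0+\delta_n+it_0)|\ge\lambda/2-o(1)$. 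Combining $|G(\sigma_0+\delta_n+it_0)|\le G^+(\sigma_0+\delta_n)+G^-(\sigma_0+\delta_n)$ (valid because $\Re(\sigma_0+\delta_n+it_0)>\sigma_0$ and $|g|=g^++g^-$) with the previous inequality gives $\delta_n G^+(\sigma_0+\delta_n)\gg 1$.

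Putting the two steps together, $\delta_n^{-1}\ll G^+(\sigma_0+\delta_n)\ll 1+\sqrt{I_1(\delta_n)/\delta_n}$, so for large $n$ the constant $1$ is negligible against $\delta_n^{-1}$, hence $\delta_n^{-2}\ll I_1(\delta_n)/\delta_n$, i.e. $\delta_n\,I_1(\delta_n)\gg 1$; therefore $\limsup_{\delta\to 0^+}\delta\,I_1(\delta)>0$, which is the assertion for $j=1$, and the case $j=2$ follows verbatim with $G^+$ replaced by the analogous $G^-$ built from $\Delta(x)-(l_s-\lambda+\epsilon)x^{\sigma_0}$. I expect the main obstacle to be the upper bound of the first step: one must pair the double Cauchy--Schwarz so that one factor reconstructs precisely $I_1(\delta)$ (with $h_1$ evaluated at the argument $y$ coming from $T=y$ in (\ref{eq:second_moment_error})) while the other contributes exactly the factor $\delta^{-1/2}$ from $\int_1^\infty y^{-1-\delta}\,\d y$; this $\delta^{-1/2}$ is the integral-version analogue of the condition used in Theorem~\ref{thm:omega_pm_main}, and it is what converts the singularity of $A$ at $\sigma_0+it_0$ into the sharp $\Omega(1/\delta)$ bound. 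The remaining bookkeeping --- the invocation of Theorem~\ref{thm:landau_representation_integral} for $G^-$, and noting that the singularity at $\sigma_0+it_0$ sits on the line $\Re(s)=\sigma_0$ so that $G$ and $G^{\pm}$ are genuinely analytic on the open half-plane $\Re(s)>\sigma_0$ --- is a routine quantitative rerun of the proof of Theorem~\ref{thm:landu_omegapm}.
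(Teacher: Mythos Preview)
Your argument is essentially the paper's, with two presentational differences: you work directly with the integral $I_1(\delta)$ via the smoothing identity $1=\frac{1}{\log 2}\int_{x/2}^{x}\frac{\d y}{y}$, whereas the paper first proves the dyadic claim $\sum_{k}m^\#(2^k)2^{-k\delta}=\Omega(1/\delta)$ and only afterwards converts it to the integral; and you phrase the endgame as matching upper and lower bounds on $G^+$, whereas the paper argues by contradiction (assume the sum is $o(1/\delta)$, deduce $\delta G^+(\sigma_0+\delta)=o(1)$, contradict $\limsup\delta|G(\sigma_0+\delta+it_0)|=\lambda$). Your route is a little more direct and avoids the dyadic--to--integral conversion.

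There is, however, a small quantitative gap in your second step as written. From $(\sigma-\sigma_0)G^-(\sigma)\le(\sigma-\sigma_0)G^+(\sigma)+O(1)$ and $\delta_n|G(\sigma_0+\delta_n+it_0)|\ge\lambda/2-o(1)$ you only obtain $\lambda/2-o(1)\le 2\delta_nG^+(\sigma_0+\delta_n)+O(1)$, and an unspecified $O(1)$ can swallow $\lambda/2$. The fix is to track the constant: since $\delta G^-(\sigma_0+\delta)=\delta G^+(\sigma_0+\delta)-\delta A(\sigma_0+\delta)+(l_i+\lambda-\epsilon)$ and $\liminf\delta A(\sigma_0+\delta)=l_i$, one has $\delta(G^++G^-)\le 2\delta G^+ + (\lambda-\epsilon)+o(1)$; combined with $\delta_n|G(\sigma_0+\delta_n+it_0)|\to\lambda$ (not merely $\ge\lambda/2$, since $\delta_n$ times the perturbation tends to $0$), this yields $\delta_nG^+(\sigma_0+\delta_n)\ge\epsilon/2-o(1)$, which is what you need. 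Equivalently, arguing by contradiction (suppose $\delta G^+(\sigma_0+\delta)=o(1)$, then $\limsup\delta|G(\sigma_0+\delta+it_0)|\le\limsup\delta(G^++G^-)=\lambda-\epsilon<\lambda$, a contradiction) is how the paper handles this point and avoids the bookkeeping.
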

\begin{proof}
 We shall prove the theorem for $j=1$; the proof is similar for $j=2$.
We define $g, g^+, g^-, G, G^+$ and $G^-$, as in Theorem~\ref{thm:landu_omegapm}.
Let 
\[m^\#(x):=h_1(x)\mu(\A_1\cap[x, 2x])x^{-1}.\]
First, we shall show:
\begin{claim}[1] As $\delta\rightarrow 0$,
\[ \sum_{k\geq 0}\frac{m^\#(2^k)}{2^{k\delta}}=\Omega\left(\frac{1}{\delta}\right).\]
\end{claim}
Assume that
\begin{equation}\label{eq:measure_contradic_A1}
\sum_{k\geq 0}\frac{m^\#(2^k)}{2^{k\delta}}=o\left(\frac{1}{\delta}\right).
\end{equation}
From the above assumption, we may obtain an upper bound for $G^+(\sigma)$ as follows:
\begin{align*}
& \int_{\A_1}\frac{g^+(x)\d x}{x^{\sigma+1}}
\leq \sum_{k\geq 0}\int_{\A_1\cap[2^k, 2^{k+1}]}\frac{\Delta(x)\d x}{x^{\sigma+1}}
\quad ( \text{as } \Delta(x)> g(x) \text{ on } \A_1 )\\
&\leq \sum_{k\geq 0}\left(\int_{\A_1\cap[2^k, 2^{k+1}]}\frac{\Delta^2(x)\d x}{x^{2\sigma_0+1}}\right)^{\half}
\left(\frac{\mu(\A_1\cap[2^k, 2^{k+1}])}{2^{k(2\delta+1)}}\right)^{\half}
\ (\text{where } \sigma-\sigma_0=\delta>0)\\
&\leq c_3 \sum_{k\geq 0}\left(\frac{h_1(2^k)\mu(\A_1\cap[2^k, 2^{k+1}])}{2^{k(2\delta+1)}}\right)^{\half}
\leq c_3\sum_{k\geq 0}\left(\frac{m^\#(2^k)}{2^{2k\delta}}\right)^{\half}.
\end{align*}
From the above inequality, we get 
\begin{align}\label{eq:Gplus_upper_bound}
 \delta G^+(\sigma)\ll \delta\left(\sum_{k\geq 0}\frac{1}{2^{k\delta}}\right)^\half 
 \left(\sum_{k\geq 0}\frac{m^\#(2^k)}{2^{k(\sigma-\sigma_0)}}\right)^\half 
 =o(1)
\end{align}
as $\delta\rightarrow 0^+$.
Therefore
\[G^+(s)=\int_1^{\infty}\frac{g^+(x)\d x}{x^{s+1}}\]
is absolutely convergent for $\Re(s)>\sigma_0$, and so it is analytic in this region.
But
\[G^-(s)=G(s)-G^+(s),\]
and $G$ is analytic on $l(\sigma_0, \infty)$. So $G^-$ is also analytic on $l(\sigma_0, \infty)$. 
Using Theorem~\ref{thm:landau_representation_integral}, we get
\[G^+(s)=\int_1^{\infty}\frac{g^+(x)\d x}{x^{s+1}}\]
is absolutely convergent for $\Re(s)>\sigma_0$. Absolute convergence of the integrals of $G$ and $G^+$
implies that the Mellin transformation of $g^-(x)$, given by
\[ G^-(s)=\int_1^{\infty}\frac{g^-(x)\d x}{x^{s+1}},\]
is also absolutely convergent for $\Re(s)>\sigma_0$. As a consequence, we get $G(\sigma), G^+(\sigma)$, and $G^-(\sigma)$
are finite non-negative real numbers for $\sigma>\sigma_0$.
As indicated in Case-1 of Theorem~\ref{thm:landu_omegapm}, we can not have 
\[G^+(\sigma)<G^-(\sigma)\ \text{ when }\sigma > \sigma_0.\] 
So we always have
\[G^+(\sigma)\geq G^-(\sigma).\]
Using (\ref{eq:Gplus_upper_bound}), 
\[\limsup\limits_{\sigma\searrow\sigma_0}(\sigma-\sigma_0)|G(\sigma+it_0)|\leq 
\limsup\limits_{\sigma\searrow\sigma_0}(\sigma-\sigma_0)G(\sigma)=0.\]
This is a contradiction to (\ref{eq:G_lim_sigma0t0}), and so (\ref{eq:measure_contradic_A1}) is wrong. This proves our Claim.

Now we are ready to prove the theorem. 
For $k\geq1$, observe that
\begin{align*}
\int_{k-1}^{k}\frac{m^\#(2^x)}{2^{\delta x}} \d x
&=\int_{k-1}^{k} \frac{h_1(2^x)\mu(\A_1\cap[2^x, 2^{x+1}])}{2^{x(\delta +1)}} \d x
=\int_{k-1}^{k}\int_{2^x}^{2^{x+1}}\frac{h_1(2^x)}{2^{\delta x +x}}\d\A_1(t)\d x\\
&(\text{where } \A_1(t) \ \text{ is the indicator function of }\ \A_1) \\
= &\int_{2^{k-1}}^{2^k}\int_{k-1}^{\frac{\log t}{\log2}}\frac{h_1(2^x)}{2^{x(1+\delta)}}\d x\d\A_1(t) 
  +\int_{2^k}^{2^{k+1}}\int_{\frac{\log t}{\log2}-1}^{k}\frac{h_1(2^x)}{2^{x(1+\delta)}}\d x\d\A_1(t)
\end{align*}
From the above identity, we have
\[\int_{k-1}^{k}\frac{m^\#(2^x)}{2^{\delta x}}\d x\geq \int_{2^k}^{2^{k+1}}\int_{\frac{\log t}{\log2}-1}^{k}
\frac{h_1(2^x)}{2^{x(1+\delta)}}\d x\d\A_1(t)\]
and  
\[\int_{k}^{k+1}\frac{m^\#(2^x)}{2^{\delta x}}\d x\geq 
\int_{2^{k}}^{2^{k+1}}\int_{k}^{\frac{\log t}{\log2}} 
\frac{h_1(2^x)}{2^{x(1+\delta)}}\d x\d\A_1(t). 
\]
So we get
\begin{align}
\notag
\int_{k-1}^{k+1}\frac{m^\#(2^x)}{2^{\delta x}}\d x
&\geq \int_{2^k}^{2^{k+1}}\int_{\frac{\log t}{\log 2}-1}^{k}\frac{h_1(2^x)}{2^{x(1+\delta)}}\d x\d\A_1(t)
+\int_{2^k}^{2^{k+1}}\int_{k}^{\frac{\log t}{\log2}}\frac{h_1(2^x)}{2^{x(1+\delta)}}\d x\d\A_1(t)\\
\notag
&=\int_{2^k}^{2^{k+1}}\int_{\frac{\log t}{\log 2}-1}^{\frac{\log t}{\log 2}}\frac{h_1(2^x)}{2^{x(1+\delta)}}\d x \d\A_1(t).
\end{align}
Now, we may use the fact that $h_1$ is a monotonically increasing function having polynomial growth, and simplify the above calculation as follows:
\begin{align}
\notag
&\int_{k-1}^{k+1}\frac{m^\#(2^x)}{2^{\delta x}} \d x
\geq h_1(2^{k})\int_{2^k}^{2^{k+1}}\int_{\frac{\log t}{\log2}-1}^{\frac{\log t}{\log2}}
\frac{\d x}{2^{x(1+\delta)}}\d\A_1(t)\\
\notag
&= \frac{h_1(2^{k})}{\log 2}\int_{2^k}^{2^{k+1}}
\Big(2^{-\left(\frac{\log t}{\log2}-1\right)(1+\delta)}-2^{-\frac{\log t}{\log2}(1+\delta)}\Big)\d\A_1(t)\\
\label{simplification_m3}
&= \frac{h_1(2^{k})}{\log 2}\int_{2^k}^{2^{k+1}}\frac{2^{1+\delta}-1}{t^{1+\delta}}\d\A_1(t)
\geq \frac{h_1(2^{k})}{2^{(k+1)(\delta +1)}}\mu(\A_1\cap[2^k, 2^{k+1}])
\geq \quater\frac{m^\#(2^k)}{2^{k\delta}}. 
\end{align}
Now using Claim~(1), we get
\[\int_0^\infty\frac{m^\#(2^x)}{2^{\delta x}} \d x\gg \sum_{k=1}^\infty\frac{m^\#(2^k)}{2^{k\delta}}=\Omega\left(\frac{1}{\delta}\right).\]
Changing the variable $x$ to $u=2^x$ in the above inequality gives
\begin{align*}
 &&\frac{1}{\log2} \int_1^\infty\frac{m^\#(u)}{u^{1+\delta}}\d u =\Omega \left(\frac{1}{\delta}\right),\\
 &\text{or }& \int_1^\infty\frac{\mu(\A_j\cap[u, 2u])h_j(u)}{u^{2+\delta}}\d u  =\Omega \left(\frac{1}{\delta}\right).
\end{align*}
This proves the theorem.
\end{proof}

\begin{coro}\label{cor:measure_omega_pm_from_upper_bound}
 Let the conditions given in Theorem~\ref{thm:omega_pm_main} hold. Suppose we have a monotonically increasing positive function $h$ such that
\begin{equation}
 \Delta(x)=O(h(x)),
\end{equation}
then
 \begin{equation}
\mu(\mathcal{A}_j\cap[T, 2T])=\Omega\left(\frac{T^{1+2\sigma_0}}{h^2(T)}\right)\quad \text{ for } j=1, 2. 
 \end{equation}
\end{coro}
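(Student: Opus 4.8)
The plan is to deduce the corollary directly from Theorem~\ref{thm:omega_pm_main}, feeding it the weighted second--moment estimate that the pointwise bound $\Delta(x)=O(h(x))$ automatically provides. All the hypotheses needed to invoke Theorem~\ref{thm:omega_pm_main} --- Assumptions~\ref{as:for_landau} and the analytic continuation of $A(s)$ along $l(\sigma_0,\infty)$ --- are assumed in the statement, so the only task is to verify condition~(\ref{eq:second_moment_error}) for a suitable pair $h_1,h_2$.

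First I would exploit the monotonicity of $h$: for $x\in[T,2T]$ we have $h(x)\le h(2T)$, hence $\Delta^2(x)\ll h^2(2T)$; also $x^{2\sigma_0+1}\ge T^{2\sigma_0+1}$ on this interval, and $\mu(\A_j\cap[T,2T])\le T$ trivially. Combining these,
\begin{equation*}
\int_{[T,2T]\cap\A_j}\frac{\Delta^2(x)}{x^{2\sigma_0+1}}\,\d x
\ll\frac{h^2(2T)}{T^{2\sigma_0+1}}\,\mu(\A_j\cap[T,2T])
\le\frac{h^2(2T)}{T^{2\sigma_0}},
\end{equation*}
so (\ref{eq:second_moment_error}) holds with $h_j(T):=c\,h^2(2T)/T^{2\sigma_0}$ for an absolute constant $c$ and both $j=1,2$.

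Theorem~\ref{thm:omega_pm_main} then yields (\ref{eq:omega_pm_measure}):
\begin{equation*}
\mu(\A_j\cap[T,2T])=\Omega\!\left(\frac{T}{h_j(T)}\right)=\Omega\!\left(\frac{T^{1+2\sigma_0}}{h^2(2T)}\right),\qquad j=1,2.
\end{equation*}
Since $h$ is of polynomial growth in all the intended applications (and more generally whenever $h$ satisfies the doubling bound $h(2T)\ll h(T)$), one may replace $h^2(2T)$ by $h^2(T)$ in the denominator, which is precisely the asserted estimate.

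The single point needing care is this last step: the estimate above naturally produces $h^2(2T)$, and passing to $h^2(T)$ is legitimate only under a mild growth restriction on $h$ --- automatic for the examples $h(x)=x^{\sigma_0}(\log x)^{A}$, $h(x)=x\exp(-c(\log x)^{3/5}(\log\log x)^{-1/5})$, and the like. Every other step is a one--line estimate, so this is where essentially all of the (minor) content lies; alternatively, one could state the conclusion with $h^2(2T)$ and dispense with any growth assumption on $h$ beyond monotonicity.
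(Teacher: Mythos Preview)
Your proposal is correct and is exactly the argument the paper has in mind: the corollary is stated without proof, immediately after Theorem~\ref{thm:omega_pm_main} and Theorem~\ref{thm:omega_pm_main_new}, as a direct consequence obtained by plugging the pointwise bound $\Delta(x)=O(h(x))$ into the second-moment hypothesis~(\ref{eq:second_moment_error}). Your observation about $h(2T)$ versus $h(T)$ is a fair caveat; in the paper's applications (Sections~\ref{subsec:sqfree_divisors}--\ref{subsec:pnt_error} and (\ref{result:tau_n_theta_omegapm})) the functions $h$ are all of polynomial type, so the doubling condition $h(2T)\ll h(T)$ is automatic there.
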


\begin{coro}\label{coro:omega_pm_secondmoment}
 Similar to Corollary~\ref{cor:measure_omega_pm_from_upper_bound}, we assume that the conditions in 
 Theorem~\ref{thm:omega_pm_main} hold. Then we have
 \begin{equation}\label{eq:omega_pm_secondmoment}
 \int_{[T, 2T]\cap \A_j}\Delta^2(x)\d x = \Omega(T^{2\sigma_0 + 1}) \quad \text{ for } j=1, 2. 
 \end{equation}
\end{coro}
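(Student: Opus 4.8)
The plan is to read this off directly from Theorem~\ref{thm:omega_pm_main}: one feeds that theorem the weight built out of the second moment itself, and then plays the resulting lower bound for $\mu(\A_j\cap[T,2T])$ against the trivial upper bound $\mu(\A_j\cap[T,2T])\le T$. I treat $j=1$; the case $j=2$ is word for word the same.

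First I would put
\[
h_1(T):=\frac{1}{T^{2\sigma_0+1}}\int_{[T,2T]\cap\A_1}\Delta^2(x)\,\d x+\frac1T ,
\]
a function that is strictly positive for every $T$ (the $1/T$ is present only to guarantee positivity; any quantity tending to $0$ would do just as well). Since $\sigma_0>0$, the map $x\mapsto x^{-(2\sigma_0+1)}$ is decreasing, so $x^{-(2\sigma_0+1)}\le T^{-(2\sigma_0+1)}$ for $x\in[T,2T]$, whence
\[
\int_{[T,2T]\cap\A_1}\frac{\Delta^2(x)}{x^{2\sigma_0+1}}\,\d x\ \le\ \frac{1}{T^{2\sigma_0+1}}\int_{[T,2T]\cap\A_1}\Delta^2(x)\,\d x\ \le\ h_1(T).
\]
Hence hypothesis~\eqref{eq:second_moment_error} of Theorem~\ref{thm:omega_pm_main} holds (with implied constant $1$), and that theorem gives $\mu(\A_1\cap[T,2T])=\Omega\big(T/h_1(T)\big)$, i.e. $\limsup_{T\to\infty}h_1(T)\,\mu(\A_1\cap[T,2T])/T>0$.

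The final step is to combine this with $\mu(\A_1\cap[T,2T])\le\mu([T,2T])=T$, which forces $\limsup_{T\to\infty}h_1(T)>0$; since the padding term $1/T$ vanishes, this is the same as
\[
\limsup_{T\to\infty}\frac{1}{T^{2\sigma_0+1}}\int_{[T,2T]\cap\A_1}\Delta^2(x)\,\d x>0 ,
\]
which is precisely $\int_{[T,2T]\cap\A_1}\Delta^2(x)\,\d x=\Omega(T^{2\sigma_0+1})$. Equivalently one may run the argument by contradiction: if the integral were $o(T^{2\sigma_0+1})$, take $h_1$ to be that $o(1)$ quantity (padded to be positive), apply Theorem~\ref{thm:omega_pm_main}, and observe that $\mu(\A_1\cap[T,2T])=\Omega(T/h_1(T))$ with $h_1(T)\to0$ contradicts $\mu(\A_1\cap[T,2T])\le T$.

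There is no genuinely hard step here; all the substance sits inside Theorem~\ref{thm:omega_pm_main}. The only points to be careful about are bookkeeping ones: the weight $h_1$ I supply must be admissible for Theorem~\ref{thm:omega_pm_main} — that theorem only requires $h_1$ to be positive, not monotone or of polynomial growth, so the construction above qualifies — and the auxiliary term added to keep $h_1$ strictly positive must tend to $0$, so that it is absorbed by the $\limsup$ and does not weaken the final conclusion.
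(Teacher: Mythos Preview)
Your argument is correct and is a genuinely different route from the paper's. The paper does not invoke Theorem~\ref{thm:omega_pm_main} as a black box; instead it reopens the machinery behind Theorem~\ref{thm:omega_pm_main_new}: assuming $\int_{[T,2T]\cap\A_1}\Delta^2(x)\,\d x=o(T^{2\sigma_0+1})$, it bounds $G^+(\sigma)$ directly by
\[
|G^+(s)|\le\sum_{k\ge0}2^{-k(\sigma-\sigma_0)}\Big(\int_{\A_1\cap[2^k,2^{k+1}]}\tfrac{\Delta^2(x)}{x^{2\sigma_0+1}}\,\d x\Big)^{1/2}\le c_4(\varepsilon)+\varepsilon\sum_{k\ge k(\varepsilon)}2^{-k(\sigma-\sigma_0)},
\]
forces $(\sigma-\sigma_0)G^+(\sigma)\to0$, and then repeats the contradiction from the Landau-type argument. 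Your proof bypasses all of this by feeding Theorem~\ref{thm:omega_pm_main} the tautological weight $h_1(T)=T^{-(2\sigma_0+1)}\int_{[T,2T]\cap\A_1}\Delta^2+1/T$ and confronting the resulting $\Omega(T/h_1(T))$ with the trivial bound $\mu\le T$; this is shorter and conceptually cleaner. The one point to be aware of is that the paper never proves Theorem~\ref{thm:omega_pm_main} on its own---it is presented as a special case of Theorem~\ref{thm:omega_pm_main_new}, whose proof \emph{after} Claim~1 does invoke monotonicity and polynomial growth of $h_1$. However, Claim~1 alone already yields Theorem~\ref{thm:omega_pm_main} (if $m^\#(T):=h_1(T)\mu(\A_1\cap[T,2T])/T\to0$ then $\sum_k m^\#(2^k)2^{-k\delta}=o(1/\delta)$, contradicting Claim~1), and the proof of Claim~1 uses only positivity of $h_1$ together with \eqref{eq:second_moment_error}. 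So your appeal to Theorem~\ref{thm:omega_pm_main} with a merely positive $h_1$ is justified.
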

\begin{proof}
 This Corollary follows from the proof of Theorem \ref{thm:omega_pm_main_new}. 
 We shall prove this Corollary for $\A_1$, and the proof for $\A_2$ is similar.  
Note that in the proof of Theorem~\ref{thm:omega_pm_main_new}, we showed that the integral for $G^+(s)$ 
is absolutely convergent for $\Re(s)>\sigma_0$ by assuming (\ref{eq:measure_contradic_A1}). Then we got a contradiction which 
proves Claim~(1) of Theorem~\ref{thm:omega_pm_main_new}. Now we proceed in a similar manner 
by assuming (\ref{eq:omega_pm_secondmoment}) is false. So we have
\[\int_{[T, 2T]\cap \A_1}\Delta^2(x)\d x = o(T^{2\sigma_0 + 1}). \]
So for an arbitrarily small constant $\varepsilon$, we have 
\begin{align*}
& |G^+(s)|\leq \int_{\A_1}\frac{g^+(x)\d x}{x^{\sigma+1}}
\leq \sum_{k\geq 0}\int_{\A_1\cap[2^k, 2^{k+1}]}\frac{\Delta(x)\d x}{x^{\sigma+1}}\\
&\leq \sum_{k\geq 0}\frac{1}{2^{k(\sigma-\sigma_0)}}\left(\int_{\A_1\cap[2^k, 2^{k+1}]}\frac{\Delta^2(x)\d x}{x^{2\sigma_0+1}}\right)^{1/2}\\
&\leq c_4(\varepsilon) + \varepsilon\sum_{k\geq k(\varepsilon)}\frac{1}{2^{k(\sigma-\sigma_0)}}, \\
\end{align*}
where $c_4(\varepsilon)$ is a positive constant depending on $\varepsilon$. From this we obtain that $G^+(s)$ is absolutely convergent
for $\Re(s)>\sigma_0$. Now onwards the proof is same as that of Theorem~\ref{thm:omega_pm_main_new}.
\end{proof}

\section{Applications}
Now we demonstrate applications of our theorems in the previous section to error terms appearing in two well known asymptotic 
formulas.
\subsection{Square Free Divisors}\label{subsec:sqfree_divisors}

Let $a_n=2^{\omega(n)}$, where $\omega(n)$ denotes the number of distinct prime factors of $n$; equivalently, $a_n$ denotes the number of square free
divisors of $n$. We write
\begin{align*}
\sum_{n\leq x}^* 2^{\omega(n)}=\M(x) + \Delta(x), 
\end{align*}
where 
\[\M(x)= \frac{x\log x}{\zeta(2)}+\left(-\frac{2\zeta'(2)}{\zeta^2(2)} + \frac{2\gamma - 1}{\zeta(2)}\right)x,\]
and by a theorem of H{\"o}lder \cite{holder}
\begin{equation}\label{eq:sqfree_divisors_ub_unconditional}
 \Delta(x)\ll x^{1/2}.
\end{equation}
Under Riemann Hypothesis, Baker \cite{baker} has improved the above upper bound to 
\[\Delta(x)\ll x^{4/11}.\]
It is easy to see that the Dirichlet series $D(s)$ has the following form:
\[D(s)=\sum_{n=1}^{\infty}\frac{2^{\omega(n)}}{n^s}=\frac{\zeta^2(s)}{\zeta(2s)}.\]
Let $A(s)$ be the Mellin transform of $\Delta(x)$ at $s$, and let $s_0$ be 
the zero of $\zeta(2s)$ with least positive imaginary part:
\begin{equation}\label{first_zeta_zero}
 2s_0=\half + i 14.134\ldots.
\end{equation}
 \begin{figure}
 \begin{tikzpicture}[yscale=0.9]
\draw [<->][thin, gray] (-1, -4.4)--(-1, 4.4);
\node at (-1.3, 0.22) {$0$};
\draw [<->][thin, gray] (4, 0)--(-3, 0);
\fill (3, 0) circle[radius=2pt];
\node at (2.8, 0.3) {$1$};
\fill (0.5, 3) circle[radius=1.5pt];
\node at (0.3, 3.2) {\small{$s_0$}};

\draw [thick] (3.6,-0.1)--(3.6, 0.1);
\node at (3.35, 0.35) {$\frac{5}{4}$};
\draw [thick] (0.5,-0.1)--(0.5, 0.1);
\node at (0.3, 0.35) {$\quater$};
\draw [thick] (2,-0.1)--(2, 0.1);
\node at (1.7, 0.35) {$\frac{3}{4}$};
\draw [thick] (0,-0.1)--(0, 0.1);
\node at (-0.3, 0.35) {$\frac{1}{5}$};

\draw [thick] (-1.1,-0.7)--(-0.9, -0.7);
\node at (-1.5, -0.7) {\small{$-2$}};
\draw [thick] (-1.1,0.7)--(-0.9, 0.7);
\node at (-1.33, 0.7) {\small{$2$}};
\draw [thick] (-1.1,4)--(-0.9, 4);
\node at (-1.5, 4) {\small{$14$}};

\draw [dashed] [postaction={decorate, decoration={ markings,
mark= between positions 0.09 and 0.98 step 0.28 with {\arrow[line width=1pt]{>},}}}]
(3.6, -4.4)--(3.6, -0.7)--(2, -0.7)--(2, 0.7)--(3.6, 0.7)--(3.6, 4.4);

\draw [dotted, thick] [postaction={decorate, decoration={ markings,
mark= between positions 0.3 and 0.7 step 0.2 with {\arrow[line width=1pt]{>},}}}]
(3.6, -4.4)--(3.6, -0.7)--(0, -0.7)--(0, 4)--(1, 4)--(1, 0.7)--(3.6, 0.7)--(3.6, 4.4);

\end{tikzpicture}
\caption{Contours for square-free divisors.}\label{fg:sq_free_divisors}
\end{figure}
We define a contour $\Co^{(1)}$ as union of the following five lines:
\begin{align*}
 \Co^{(1)}:=&\left(\frac{5}{4}-i\infty,\ \frac{5}{4}-i2\right]\cup\left[\frac{5}{4}-i2,\ \frac{3}{4}-i2\right]
 \cup\left[\frac{3}{4}-i2,\ \frac{3}{4}+i2\right]\\
 &\cup\left[\frac{3}{4}+i2,\ \frac{5}{4}+i2\right]\cup\left[\frac{5}{4}+i2,\ \frac{5}{4}+i\infty\right)
\end{align*}
The contour $\Co^{(1)}$ is represented by \lq dashed\rq \ lines in Figure~\ref{fg:sq_free_divisors}.
By Theorem~\ref{thm:analytic_continuation_mellin_transform}, we have
\[A(s)=\int_1^\infty\frac{\Delta(x)}{x^{s+1}}\d x=\int_{\Co^{(1)}}\frac{D(\eta)}{\eta(s-\eta)}\d \eta.\]
Now, we shift the contour $\Co^{(1)}$ to form a new contour $\Co^{(2)}$, so that 
\[1, \ s_0,\  l\left(\quater, \infty\right)\]
lie to the right of $\Co^{(2)}$. We have represented the contour $\Co^{(2)}$ by dotted lines in Figure~\ref{fg:sq_free_divisors}.

Since $s_0$ is a pole of $D(s)$ and is on the right side of $\Co^{(1)}$, we have
\begin{align*}
A(s)=\int_{\Co^{(2)}}\frac{D(\eta)}{\eta(s-\eta)}\d \eta + \underset{\eta=s_0}{\res}\left(\frac{D(\eta)}{\eta(s-\eta)}\right).
\end{align*}
From the above formula, we may compute the following limits:
\begin{equation*}
\lambda_1:= \lim_{\sigma\searrow0}\sigma|A(\sigma+s_0)| = |s_0|^{-1} \left|\underset{\eta=s_0}{\res}D(\eta)\right|>0
\end{equation*}
and 
\begin{equation*}
\lim_{\sigma\searrow0}\sigma A(\sigma+ 1/4)=0.
\end{equation*}
For a fixed $\epsilon_0>0$, 
\begin{align*}
 \A_1&=\left\{x: \Delta(x)>(\lambda_1-\epsilon_0)x^{1/4}\right\}\\
\text{and} \quad \A_2&=\left\{ x : \Delta(x)<(-\lambda_1+\epsilon_0)x^{1/4}\right\}.
\end{align*}
Using Corollary~\ref{cor:measure_omega_pm_from_upper_bound} and (\ref{eq:sqfree_divisors_ub_unconditional}), we get
\begin{equation}\label{eq:sqfree_divisors_omegaset_uc}
 \mu\left(\A_j\cap[T, 2T]\right)=\Omega\left(T^{1/2}\right) \text{ for } j=1, 2.
\end{equation}
Under Riemann Hypothesis 
we may show (also see Proposition~\ref{prop:upper_bound_second_moment_twisted_divisor}),
\[\int_{T}^{2T}\Delta^2(x)\ll T^{3/2+\epsilon}\ \text{ for any } \epsilon>0.\]
The above upper bound and Theorem~\ref{thm:omega_pm_main} give
\begin{equation}\label{eq:sqfree_divisors_omegaset}
 \mu\left(\A_j\cap[T, 2T]\right)=\Omega\left(T^{1-\epsilon}\right), \text{ for } j=1, 2 \ \text{ and for any } \ \epsilon>0.
\end{equation}

\subsection{The Prime Number Theorem Error}\label{subsec:pnt_error}
Consider the error term in the Prime Number Theorem:
\[\Delta(x)=\sum_{n\leq x}^*\Lambda(n)-x.\]
Let
\[\lambda_2=|2s_0|^{-1}, \]
where $2s_0$ is the first nontrivial zero of $\zeta(s)$ as in (\ref{first_zeta_zero}).
We shall apply Corollary~\ref{cor:measure_omega_pm_from_upper_bound} to prove the following proposition.
\begin{prop}\label{prop:pnt}
  We write
\begin{align*}
 \A_1&=\left\{x: \Delta(x)>(\lambda_2-\epsilon_0)x^{1/2}\right\}\\
 \text{and} \quad \A_2&=\left\{ x : \Delta(x)<(-\lambda_2+\epsilon_0)x^{1/2}\right\},
\end{align*}
for a fixed $\epsilon_0$ such that $0<\epsilon_0<\lambda_2$. Then
\[\mu\left(\A_j\cap[T, 2T]\right)=\Omega\left(T^{1-\epsilon}\right), \text{ for } j=1, 2 \ \text{ and for any } \epsilon>0.\]
\end{prop}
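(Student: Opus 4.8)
The plan is to obtain Proposition~\ref{prop:pnt} as a direct instance of the machinery of this chapter: identify the Dirichlet series and its Mellin transform, verify Assumptions~\ref{as:for_landau} at $\sigma_0=\half$, and then quote Corollary~\ref{cor:measure_omega_pm_from_upper_bound}. Here $a_n=\Lambda(n)$, so $D(s)=-\zeta'(s)/\zeta(s)$, which converges absolutely for $\Re(s)>1$ and whose only pole in $\Re(s)>\half$ relevant to the main term is the simple pole at $s=1$; the residue of $D(s)x^s/s$ there is $x$, so $\M(x)=x$ and $\Delta(x)=\sum_{n\le x}^*\Lambda(n)-x$ as required. Since $-\zeta'/\zeta$ has infinitely many poles to the left of $\Re(s)=1$, it is cleanest not to invoke Theorem~\ref{thm:analytic_continuation_mellin_transform} directly but to use the identity from the Alternative Approaches section: for $\Re(s)>1$,
\[A(s)=\frac{D(s)}{s}-\int_1^\infty\M(x)\,x^{-s-1}\,\d x=-\frac{\zeta'(s)}{s\,\zeta(s)}-\frac{1}{s-1}.\]
The right-hand side is meromorphic on $\mathbb{C}$; the apparent pole at $s=1$ cancels (the residue of $-\zeta'(s)/(s\zeta(s))$ there equals $1$), and since $\zeta$ has no zeros with $\Re(s)>0$ for $|\Im(s)|<14$, the function $A(s)$ is holomorphic on $\{\Re(s)>\half,\ |\Im(s)|<14\}$, a region containing the half-line $l(\half,\infty)$. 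This furnishes both the meromorphic continuation of $A(s)$ and Assumptions~\ref{as:for_landau}(iv).

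Next I would verify the remaining parts of Assumptions~\ref{as:for_landau} with $\sigma_0=\half$. Let $\rho_1=2s_0=\half+i\gamma_1$, $\gamma_1=14.134\ldots$, be the first nontrivial zero of $\zeta$, as in (\ref{first_zeta_zero}); it is simple, so near $s=\rho_1$ one has $-\zeta'(s)/\zeta(s)=-(s-\rho_1)^{-1}+O(1)$ and hence $A(s)=-\bigl(\rho_1(s-\rho_1)\bigr)^{-1}+O(1)$. Taking $t_0=\gamma_1$ gives
\[\lambda:=\limsup_{\sigma\searrow\frac12}(\sigma-\tfrac12)\,|A(\sigma+i\gamma_1)|=|\rho_1|^{-1}=|2s_0|^{-1}=\lambda_2>0,\]
(the $\limsup$ being an honest limit), which is part (i) and shows $\sigma_0+it_0$ is a genuine singularity. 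On the real line $A(\sigma)$ is holomorphic at $\sigma=\half$ because $\zeta(\half)\neq 0$, so $(\sigma-\half)A(\sigma)\to 0$ as $\sigma\searrow\half$; thus $l_s=l_i=0$, which is part (ii), and since $l_i+\lambda=\lambda_2>0$ and $l_s-\lambda=-\lambda_2<0$ this also gives part (iii). With these values the sets of Definition~\ref{def:A1A2} coincide exactly with the $\A_1,\A_2$ of Proposition~\ref{prop:pnt}, the parameter there called $\epsilon_0$ playing the role of $\epsilon$.

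Finally, the hypotheses of Theorem~\ref{thm:omega_pm_main}, hence of Corollary~\ref{cor:measure_omega_pm_from_upper_bound}, are now in place at $\sigma_0=\half$. Feeding in the prime number theorem upper bound $\Delta(x)=O(h(x))$ for a monotone $h$ and applying Corollary~\ref{cor:measure_omega_pm_from_upper_bound} gives
\[\mu\bigl(\A_j\cap[T,2T]\bigr)=\Omega\!\left(\frac{T^{1+2\sigma_0}}{h^2(T)}\right)=\Omega\!\left(\frac{T^2}{h^2(T)}\right)\qquad(j=1,2),\]
which yields the asserted estimate; a sharper pointwise bound for $\Delta$, such as $x^{\half}\log^2 x$ under the Riemann Hypothesis, feeds through to the sharper $\Omega(T/\log^4 T)$ recorded earlier. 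I do not expect a genuine analytic obstacle: once the Mellin-transform and oscillation apparatus of the previous sections is available, the argument is bookkeeping. The two points needing care are (a) that the closed form $A(s)=-\zeta'(s)/(s\zeta(s))-(s-1)^{-1}$ really is holomorphic across $l(\half,\infty)$, which rests on $\zeta$ having no positive real zeros and no low-lying zeros, and (b) the evaluation $\lambda=|2s_0|^{-1}$, which rests on the first zero being simple; thereafter the strength of the final measure estimate is governed entirely by the quality of the input bound $\Delta(x)=O(h(x))$.
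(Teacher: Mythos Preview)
Your verification of Assumptions~\ref{as:for_landau} at $\sigma_0=\half$ via the explicit formula $A(s)=-\zeta'(s)/(s\zeta(s))-(s-1)^{-1}$ is clean and correct. The gap is in the very last step. The only unconditional pointwise bound available is the Vinogradov--Korobov estimate $h(x)=x\exp\bigl(-c(\log x)^{3/5}(\log\log x)^{-1/5}\bigr)$, and with this choice Corollary~\ref{cor:measure_omega_pm_from_upper_bound} gives only
\[
\frac{T^{2}}{h^2(T)}=\exp\!\bigl(2c(\log T)^{3/5}(\log\log T)^{-1/5}\bigr),
\]
which is $o(T^{\epsilon})$ for every $\epsilon>0$, not $\Omega(T^{1-\epsilon})$. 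So applying the corollary at $\sigma_0=\half$ with the unconditional bound does \emph{not} yield the asserted estimate; it only works once you assume RH and take $h(x)=x^{1/2}\log^2 x$.

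The paper closes this gap by a dichotomy on RH. If RH holds, your argument (with the RH bound) gives $\Omega(T/\log^4 T)$, which is more than enough. If RH fails, set $\mathfrak a=\sup\{\sigma:\zeta(\sigma+it)=0\}>\half$. Two things now happen simultaneously: Perron's formula gives $\Delta(x)\ll x^{\mathfrak a+\epsilon}$, and for any small $\delta>0$ there is a zero $\sigma'+it'$ with $\mathfrak a-\delta<\sigma'\le\mathfrak a$. Apply Corollary~\ref{cor:measure_omega_pm_from_upper_bound} with $\sigma_0=\sigma'$ (not $\half$) and $h(x)=x^{\mathfrak a+\epsilon}$; the same bookkeeping you already did shows the hypotheses hold at this new $\sigma_0$, and the output is $\Omega(T^{1+2\sigma'-2\mathfrak a-2\epsilon})=\Omega(T^{1-2\delta-2\epsilon})$. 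Since $\sigma'>\half$, the sets $\{x:\pm\Delta(x)>(\lambda''/2)x^{\sigma'}\}$ are eventually contained in $\A_1,\A_2$, and the proposition follows. The point you are missing is that when RH fails the singularity and the upper bound are both governed by the same quantity $\mathfrak a$, so moving $\sigma_0$ up to $\sigma'$ recovers the loss.
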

\begin{proof}
 Here we apply Corollary~\ref{cor:measure_omega_pm_from_upper_bound} in a similar way as in the previous application,
so we shall skip the details. 

The Riemann Hypothesis, Theorem~\ref{thm:kaczorowski} and Theorem~PNT** give
\[\mu\left(\A_j\cap[T, 2T]\right)=\Omega\left(\frac{T}{\log^4 T}\right) \text{ for } j=1, 2; \]
this implies the proposition. But if the Riemann Hypothesis is false, then there exists  a constant $\mathfrak{a}$, with $1/2<\mathfrak{a}\leq1$, such that
\[\mathfrak{a}=\sup\{\sigma:\zeta(\sigma+it)=0\}.\]
Using Perron summation formula, we may show that
\[\Delta(x)\ll x^{\mathfrak{a}+\epsilon},\]
for any $\epsilon>0$. Also for any arbitrarily small $\delta$, we have $\mathfrak{a}-\delta<\sigma'<\mathfrak{a}$ such that
$\zeta(\sigma'+it')=0$ for some real number $t'$. If $\lambda'':=|\sigma'+it'|^{-1}$, then by Corollary~\ref{cor:measure_omega_pm_from_upper_bound}
we get 
\begin{align*}
 \mu\left(\left\{x\in[T, 2T]:\Delta(x)>(\lambda''/2)x^{\sigma'}\right\}\right)&=\Omega\left(T^{1-2\delta-2\epsilon}\right)\\
 \text{ and } \quad \mu\left(\left\{x\in[T, 2T]:\Delta(x)<-(\lambda''/2)x^{\sigma'}\right\}\right)&=\Omega\left(T^{1-2\delta-2\epsilon}\right).
\end{align*}
As $\delta$ and $\epsilon$ are arbitrarily small and $\sigma'>1/2$, the above $\Omega$ bounds imply the proposition.
\end{proof}

\begin{rmk}
Results similar to Proposition~\ref{prop:pnt} can be obtained for error terms in asymptotic formulas for partial sums of Mobius function and for 
partial sums of the indicator 
function of square-free numbers.  
\end{rmk}
\begin{rmk}
In Section~\ref{subsec:sqfree_divisors}~and~\ref{subsec:pnt_error}, we saw that $\mu(\A_j)$ are large. Now suppose that 
$\mu(\A_1\cup\A_2)$ is large, then what can we say about the individual sizes of $\A_j$? We may guess that $\mu(\A_1)$ and $\mu(\A_2)$
are both large and almost equal. But this may be very difficult to prove. In the next chapter, we shall show that if $\mu(\A_1\cup\A_2)$ is large,
then both $\A_1$ and $\A_2$ are nonempty. 
\end{rmk}
\newpage
\chapter{Influence Of Measure}\label{chap:measure_analysis}
In this chapter, we study the influence of measure of the set where $\Omega$-result holds, on its possible improvements. The 
following proposition is an interesting application of the main theorem (Theorem~\ref{thm:omega_pm_measure}) of this chapter.

Let $\Delta(x)$ denotes the error term appearing in the assymptotic formula for average order of non-isomorphic abelian groups:
\begin{equation}\label{eq:non_isomorphic_ab_gr_delta}
\Delta(x)=\sum_{n\leq x}^*a_n - \sum_{k=1}^{6}\Big(\prod_{j \neq k}\zeta(j/k)\Big) x^{1/k},
\end{equation}
where $a_n$ denotes the number of non-isomophic abelian groups of order $n$.
One would expect that 
\[\Delta(x)=O\left(x^{1/6+\epsilon}\right)\ \text{ for any } \epsilon>0\]
(see Section~\ref{sec:sub_abelian_group} for more details), so an analogus $\Omega_\pm$ bound for $\Delta(x)$ is also 
expected. The proposition below gives a sufficient condition to obtain such an $\Omega_\pm$ bound.
\begin{prop}\label{prop:abelian_group}
 Let $\delta$ be such that $0<\delta<1/42$, and $\Delta(x)$ be as in (\ref{eq:non_isomorphic_ab_gr_delta}). Then 
 either
  \[\int_T^{2T}\Delta^4(x)\d x=\Omega( T^{5/3+\delta} ),\]
or
  \[\Delta(x)=\Omega_\pm(x^{1/6-\delta}).\]
\end{prop}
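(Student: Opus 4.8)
The plan is to argue by contradiction, combining the dichotomy machinery already developed in this chapter (specifically Proposition~\ref{prop:refine_omega_from_measure} and Theorem~\ref{thm:omega_pm_measure}) with a fourth-moment bound on $\Delta(x)$. Suppose neither alternative holds. Then $\int_T^{2T}\Delta^4(x)\,\d x = o(T^{5/3+\delta})$, and $\Delta(x)\neq\Omega_\pm(x^{1/6-\delta})$, so one of the two one-sided $\Omega$ statements fails; by the symmetry of the problem (there is nothing distinguishing positive from negative fluctuations here) we may assume $\Delta(x)\neq\Omega_+(x^{1/6-\delta})$, i.e. $\Delta(x)\ll x^{1/6-\delta}$ eventually (more precisely $\limsup \Delta(x)/x^{1/6-\delta}\le 0$, which still yields the upper bound we need after an $\epsilon$-adjustment). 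The goal is then to feed this one-sided upper bound, together with the fourth-moment control, into Theorem~\ref{thm:omega_pm_measure} to produce an $\Omega_\pm$ conclusion that contradicts the failure of $\Omega_\pm(x^{1/6-\delta})$.

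First I would set $\alpha$ slightly below $1/6$, say $\alpha = 1/6-\delta'$ for a suitable $\delta' $ close to $\delta$, and estimate the measure of $\A(\alpha,T)=\{x\in[T,2T]:|\Delta(x)|>x^\alpha\}$ from below. Here is where the fourth moment enters: on $[T,2T]\setminus\A(\alpha,T)$ one has $|\Delta(x)|\le x^\alpha \ll T^\alpha$, so by Hölder's/Chebyshev's inequality the bulk of any nontrivial lower bound for a moment of $\Delta$ must come from $\A(\alpha,T)$; conversely, using $\int_T^{2T}\Delta^4 = o(T^{5/3+\delta})$ together with a matching \emph{lower} bound for a moment of $\Delta$ on $[T,2T]$ (which should be available from the known $\Omega(x^{1/6}\sqrt{\log x})$-type oscillation of Balasubramanian--Ramachandra, or more directly from the measure estimate $\mu(\A\cap[T,2T])=\Omega(T^{5/6-\epsilon})$ quoted in the synopsis for the set where $|\Delta(x)|\ge \lambda_2 x^{1/6}(\log x)^{1/2}$), one derives that $\mu(\A(\alpha,T))=\Omega(T^{1-\eta})$ for some explicit $\eta$ depending on $\delta$, with $\eta<\delta'$ provided $\delta<1/42$. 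The numerology $1/42$ should fall out of balancing the exponents $5/3$, $1/6$ and the Hölder split — this is the bookkeeping I would not grind through here but expect to close.

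Next I would invoke Theorem~\ref{thm:omega_pm_measure}: since $D(s)=\prod_k\zeta(ks)$ has no real pole with $\Re(s)\ge \alpha-\eta$ in the relevant range (the only real pole in play sits at $s=1$ and at $s=1/k$ for $k\le 6$, all with $\Re(s)\ge 1/6 > \alpha-\eta$ — one must check that the continuation of $A(s)$ furnished by Theorem~\ref{thm:analytic_continuation_mellin_transform} indeed has no real pole for $\Re(s)\ge \alpha-\eta$, which reduces to locating the poles of $\prod_k\zeta(ks)$ and noting the ones contributing to the main term have been removed), the hypothesis ``$\mu(\A(\alpha,T))=\Omega(T^{1-\eta})$'' implies $\Delta(x)=\Omega_\pm(x^{\alpha-\eta})$. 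Choosing $\delta'$ and $\eta$ so that $\alpha-\eta = 1/6-\delta'-\eta \ge 1/6-\delta$ (feasible exactly when $\delta<1/42$) gives $\Delta(x)=\Omega_\pm(x^{1/6-\delta})$, contradicting our assumption. This contradiction establishes the proposition.

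The main obstacle I anticipate is two-fold. First, the clean input Theorem~\ref{thm:omega_pm_measure} wants is a lower bound $\mu(\A(\alpha,T))=\Omega(T^{1-\delta})$, whereas what the failure of the fourth-moment alternative gives us is an \emph{upper} bound on $\int\Delta^4$; turning that into a measure \emph{lower} bound requires an independent \emph{lower} bound on some moment of $\Delta$ over $[T,2T]$, and one must be careful that the oscillation result supplying this lower bound is genuinely unconditional and localized to $[T,2T]$ (the Balasubramanian--Ramachandra $\Omega$-estimate and its measure-theoretic refinement in Proposition~\ref{Balu-Ramachandra-measure} are the natural candidates). Second, the exponent arithmetic — propagating $5/3+\delta$, $1/6$, the $L^4$--$L^1$ Hölder loss, and the no-real-pole margin through to the threshold $\delta<1/42$ — has to be done carefully, and it is the only place the precise constant $1/42$ is forced; I would organize that as a short lemma isolating the inequality $\delta < 1/42 \iff \alpha-\eta \ge 1/6-\delta$ for the optimal choice of auxiliary parameters, rather than scattering it through the argument.
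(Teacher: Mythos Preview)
Your overall architecture is right and matches the paper's: assume the fourth-moment alternative fails, combine this with the Balasubramanian--Ramachandra second-moment lower bound via Cauchy--Schwarz to get a measure lower bound for $\{x\in[T,2T]:|\Delta(x)|>c\,x^{1/6}\}$, and feed that into Theorem~\ref{thm:omega_pm_measure}. The paper does exactly this, with $\alpha_2=1/6$, $h_1(T)=T^{1/6-\delta}$, and $\alpha_1=13/84-\delta/2$, obtaining $\Delta(x)=\Omega_\pm(h_1(x))$ directly.

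There is, however, a genuine gap in your pole analysis, and it is precisely where the number $1/42$ lives. You write that ``the only real pole in play sits at $s=1$ and at $s=1/k$ for $k\le 6$'' and that after these are removed $A(s)$ has no real pole for $\Re(s)\ge\alpha-\eta$. This is false: $D(s)=\prod_{k\ge 1}\zeta(ks)$ has a simple real pole at every $s=1/k$, and only the first six are absorbed into the main term. The pole at $s=1/7$ remains, and it is this pole that forces the constraint on $\delta$. In Theorem~\ref{thm:omega_pm_measure} one needs $D(s)$ to be pole-free on the real segment down to $\alpha_1-\epsilon_0$ (outside $\mathcal P$), and one needs $h_1(T)/T^{\alpha_1}\to\infty$; together these force $1/7<\alpha_1<1/6-\delta$, which is possible if and only if $\delta<1/6-1/7=1/42$. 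Your expectation that $1/42$ ``should fall out of balancing the exponents $5/3$, $1/6$ and the H\"older split'' is therefore misplaced: the Cauchy--Schwarz step yields $\mu\gg T^{1-\delta}$ cleanly with no further loss, and the threshold is entirely an artifact of the next real pole.

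Two smaller points. First, the detour through ``by symmetry assume $\Omega_+$ fails'' is both unjustified (the coefficients $a_n$ are nonnegative, so there is no symmetry between positive and negative fluctuations of $\Delta$) and unnecessary: Theorem~\ref{thm:omega_pm_measure} delivers $\Omega_\pm$ outright, so you never use the one-sided hypothesis. The paper simply assumes the fourth-moment bound and derives $\Omega_\pm$, with no contradiction framing. Second, there is no need to take $\alpha=1/6-\delta'$; working at $\alpha_2=1/6$ and encoding the loss in $h_1$ is cleaner and is what the paper does.
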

It may be conjectured that
\[\int_T^{2T}\Delta^4(x)\d x=O( T^{5/3+\epsilon} )\]
for any $\epsilon>0$. By the above proposition, this conjecture implies 
\[\Delta(x)=\Omega_\pm(x^{1/6-\epsilon}) \ \text{ for any } \ \epsilon>0.\]

We begin by assuming the conditions and notations given in Assumptions~\ref{as:for_continuation_mellintran}. Further we have the 
following notations for this chapter. 
\begin{notations}
For $i=0, 1, 2$, let $\alpha_i(T)$ denote positive monotonic functions such that 
$\alpha_i(T)$ converges to a constant as $T\rightarrow \infty$. For example, in some cases 
$\alpha_i(T)$ could be $1-1/\log(T)$, which tend to $1$ as $T\rightarrow \infty$. 

For $i=0, 1$, let $h_i(T)$ be positive monotonically increasing functions such that $h_i(T)\rightarrow\infty$
as $T\rightarrow\infty$.

For a real valued and non-negative function $f$, we denote 
\[\mathcal{A}(f(x)):=\{x\geq 1: |\Delta(x)|>f(x)\}.\]
\end{notations}

\section{Refining Omega Result from Measure}
Define an $\xset$ as follows.
\begin{defi}
An infinite unbounded subset $\set$ of non-negative real numbers is called an $\xset$ . 
\end{defi}
Now we hypothesize a situation when there is a lower bound estimate for the second moment of the error term. 
\begin{asump}\label{as:measure_to_omega}
 Let $\mathcal{S}$ be an $\xset$. Define a real valued positive bounded function $\alpha(T)$ on $\set$, such that
 \[0\leq \alpha(T)<M<\infty\] 
 for some constant $M$. For a fixed 
 $T$, we write
 \[\mathcal{A}_T:=[T/2, T]\cap\mathcal{A}(c_5 x^{\alpha(x)}) \ \text{ for } c_5>0.\] 
 For all $T\in \set$ and for constants $c_6, \ c_7 > 0$, assume the following bounds hold:
 \begin{enumerate}
  \item[(i)] \[\int_{\mathcal{A}_T}\frac{\Delta^2(x)}{x^{2\alpha+1}}\mathrm{d}x>c_6,\]
  \item[(ii)]
  \[\mu(\mathcal{A}_T)<c_7h_0(T),\quad \text{ and }\]
  \item[(iii)] the function 
  \[x^{\alpha+1/2}h_0^{-1/2}(x)\]
  is monotonically increasing for $x\in [T/2, T]$.
  
 \end{enumerate}
\end{asump}
We note that the first assumption indicates an $\Omega$-estimate. The next two assumptions indicate that the measure of 
the set on which the $\Omega$ estimate holds is not \lq too big\rq.

\begin{prop}\label{prop:refine_omega_from_measure}
 Suppose there exists an $\xset$ $\set$ for $\Delta(x)$, having properties as described in Assumptions~\ref{as:measure_to_omega}.
 Let the constant $c_8$ be given by
 \[c_8:=\sqrt{\frac{c_6}{2^{2M+1}c_7}}.\]
 Then there exists a $T_0$ such that for all $T>T_0$ and $T\in \set$, we have 
 \[|\Delta(x)|>c_8 x^{\alpha+1/2}h_0^{-1/2}(x)\]
for some $x\in [T/2, T]$.

In particular 
\[\Delta(x)=\Omega(x^{\alpha+1/2}h_0^{-1/2}(x)).\]
\end{prop}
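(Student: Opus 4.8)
The plan is to argue by contradiction, playing the lower bound for the truncated second moment in Assumption~\ref{as:measure_to_omega}(i) against the measure bound in Assumption~\ref{as:measure_to_omega}(ii), while Assumption~\ref{as:measure_to_omega}(iii) is used to control the distortion incurred by passing from scale $T$ down to scale $T/2$. So fix $T\in\set$ large and suppose the asserted conclusion fails at $T$, i.e.
\[|\Delta(x)|\le c_8\,x^{\alpha(x)+1/2}h_0^{-1/2}(x)\qquad\text{for every }x\in[T/2,T].\]
First I would square this and divide by $x^{2\alpha(x)+1}$, obtaining $\Delta^2(x)/x^{2\alpha+1}\le c_8^2\,h_0^{-1}(x)$ on $[T/2,T]$; since $h_0$ is non-decreasing and $\mathcal A_T\subseteq[T/2,T]$, on $\mathcal A_T$ the right-hand side is at most $c_8^2\,h_0^{-1}(T/2)$. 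Integrating over $\mathcal A_T$ and inserting the measure bound (ii) gives
\[\int_{\mathcal A_T}\frac{\Delta^2(x)}{x^{2\alpha+1}}\,\d x\;\le\;c_8^2\,h_0^{-1}(T/2)\,\mu(\mathcal A_T)\;<\;c_8^2\,c_7\,\frac{h_0(T)}{h_0(T/2)}.\]

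The step I expect to be the crux is bounding the ratio $h_0(T)/h_0(T/2)$ by the explicit constant $2^{2M+1}$ using only Assumption~(iii) together with $0\le\alpha<M$. Evaluating the non-decreasing function $x\mapsto x^{\alpha(x)+1/2}h_0^{-1/2}(x)$ at the endpoints $T/2$ and $T$ yields $(T/2)^{\alpha(T/2)+1/2}h_0^{-1/2}(T/2)\le T^{\alpha(T)+1/2}h_0^{-1/2}(T)$, which rearranges to
\[\frac{h_0(T)}{h_0(T/2)}\;\le\;2^{\,2\alpha(T/2)+1}\;T^{\,2(\alpha(T)-\alpha(T/2))}.\]
If $\alpha$ is non-increasing the second factor is at most $1$ and the first is strictly below $2^{2M+1}$, so the bound is immediate; if $\alpha$ is non-decreasing one uses that $\alpha(T)$ converges to a constant, so $(\alpha(T)-\alpha(T/2))\log T\to0$ and $T^{2(\alpha(T)-\alpha(T/2))}\to1$, whence $T_0$ can be chosen so large that for all $T>T_0$ the product stays $\le 2^{2M+1}$. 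Combining this with the previous display and the definition $c_8^2=c_6/(2^{2M+1}c_7)$ produces $\int_{\mathcal A_T}\Delta^2(x)x^{-2\alpha-1}\,\d x<c_6$, contradicting (i).

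Hence for every $T>T_0$ in $\set$ there is a point $x_T\in[T/2,T]$ with $|\Delta(x_T)|>c_8\,x_T^{\alpha(x_T)+1/2}h_0^{-1/2}(x_T)$, which is the first assertion. Since $\set$ is an $\xset$ and therefore unbounded, the points $x_T$ may be taken to tend to infinity, and as $x\mapsto x^{\alpha(x)+1/2}h_0^{-1/2}(x)$ is positive and monotone (by (iii)), this is precisely $\Delta(x)=\Omega(x^{\alpha+1/2}h_0^{-1/2}(x))$, which is the claimed bound.
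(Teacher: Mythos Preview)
Your argument is correct and follows the same contradiction strategy as the paper, but the paper's bounding is more direct and avoids your case analysis entirely. Instead of bounding $h_0^{-1}(x)\le h_0^{-1}(T/2)$ and then having to control the ratio $h_0(T)/h_0(T/2)$, the paper uses assumption~(iii) at the \emph{upper} endpoint: from $|\Delta(x)|\le c_8\,x^{\alpha+1/2}h_0^{-1/2}(x)$ and the monotonicity of $x^{\alpha+1/2}h_0^{-1/2}(x)$ one gets $\Delta^2(x)\le c_8^2\,T^{2\alpha+1}/h_0(T)$ on $[T/2,T]$, and then simply $x^{-(2\alpha+1)}\le (T/2)^{-(2\alpha+1)}$. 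The resulting factor is $(T/(T/2))^{2\alpha+1}=2^{2\alpha+1}\le 2^{2M+1}$, which combined with $\mu(\mathcal A_T)<c_7 h_0(T)$ gives the contradiction in one line.

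One small caution on your route: the claim that $(\alpha(T)-\alpha(T/2))\log T\to 0$ does not follow from monotone convergence of $\alpha$ alone (a step function $\alpha$ jumping by $2^{-k}$ near $T=e^{2^{k}}$ is a counterexample). The paper's direct bound sidesteps this issue; alternatively, if one reads $\alpha$ as the fixed value $\alpha(T)$ throughout the interval $[T/2,T]$ (as the paper's notation ``for a fixed $T$'' suggests), your ratio bound collapses to $2^{2\alpha(T)+1}<2^{2M+1}$ with no case analysis needed.
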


\begin{proof}
 If the statement of the above proposition is not true, then for all $x\in [T/2, T]$ we have
 \[\Delta(x)\leq c_8 x^{\alpha + 1/2}h_0^{-1/2}(x).\] 
 From this, we may derive an upper bound for second moment of $\Delta(x)$:
 \begin{align*}
 \int_{\mathcal{A}_T}\frac{\Delta^2(x)}{x^{2\alpha+1}}\mathrm{d}x &\leq 
 \frac{c_8^2 T^{2\alpha+1}\mu(\mathcal{A}_T\cap[T/2, T])}{h_0(T)(T/2)^{2\alpha+1}} \\
 &\leq c_8^2 2^{2M + 1} c_7 \leq c_6.
 \end{align*}
This bound contradicts (i) of Assumptions~\ref{as:measure_to_omega}, which proves the proposition.
\end{proof}
The above proposition will be used in the next chapter to obtain a result on the error term appearing in the asymptotic formula
for $\sum_{n\leq x}^*|\tau(n, \theta)|^2$.
\section{Omega Plus-Minus Result from Measure}
 In this section, we prove an $\Omega_\pm$ result for $\Delta(x)$ when $\mu(A_T)$ is big. 
We formalize the conditions in the following assumptions.
\begin{asump}\label{as:measure_omega_plus_minus}
Suppose the conditions in Assumptions~\ref{as:for_continuation_mellintran} hold.
Let $l$ be an integer such that
\[l>\max(\sigma_2, 1),\]
and let $\alpha_1(T)$ be a monotonic function satisfying the inequality 
\[0<\alpha_1(T)\leq \sigma_1.\]
Furthermore: 
\begin{enumerate}
 \item[(i)] the Dirichlet series $D(\sigma+it)$ has no pole when $\alpha_1(T)\le \sigma\le \sigma_1$;
 \item[(ii)] if $|t|\leq T^{2l}$ and $\alpha_1(T)\le \sigma\le \sigma_1$, we have 
 \[|D(\sigma + it)|\leq c_9 (|t|+1)^{l-1}\]
 for some constant $c_9>0$.
\end{enumerate}
\end{asump}

\begin{asump}\label{as:measure_omega_plus_minus_weak}
Suppose there exists $\epsilon>0$ such that the following holds:
\begin{flushleft}
 if $D(\sigma+it)$ has no pole for $\alpha_1(T)-\epsilon< \sigma \le \sigma_1$ and $|t|\leq 2T^{2l}$, then 
there exists a constant $c_{10}>0$ depending on $\epsilon$ such that
  \[|D(\sigma + it)|\leq c_{10} (|t|+1)^{l-1}, \]
when $\alpha_1(T)\le \sigma \le \sigma_1$ and $|t|\leq T^{2l}$. 
\end{flushleft}
\end{asump}

Assumptions~\ref{as:measure_omega_plus_minus_weak} says that if $D(s)$ does not have pole in 
$\alpha_1(T)-\epsilon< \sigma \le \sigma_1,$ then it has polynomial growth in a certain region.

\begin{lem}\label{lem:perron_for_omegapm_measure}
Under the conditions in Assumptions~\ref{as:measure_omega_plus_minus}, we have
\[\Delta(x)
=\int_{\alpha_1-iT^{2l}}^{\alpha_1+iT^{2l}}\frac{D(\eta)x^\eta}{\eta}\d\eta + O(T^{-1}),\]
for all $x\in [T/2, 5T/2]$. 
\end{lem}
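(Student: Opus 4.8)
The plan is to start from the contour integral representation of $\Delta(x)$ furnished by Lemma~\ref{lem:integral_exp_delta}, namely $\Delta(x)=\int_{\Co}\frac{D(\eta)x^{\eta}}{\eta}\d\eta$, and then shift the contour $\Co$ leftward to the vertical line $\Re(\eta)=\alpha_1$, while also truncating it at height $\pm T^{2l}$. Concretely, I would first replace $\Co$ by a truncated contour $\Co(T^{2l})$ (in the notation of the earlier definition $\Co(N)$), controlling the tail $\int_{\Co\setminus\Co(T^{2l})}\frac{D(\eta)x^\eta}{\eta}\d\eta$: on the portions of $\Co$ with $\Re(\eta)=\sigma_3>\sigma_2$ the Dirichlet series $D(\eta)$ is bounded, $x^{\sigma_3}\ll T^{\sigma_3}$ for $x\in[T/2,5T/2]$, and $\int_{T^{2l}}^\infty \frac{\d v}{v}$-type estimates (with the $1/|\eta|$ decay) give a bound of size $\ll T^{\sigma_3}/T^{2l}$, which is $O(T^{-1})$ once $l>\max(\sigma_2,1)$ is chosen large enough (here one uses $2l-\sigma_3$ can be made $>1$; if needed one fixes $\sigma_3$ close to $\sigma_2$ first). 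This is the routine tail estimate and causes no trouble.

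The substantive step is the horizontal shift from $\Re(\eta)=\sigma_1$ (and the excursions out to $\sigma_3$) down to $\Re(\eta)=\alpha_1(T)$. I would apply Cauchy's theorem to the closed rectangular contour with vertical sides $\Re(\eta)=\alpha_1$ and $\Re(\eta)=\sigma_3$ and horizontal sides at height $\pm T^{2l}$. By Assumptions~\ref{as:measure_omega_plus_minus}(i) there are no poles of $D(\eta)$ in the strip $\alpha_1(T)\le\Re(\eta)\le\sigma_1$, and the only pole of the integrand $D(\eta)x^\eta/\eta$ could be at $\eta=0$; since $0<\alpha_1(T)\le\sigma_1$, the point $\eta=0$ lies strictly to the left of the shifted line, so it is not enclosed and there is no residue contribution. (One should also remark that the poles of $D$ in $\sigma_1<\Re(\eta)\le\sigma_2$ from Assumptions~\ref{as:for_continuation_mellintran} have already been accounted for in $\M(x)$ and lie outside the rectangle.) Hence the integral over the left side equals the integral over the right side minus the two horizontal pieces.

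It remains to bound the two horizontal segments at height $\pm T^{2l}$, running over $\alpha_1(T)\le\Re(\eta)\le\sigma_3$. Here I split at $\Re(\eta)=\sigma_1$: for $\sigma_1\le\Re(\eta)\le\sigma_3$ use the convexity/absolute-convergence bound on $D$ near $\Re(\eta)=\sigma_2$ (polynomial in the height), while for $\alpha_1(T)\le\Re(\eta)\le\sigma_1$ use the hypothesis $|D(\sigma+it)|\le c_9(|t|+1)^{l-1}$ from Assumptions~\ref{as:measure_omega_plus_minus}(ii), valid precisely because $|t|=T^{2l}\le T^{2l}$. Combined with $|x^\eta|\ll T^{\sigma_3}$ and $|1/\eta|\ll 1/T^{2l}$ on these segments, and integrating over a bounded range of $\Re(\eta)$, each horizontal piece is $\ll T^{\sigma_3}\cdot (T^{2l})^{l-1}/T^{2l}=T^{\sigma_3+2l(l-1)-2l}$. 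I expect the main obstacle to be precisely this bookkeeping: one must verify that $l$ can be chosen (depending only on $\sigma_2$) so that this exponent, as well as the tail exponent $\sigma_3-2l$, is $\le -1$ uniformly for $x\in[T/2,5T/2]$ — this is why the excerpt builds in the freedom $l>\max(\sigma_2,1)$ and the generous truncation height $T^{2l}$ rather than a fixed power of $T$. Once these two estimates are in place, collecting them gives
\[
\Delta(x)=\int_{\alpha_1-iT^{2l}}^{\alpha_1+iT^{2l}}\frac{D(\eta)x^\eta}{\eta}\d\eta+O(T^{-1}),
\]
as claimed; the restriction $x\in[T/2,5T/2]$ is used only to bound $x^{\sigma_3}$ by $T^{\sigma_3}$ (up to a constant) throughout.
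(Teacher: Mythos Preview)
Your overall strategy---truncating the Perron integral and shifting the line of integration to $\Re(\eta)=\alpha_1$---is what the paper's one-line proof ``follows from Perron summation formula'' means, so the approaches agree. Two of your estimates, however, fail. The tail on $\Re(\eta)=\sigma_3$ cannot be bounded by integrating $|D(\eta)x^\eta/\eta|$: the integral $\int_{T^{2l}}^\infty v^{-1}\,\d v$ diverges, so your claimed bound $T^{\sigma_3}/T^{2l}$ does not follow that way. This truncation error is precisely what the effective Perron formula (Theorem~\ref{thm:effective_perron_formula}) controls, and you should invoke that error term directly rather than try to estimate the conditionally convergent tail by absolute values.

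More seriously, your horizontal-segment estimate is wrong in the direction that matters. You obtain a bound $T^{\sigma_3+2l(l-1)-2l}=T^{\sigma_3+2l(l-2)}$ and then assert that $l$ can be chosen so that the exponent is $\le -1$. But for every integer $l\ge 2$ one has $\sigma_3+2l(l-2)\ge\sigma_3>0$, and enlarging $l$ makes the exponent \emph{larger}, not smaller. The crude bound $|D(\sigma+iT^{2l})|\ll(T^{2l})^{l-1}$ from Assumption~\ref{as:measure_omega_plus_minus}(ii) is simply too weak to force the horizontal pieces to be $O(T^{-1})$. In concrete applications one uses the genuine (much sharper) convexity bound for $D$ on the horizontals; alternatively, one observes that the horizontal contribution need not be $O(T^{-1})$ pointwise but only $O(1)$ after the $(l{+}1)$-fold integration carried out in Theorem~\ref{thm:upper_bound_measure}, where the extra factors $\prod_j(\eta-\alpha_2+j)^{-1}$ supply the missing decay in $|\eta|$.
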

\begin{proof}Follows from Perron summation formula.
\end{proof}

\begin{lem}[Balasubramanian and Ramachandra \cite{BaluRamachandra2}]\label{lem:ramachandra_trick}
Let  $T\ge 1,$ $\delta_0>0$ and $f(x)$ be a real-valued integrable function such that
\[f(x)\geq0 \quad \text{ for } x\in [T-\delta_0T, \ 2T+ \delta_0T].\]
Then for  $\delta>0$ and for a positive integer $l$ satisfying
$\delta l\leq \delta_0,$
we have
\[\int_T^{2T}f(x)\d x \leq \frac{1}{(\delta T)^l}\mulint \int_{T-\sum_{1}^l y_i}^{2T+\sum_{1}^l y_i}f(x)\d{x}~\d y_1 \ldots \d y_l.\]
\end{lem}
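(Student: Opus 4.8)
The plan is to use the elementary observation that enlarging the interval of integration of a nonnegative function cannot decrease the integral, and then to average the resulting inequality over the auxiliary shift variables $y_1,\dots,y_l$. Nothing analytic is needed; the statement is a formal averaging identity of "Ramachandra trick" type.

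First I would fix a point $(y_1,\dots,y_l)\in[0,\delta T]^l$ and set $Y=\sum_{1}^{l}y_i$. Since $\delta l\le\delta_0$ we have $0\le Y\le l\,\delta T\le\delta_0 T$, so
\[
[T,2T]\;\subseteq\;\bigl[\,T-Y,\;2T+Y\,\bigr]\;\subseteq\;\bigl[\,T-\delta_0 T,\;2T+\delta_0 T\,\bigr].
\]
By hypothesis $f\ge 0$ on the outermost interval, hence on $[T-Y,2T+Y]$, so enlarging the domain of integration only increases the integral:
\[
\int_T^{2T}f(x)\,\d x\;\le\;\int_{T-\sum_1^l y_i}^{2T+\sum_1^l y_i}f(x)\,\d x .
\]
Now I would integrate both sides over $(y_1,\dots,y_l)\in[0,\delta T]^l$. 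The left-hand side does not depend on the $y_i$, so its integral is exactly $(\delta T)^l\int_T^{2T}f(x)\,\d x$, giving
\[
(\delta T)^l\int_T^{2T}f(x)\,\d x\;\le\;\mulint\int_{T-\sum_1^l y_i}^{2T+\sum_1^l y_i}f(x)\,\d x\,\d y_1\cdots\d y_l ,
\]
and dividing through by $(\delta T)^l$ is the assertion of the lemma.

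There is no real obstacle here; the only point deserving a word is the bookkeeping of the iterated integral. Since $f$ is integrable on $[T-\delta_0 T,\,2T+\delta_0 T]$ and nonnegative there, the function $(x,y_1,\dots,y_l)\mapsto f(x)$ on the region $\{\,T-\sum y_i\le x\le 2T+\sum y_i,\ 0\le y_i\le\delta T\,\}$ is measurable and integrable, so Tonelli's theorem (using nonnegativity) legitimises the order of integration employed above and all quantities appearing are finite. The lemma is thus purely an averaging device; its value is that the $(l+1)$-fold integral on the right is later amenable to contour-shift estimates via Perron's formula, which is what converts lower bounds for moments of $\Delta(x)$ into the $\Omega$-results sought in this chapter.
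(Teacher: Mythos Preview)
Your proof is correct and follows exactly the same approach as the paper: for each fixed $(y_1,\dots,y_l)\in[0,\delta T]^l$ the enlarged interval contains $[T,2T]$ and lies inside the region where $f\ge 0$, so the pointwise inequality holds, and then one averages over the $y_i$ and divides by $(\delta T)^l$. The only addition you make is the (harmless) remark about Tonelli justifying the iterated integral.
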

\begin{proof}
For $0\le y_i \le \delta T$, $i=1,2,...,l$ 
\begin{align*}
\int_T^{2T}f(x)\d x \leq \int_{T-\sum_{1}^l y_i}^{2T+\sum_{1}^l y_i}f(x)\d{x}, 
\end{align*}
as $f(x)\ge 0$ in 
\[\left[T-\sum_{1}^l y_i, 2T+\sum_{1}^l y_i\right]\subseteq [T-\delta_0T, 2T+ \delta_0T].\]
This gives
\begin{align*}
 &\frac{1}{(\delta T)^l}\mulint \int_{T-\sum_{1}^l y_i}^{2T+\sum_{1}^l y_i}f(x)\d{x}~\d y_1 \ldots \d y_l \\
 & \geq \frac{1}{(\delta T)^l}\mulint \int_{T}^{2T}f(x)\d{x}~\d y_1 \ldots \d y_l
 = \int_{T}^{2T}f(x)\d{x}. 
\end{align*}

\end{proof}
The next theorem shows that if $\Delta(x)$ does not change sign then the set on which $\Omega$-estimate holds can not be \lq too big\rq.

\begin{thm}\label{thm:upper_bound_measure}
 Suppose the conditions in Assumptions~\ref{as:measure_omega_plus_minus} hold. Let $h_1(T)$ be a monotonically 
 increasing function such that $h_1(T)\rightarrow\infty$. Let $\alpha_2(T)$ be a bounded positive monotonic function, such that
 \begin{align*}
& 0<\alpha_1(T)<\alpha_2(T)\leq \sigma_1, \text{ and} \\
& \frac{h_1(T)}{T^{\alpha_1}}\rightarrow \infty \text{ as } T\rightarrow \infty.
\end{align*}
 If there exist a constant $x_0$ such that $\Delta(x)$ does not change sign on $\mathcal{A}(h_1(x))\cap[x_0, \infty)$, then 
 \[\mu(\mathcal{A}(x^{\alpha_2(x)})\cap [T, 2T])\leq 4h_1(5T/2)T^{1-\alpha_2(T)}+ O(1 + T^{1-\alpha_2(T) + \alpha_1(T)})\]
 for $T\geq 2x_0$.
\end{thm}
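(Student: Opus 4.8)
The plan is to reduce the measure estimate to an upper bound for $\int_T^{2T}|\Delta(x)|\,\d x$, and to obtain the latter by combining the Balasubramanian--Ramachandra averaging device (Lemma~\ref{lem:ramachandra_trick}) with the truncated Perron representation of Lemma~\ref{lem:perron_for_omegapm_measure}. The first reduction is immediate: on $\mathcal{A}(x^{\alpha_2(x)})$ one has $|\Delta(x)|>x^{\alpha_2(x)}\ge \min_{T\le u\le 2T}u^{\alpha_2(u)}\gg T^{\alpha_2(T)}$, so
\[
\mu\left(\mathcal{A}(x^{\alpha_2(x)})\cap[T,2T]\right)\;\ll\; T^{-\alpha_2(T)}\int_T^{2T}|\Delta(x)|\,\d x ,
\]
and it suffices to show $\int_T^{2T}|\Delta(x)|\,\d x\le \frac{5}{2}\,Th_1(5T/2)+O(1+T^{1+\alpha_1(T)})$ for $T\ge 2x_0$.

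The no-sign-change hypothesis enters as follows. Replacing $\Delta$ by $-\Delta$ if necessary, assume $\Delta(x)>h_1(x)>0$ on $\mathcal{A}(h_1(x))\cap[x_0,\infty)$; then for every $x\ge x_0$ we have $\Delta(x)\ge -h_1(x)$, so the negative part $\Delta_-:=\max(-\Delta,0)$ satisfies $\Delta_-(x)\le h_1(x)$, whence $\int_T^{2T}\Delta_-(x)\,\d x\le Th_1(2T)$. Since $|\Delta|=\Delta_++\Delta_-$ with $\Delta_+:=\max(\Delta,0)=\Delta+\Delta_-$, we get $\int_T^{2T}|\Delta|=\int_T^{2T}\Delta+2\int_T^{2T}\Delta_-$, so the whole problem reduces to an \emph{upper} bound for $\int_T^{2T}\Delta(x)\,\d x$.

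To bound $\int_T^{2T}\Delta(x)\,\d x$ from above, apply Lemma~\ref{lem:ramachandra_trick} to $f(x)=\Delta(x)+h_1(5T/2)$ with $\delta_0=1/4$ and $\delta=1/(4l)$ (so $\delta l=\delta_0$): for $T\ge 2x_0$ we have $f(x)\ge \Delta(x)+h_1(x)\ge 0$ on $[3T/4,\,9T/4]=[T-\delta_0T,\,2T+\delta_0T]$, and the lemma, together with the crude bound $T+2\sum_{1}^{l}y_i\le \frac{3}{2}T$ for the length of the inner interval, gives
\[
\int_T^{2T}\Delta(x)\,\d x + Th_1(5T/2)\;\le\;\frac{3}{2}\,Th_1(5T/2) + \frac{1}{(\delta T)^l}\,\mulint\int_{T-\sum_{1}^{l}y_i}^{2T+\sum_{1}^{l}y_i}\Delta(x)\,\d x\;\d y_1\cdots\d y_l .
\]
Into the remaining $l$-fold average I substitute the Perron formula of Lemma~\ref{lem:perron_for_omegapm_measure}, which applies because $T-\sum_{1}^{l}y_i$ and $2T+\sum_{1}^{l}y_i$ lie in $[T/2,\,5T/2]$; its $O(T^{-1})$ error contributes $O(1)$ after averaging. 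After interchanging the compact contour integral on $\Re\eta=\alpha_1(T)$ with the $y_i$-integrations, each of the $l$ inner integrations raises the exponent of $T$ by $1$ and inserts one further factor into the denominator, which together with the $\eta(\eta+1)$ coming from $D(\eta)/\eta$ and the $x$-integration gives the denominator $\prod_{j=0}^{l+1}(\eta+j)$; thus on the line $\eta=\alpha_1(T)+iv$ the integrand is
\[
\ll\ \frac{T^{\alpha_1(T)+l+1}}{(\delta T)^{l}}\cdot\frac{|D(\alpha_1(T)+iv)|}{\prod_{j=0}^{l+1}|\eta+j|}\ \ll\ T^{\alpha_1(T)+1}\,(1+|v|)^{-3},
\]
where the last step uses the polynomial bound $|D(\alpha_1(T)+iv)|\ll(1+|v|)^{l-1}$ and the absence of poles for $|v|\le T^{2l}$ from Assumptions~\ref{as:measure_omega_plus_minus}. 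Hence the $v$-integral over $[-T^{2l},T^{2l}]$ is $O(1)$, and we conclude $\int_T^{2T}\Delta(x)\,\d x\le \frac{1}{2}\,Th_1(5T/2)+O(1+T^{1+\alpha_1(T)})$. Combining this with $\int_T^{2T}\Delta_-\le Th_1(2T)\le Th_1(5T/2)$ yields $\int_T^{2T}|\Delta|\le \frac{5}{2}\,Th_1(5T/2)+O(1+T^{1+\alpha_1(T)})$, and dividing by $\min_{T\le u\le 2T}u^{\alpha_2(u)}\gg T^{\alpha_2(T)}$ gives the asserted inequality; tracking the constants (in particular $\min_{T\le u\le 2T}u^{\alpha_2(u)}/T^{\alpha_2(T)}\to$ a limit $\ge 1$) produces the stated coefficient $4$ and turns $O(T^{1+\alpha_1(T)})/T^{\alpha_2(T)}$ into $O(T^{1+\alpha_1(T)-\alpha_2(T)})$.

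The step I expect to be the main obstacle is the contour estimate above: on $\Re\eta=\alpha_1(T)$ the Dirichlet series grows like $(1+|v|)^{l-1}$ and the truncation height $T^{2l}$ is enormous, so a direct bound for $\int\frac{D(\eta)x^{\eta}}{\eta}\,\d\eta$ blows up with $T$; the sole purpose of the $l$-fold smoothing in Lemma~\ref{lem:ramachandra_trick} is to manufacture exactly the $l$ additional denominator factors $\eta+2,\dots,\eta+l+1$ that are needed to dominate this growth and make the $v$-integral absolutely convergent. A secondary technical point is uniformity in $\alpha_1(T)$ when $\alpha_1(T)\to 0$: near $v=0$ the factor $|\alpha_1(T)+iv|^{-1}$ inflates the $v$-integral to size $O(\log(1/\alpha_1(T)))$ rather than $O(1)$, but this is harmlessly absorbed into the $O(T^{1+\alpha_1(T)})$ error, and in the intended applications $\alpha_1(T)$ stays bounded away from $0$.
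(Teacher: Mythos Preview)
Your proof is correct and follows essentially the same strategy as the paper: bound the measure by a weighted $L^1$-norm of $\Delta$, use the no-sign-change hypothesis to replace $|\Delta|$ by $\Delta$ plus a term controlled by $h_1$, and then kill the signed integral by combining the $l$-fold Balasubramanian--Ramachandra smoothing (Lemma~\ref{lem:ramachandra_trick}) with the truncated Perron formula (Lemma~\ref{lem:perron_for_omegapm_measure}) so that the resulting denominator $\prod_{j=0}^{l+1}(\eta+j)$ beats the growth $|D|\ll(1+|v|)^{l-1}$.

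The organizational differences are cosmetic: the paper keeps the weight $x^{-\alpha_2}$ inside the integral throughout and applies the smoothing lemma directly to the nonnegative function $|\Delta(x)|/x^{\alpha_2}$, then splits via the indicator $\chi$ of $\mathcal{A}(h_1)^c$ to get $\int|\Delta|/x^{\alpha_2}\le 2\int|\Delta|\chi/x^{\alpha_2}+\bigl|\int\Delta/x^{\alpha_2}\bigr|$; you instead pull the weight out as $T^{-\alpha_2(T)}$, write $|\Delta|=\Delta+2\Delta_-$, and apply the smoothing lemma to $\Delta+h_1(5T/2)$. Both routes land on the same contour estimate. The paper's choice avoids the mild awkwardness you flag at the end about $\min_{[T,2T]}u^{\alpha_2(u)}/T^{\alpha_2(T)}$, which is why it gets the constant $4$ cleanly; your extraction of the weight is fine for $\alpha_2$ monotone and slowly varying (as in the stated Notations), but the paper's version is tidier for the explicit constant.
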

\begin{proof}
Trivially we have
\[\mu(\mathcal{A}(x^{\alpha_2})\cap [T, 2T])\leq \int_T^{2T}\frac{|\Delta(x)|}{x^{\alpha_2}}\d x .\]
Using Lemma~\ref{lem:ramachandra_trick} on the above inequality, we get
\[\mu(\mathcal{A}(x^{\alpha_2})\cap [T, 2T])\leq 
\frac{1}{(\delta T)^l}\mulint  \int_{T-\sum_{1}^l y_i}^{2T+\sum_{1}^l y_i}\frac{|\Delta(x)|}{x^{\alpha_2}}\d{x}~\d y_1 \ldots \d y_l,\]
where $\delta=\frac{1}{2l}.$

Let $\chi$ denote the characteristic function of the complement of $\mathcal{A}(h_1(x))$:
\[\chi(x)=\begin{cases}
           1 \quad \mbox{ if } x \notin  \mathcal{A}(h_1(x)),\\
           0 \quad \mbox{ if } x \in \mathcal{A}(h_1(x)).
          \end{cases}
\]
For $T\geq 2x_0$, $\Delta(x)$ does not change sign on
$$\left[T-\sum_{1}^l y_i, \ 2T+\sum_{1}^l y_i\right]\cap\A(h_1(x)),$$ 
as $0\le y_i\le \delta T$ for all $i=1,...,l$.
So we can write 
the above inequality as
\begin{align} \label{eq:measure_omega_pm_first_bound}
\notag
\mu(\mathcal{A}(x^{\alpha_2})\cap [T, 2T]) 
&\leq \frac{2}{(\delta T)^l}
\mulint  \int_{T-\sum_{1}^l y_i}^{2T+\sum_{1}^l y_i}\frac{|\Delta(x)|}{x^{\alpha_2}}\chi(x)\d{x}~\d y_1 \ldots \d y_l\\
&+ \frac{1}{(\delta T)^l} 
\left| \mulint  \int_{T-\sum_{1}^l y_i}^{2T+\sum_{1}^l y_i}\frac{\Delta(x)}{x^{\alpha_2}}\d{x}~\d y_1 \ldots \d y_l\right|.
\end{align}
Since $x\notin \A(h_1(x))$ implies $|\Delta(x)|\le h_1(x)$,  we get
\begin{align}\label{eq:measure_omega_pm_trivial_part}
\notag
 \frac{2}{(\delta T)^l}&
\mulint  \int_{T-\sum_{1}^l y_i}^{2T+\sum_{1}^l y_i}\frac{|\Delta(x)|}{x^{\alpha_2}}\chi(x)\d{x}~\d y_1 \ldots \d y_l \\
&\leq 4 h_1(5T/2) T^{1-\alpha_2}.
\end{align}
 We use the integral expression for $\Delta(x)$ as given in Lemma~\ref{lem:perron_for_omegapm_measure}, and get
 \begin{align}\label{eq:measure_omega_pm_perron_part}
 \notag
 & \frac{1}{(\delta T)^l} 
\left| \mulint  \int_{T-\sum_{1}^l y_i}^{2T+\sum_{1}^l y_i}\frac{\Delta(x)}{x^{\alpha_2}}\d{x}~\d y_1 \ldots \d y_l\right|\\
\notag
 &= \frac{1}{(\delta T)^l}\left| \mulint  \int_{T-\sum_{1}^l y_i}^{2T+\sum_{1}^l y_i}
 \int_{\alpha_1-iT^{2l}}^{\alpha_1+iT^{2l}}\frac{D(\eta)x^{\eta-\alpha_2}}{\eta}\d\eta~\d{x}~\d y_1 \ldots \d y_l\right| +O(1) \\
 \notag
 &\ll 1 + \frac{1}{(\delta T)^l}\left| \int_{\alpha_1-iT^{2l}}^{\alpha_1+iT^{2l}}\frac{D(\eta)}{\eta}
  \mulint  \int_{T-\sum_{1}^l y_i}^{2T+\sum_{1}^l y_i} x^{\eta-\alpha_2}\d{x}~\d y_1 \ldots \d y_l~\d\eta \right| \\
 \notag
&\ll 1 + \frac{1}{(\delta T)^l}\left| \int_{\alpha_1-iT^{2l}}^{\alpha_1+iT^{2l}} 
\frac{D(\eta)(2T+l\delta T)^{\eta-\alpha_2 + l + 1}}{\eta\prod_{j=1}^{l+1}(\eta-\alpha_2+j)}\d\eta\right|\\ \notag
&\ll 1 + \frac{T^{\alpha_1-\alpha_2+l + 1}}{(\delta T)^l}
 \int_{-T^{2l}}^{T^{2l}}\frac{(1+|t|)^{l-1}}{(1+ |t|)^{l+2}}\d t 
 \ll 1 + T^{1-\alpha_2+\alpha_1}.\\ 
\end{align}
The theorem follows from (\ref{eq:measure_omega_pm_first_bound}), (\ref{eq:measure_omega_pm_trivial_part}) 
and (\ref{eq:measure_omega_pm_perron_part}).
\end{proof}

\begin{thm}\label{thm:omega_pm_measure}
Consider $\alpha_1(T), \alpha_2(T), \sigma_1, h_1(T)$ as in Theorem~\ref{thm:upper_bound_measure},
and $\mathcal P$ as in Assumptions \ref{as:for_continuation_mellintran}.
Let $D(s)$ does not have a real pole in $[\alpha_1-\epsilon_0, \infty)-\mathcal{P}$, for some $\epsilon_0>0$. 
Suppose there exists an $\xset$ $\set$ such that for all $T\in \set$ 
 \[\mu(\mathcal{A}(x^{\alpha_2})\cap [T, 2T])> 5h_1(5T/2)T^{1-\alpha_2}.\]
 Then: 
  \begin{enumerate}
   \item[(i)]under Assumptions~\ref{as:measure_omega_plus_minus}, we have
  \[\Delta(x)=\Omega_\pm(h_1(x))\]
 ( In this case $\Delta(x)$ changes sign in $[T/2, 5T/2]\cap \mathcal{A}(h_1(x))$ for $T\in S$
  and $T$ is sufficiently large.);
   \item[(ii)]under Assumptions~\ref{as:measure_omega_plus_minus_weak}, we have
   \[\Delta(x)=\Omega_\pm(x^{\alpha_1-\epsilon}), \quad \text{for any } \ \epsilon>0.\]
 \end{enumerate}
\end{thm}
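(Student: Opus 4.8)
The plan is to argue by contradiction, feeding a hypothetical non-change-of-sign of $\Delta(x)$ on $\mathcal{A}(h_1(x))$ (resp. on $\mathcal{A}(x^{\alpha_1-\epsilon})$ for part (ii)) into Theorem~\ref{thm:upper_bound_measure}, and then deriving an upper bound for $\mu(\mathcal{A}(x^{\alpha_2})\cap[T,2T])$ that clashes with the lower bound assumed in the statement. More precisely, for part (i), suppose $\Delta(x)$ does \emph{not} change sign on $[T/2,5T/2]\cap\mathcal{A}(h_1(x))$ for some arbitrarily large $T\in\set$; then by a standard argument (if $\Delta$ does not change sign on infinitely many such windows, one can arrange a single $x_0$ past which it is sign-definite on the whole of $\mathcal{A}(h_1(x))$, shrinking $\set$ if necessary) we are in the hypothesis of Theorem~\ref{thm:upper_bound_measure}. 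That theorem gives
\[\mu(\mathcal{A}(x^{\alpha_2})\cap[T,2T])\le 4h_1(5T/2)T^{1-\alpha_2(T)}+O\!\left(1+T^{1-\alpha_2(T)+\alpha_1(T)}\right).\]
Since $\alpha_1(T)<\alpha_2(T)$ the error term is $o(h_1(5T/2)T^{1-\alpha_2(T)})$ because $h_1(T)/T^{\alpha_1}\to\infty$, so the right-hand side is $\le 5h_1(5T/2)T^{1-\alpha_2(T)}$ for $T$ large, contradicting the assumed lower bound $\mu(\mathcal{A}(x^{\alpha_2})\cap[T,2T])>5h_1(5T/2)T^{1-\alpha_2}$ for $T\in\set$. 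Hence $\Delta(x)$ must change sign in $[T/2,5T/2]\cap\mathcal{A}(h_1(x))$ for all large $T\in\set$; in particular there are points where $\Delta(x)>h_1(x)$ and points where $\Delta(x)<-h_1(x)$, i.e. $\Delta(x)=\Omega_\pm(h_1(x))$.

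For part (ii), the idea is the same but one must first upgrade Assumptions~\ref{as:measure_omega_plus_minus_weak} to the polynomial-growth hypothesis needed to invoke Theorem~\ref{thm:upper_bound_measure}. Here the function $h_1(x)$ is replaced by $c\,x^{\alpha_1-\epsilon}$ for a suitable constant $c$, and the role of the interior abscissa is played by $\alpha_1-\epsilon$ rather than $\alpha_1$. The key point is that if $\Delta(x)$ failed to change sign on $\mathcal{A}(x^{\alpha_1-\epsilon})$, then (arguing as in the Phragmén--Landau circle of ideas, using that $D(s)$ has no real pole in $[\alpha_1-\epsilon_0,\infty)\setminus\mathcal{P}$) the absence of poles of $D$ in the strip $\alpha_1(T)-\epsilon<\sigma\le\sigma_1$ would be forced for infinitely many $T$, which by Assumptions~\ref{as:measure_omega_plus_minus_weak} supplies the bound $|D(\sigma+it)|\le c_{10}(|t|+1)^{l-1}$ on $\alpha_1(T)\le\sigma\le\sigma_1$, $|t|\le T^{2l}$ --- exactly the input Theorem~\ref{thm:upper_bound_measure} needs (with $\alpha_1$ there taken to be our $\alpha_1-\epsilon$ and $\alpha_2$ adjusted accordingly). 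Running the same contradiction then yields sign changes of $\Delta(x)$ relative to $x^{\alpha_1-\epsilon}$, hence $\Delta(x)=\Omega_\pm(x^{\alpha_1-\epsilon})$ for every $\epsilon>0$.

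The main obstacle I expect is the bookkeeping in part (ii): making precise the step "if $\Delta$ does not change sign, then $D$ has no pole in the enlarged strip." This is where one has to combine the Mellin-transform machinery of Chapter~\ref{chap:analytic_continuation} (the representation $A(s)=\int_{\Co}D(\eta)/(\eta(s-\eta))\,\d\eta$) with the Phragmén--Landau theorem (Theorem~\ref{thm:landau_representation_integral}): a fixed sign of $\Delta$ past $x_0$ makes the relevant Mellin transform analytic and convergent up to its real abscissa, which in turn constrains the poles of $D$ via the analytic continuation of $A(s)$, so that a pole of $D$ in $\alpha_1(T)-\epsilon<\Re(s)\le\sigma_1$ would contradict sign-definiteness. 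Once that implication is nailed down, the rest is the uniform-growth estimate feeding into Theorem~\ref{thm:upper_bound_measure} and the elementary comparison of orders $h_1(T)T^{1-\alpha_2}$ versus $T^{1-\alpha_2+\alpha_1}$, both of which are routine. A secondary technical point is handling the passage from "does not change sign on each window $[T/2,5T/2]\cap\mathcal{A}(h_1(x))$ for infinitely many $T$" to "does not change sign on a tail of $\mathcal{A}(h_1(x))$"; this is a standard compactness/connectedness argument on the (possibly disconnected) set $\mathcal{A}(h_1(x))$, and one only needs it on the subsequence $\set$, so it causes no real difficulty.
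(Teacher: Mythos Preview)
Your part~(i) is essentially the paper's argument: contrapose Theorem~\ref{thm:upper_bound_measure} against the measure lower bound. One remark: the proof of Theorem~\ref{thm:upper_bound_measure} only uses sign-definiteness of $\Delta$ on $[T/2,5T/2]\cap\mathcal{A}(h_1(x))$ for the particular $T$ under consideration (since $\sum y_i\le l\delta T=T/2$), so your ``compactness/connectedness'' reduction to a global tail is unnecessary.

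For part~(ii) your route is more roundabout than the paper's and has a technical flaw. The paper argues by a direct dichotomy: either $D$ has a pole $\eta_0$ with $\Re(\eta_0)\ge\alpha_1(T)-\epsilon$ and $|\Im(\eta_0)|\le 2T^{2l}$, in which case (since real poles are excluded by hypothesis) Theorem~\ref{thm:landu_omegapm} gives $\Delta(x)=\Omega_\pm(x^{\alpha_1-\epsilon})$ immediately; or there is no such pole, whence Assumptions~\ref{as:measure_omega_plus_minus_weak} supplies the polynomial bound, we are in Assumptions~\ref{as:measure_omega_plus_minus}, and part~(i) yields $\Delta(x)=\Omega_\pm(h_1(x))$, which is stronger than $\Omega_\pm(x^{\alpha_1-\epsilon})$ because $h_1(T)/T^{\alpha_1}\to\infty$. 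No Mellin/Phragm\'en--Landau contrapositive is needed.

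Your plan instead replaces $h_1$ by $c\,x^{\alpha_1-\epsilon}$ and re-runs Theorem~\ref{thm:upper_bound_measure}. This fails the hypothesis of that theorem: with the new interior abscissa $\alpha_1-\epsilon$ and the new $h_1(T)=cT^{\alpha_1-\epsilon}$, the required condition $h_1(T)/T^{\alpha_1-\epsilon}\to\infty$ is just $c\to\infty$, which is false; and the comparison $5h_1(5T/2)T^{1-\alpha_2}$ versus the assumed lower bound also becomes unclear. Moreover, the implication you flag as the ``main obstacle'' (no sign change $\Rightarrow$ no poles of $D$ in the strip) is precisely the contrapositive of Landau's theorem, so once you unwind it you are back to the paper's dichotomy anyway. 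The fix is simple: do not touch $h_1$; after establishing the no-pole case, invoke part~(i) as is and then weaken $\Omega_\pm(h_1(x))$ to $\Omega_\pm(x^{\alpha_1-\epsilon})$.
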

\begin{proof}
If the conditions in Assumptions~\ref{as:measure_omega_plus_minus} hold, then (i) follows from Theorem~\ref{thm:upper_bound_measure}. 
To prove (ii), choose an $\epsilon$ such that $0<\epsilon<\epsilon_0$.
Now suppose 
 $\eta_0$ is a pole of $D$ for $\Re(\eta)\geq\alpha_1(T)-\epsilon$ and $t\leq 2T^{2l}$, then by Theorem~\ref{thm:landu_omegapm}
  \[\Delta(x)=\Omega_\pm(T^{\alpha_1-\epsilon}).\]
If there are no poles in the above described region of $\sigma + it$, then we are in the set-up of 
Assumptions~\ref{as:measure_omega_plus_minus}, and get 
\[\Delta(x)=\Omega_\pm(h_1(x)).\]
We have
\[T^{\alpha_1(T)}=o(h_1(T)),\]
which gives
\[\Delta(x)=\Omega_\pm(x^{\alpha_1-\epsilon}).\] 
This completes the proof of (ii). 
\end{proof}

\section{Applications}\vspace{0.05mm}
Now we shall see two examples demonstrating applications of Theorem~\ref{thm:omega_pm_measure}.

\subsection{Error term of the divisor function} 
Let $d(n)$ denote the number of divisors of $n$:
\[d(n)=\sum_{d|n}1.\]
Dirichlet \cite[Theorem~320]{HardyWright} showed that
\[\sum_{n\leq x}^*\tau(n) = x\log(x) + (2\gamma -1)x + \Delta(x), \]
where $\gamma$ is the Euler constant and 
\[\Delta(x)=O(\sqrt{x}).\]
Latest result on $\Delta(x)$ is due to Huxley \cite{HuxleyDivisorProblem}, which is
\[\Delta(x)=O(x^{131/416}).\]
On the other hand, Hardy \cite{HardyDirichletDivisor} showed that 
\begin{align*}
 \Delta(x)&=\Omega_+((x\log x)^{1/4}\log\log x),\\
 &=\Omega_-(x^{1/4}).
\end{align*}
There are many improvements of Hardy's result. Some notable results are due to K. Corr{\'a}di and I. K{\'a}tai \cite{CorradiKatai},
J. L. Hafner \cite{Hafner}, and K. Sounderarajan \cite{Sound}. Below, we shall show that $\Delta(x)$ is $\Omega_\pm(x^{1/4})$ as a consequence of Theorem~\ref{thm:omega_pm_measure} and 
results of Ivi{\'c} and Tsang ( see below ).
Moreover, we shall how that such fluctuations occur in $[T, 2T]$ for every sufficiently large $T$.

\noindent Ivi{\'c} \cite{Ivic_second_moment_divisor_problem} proved that for a positive constant $c_{11}$,
\[\int_{T}^{2T}\Delta^2(x)\d x \sim c_{11} T^{3/2}.\]
A similar result for fourth moment of $\Delta(x)$ was proved by Tsang \cite{Tsang}:
\[\int_T^{2T}\Delta^4(x)\d x\sim c_{12} T^2,\]
for a positive constant $c_{12}$.
Let $\A$ denote the following set: 
\[\A:=\left\{x:|\Delta(x)|>\frac{c_{11} x^{1/4}}{6}\right\}.\]
For sufficiently large $T$, using the result of Ivi{\'c} \cite{Ivic_second_moment_divisor_problem}, we get
\begin{align*}
 \int_{[T, 2T]\cap \A} \frac{\Delta^2(x)}{x^{3/2}}\d x &=\int_T^{2T}\frac{\Delta(x)^2}{x^{3/2}}\d x 
 -\int_{[T, 2T]\cap\A^c}\frac{\Delta^2(x)}{x^{3/2}}\d x\\
& \geq \frac{1}{4T^{3/2}}\int_T^{2T}\Delta^2(x)\d x -\frac{c_{11}}{6} \\
& \geq \frac{c_{11}}{5} -\frac{c_{11}}{6} \geq \frac{c_{11}}{30}.
\end{align*}
 Using Cauchy-Schwarz inequality and the result due to Tsang \cite{Tsang} we get
\begin{align*}
\int_{[T, 2T]\cap \mathcal{A}} \frac{\Delta^2(x)}{x^{3/2}}\d x &\leq \left(\int_{[T, 2T]\cap\A}\frac{\Delta^4(x)}{x^2}\d x\right)^{1/2}
\left(\int_{[T, 2T]\cap\A}\frac{1}{x}\d x\right)^{1/2}\\
& \leq \left(\frac{c_{12}\mu([T, 2T]\cap \A)}{T}\right)^{1/2}. 
\end{align*}
The above lower and upper bounds on second moment of $\Delta$ gives the following lower bound for measure of $\A$:
\begin{equation*}
\mu([T, 2T]\cap \A)> \frac{c_{11}^2}{901 c_{12}}T,
\end{equation*}
for some $T\geq T_0$.
Now, Theorem ~\ref{thm:omega_pm_measure} applies with the following choices:
\[\alpha_1(T)=1/5, \quad \alpha_2(T)=1/4, \quad h_1(T)=\frac{c_{11}^2}{9000c_{12}}T^{1/4}.\]  
Finally using Theorem~\ref{thm:omega_pm_measure}, we get that for all $T\geq T_0$ there exists $x_1, x_2 \in [T, 2T]$ such that
\begin{align*}
 \Delta(x_1)> h_1(x_1) \ \text{ and } \ \Delta(x_2)< - h_1(x_2).
\end{align*}
In particular we get 
\begin{align*}
\Delta(x)=\Omega_\pm(x^{1/4}).
\end{align*}

\subsection{Average order of Non-Isomorphic abelian Groups}\label{sec:sub_abelian_group}

Let $a_n$ denote the number of non-isomorphic abelian groups of order $n$. 
The Dirichlet series $D(s)$ is given by 
\[D(s)=\sum_{n=1}^{\infty}\frac{a_n}{n^s} =\prod_{k=1}^{\infty}\zeta(ks),\quad \Re(s)>1.\]
The meromorphic continuation of $D(s)$ has poles at $1/k$, for all positive integer $k\geq 1$. Let the 
main term $\M(x)$ be
\[\M(x)=\sum_{k=1}^{6}\Big( \prod_{j\neq k} \zeta(j/k) \Big)x^{1/k},\]
and the error term $\Delta(x)$ be
\[\sum_{n\leq x}^* a_n - \M(x).\]
Balasubramanian and Ramachandra \cite{BaluRamachandra2} proved that
\begin{equation*}
 \int_T^{2T}\Delta^2(x)\d x=\Omega(T^{4/3}\log T),
\text{ and }
\Delta(x)=\Omega_{\pm}(x^{92/1221}).
\end{equation*}
Sankaranarayanan and Srinivas \cite{srini} improved the $\Omega_\pm$ bound to
\[ \Delta(x)=\Omega_{\pm}\left(x^{1/10}\exp\left(c\sqrt{\log x}\right)\right)\]
for some constant $c>0$.
An upper bound for the second moment of $\Delta(x)$ was first given by Ivi{\'c} \cite{Ivic_abelian_group}, 
and then improved 
by Heath-Brown \cite{Brown} to
\[\int_T^{2T}\Delta^2(x)\d x\ll T^{4/3}(\log T)^{89}.\]
This bound of Heath-Brown is best possible in terms of power of $T$. 
But for the fourth moment, the similar statement
 \[\int_T^{2T}\Delta^4(x)\d x\ll T^{5/3}(\log T)^C,\]
which is best possible in terms of power of $T$, is an open problem. 
Another open problem is to show that 
\[\Delta(x)=\Omega_\pm(x^{1/6-\delta})\ \text{ for any }\ \delta>0.\]
For $0<\delta<1/42$, we have stated in Proposition~\ref{prop:abelian_group} that either
  \[\int_T^{2T}\Delta^4(x)\d x=\Omega( T^{5/3+\delta} )\ \text{ or }\ \Delta(x)=\Omega_\pm(x^{1/6-\delta}).\]
Below, we present a proof of this proposition.
\begin{proof}[Proof of Proposition~\ref{prop:abelian_group}]
 If the first statement is false, then we have 
\[\int_T^{2T}\Delta^4(x)\d x\leq c_{13} T^{5/3+\delta}, \]
 for some constant $c_{13}$ depending on $\delta$ and for all $T\geq T_0$.
 Let $\A$ be defined by:
 \[\A=\{x: |\Delta(x)|>c_{14}x^{1/6}\}, \quad c_{14}>0.\]
  By the result of Balasubramanian and Ramachandra \cite{BaluRamachandra2}, we have an $\xset$ $\set$, such that
 \[\int_{[T, 2T]\cap\A}\Delta^2(x)\d x \geq c_{15}T^{4/3}(\log T)\]
 for $T\in S$. Using Cauchy-Schwartz inequality, we get
 \begin{align*}
  c_{15}T^{4/3}(\log T)&\leq \int_{[T, 2T]\cap\A}\Delta^2(x)\d x 
  \leq \left(\int_T^{2T}\Delta^4(x)\d x\right)^{1/2}(\mu(\A\cap[T, 2T]))^{1/2}\\
  &\leq c_{13}^{1/2}T^{5/6+\delta/2}(\mu(\A\cap[T, 2T]))^{1/2}.
 \end{align*}
This gives, for a suitable positive constant $c_{16}$,
\[\mu(\A\cap[T, 2T])\geq c_{16}T^{1-\delta}(\log T)^2.\]
Now we use Theorem~\ref{thm:omega_pm_measure}, (i), with
\[\alpha_2=\frac{1}{6}, \quad \alpha_1=\frac{13}{84}-\frac{\delta}{2}, \quad \mbox{ and } \quad h_1(T)=T^{1/6-\delta}.\]
So we get 
\[\Delta(x)=\Omega_\pm(x^{1/6-\delta}).\]
This completes the proof.
\end{proof}
\newpage
\chapter{The Twisted Divisor Function}\label{chap:twisted_divisor}

Recall that in Chapter~\ref{chap:intro}, we have defined the twisted divisor function $\tau(n, \theta)$
as follows: 
\[\tau(n, \theta)=\sum_{d | n} d^{i\theta}, \quad \text{for } \ \theta\in \mathbb R -\{0\},\  n\in \mathbb N.\]
We also have stated the following asymptotic formula:
\begin{equation*}
 \sum_{n\leq x}^*|\tau(n, \theta)|^2=\omega_1(\theta)x\log x + \omega_2(\theta)x\cos(\theta\log x)
+\omega_3(\theta)x + \Delta(x),
\end{equation*}
where $\omega_i(\theta)$s are explicit constants depending only on $\theta$ and
\[\Delta(x)=O_\theta(x^{1/2}\log^6x).\]
In this chapter, we give a proof of this formula (see Section~\ref{sec:asymptotic_formula}, Theorem~\ref{thm:asymp_formula_tau_n_theta}).
In Section~\ref{sec:twisted_divisor_application_landau}, we use Theorem~\ref{thm:omega_pm_main} to obtain some measure
theoretic $\Omega_\pm$ results. Further, we obtain an $\Omega$ bound for the second moment of $\Delta(x)$ in Section~\ref{sec:stronger_omega}
by adopting a technique due to Balasubramanian, Ramachandra and Subbarao \cite{BaluRamachandraSubbarao}. In the final section, we prove 
that if the $\Omega$ bound obtained in the previous section can not be improved, then
\[\Delta(x)=\Omega(x^{3/8-\epsilon}) \ \text{ for  any }\ \epsilon>0.\]

Now we motivate with a brief note on few applications of $\tau(n, \theta)$.

\section{Applications of $\tau(n, \theta)$}\label{sec:applications_tau_n_theta}
The function $\tau(n, \theta)$ can be used to study various properties related to the distribution of divisors of an integer: 
\[\sum_{\substack{d|n \\ a\leq \log d \leq b}}^*1=\frac{1}{2\pi}\int_{-\infty}^{\infty}\tau(n, \theta)
\frac{e^{-ib\theta}-e^{-ia\theta}}{-i\theta};\]
here $\sum^*$ means that the corresponding contribution to the sum is $\half$ if $e^a|n$ or $e^b|n$.
Below we present two applications.

\subsection{Clustering of Divisors}
The following function measures the clustering of divisors of an integer:
\[W(n, f):=\sum_{d, d'|n} f(\log(d/d')),\]
for some constant $c>0$ and for a function $f\in L^1(\mathbb R)$. We assume that $f$ has a Fourier transformation, say $\hat f$,
and $\hat f \in L^1(\mathbb{R})$.
\begin{prop}\label{prop:clustering}
With the above notations:
\[\sum_{n\le x} W(n, f) = \frac{1}{2\pi} \int_{-\infty}^{\infty} \hat{f}( \theta)\sum_{n\le x} |\tau(n,\theta)|^2 \d\theta.\]
\end{prop}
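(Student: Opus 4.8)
The plan is to unfold the definition of $W(n,f)$ by Fourier inversion, recognise the resulting divisor sums as $\tau(n,\theta)$ and its complex conjugate, and then sum over $n\le x$, interchanging the finite sums with the integral at each stage.

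First I would fix the normalisation of the Fourier transform consistent with the formula in Section~\ref{sec:applications_tau_n_theta}, namely $\hat f(\theta)=\int_{-\infty}^{\infty}f(u)e^{-i\theta u}\,\d u$. Since $f\in L^1(\mathbb R)$ and $\hat f\in L^1(\mathbb R)$, the Fourier inversion theorem gives that $f$ coincides almost everywhere with the continuous function $u\mapsto \frac{1}{2\pi}\int_{-\infty}^{\infty}\hat f(\theta)e^{i\theta u}\,\d\theta$; I would adopt this continuous representative for $f$, so that for all positive integers $d,d'$
\[
f\bigl(\log(d/d')\bigr)=\frac{1}{2\pi}\int_{-\infty}^{\infty}\hat f(\theta)\,e^{i\theta\log(d/d')}\,\d\theta
=\frac{1}{2\pi}\int_{-\infty}^{\infty}\hat f(\theta)\,d^{i\theta}(d')^{-i\theta}\,\d\theta .
\]

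Next I would substitute this into $W(n,f)=\sum_{d,d'\mid n}f(\log(d/d'))$. Because $n$ has only finitely many divisors and each integrand is bounded in modulus by the fixed integrable function $|\hat f|$, the finite double sum may be brought inside the integral and then factored:
\[
W(n,f)=\frac{1}{2\pi}\int_{-\infty}^{\infty}\hat f(\theta)\Bigl(\sum_{d\mid n}d^{i\theta}\Bigr)\Bigl(\sum_{d'\mid n}(d')^{-i\theta}\Bigr)\d\theta .
\]
The first inner sum is $\tau(n,\theta)$ by definition, while $\sum_{d'\mid n}(d')^{-i\theta}=\overline{\tau(n,\theta)}$ since every divisor is a positive real; hence $W(n,f)=\frac{1}{2\pi}\int_{-\infty}^{\infty}\hat f(\theta)\,|\tau(n,\theta)|^2\,\d\theta$. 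Summing over the finitely many $n\le x$ and using the crude bound $|\tau(n,\theta)|\le d(n)$, so that $|\hat f(\theta)|\sum_{n\le x}|\tau(n,\theta)|^2\le |\hat f(\theta)|\sum_{n\le x}d(n)^2$ is integrable, I may again interchange the finite sum with the integral to obtain the claimed identity.

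The only point requiring care is the passage to the continuous representative of $f$, which is what lets one evaluate the inversion formula at the specific real numbers $\log(d/d')$ rather than merely almost everywhere; all remaining interchanges are elementary, since every sum involved is finite and the kernel is dominated throughout by the fixed $L^1$ function $|\hat f|$. If one prefers not to replace $f$ by its continuous version, the same conclusion follows by mollifying $f$ and passing to the limit, but under the standing hypothesis $\hat f\in L^1$ this refinement is unnecessary.
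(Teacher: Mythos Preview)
Your proof is correct and follows essentially the same approach as the paper: Fourier inversion applied to $f(\log(d/d'))$, then recognising the divisor sums as $\tau(n,\theta)\overline{\tau(n,\theta)}=|\tau(n,\theta)|^2$, and finally summing over $n\le x$. Your version is in fact more careful than the paper's, which simply invokes Fourier inversion and swaps sums with the integral without commenting on the continuous representative of $f$ or on domination by $|\hat f|$.
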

\begin{proof}
Note that by the Fourier inversion formula, we get
\begin{align*}
W(n, f)&= \sum_{d, d'|n} f(\log(d/d'))= \frac{1}{2\pi}\sum_{d, d'|n} \int_{-\infty}^{\infty} \hat{f}(\theta)\left(\frac{d}{d'}\right)^{i\theta}\d \theta\\
&= \frac{1}{2\pi}\int_{-\infty}^{\infty} \hat{f}(\theta) \left( \sum_{d, d'|n} \left(\frac{d}{d'}\right)^{i\theta} \right)\d \theta
= \frac{1}{2\pi}\int_{-\infty}^{\infty} \hat{f}(\theta)|\tau(n, \theta)|^2\d \theta.
\end{align*}
This implies the proposition.
\end{proof}
Using Proposition~\ref{prop:clustering} and the formula in (\ref{eq:formmula_tau_ntheta}), we may write
\begin{align*}
\sum_{n\le x} W(n, f) &= \frac{x\log x}{2\pi}  \int_{-\infty}^{\infty} \hat{f}( \theta) \omega_1(\theta)\d\theta
+ \frac{x}{2\pi}  \int_{-\infty}^{\infty} \hat{f}( \theta)\big(\omega_2(\theta)\cos(\theta\log x)+\omega_3(\theta)\big)\d\theta \\
&+ \frac{x}{2\pi}  \int_{-\infty}^{\infty} \hat{f}( \theta)\Delta(x, \theta)\d\theta. \\
&\text{(In the above identity, we denoted $\Delta(x)$ by $\Delta(x, \theta)$.)}
\end{align*}
This gives that the function $\sum_{n\leq x}W(n, f)$ behaves like $x\log x$. Further, if we want to obtain more information on $\sum_{n\leq x}W(n, f)$,
we may analyzing other terms in the above formula. But now, we skip the details and refer to \cite[Chapter~4]{DivisorsHallTenen}.

\subsection{The Multiplication Table Problem}
The multiplication table problem asks for an estimate on the order of the growth of $|\text{Mul}(N)|$ as $N\rightarrow\infty$, where
\[\text{Mul}(N):=\{1\leq m \leq N^2:  m= ab, \ a, b \in \mathbb{Z}\text{ and }1\leq a, b \leq N\}.\]
The initial attempts in this direction
are due to Erd{\H{o}}s \cite{ErdosMultTab}. He used a result of Hardy and Ramanujan \cite{HardyRamanujan} (also see \cite{MyExpo}) to show 
\[|\text{Mul}(N)|\ll \frac{N^2}{(\log N)^{\nu_0}\sqrt{\log\log N}}\ \text{as} \ N\rightarrow\infty,\]
and here
\[\nu_0=1-\frac{1+\log\log 2}{\log2}.\]
Intuitively, the theorem of Hardy and Ramanujan says that most of the positive integers less than $x$ have around $\log\log x$ prime factors;
more precisely
\[\#\{n\leq x: |\omega(n)-\log\log n|<\sqrt{\log\log n}\}\sim x \ \text{as} \ x\rightarrow\infty.\]
This gives that most of the positive integers less than $N^2$ have around $\log\log N$ prime factors, whereas most of
the integers in the multiplication table 
have around $2\log\log N$ prime factors, which implies $|\text{Mul}(N)|$ is $o(N^2)$. Erd{\H{o}}s used a refined version of this argument 
to obtain the given upper bound for $|\text{Mul}(N)|$. 

The best known bound on the asymptotic growth of $|\text{Mul}(n)|$ is due to Ford \cite{FordAnn}:
\[|\text{Mul}(N)|\asymp \frac{N^2}{(\log N)^{\nu_0}(\log\log N)^{3/2}},\ \text{ as } N\rightarrow\infty.\]
To obtain the expected lower bound for $|\text{Mul}(N)|$, Ford first proved that
\[|\text{Mul}(N)|\gg \frac{N^2}{(\log N)^2}\sum_{n\leq N^{1/8}}\frac{L(n)}{n},\
\text{ where } \ L(n):=\mu\left(\cup_{d|n}[\log(d/2), \log d]\right).\]
We may also observe that
\[\sum_{n\leq N^{1/8}}\frac{L(n)}{n}\geq\frac{\left(\sum_{n\leq N^{1/8}}\frac{d(n)}{n}\right)^2}{6\sum_{n\leq N^{1/8}}\frac{W(n)}{n}}.\]
Rest of the part in Ford's argument deals with the above sums involving the divisor function $d(n)$ and 
$W(n):=W\left(n, 1_{[\half, 2]}\right)$, where $1_{[\half, 2]}$ is the indicator function of the interval $[\half, 2]$. 
We skip the details and refer to \cite{Fordy2y}.

\section{Asymptotic Formula for $\sum_{n\leq x}^*|\tau(n, \theta)|^2$}\label{sec:asymptotic_formula}
In this section, we shall prove the following asymptotic formula for $\sum_{n\leq x}^*|\tau(n, \theta)|^2$.
\begin{thm}[Theorem~33, \cite{DivisorsHallTenen}]\label{thm:asymp_formula_tau_n_theta}
 Let $\theta\neq 0$ be a fixed real number. Then for $x\geq 1$, we have
 \[\sum_{n\leq x}^*|\tau(n, \theta)|^2=\omega_1(\theta)x\log x + \omega_2(\theta)x\cos(\theta\log x)
+\omega_3(\theta)x + O_\theta(x^{1/2}\log^6x)\,\]
where $\omega_i(\theta)$s are explicit constants depending only on $\theta$. 
\end{thm}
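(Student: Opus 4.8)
The plan is to establish the asymptotic formula by computing the summatory function of $|\tau(n,\theta)|^2$ via Perron's formula applied to the Dirichlet series $D(s) = \zeta^2(s)\zeta(s+i\theta)\zeta(s-i\theta)/\zeta(2s)$, and collecting residues at the poles to the right of a suitable contour of integration. First I would verify the Dirichlet series identity $\sum_n |\tau(n,\theta)|^2 n^{-s} = \zeta^2(s)\zeta(s+i\theta)\zeta(s-i\theta)/\zeta(2s)$ for $\Re(s) > 1$; this is a standard multiplicativity computation, checking the Euler factor at each prime $p$ and matching it against the product of the four zeta Euler factors divided by $(1-p^{-2s})$. Then I would apply the effective Perron formula (Theorem~\ref{thm:effective_perron_formula}) with $\kappa = 1 + 1/\log x$ and a truncation height $T$, writing
\[
\sum_{n\leq x}^* |\tau(n,\theta)|^2 = \frac{1}{2\pi i}\int_{\kappa - iT}^{\kappa+iT}\frac{D(s)x^s}{s}\,\d s + O\!\left(\text{error}\right),
\]
and bound the Perron error term using the crude bound $|\tau(n,\theta)|^2 \leq d(n)^2$.

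Next I would shift the line of integration leftward to a contour with real part slightly less than $1/2$ but staying to the right of the zeros of $\zeta(2s)$ (so $\Re(s)$ slightly above $1/4$ on the central segment), picking up residues at $s=1$ (a double pole, giving the $\omega_1(\theta)x\log x + (\text{const})x$ terms), and at $s = 1 + i\theta$ and $s = 1 - i\theta$ (simple poles, whose combined contribution is $\omega_2(\theta)x\cos(\theta\log x) + (\text{const})\,x$, using that the two residues are complex conjugates). Collecting the $x$-terms from all three residues yields $\omega_3(\theta)x$, and the explicit constants $\omega_i(\theta)$ come out of the Laurent expansions of $\zeta$ and $\zeta(2s)^{-1}$ at these points. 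The remaining integral over the shifted contour, together with the horizontal connecting segments, must be shown to be $O_\theta(x^{1/2}\log^6 x)$.

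The main obstacle is the estimation of the shifted contour integral and the choice of truncation height $T$ to balance against the Perron error. On the vertical line $\Re(s) = 1/2 + \delta$ one needs convexity-type bounds for $\zeta(s)$, $\zeta(s\pm i\theta)$ and a lower bound $|\zeta(2s)|^{-1} \ll \log|t|$ (valid since $2s$ has real part just above $1$, so one is in the zero-free-region regime where $1/\zeta$ has at most logarithmic growth — here the precise positioning of the contour relative to $\zeta(2s)=0$ matters and is what forces real part near $1/2$ rather than near $1/4$). The factor $\zeta^2(s)$ contributes the bulk of the $\log$ powers: on $\Re(s) = 1/2$ one has $\zeta(1/2+it) \ll |t|^{1/4}$ trivially but $\ll (\log|t|)$-type mean-square bounds, and integrating $|D(s)x^s/s|$ against $\d t$ with the weight $1/|s|$ produces the $x^{1/2}$ times a power of $\log x$; tracking that the power is exactly $6$ requires care with how many $\log$'s come from each zeta factor and from the length of integration, and this is essentially the content of Hall and Tenenbaum's Theorem~33 that we are reproducing. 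I would handle the horizontal segments at height $\pm T$ by the standard bound $\int_{1/2}^\kappa |D(\sigma \pm iT)| T^{-1} x^\sigma \,\d\sigma$, choosing $T$ a suitable power of $x$ so this and the Perron error are both absorbed into $O_\theta(x^{1/2}\log^6 x)$.
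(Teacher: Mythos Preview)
Your outline is the paper's approach: Perron with $\kappa=1+1/\log x$, shift the contour, collect residues at $1$ and $1\pm i\theta$, bound what remains. Two points where you are vague and the paper is specific. First, the shift is to $\Re(s)=\tfrac12$ exactly (not ``slightly less than $1/2$'' nor ``$1/2+\delta$''): on this line $\zeta(2s)=\zeta(1+2it)$, and one uses the classical lower bound giving $|\zeta(1+2it)|^{-1}\ll\log|t|$; going below $1/2$ would force you to control $1/\zeta$ inside the critical strip, which is what you want to avoid. Second, the step you defer --- getting precisely $\log^6 x$ --- is not done by pointwise convexity bounds but by the weighted fourth-moment estimate
\[
\mathcal J(a,T):=\int_{-T}^{T}\frac{|\zeta(\tfrac12+i(a+t))|^4}{\sqrt{t^2+\tfrac14}}\,\d t \ll_a \log^5 T,
\]
combined with two applications of Cauchy--Schwarz to the integrand $|\zeta^2(\tfrac12+it)\zeta(\tfrac12+it+i\theta)\zeta(\tfrac12+it-i\theta)|/|s|$, yielding the bound $x^{1/2}\log x\cdot\mathcal J(0,x)^{1/2}\mathcal J(\theta,x)^{1/4}\mathcal J(-\theta,x)^{1/4}\ll x^{1/2}\log^6 x$. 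The truncation is taken at $T=x+|\theta|+1$, which makes both the Perron remainder and the horizontal segments $O(x^\epsilon)$.
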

\begin{proof}
Recall that the corresponding Dirichlet series $D(s)$ has the following meromorphic continuation:
\[D(s)=\sum_{1}^\infty\frac{|\tau(n, \theta)|^2}{n^s}=\frac{\zeta^2(s)\zeta(s+i\theta)\zeta(s-i\theta)}{\zeta(2s)},\ \text{ for } s>1.\]
For $x\geq 2$, we denote $\kappa=1+\frac{1}{\log x}$ and $T=x+|\theta|+1$. By Perron's formula
\[\sum_{n\leq x}^*|\tau(n, \theta)|^2=\frac{1}{2\pi i}\int_{\kappa-iT}^{\kappa+iT}D(s)x^s\frac{\d s}{s} + O(x^{\epsilon}).\]
After shifting the line of integration to $\Re(s)=\half$, we may estimate the contributions from horizontal lines as follows:
\begin{align*}
 T^{-1}\int_{\half}^{1}|D(\sigma \pm iT)|x^{\sigma}\d \sigma \ll T^{-1}\int_{\half}^{1}T^{1-\sigma+\epsilon}x^{\sigma}\d \sigma \ll x^{\epsilon}.
\end{align*}
To obtain an asymptotic formula for $\sum_{n\leq x}^*|\tau(n, \theta)|^2$, we add up the residues from the poles $1, 1\pm i\theta$
after shifting the line of integration to $\Re(s)=\half$:
\begin{align*}
\sum_{n\leq x}^*|\tau(n, \theta)|^2 = \M(x)  + 
O\left(x^\epsilon + x^\half\int_{-T}^{T}\left|\frac{\zeta^2(\half+it)\zeta(\half+i(t+\theta))\zeta(\half+i(t-\theta))}{\zeta(1+2it)(\half + it)}\right|\d t\right),
\end{align*}
where 
\[\M(x)=\omega_1(\theta)x\log x + \omega_2(\theta)x\cos(\theta\log x)+\omega_3(\theta)x.\]
If we write
\[\mathcal J(\mathfrak{a}, T):=\int_{-T}^{T}\frac{\zeta^4(\half+i(\mathfrak{a}+t))}{\sqrt{t^2+\quater}}\d t \quad \text{ for } \ \mathfrak{a}, T
\in \mathbb R \ \text{ and } \ T\geq 1\ ,\]
then we have \cite[Theorem~5.1]{IvicBook}
\begin{equation}\label{eq:upper_bound_4thmoment_zeta}
 \mathcal J(\mathfrak a, T)\ll_{\mathfrak a} \log^5 T.\
\end{equation}
To express $\Delta(x)$ in terms of $\mathcal J(\mathfrak a, T)$, observe that 
\begin{align*}
\Delta(x)&=\sum_{n\leq x}^* |\tau(n, \theta)|^2 -\M(x)\\
&\ll x^\epsilon + x^\half\int_{-T}^{T}\left|\frac{\zeta^2(\half+it)\zeta(\half+i(t+\theta))\zeta(\half+i(t-\theta))}{\zeta(1+i2t)(\half + it)}\right|\d t \\
&\ll x^\epsilon + x^\half\log x \int_{-T}^{T}|\zeta^2(\half+it)\zeta(\half+i(t+\theta))\zeta(\half+i(t-\theta))|\frac{\d t}{|\half + it|} .\\
\end{align*}
From (\ref{eq:upper_bound_4thmoment_zeta}) and using the Cauchy-Schwartz inequality twice, we get
\begin{align*}
\Delta(x)\ll x^\epsilon + x^\half\log x \mathcal J^\half(0, x) \mathcal J^\quater(\theta, x) \mathcal J^\quater(-\theta, x) 
\ll_{\theta} x^\half \log ^6 x,
\end{align*}
which gives the required result.
\end{proof}
In the following sections, we shall obtain various $\Omega$ and $\Omega_\pm$ bounds for $\Delta(x)$.

\section{Oscillations of the Error Term}\label{sec:twisted_divisor_application_landau}
Here we shall apply results in Chapter~\ref{chap:landau_theorem} to $\Delta(x)$ and obtain some measure theoretic $\Omega_\pm$ results.
We begin by defining a contour $\Co$ as given in Figure~\ref{fg:contour_c0_taun}:
\begin{align*}
 \Co=&\left(\frac{5}{4}-i\infty, \frac{5}{4}-i(\theta+1)\right]\cup \left[\frac{5}{4}-i(\theta+1), \frac{3}{4}-i(\theta+1)\right]\\
 &\cup \left[\frac{3}{4}-i(\theta+1), \frac{3}{4}+i(\theta+1)\right]\cup \left[\frac{3}{4}+i(\theta+1), \frac{5}{4}+i(\theta+1)\right]\\
 &\cup \left[\frac{5}{4}+i(\theta+1), \frac{5}{4}+i\infty \right).
\end{align*}
From Theorem~\ref{thm:perron_formula}, we have
\[\Delta(x)=\int_{\Co}\frac{D(\eta)x^\eta}{\eta}\d \eta.\]
The above identity expresses the Mellin transform $A(s)$ of $\Delta(x)$ as a contour integral involving $D(s)$.
Using Theorem~\ref{thm:analytic_continuation_mellin_transform}, we write
\[A(s)=\int_{1}^{\infty}\frac{\Delta(x)}{x^{s+1}}\d x=\int_{\Co}\frac{D(\eta)}{\eta(s-\eta)}\d \eta,\]
when $s$ lies right to the contour $\Co$. Denote the first nontrivial zero of $\zeta(s)$ with least positive imaginary part by $2s_0$. 
An approximate value of this point is
\[2s_0=\frac{1}{2}+i14.134\ldots .\]
Define the contour $\Co(s_0)$, as in Figure~\ref{fg:contour_c0s0}, such that $s_0$ and any real number $s\geq1/4$ lie in the 
right side of this contour. A meromorphic continuation of $A(s)$ to all $s$ that lies right side of $\Co(s_0)$ is given by
\begin{equation}\label{eq:analytic_cont1/4}
 A(s)=\int_{\Co(s_0)}\frac{D(\eta)x^\eta}{\eta}{\d \eta} + \frac{\underset{\eta=s_0}{\res}D(\eta)}{s_0(s-s_0)}.
 \end{equation}
\begin{center}
 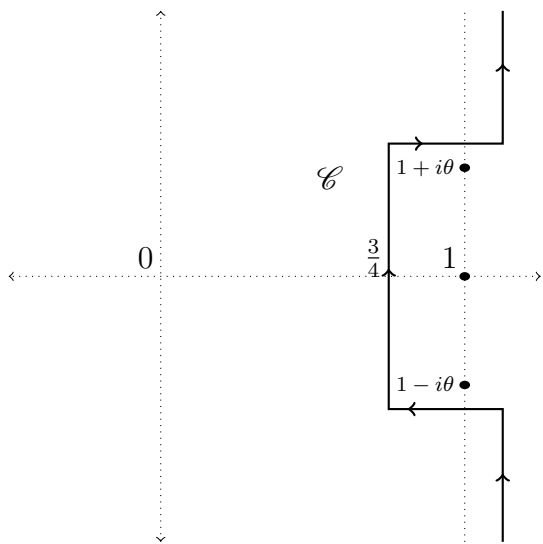
\begin{figure}
 \begin{tikzpicture}[yscale=0.8]
\draw [<->][dotted] (-1, -4.4)--(-1, 4.4);
\node at (-1.2, 0.3) {$0$};
\draw [<->][dotted] (4, 0)--(-3, 0);

\draw [dotted] (3, -4.4)--(3, 4.4);
\fill (3, 0) circle[radius=2pt];
\node at (2.8, 0.3) {$1$};

\fill (3, 1.8) circle[radius=2pt];
\node at (2.48, 1.8) {\scriptsize{$1+i\theta$}};
\fill (3, -1.8) circle[radius=2pt];
\node at (2.48, -1.8) {\scriptsize{$1-i\theta$}};
\node at (1.8, 0.3) {$\frac{3}{4}$};

\draw [thick] [postaction={decorate, decoration={ markings,
mark= between positions 0.09 and 0.98 step 0.21 with {\arrow[line width=1pt]{>},}}}]
(3.5, -4.4)--(3.5, -2.2)--(2, -2.2)--(2, 2.2)--(3.5, 2.2)--(3.5, 4.4);
\node [below left] at (1.55, 2) {$\mathscr{C}$};
\end{tikzpicture}
\caption{Contour $\mathscr{C}$, for $D(s)=\sum_{n=1}^{\infty}\frac{|\tau(n, \theta)|^2}{n^s}$.}\label{fg:contour_c0_taun}
\end{figure}
\end{center}

\begin{center}
 \begin{figure}
 \begin{tikzpicture}[yscale=0.8]
\draw [<->][dotted] (-1, -4.4)--(-1, 4.4);
\node at (-1.2, 0.3) {$0$};
\draw [<->][dotted] (4, 0)--(-3, 0);

\draw [dotted] (3, -4.4)--(3, 4.4);
\fill (3, 0) circle[radius=2pt];
\node at (2.8, 0.3) {$1$};

\fill (3, 1.8) circle[radius=2pt];
\node at (2.48, 1.8) {\scriptsize{$1+i\theta$}};
\fill (3, -1.8) circle[radius=2pt];
\node at (2.48, -1.8) {\scriptsize{$1-i\theta$}};
\draw [dotted] (0, -4.4)--(0, 4.4);
\node at (-0.28, 0.3) {$\frac{1}{4}$};
\fill (0, 3) circle[radius=2pt];
\node at (-0.28, 3) {$s_0$};

\draw [thick] [postaction={decorate, decoration={ markings,
mark= between positions 0.03 and 0.99 step 0.147 with {\arrow[line width=1pt]{>},}}}]
(3.5, -4.4)--(3.5, -2.5)--(2, -2.5)--(2, -0.5)--(-0.6, -0.5)--(-0.6, 3.8)--(0.4, 3.8)--(0.4, 0.65)
--(2, 0.65)--(2, 2.5)--(3.5, 2.5)--(3.5, 4.4);
\node [below left] at (-0.55, 2) {$\mathscr{C}(s_0)$};
\end{tikzpicture}
\caption{Contour $\mathscr{C}(s_0)$}\label{fg:contour_c0s0}
\end{figure}
\end{center}

From (\ref{eq:analytic_cont1/4}), we calculate the following two limits:
\begin{equation}\label{eqn:lambda_theta}
\lambda(\theta):= \lim_{\sigma\searrow0}\sigma|A(\sigma+s_0)| = |s_0|^{-1} \left|\underset{\eta=s_0}{\res}D(\eta)\right|>0
\end{equation}
and 
\begin{equation*}
\lim_{\sigma\searrow0}\sigma A(\sigma+ 1/4)=0.
\end{equation*}
For a fixed small enough $\epsilon>0$, define
\begin{align*}
 \A_1&=\left\{x: \Delta(x)>(\lambda(\theta)-\epsilon)x^{1/4}\right\},\\
 \A_2&=\left\{ x : \Delta(x)<(-\lambda(\theta)+\epsilon)x^{1/4}\right\}.
\end{align*}
Theorem~\ref{thm:asymp_formula_tau_n_theta} and Corollary~\ref{cor:measure_omega_pm_from_upper_bound} give
\begin{equation}\label{result:tau_n_theta_omegapm}
 \mu\left(\A_j\cap[T, 2T]\right)=\Omega\left(T^{1/2}(\log T)^{-12}\right) \text{ for } j=1, 2.
\end{equation}
Under Riemann Hypothesis, Theorem~\ref{thm:omega_pm_main} and Proposition~\ref{prop:upper_bound_second_moment_twisted_divisor} give 
\begin{equation}\label{result:tau_n_theta_omegapm_underRH}
 \mu\left(\A_j\cap[T, 2T]\right)=\Omega\left(T^{3/4-\epsilon}\right) \text{ for } j=1, 2.
\end{equation}
From Corollary~\ref{coro:omega_pm_secondmoment}, we get
\begin{equation}\label{result:tau_n_theta_secondmoment}
 \int_{\A_j\cap[T, 2T]}\Delta^2(x)\d x = \Omega\left(T^{3/2}\right) \text{ for } j=1, 2.
\end{equation}

\section{An Omega Theorem }\label{sec:stronger_omega}
%
%
%
%
%
Recall that (see Theorem~\ref{thm:asymp_formula_tau_n_theta})
$$\sum_{n\le x} |\tau(n,\theta)|^2 =\M(x) + \Delta(x),$$
where the main term
\[\M(x)=\omega_1(\theta)x\log x + \omega_2(\theta)x\cos(\theta\log x)+\omega_3(\theta)x\] 
comes from the poles of $D(s)$ at $s=1, 1+i\theta$ and 
$s=1-i\theta$. 
We may observe from Corollary~\ref{coro:omega_pm_secondmoment} that if $D(s)$ has a complex pole at $s_0=\sigma_0+it_0$, other than $1+i\theta$
and $1-i\theta$, then
\[ \int_T^{2T}\Delta(x)\d x=\Omega(x^{2\sigma_0+1}).\]
By Riemann Hypothesis, the only positive value for $\sigma_0$ is $\quater$, which is same as (\ref{result:tau_n_theta_secondmoment}). 
In this section, 
we shall use a technique due to  Balasubramanian, Ramachandra and Subbarao \cite{BaluRamachandraSubbarao} to improve this omega bound further.
Now we state the main theorem of this section.
\begin{thm}\label{omega_integral}
For any $c>0$ and for a sufficiently large $T$ depending on $c$, we get 
\begin{equation}\label{lb-increasing}
\int_T^{\infty} \frac{|\Delta(x)|^2}{x^{2\alpha+1}}e^{-2x/y} \d u 
\gg_c\exp\left( c(\log T)^{7/8} \right),
\end{equation}
where
\[ \alpha=\alpha(T)=\frac{3}{8} -\frac{c}{(\log T)^{1/8} }.\]
\end{thm}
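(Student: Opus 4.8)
The plan is to follow the Balasubramanian–Ramachandra–Subbarao method \cite{BaluRamachandraSubbarao}, which produces an $\Omega$-result for a weighted integral of $|\Delta(x)|^2$ by contour-shifting the generating Dirichlet integral and extracting a single, carefully placed zero of $\zeta(2s)$. First I would introduce the weighted Mellin-type transform $F(w):=\int_1^\infty \Delta(x)\, x^{-w-1} e^{-x/y}\,\d x$ (for a large parameter $y$ to be chosen as a power of $T$), and relate it to the Mellin transform $A(s)$ of $\Delta(x)$ from Definition~\ref{def:mellin_transform} together with the analytic continuation in Theorem~\ref{thm:analytic_continuation_mellin_transform}. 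Since $D(s)=\zeta^2(s)\zeta(s+i\theta)\zeta(s-i\theta)/\zeta(2s)$ has poles at the points $\rho$ with $\zeta(2\rho)=0$, and the main term only absorbs the poles at $s=1,1\pm i\theta$, the function $A(s)$ inherits genuine poles from the nontrivial zeros of $\zeta(2s)$; in particular there is a pole with real part $1/4$.

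The core of the argument is a lower bound for $\int_T^\infty |\Delta(x)|^2 x^{-2\alpha-1} e^{-2x/y}\,\d x$ obtained by the following route. Using Cauchy–Schwarz in the form
\[
\left| \int_T^{\infty} \Delta(x)\, x^{-\alpha-1} e^{-x/y}\, \phi(x)\,\d x \right|^2 \le \left(\int_T^\infty |\Delta(x)|^2 x^{-2\alpha-1} e^{-2x/y}\,\d x\right)\left(\int_T^\infty |\phi(x)|^2 x^{-1}\,\d x\right),
\]
for a suitable test function $\phi$, it suffices to exhibit a large value of the left-hand inner product. I would take $\phi$ to be (essentially) $x^{it_0}$, tuned to the imaginary part of a zero $\beta_0 = \frac14 + it_0$ of $\zeta(2s)$ deep in the critical strip, and evaluate the resulting integral by moving the line of integration in the $w$-plane so as to pick up the residue of $D(w)x^w/w$ (weighted by $e^{-x/y}$) at that zero. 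The zero must be chosen with $t_0$ as large as the zero-density / zero-free information permits while keeping its real part as close to $\frac38$ as possible after the $x^{it_0}$-twist — this is precisely where the exponent $\alpha(T) = \frac38 - c(\log T)^{-1/8}$ and the gain $\exp(c(\log T)^{7/8})$ come from: one balances the size $|\zeta'(2\beta_0)|^{-1}$ and the spacing of zeros against the cost $T^{3/8-\alpha} = \exp(c(\log T)^{7/8})$ of the mismatch in exponents, choosing $y$ (a power of $T$) to control the tail of the $e^{-x/y}$ weight and the truncation errors from the horizontal segments of the contour.

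The main obstacle, as in the original BRS argument, is the bookkeeping on the contour shift: bounding the contributions from the horizontal segments and from the remaining vertical line using mean-value / fourth-moment estimates for $\zeta$ on $\Re(s)=\frac12$ (the bound $\mathcal J(\mathfrak a,T)\ll \log^5 T$ of \eqref{eq:upper_bound_4thmoment_zeta} and the Perron-type representation underlying Theorem~\ref{thm:analytic_continuation_mellin_transform}), and then verifying that these error terms are genuinely smaller than the extracted residue. A secondary technical point is ensuring there is a zero $\beta_0$ of $\zeta(2s)$ with imaginary part in the required dyadic-type range so that the twist $x^{it_0}$ is admissible; this uses classical zero-counting $N(T)\sim \frac{T}{2\pi}\log\frac{T}{2\pi}$ for $\zeta$. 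Once the inner product is shown to be $\gg \exp(c(\log T)^{7/8})$ up to acceptable errors, dividing by $\int_T^\infty |\phi(x)|^2 x^{-1}\,\d x \ll \log(y/T) \ll \log T$ and absorbing this polynomial-in-$\log T$ loss into the constant $c$ (by slightly shrinking $c$) yields \eqref{lb-increasing}.
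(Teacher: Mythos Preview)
Your proposal has a genuine gap in the mechanism that is supposed to produce the exponent $3/8$. Extracting the residue at a single pole of $A(s)$ coming from a zero $\rho$ of $\zeta(2s)$ yields a contribution of order $x^{\Re(\rho)}$; such zeros have $\Re(\rho)\le 1/4$ under RH, and unconditionally there is no reason whatever to find one near $\Re(\rho)=3/8$ (that would be a zero of $\zeta$ near $\Re=3/4$). Hence your Cauchy--Schwarz/residue scheme with $\phi(x)=x^{it_0}$ can at best recover $\Omega(x^{1/4})$, which is exactly what Landau's method already gives and which the paper records separately. The sentence ``keeping its real part as close to $3/8$ as possible after the $x^{it_0}$-twist'' is not meaningful: the twist $x^{it_0}$ is purely oscillatory and shifts nothing real. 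The quantity $T^{3/8-\alpha}=\exp(c(\log T)^{7/8})$ you invoke is the \emph{conclusion}; you have not identified anything on the analytic side that is genuinely of size $T^{3/8}$.

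The paper's route is structurally different and does not pick out any individual pole. The exponent $3/8$ arises from a \emph{second-moment} comparison: Proposition~\ref{main-prop} shows that if the weighted $|\Delta|^2$-integral were small, then $\int_{J_2(T)} |D(\beta+it)|^2|\beta+it|^{-2}\,\d t$ would be bounded for $\beta$ slightly left of $3/8$, where $J_2(T)\subset[T,2T]$ is the set of $t$ avoiding neighbourhoods of possible off-line zeros of $\zeta$ (Lemma~\ref{size-J(T)} gives $\mu(J_2(T))\sim T$ via Selberg's zero-density bound). The functional equation then gives $|D(\beta+it)|\gg t^{2-4\beta}$ times controllable $|\zeta|^{\pm1}$-factors, which on $J_2(T)$ are handled unconditionally by the Borel--Carath\'eodory/three-circles Lemma~\ref{estimate-on-J(T)}. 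This forces $\int_{J_2(T)}|D(\beta+it)|^2 t^{-2}\,\d t \gg T^{3-8\beta}$, and with $\beta=3/8-c/(2(\log T)^{1/8})$ one gets $\exp\bigl(c'(\log T)^{7/8}\bigr)$, a contradiction. The bridge from $\int|D|^2$ back to $\int|\Delta|^2$ is Proposition~\ref{main-prop}, built on the Montgomery--Vaughan mean-value Theorem~\ref{mean-value} together with Lemmas~\ref{gamma}--\ref{mean-value-error}; none of these ingredients appear in your sketch, and the residue/test-function mechanism you propose cannot replace them.
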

\noindent 
In particular, this implies
\[ \Delta(x)=\Omega (x^{3/8}\exp(-c(\log x)^{7/8}), \]
for some suitable $c>0$.

In order to prove the theorem, we need several lemmas, which form the content of this section. 
We begin with a fixed $\delta_0 \in (0,1/16]$ for which we would choose a numerical value at the end of this section.

\begin{defi}
 For $T>1$, let $Z(T)$ be the set of all $\gamma$ such that 
\begin{enumerate}
\item $T\le \gamma \le 2T$,
\item either $\zeta(\beta_1+i\gamma)=0$ for some $\beta_1\ge \half+\delta_0$ \\
or $\zeta(\beta_2+i 2\gamma)=0$ for some $\beta_2\ge \half +\delta_0$.
\end{enumerate}
Let
\[ I_{\gamma,k} = \{ T\le t \le 2T: |t-\gamma| \le k\log^2 T \}  \text{ for }  k=1, 2.\]
We finally define
\[J_k(T)=[T,2T] \setminus \cup_{\gamma\in Z(T)} I_{\gamma,k}. \]
\end{defi}

\begin{lem}\label{size-J(T)}
With the above definition, we have for $k=1,2$
\[ \mu(J_k(T)) = T +O\left( T^{1-\delta_0/4} \log^3 T \right). \]
\end{lem}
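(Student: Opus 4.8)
The plan is to estimate $\mu(J_k(T))$ by controlling the total length of the excised intervals $\bigcup_{\gamma \in Z(T)} I_{\gamma,k}$, which amounts to counting the elements of $Z(T)$ — that is, the ordinates in $[T,2T]$ at which $\zeta$ has a zero with real part at least $\tfrac12 + \delta_0$ (either at height $\gamma$ or at height $2\gamma$). Since $\mu(I_{\gamma,k}) \le 2k\log^2 T$ for each $\gamma$, we immediately get
\[
T - 2k\log^2 T \cdot |Z(T)| \le \mu(J_k(T)) \le T,
\]
so the whole statement reduces to the bound $|Z(T)| \ll T^{1-\delta_0/4}\log T$. (The upper bound $\mu(J_k(T)) \le \mu([T,2T]) = T$ is trivial; the content is the lower bound, hence the $O(\cdot)$ error term.)

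To bound $|Z(T)|$ I would invoke the classical Bohr–Landau / Carlson zero-density estimate for the Riemann zeta function: the number $N(\sigma, T)$ of zeros $\beta + i\gamma$ with $\beta \ge \sigma$ and $0 \le \gamma \le T$ satisfies a bound of the form $N(\sigma, T) \ll T^{c(1-\sigma)}\log T$ for some absolute constant $c$ (e.g. the Ingham–Huxley density estimate, or even the cruder $N(\sigma,T) \ll T^{4\sigma(1-\sigma)}\log T$ or Carlson's $T^{4\sigma(1-\sigma)+\epsilon}$). Applying this with $\sigma = \tfrac12 + \delta_0$ gives $N(\tfrac12+\delta_0, 2T) \ll T^{c(\tfrac12-\delta_0)}\log T$; more to the point, the zeros in the strip $T \le \gamma \le 2T$ number $\ll T^{A\delta_0}\log T$ for a suitable constant $A$ once one uses the log-free or near-optimal form near the critical line — the key point being that the exponent degrades linearly in $\delta_0$ away from $\tfrac12$, so with the stated $\delta_0 \le 1/16$ and the right density exponent one lands on $T^{1-\delta_0/4}$. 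The zeros counted via $\zeta(\beta_2 + i2\gamma) = 0$ contribute zeros in $[2T, 4T]$, which the same density estimate controls with the same bound up to constants.

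The main obstacle — really a bookkeeping point rather than a deep one — is pinning down which zero-density estimate yields exactly the exponent $1 - \delta_0/4$. A completely elementary input (Jensen's formula / the classical mean-value argument behind Bohr–Landau) already gives $N(\tfrac12 + \delta_0, T+1) - N(\tfrac12+\delta_0, T) \ll \log T$ per unit interval, hence $\ll T \log T$ over $[T,2T]$, which is far too weak; so one genuinely needs a density theorem with a power saving. I would cite Ivić's book (which the paper already uses, e.g. Theorem~11.x there) for the estimate $N(\sigma,T) \ll T^{4(1-\sigma)/3+\epsilon}$ valid uniformly for $\tfrac12 \le \sigma \le 1$ — or any of its refinements — and observe that with $\sigma = \tfrac12+\delta_0$ this gives $N(\tfrac12+\delta_0, T) \ll T^{2/3 - 4\delta_0/3 + \epsilon}$, and in particular the count of such ordinates in $[T,4T]$ is $\ll T^{2/3}$, which is comfortably $\ll T^{1-\delta_0/4}$ for all $\delta_0 \in (0,1/16]$ and absorbs the $\log$ factors. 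Multiplying by $2k\log^2 T$ then yields the claimed error term $O(T^{1-\delta_0/4}\log^3 T)$, completing the proof.
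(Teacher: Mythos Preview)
Your argument is correct and follows exactly the same structure as the paper's proof: bound the measure of the excised set by $|Z(T)|\cdot O(\log^2 T)$ and then control $|Z(T)|$ by a zero-density estimate for $\zeta$. The only difference is the input you cite. The paper uses Selberg's density estimate (Titchmarsh, p.~237)
\[
N(\sigma,T)\ll T^{1-\frac{1}{4}(\sigma-\frac12)}\log T,\qquad \sigma>\tfrac12,
\]
which at $\sigma=\tfrac12+\delta_0$ gives $N(\tfrac12+\delta_0,T)\ll T^{1-\delta_0/4}\log T$ and hence explains the precise exponent $1-\delta_0/4$ in the lemma with no slack. You instead invoke an Ingham/Ivi\'c-type bound $N(\sigma,T)\ll T^{4(1-\sigma)/3+\epsilon}$, obtain a stronger count $\ll T^{2/3}$, and then verify this is comfortably $O(T^{1-\delta_0/4})$ for $\delta_0\le 1/16$. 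Both routes work; the paper's choice has the virtue of making the stated error term sharp for the input, while yours shows there is actually room to spare.
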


\begin{proof}
We shall use an estimate on the function $N(\sigma, T)$, which is defined as 
\[N(\sigma, T):=\left|\{\sigma'+it:\sigma'\ge\sigma,\ 0<t\leq T,\ \zeta(\sigma'+it)=0\}\right|.\]
Selberg \cite[Page~237]{Titchmarsh} proved that
$$N(\sigma, T) \ll T^{1-\frac{1}{4}(\sigma -\half)} \log T, \ \text{ for } \ \sigma>1/2.$$
Now the lemma follows from the above upper bound on $N(\sigma, t)$, and the observation that
$$|\cup_{\gamma\in Z(T)} I_{\gamma,k}| \ll N\left(\half+ \delta_0, T\right)\log^2 T.$$
\end{proof}

The next lemma closely follows Theorem 14.2 of  \cite{Titchmarsh}, but does not depend on Riemann Hypothesis.
\begin{lem}\label{estimate-on-J(T)}
For $t\in J_1(T)$ and $\sigma= 1/2+\delta$ with $\delta_0 < \delta < 1/4-{\delta_0}/2$,  we have
$$|\zeta(\sigma+it)|^{\pm 1} \ll 
\exp\left(\log\log t \left(\frac{\log t}{\delta_0}\right)^{\frac{1-2\delta}{1-2\delta_0}}\right)$$

and
\[ |\zeta(\sigma+2it)|^{\pm 1} \ll
\exp\left(\log\log t \left(\frac{\log t}{\delta_0}\right)^{\frac{1-2\delta}{1-2\delta_0}}\right).\]
\end{lem}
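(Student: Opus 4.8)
The plan is to mimic the classical argument for Theorem~14.2 of Titchmarsh, which derives bounds for $|\zeta(s)|^{\pm1}$ on a vertical line to the right of $\tfrac12$ from the subconvexity/growth bounds just inside the critical strip, but to replace the appeal to the Riemann Hypothesis by the defect in our region: on $J_1(T)$ the nearest zero of $\zeta(\beta_1+i\gamma)$ with $\beta_1 \ge \tfrac12+\delta_0$ lies at distance more than $\log^2 T$ in the $t$-aspect, by the very definition of $J_1(T)$ and the excised intervals $I_{\gamma,1}$. First I would recall the Borel--Carath\'eodory plus Hadamard three-circles machinery: starting from the trivial bound $|\zeta(s)| \ll t^A$ in a strip and the representation of $\log\zeta$ via its zeros, one gets, for $t\in J_1(T)$ and any $\sigma$ with $\tfrac12+\delta_0 \le \sigma$, an upper bound for $\log|\zeta(\sigma+it)|$ in terms of the number of zeros within a bounded distance and the value of $\log\zeta$ at a reference point with real part a fixed amount to the right of $1$. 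Because $t\in J_1(T)$ excludes a neighbourhood of width $\log^2 T$ of every zero with real part $\ge \tfrac12+\delta_0$, the contribution of the ``close'' zeros is controlled: there are none with real part that large, and the zeros with real part in $(\tfrac12,\tfrac12+\delta_0)$ are counted by Selberg's density estimate $N(\sigma,T)\ll T^{1-\frac14(\sigma-\frac12)}\log T$ used in Lemma~\ref{size-J(T)}.

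The heart of the quantitative estimate is a convexity argument in the $\sigma$-variable. I would fix the two lines $\Re(s)=\tfrac12+\delta_0$ and $\Re(s) = 1+\delta_0$ (say), and on the right line use $\log|\zeta|\ll \log\log t$, while on the left line use the Lindel\"of-type bound $\log|\zeta(\tfrac12+\delta_0+it)| \ll \frac{\log t}{\log\log t}$ available away from zeros (this is exactly where $J_1(T)$ enters: the zero-free window of width $\log^2 T$ lets one apply the standard local estimate for $\log\zeta$ near the critical line without RH, with the $\delta_0$ built into the exponent). Then Hadamard's three-lines theorem interpolated at $\sigma = \tfrac12+\delta$ yields an exponent that is linear in the position of $\sigma$ between the two lines, namely a weight $\frac{(1+\delta_0)-(1/2+\delta)}{(1+\delta_0)-(1/2+\delta_0)} = \frac{1/2+\delta_0-\delta}{1/2}$ on the large bound and the complementary weight on the small one. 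Tracking the algebra, the dominant term becomes $\log\log t \cdot \big(\tfrac{\log t}{\delta_0}\big)^{\frac{1-2\delta}{1-2\delta_0}}$, which is the claimed bound; the reciprocal bound $|\zeta|^{-1}$ follows from the same reasoning applied to $1/\zeta$, valid precisely because there are no zeros in the relevant box. Finally, the estimate at $\sigma+2it$ is literally the estimate at $\sigma+it'$ with $t'=2t$, and $t\in J_1(T)$ was defined so that $2t$ also avoids zeros of $\zeta$ (via condition~2 in the definition of $Z(T)$ involving $\zeta(\beta_2+i2\gamma)$), so no extra work is needed.

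The main obstacle I anticipate is making the convexity/three-lines step rigorous with the correct uniform constants while the left-hand line $\Re(s)=\tfrac12+\delta_0$ is so close to the critical line: one must genuinely use the width-$\log^2 T$ exclusion to get a clean bound for $\log|\zeta|$ there (as opposed to merely $|\zeta|\ll t^{1/2}$), and one must check that the three-lines principle applies to $\log\zeta$, which requires $\zeta\ne 0$ in the whole strip $\tfrac12+\delta_0\le\Re(s)\le 1+\delta_0$, $|t'-t|$ bounded — again guaranteed by $t\in J_1(T)$. I would structure the write-up so that this ``no zeros in the box'' fact is isolated as the first sentence of the proof, then invoke the Borel--Carath\'eodory lemma to pass from a zero-free disc to a bound on $\log\zeta$, then Hadamard three-lines to interpolate, and finally simplify the exponent; everything after the zero-free observation is routine and parallels Titchmarsh verbatim, so I would not grind through it.
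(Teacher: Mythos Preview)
Your overall plan (Borel--Carath\'eodory to control $\log\zeta$ in a zero-free region, then a convexity interpolation in $\sigma$) is the right one and matches the paper's strategy, and your remark that the $\sigma+2it$ bound follows from the second clause in the definition of $Z(T)$ is exactly right. But the convexity step as you describe it does not go through, for two linked reasons. First, you cannot apply Hadamard's three-\emph{lines} theorem to $\log\zeta$ on the vertical strip $\tfrac12+\delta_0\le\Re(s)\le 1+\delta_0$: the function is neither bounded nor even defined there (zeros of $\zeta$ at other heights). Saying ``restrict to $|t'-t|$ bounded'' does not rescue this; what you need is three-\emph{circles} on discs centred at $\sigma'+it$ with $\sigma'$ a parameter, so that the whole disc of radius $\sigma'-\tfrac12-\delta_0$ stays inside the zero-free window of height $\log^2 T$. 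This is the geometric point that your write-up is missing.

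Second, and consequently, your exponent is wrong. With lines at $\tfrac12+\delta_0$ and $1+\delta_0$ the linear interpolation weight is $1-2(\delta-\delta_0)$, not $\frac{1-2\delta}{1-2\delta_0}$; these differ already at first order in $\delta_0$, and the lemma asserts the latter. The paper obtains the correct exponent by taking the three circles centred at $\sigma'+it$ with radii $r_1=\sigma'-1-\xi$, $r_2=\sigma'-\sigma$, $r_3=\sigma'-\tfrac12-\delta_0$, so that $\nu=\log(r_2/r_1)/\log(r_3/r_1)\to\frac{1-2\delta}{1-2\delta_0}$ as $\sigma'\to\infty$; one then sets $\sigma'=1/\xi=\log\log t$ to balance the Borel--Carath\'eodory bound $M_3\ll\frac{\sigma'}{\delta_0}\log t$ against the trivial bound $M_1\ll 1/\xi$ on the inner circle, which is also where the prefactor $\log\log t$ in the statement comes from. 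Your claimed input $|\log\zeta(\tfrac12+\delta_0+it)|\ll\frac{\log t}{\log\log t}$ is not what Borel--Carath\'eodory delivers here (it gives $\ll\frac{\sigma'}{\delta_0}\log t$), and the Selberg density estimate for zeros in $(\tfrac12,\tfrac12+\delta_0)$ plays no role in this lemma---it was used only in Lemma~\ref{size-J(T)}.
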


\begin{proof}
We provide a proof of the first statement, and the second statement can be similarly proved.

Let $1 <\sigma' \le \log t$. We consider two concentric circles centered at $\sigma'+it$,
with radius $\sigma'-1/2-\delta_0/2$ and $\sigma' -1/2- \delta_0$. 
Since $t\in J_1(T)$ and the radius of the circle is $\ll \log t$, we conclude that
\[ \zeta(z)\neq 0 \ \text{ for } \  |z-\sigma'-it | \le \sigma' - \half -  \frac{\delta_0}{2} \]
and also $\zeta(z)$ has polynomial growth in this region. 
Thus on the larger circle, $\log |\zeta(z)| \le c_{17}\log t$, for some constant $c_{17}>0$.
By Borel-Carath\'eodory theorem, 
\[\  |z-\sigma'-it | \le \sigma' - \half - \delta_0 \  \text{ implies } \
|\log \zeta(z)| \le \frac{c_{18}\sigma'}{\delta_0} \log t, \]
for some $c_{18}>0$.
Let $1/2+\delta_0< \sigma < 1$, and $\xi>0$ be such that $1+\xi< \sigma'$. 
We consider three concentric circles centered at 
$\sigma'+it$ with radius $r_1=\sigma'-1-\xi$, $r_2=\sigma'-\sigma$ and 
$r_3=\sigma'-1/2-\delta_0$,
and call them $\mathcal  C_1, \mathcal C_2$ and $\mathcal C_3$ respectively. 
Let
$$M_i = \sup_{z\in \mathcal C_i} |\log \zeta(z)|.$$
From the above bound on $|\log\zeta(z)|$, we get 
$$M_3 \le  \frac{c_{18}\sigma'}{\delta_0} \log t.$$
Suitably enlarging $c_{18}$, we see that 
\[ M_1 \le \frac{c_{18}}{\xi}.\]
Hence we can apply the Hadamard's three circle theorem to conclude that
\[ M_2 \le M_1^{1-\nu} M_3^\nu,  \ \text{ for } \ \nu=\frac{\log(r_2/r_1)}{\log(r_3/r_1)}. \]
Thus
\[ M_2 \le \left( \frac{c_{18}}{\xi} \right)^{1-\nu}\left(\frac{c_{18}\sigma' \log t}{\delta_0}\right)^\nu. \]
It is easy to see that
\[ \nu=2-2\sigma + \frac{4\delta_0(1-\sigma)}{1+2\xi-2\delta_0} 
+O(\xi) + O\left( \frac{1}{\sigma'}\right). \]
Now we put 
\[ \xi=\frac{1}{\sigma'}=\frac{1}{\log\log t} .\]
Hence
\[ M_2 \le \frac{c_{18}  \log^\nu t \log\log t}{\delta_0^\nu}
=\frac{c_{19} \log\log t}{\delta_0^\nu}(\log t)^{2-2\sigma+\frac{4\delta_0(1-\sigma)}{1+2\xi-2\delta_0} }, \]
for some $c_{19}>0$.
We observe that
\[  2-2\sigma+\frac{4\delta_0(1-\sigma)}{1+2\xi-2\delta_0} < 
2-2\sigma+\frac{4\delta_0(1-\sigma)}{1-2\delta_0} =\frac{1-2\delta}{1-2\delta_0}. \]
So we get 
\[ |\log \zeta(\sigma +it) | 
\le c_{19} \log\log t \left(\frac{\log t}{\delta_0}\right)^{\frac{1-2\delta}{1-2\delta_0}},\]
and hence the lemma.
\end{proof}

We put $y=T^{\mathfrak b}$, for a constant $\mathfrak b \ge 8$. Now suppose that
$$\int_T^{\infty} \frac{|\Delta(u)|^2}{u^{2\alpha+1}}e^{-u/y}du \ge \log^2 T,$$
for sufficiently large $T$. Then clearly 
$$\Delta(u) =\Omega( u^{\alpha}) .$$ 
Our next result explores the situation when such an inequality does not hold.

\begin{prop}\label{main-prop}
Let $\delta_0<\delta<\quater-\frac{\delta_0}{2}$.
For $1/4+\delta/2 < \alpha <1/2$, suppose that
 \begin{equation}\label{assumption}
\int_T^{\infty} \frac{|\Delta(u)|^2}{u^{2\alpha+1}}e^{-u/y}du \le \log^2 T
\end{equation}
for any sufficiently large $T$.
Then we have
$$\int_{\substack{Re(s)=\alpha \\ t\in J_2(T)}} \frac{|D(s)|^2}{|s|^2} 
\ll 
1 + \int_T^{\infty} \frac{|\Delta(u)|^2}{u^{2\alpha+1}}e^{-2u/y} du.$$
\end{prop}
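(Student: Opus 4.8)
The idea is to connect the Mellin--Dirichlet side with the "error term" side via a smoothed Perron-type integral, and to exploit that outside the exceptional set $\cup_{\gamma}I_{\gamma,k}$ the zeta-factors in $D(s)$ are well controlled by Lemma~\ref{estimate-on-J(T)}. Concretely, start from the weighted integral $\int_T^\infty \Delta(u)u^{-\alpha-1}e^{-u/y}\,\d u$ (or rather its analogue with $|\Delta|^2$), and recall the standard identity that expresses the exponentially smoothed sum $\sum_n a_n e^{-n/y}$ as $\frac{1}{2\pi i}\int_{(\kappa)} D(s)\Gamma(s)y^s\,\d s$. Subtracting the main term $\M(x)$ (whose smoothed transform is explicit and holomorphic away from $s=1,1\pm i\theta$), one gets a representation of the form
\[
\int_0^\infty \Delta(u)\,u^{-\alpha-1}e^{-u/y}\,\d u \;=\; \frac{1}{2\pi i}\int_{\Re(s)=\alpha}\frac{D(s)\,\Gamma(s-\alpha)\,y^{s-\alpha}}{s}\,\d s \;+\;(\text{holomorphic corrections}),
\]
after shifting the contour down to $\Re(s)=\alpha$, which is legitimate since $1/4+\delta/2<\alpha<1/2$ keeps us to the right of all the zeta-zero-induced poles used in the applications. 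The rapid decay of $\Gamma(s-\alpha)$ on vertical lines makes every tail convergent.

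Next I would square-and-integrate: apply Cauchy--Schwarz (or Plancherel for the Mellin transform) to pass from the linear relation to the quadratic one asserted in the proposition. The point is that $\int_T^\infty |\Delta(u)|^2 u^{-2\alpha-1}e^{-2u/y}\,\d u$ dominates, up to the Gamma-weight, the $L^2$-norm of $D(s)/s$ on the line $\Re(s)=\alpha$ restricted to where the smoothing weight $|\Gamma(s-\alpha)y^{s-\alpha}|$ is not too small, i.e.\ for $|\Im(s)|$ up to a power of $\log T$ times $y = T^{\mathfrak b}$. Here the restriction to $t\in J_2(T)$ enters: on the complementary arcs $I_{\gamma,1}$ (a neighbourhood of width $\asymp\log^2 T$ around each exceptional ordinate) one cannot bound $D$ pointwise, but Lemma~\ref{size-J(T)} shows these arcs have total length $O(T^{1-\delta_0/4}\log^3 T)$, which is negligible, and on $J_2(T)$ — note $J_2 \subset J_1$ so Lemma~\ref{estimate-on-J(T)} applies with a bit of room — the factors $|\zeta(\alpha+it)|^{\pm 1}$, $|\zeta(\alpha+2it)|^{-1}$ are of size $\exp(o(\log t))$, hence subpolynomial and harmless against the power savings. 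The hypothesis \eqref{assumption} is used to absorb the contribution of the genuinely large $|t|$ range (beyond $y$, where $e^{-u/y}$ has not yet decayed) and to control the error made in truncating the smoothing: it guarantees the main term on the $\Delta$ side is not swamped, so the inequality can be reversed into the stated bound.

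The main obstacle, as I see it, is the bookkeeping in the contour shift and in controlling the portion of the $t$-integral with $|t|$ large compared to any fixed power of $\log T$ but still relevant because $y=T^{\mathfrak b}$ is a large power of $T$: one must show that $\int_{\Re(s)=\alpha}|D(s)|^2 |s|^{-2}|\Gamma(s-\alpha)|^2 y^{2(\alpha-\alpha)}\,|\d s|$ over $|t|\ge T^{\mathfrak b}\log^{C}T$ is $O(1)$, which requires a crude polynomial mean-value bound for $\zeta^4$ on the $\alpha$-line (available from \cite[Theorem~5.1]{IvicBook}-type fourth-moment estimates, since $\alpha<1/2$ lies only slightly left of the critical line and $1/\zeta(2s)$ is bounded there off the exceptional set) beaten down by the super-exponential decay of $\Gamma$. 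A secondary technical point is justifying the interchange of the $\d u$-integral with the contour integral (absolute convergence, using $\Delta(u)\ll u^{1/2+\epsilon}$ from Note~\ref{note:initial_assumption_consequences} together with the Gamma-decay), and making sure the holomorphic correction terms coming from the poles at $s=1,1\pm i\theta$ and $s=0$ really are $O(1)$ after the smoothing — they contribute only a bounded amount because $y^{s-\alpha}$ at those points is a fixed power of $T$ against which one divides, but one should state this carefully rather than wave at it.
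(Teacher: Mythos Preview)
Your plan misses the actual mechanism of the argument and has a genuine gap. You propose a single global identity relating a weighted $\Delta$-integral to a contour integral of $D(s)\Gamma(s-\alpha)y^{s-\alpha}/s$, followed by ``Plancherel or Cauchy--Schwarz''. But a Plancherel-type relation produces an identity over the \emph{full} vertical line, not over $t\in J_2(T)\subset[T,2T]$; there is no obvious way to localise such an identity to a bounded $t$-interval and still keep the direction of the inequality you need. Moreover, the $\Gamma(s-\alpha)$ weight you introduce would concentrate the $t$-integral near $t=0$, which is the opposite of what is wanted.

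The paper's route is quite different. It works \emph{pointwise in $t$}: for each fixed $s=\alpha+it$ with $t\in J_2(T)$ one writes the smoothed Dirichlet series $\sum_n |\tau(n,\theta)|^2 n^{-s}e^{-n/y}$ as $\tfrac{1}{2\pi i}\int_{(2)} D(s+w)\Gamma(w)y^w\,\d w$, shifts the $w$-contour leftward across $w=0$ to $\Re(w)=\tfrac14+\tfrac{\delta}{2}-\alpha<0$ (this is precisely where $t\in J_2(T)$ is used, via Lemma~\ref{estimate-on-J(T)}, to bound $D(s+w)$ during the shift), and obtains $D(s)=\sum_n |\tau(n,\theta)|^2 n^{-s}e^{-n/y}+o(1)$. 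The sum is then split at a point $T_0\in[T,2T]$ chosen using the hypothesis~\eqref{assumption} (Lemma~\ref{initial-estimates}): the head $\sum_{n\le T_0}$ and, after integration by parts, the tail involving $\Delta$ are each squared and integrated over $t\in[T,2T]$ using the Montgomery--Vaughan mean value theorem (Lemmas~\ref{mean-value-estimate} and~\ref{mean-value-error}). That discrete mean-value input is the crux, and it does not appear in your outline at all; without it there is no mechanism to convert a pointwise representation of $D(s)$ into the $L^2$-bound over the finite interval $J_2(T)$.
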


Before embarking on a proof, we need the following technical lemmas.

\begin{lem}\label{gamma}
For $0\le \Re(z) \le 1$ and $|Im(z)|\ge \log^2T$, we have
\begin{equation}\label{gamma1}
\int_T^{\infty} e^{-u/y}u^{-z} du =\frac{T^{1-z}}{1-z} + O(T^{-\mathfrak b'})
\end{equation}
and
\begin{equation}\label{gamma2}
\int_T^{\infty} e^{-u/y} u^{-z}\log u\  du =\frac{T^{1-z}}{1-z}\log T + O(T^{-\mathfrak b'}),
\end{equation}
where $\mathfrak b'>0$ depends only on $\mathfrak b$.
\end{lem}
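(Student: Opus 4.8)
The plan is to evaluate each integral by comparing it to the complete integral $\int_0^\infty e^{-u/y}u^{-z}\,\mathrm du$, which is essentially a Gamma function, and then to show that the discrepancies introduced by (a) truncating the lower limit at $T$ and (b) replacing the exponential cutoff $e^{-u/y}$ by the sharp power $u^{-z}$ are both negligible, i.e.\ of size $O(T^{-\mathfrak b'})$ for a suitable $\mathfrak b'>0$ depending on $\mathfrak b$. Recall $y=T^{\mathfrak b}$ with $\mathfrak b\ge 8$, so the cutoff only becomes effective at $u\asymp T^{\mathfrak b}$, far beyond the range where the main contribution lives. The key heuristic is that, since $0\le\Re(z)\le 1$, the antiderivative $u^{1-z}/(1-z)$ is well-behaved away from $z=1$; the hypothesis $|\Im(z)|\ge\log^2 T$ keeps $|1-z|\gg\log^2 T$, so dividing by $1-z$ costs nothing and in fact gives room.

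For \eqref{gamma1}, first I would write
\[
\int_T^\infty e^{-u/y}u^{-z}\,\mathrm du = \int_T^\infty u^{-z}\,\mathrm du + \int_T^\infty\bigl(e^{-u/y}-1\bigr)u^{-z}\,\mathrm du.
\]
The first term is exactly $\dfrac{T^{1-z}}{1-z}$ (the integral converges at $\infty$ since $\Re(z)\le 1$; the boundary term at infinity vanishes — if $\Re(z)=1$ one interprets this via the oscillation from $|\Im(z)|\ge\log^2T$, or one simply keeps the cutoff throughout and integrates by parts). For the second term, use $|e^{-u/y}-1|\le u/y$ for $u\le y$ and $|e^{-u/y}-1|\le 1$ together with the rapid decay of $e^{-u/y}$ for $u\ge y$; splitting the integral at $u=y$ and using $\Re(z)\ge 0$, one bounds it by $O(y^{2-\Re(z)}/y)=O(y)=O(T^{\mathfrak b})$ from the first piece — which is too big, so instead I would integrate by parts once in $\int_T^\infty e^{-u/y}u^{-z}\mathrm du$ to gain a factor $1/y$ and iterate. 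More cleanly: integrate by parts repeatedly, each step producing a main term together with a remainder carrying an extra factor $1/(y|1-z|)\cdot u$, and after finitely many steps (depending on $\mathfrak b$) the remainder is $O(T^{-\mathfrak b'})$; the boundary terms at $u=T$ all cancel against the Taylor expansion of $e^{-T/y}=1+O(T^{1-\mathfrak b})$ except for the stated $T^{1-z}/(1-z)$. Equation \eqref{gamma2} follows by the same argument applied to $u^{-z}\log u$, using that differentiating $u^{1-z}/(1-z)$ in a way that produces $\log u$ contributes the factor $\log T$ in the main term and only lower-order corrections; alternatively, differentiate \eqref{gamma1} with respect to $z$ (legitimately, since both sides are analytic in $z$ on $0<\Re(z)<1$ and the error term is uniform), which directly yields $-\int_T^\infty e^{-u/y}u^{-z}\log u\,\mathrm du = -\dfrac{T^{1-z}}{1-z}\log T + \dfrac{T^{1-z}}{(1-z)^2} + O(T^{-\mathfrak b'})$; the middle term is $O(T^{1-\Re(z)}/\log^4T)$, which is absorbed — wait, that is not $O(T^{-\mathfrak b'})$ when $\Re(z)$ is near $0$, so I would instead present \eqref{gamma2} with the same self-contained integration-by-parts estimate rather than by differentiation, being careful that the $(1-z)^{-2}$ term is genuinely smaller by a factor $\log^{-2}T$ and gets folded into a slightly weakened but still adequate error — here one uses that the application only needs the main term's leading behaviour.

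The main obstacle I anticipate is bookkeeping the error term uniformly across the strip $0\le\Re(z)\le 1$: near $\Re(z)=0$ the integrand $u^{-z}$ barely decays, so naive truncation estimates give powers of $y=T^{\mathfrak b}$ rather than negative powers of $T$, and one genuinely must exploit the cutoff $e^{-u/y}$ through repeated integration by parts (each step trading a power of $u$ for a factor $y^{-1}$) to convert the large parameter $\mathfrak b$ into a negative exponent $-\mathfrak b'$. The condition $\mathfrak b\ge 8$ is presumably exactly what guarantees enough iterations; I would track how many integrations by parts are needed and verify $\mathfrak b'$ can be taken, say, $\ge\mathfrak b-2$ or similar. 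The hypothesis $|\Im(z)|\ge\log^2T$ is the easy ingredient — it merely ensures $|1-z|$ is bounded below, so no small denominators appear — and should be invoked only once.
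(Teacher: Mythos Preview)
Your proposal has a genuine gap. The splitting $\int_T^\infty e^{-u/y}u^{-z}\,\d u=\int_T^\infty u^{-z}\,\d u+\int_T^\infty(e^{-u/y}-1)u^{-z}\,\d u$ is not legitimate: for $\Re(z)<1$ the antiderivative $u^{1-z}/(1-z)$ has modulus $u^{1-\Re(z)}\to\infty$, so $\int_T^\infty u^{-z}\,\d u$ diverges (oscillation does not help, since $|u^{1-z}|$ itself blows up). Your fallback of repeated integration by parts in $u$ also cannot close by itself: after $k$ steps the remainder is $\dfrac{1}{y^k(1-z)\cdots(k-z)}\int_T^\infty e^{-u/y}u^{k-z}\,\d u$, whose modulus is of order $y^{1-\Re(z)}\Gamma(k+1-\Re(z))/|1-z|^k\asymp T^{\mathfrak b}k!/(\log^2T)^k$, which no finite $k$ renders $O(T^{-\mathfrak b'})$.

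The point you are missing is the true role of the hypothesis $|\Im(z)|\ge\log^2T$. It is \emph{not} merely a small-denominator safeguard; it is the mechanism that annihilates the bulk of the integral. The paper's argument is: substitute $v=u/y$, integrate by parts once (differentiating $e^{-v}$) to isolate the boundary term $e^{-T/y}T^{1-z}/(1-z)$, and then observe that the remaining integral $\int_{T/y}^\infty e^{-v}v^{1-z}\,\d v$ equals $\Gamma(2-z)+O((T/y)^{2-\Re(z)})$. The tail $O((T/y)^{2-\Re(z)})$ is harmless since $T/y=T^{1-\mathfrak b}$, while Stirling's formula gives $|\Gamma(2-z)|\ll |\Im(z)|^{3/2-\Re(z)}e^{-\pi|\Im(z)|/2}\le e^{-(\pi/2)\log^2T}$, which decays faster than any negative power of $T$ and thus absorbs the prefactor $y^{1-\Re(z)}\le T^{\mathfrak b}$. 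Equation~(\ref{gamma2}) is handled identically, with $\Gamma'(2-z)$ in place of $\Gamma(2-z)$ and the same Stirling decay. In short, recognising the complex Gamma function and invoking its exponential decay in the imaginary direction is the key step; without it no amount of real-variable integration by parts will produce $O(T^{-\mathfrak b'})$.
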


\begin{proof}

By changing variable by $v= u/y$, we get
\[ \int_T^{\infty} \frac{e^{-u/y}}{u^z} du = y^{1-z} \int_{T/y}^{\infty} e^{-v} v^{-z} dv. \]
Integrating the right hand side by parts
\[ \int_{T/y}^{\infty} e^{-v} v^{-z} dv = \frac{e^{-T/y}}{1-z}\left( \frac{T}{y} \right)^{1-z}
 + \frac{1}{1-z} \int_{T/y}^{\infty} e^{-v} v^{1-z} dv \]
It is easy to see that
 \[ \int_{T/y}^{\infty} e^{-v} v^{1-z} dv = \Gamma(2-z) + O\left( \left(\frac{T}{y} \right)^{2-Re(z)}\right). \]
Hence (\ref{gamma1})  follows using $e^{-T/y} =1+O(T/y)$ and Stirling's formula along with the assumption
that $ |Im(z)|\ge \log^2T$. 

Proof of (\ref{gamma2}) proceeds in the same line and uses the fact that
 \[ \int_{T/y}^{\infty} e^{-v} v^{1-z}\log v\ dv = \Gamma'(2-z) 
+ O\left( \left(\frac{T}{y} \right)^{2-Re(z)}\log T\right). \]
Then  we apply Stirling's formula for $\Gamma'(s)$ instead of $\Gamma(s)$.
\end{proof}

\begin{lem}\label{initial-estimates}
Under the assumption (\ref{assumption}), there exists $T_0$ with $T\le T_0 \le 2T$ such that
\begin{equation*}
\frac{\Delta(T_0)e^{-T_0/y}}{T_0^{\alpha}} \ll \log^2 T,
\end{equation*}
\begin{equation*}
\text{and}\quad\frac{1}{y}\int_{T_0}^{\infty}\frac{\Delta(u)e^{-u/y}}{u^{\alpha}} du \ll \log T.
\end{equation*}

\end{lem}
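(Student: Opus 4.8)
The plan is a standard ``mean-square-to-pointwise'' extraction that uses only the hypothesis (\ref{assumption}) together with the fact that $y=T^{\mathfrak b}$ with $\mathfrak b\ge 8$ is enormous relative to the interval $[T,2T]$, so that $e^{-u/y}\le 1$ is essentially harmless throughout. Crucially, the second of the two displayed bounds will hold for \emph{every} $T_0\ge T$, so the only thing that has to be located is a good point for the first bound, and then any such point works for both.

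First I would derive the pointwise bound. From (\ref{assumption}), bounding $u^{-2\alpha}=u\cdot u^{-2\alpha-1}\le 2T\,u^{-2\alpha-1}$ on $[T,2T]$,
\[
\int_T^{2T}\frac{|\Delta(u)|^2 e^{-u/y}}{u^{2\alpha}}\,\d u\le 2T\int_T^{\infty}\frac{|\Delta(u)|^2 e^{-u/y}}{u^{2\alpha+1}}\,\d u\le 2T\log^2 T .
\]
By Chebyshev's inequality the set of $u\in[T,2T]$ with $|\Delta(u)|^2 e^{-u/y}u^{-2\alpha}>4\log^2 T$ has measure at most $T/2$, so there is a set $E\subseteq[T,2T]$ of measure at least $T/2$ on which $|\Delta(u)|^2 e^{-u/y}u^{-2\alpha}\le 4\log^2 T$. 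Using $e^{-u/y}\le 1$ this gives $\bigl(\Delta(u)e^{-u/y}u^{-\alpha}\bigr)^2\le 4\log^2 T$ for $u\in E$, and I would simply pick any $T_0\in E$ (the set being nonempty since $\mu(E)\ge T/2>0$), obtaining $\Delta(T_0)e^{-T_0/y}T_0^{-\alpha}\ll\log T\ll\log^2 T$.

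For the tail integral I would apply Cauchy--Schwarz with the split $\frac{\Delta(u)e^{-u/y}}{u^{\alpha}}=\frac{\Delta(u)e^{-u/(2y)}}{u^{\alpha+1/2}}\cdot u^{1/2}e^{-u/(2y)}$:
\[
\left|\int_{T_0}^{\infty}\frac{\Delta(u)e^{-u/y}}{u^{\alpha}}\,\d u\right|\le\left(\int_{T_0}^{\infty}\frac{|\Delta(u)|^2 e^{-u/y}}{u^{2\alpha+1}}\,\d u\right)^{1/2}\left(\int_{T_0}^{\infty}u\,e^{-u/y}\,\d u\right)^{1/2}.
\]
Since $T_0\ge T$, the first factor is $\le(\log^2 T)^{1/2}=\log T$ by (\ref{assumption}), and the second factor is $\le\bigl(\int_0^{\infty}u\,e^{-u/y}\,\d u\bigr)^{1/2}=\bigl(y^2\Gamma(2)\bigr)^{1/2}=y$. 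Hence the left-hand side is $\le y\log T$, and dividing by $y$ finishes the proof.

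There is no real obstacle here; the argument is short once one keeps the weight $e^{-u/y}$ consistent across the hypothesis and the two conclusions. The only small points worth recording are the integrability of $|\Delta(u)|^2 e^{-u/y}u^{-2\alpha-1}$ on $[T,\infty)$, which is immediate from the polynomial bound $\Delta(u)\ll u^{\sigma_2+\epsilon}$ of Note~\ref{note:initial_assumption_consequences} and the decay of $e^{-u/y}$, and the fact that $\mu(E)>0$ guarantees a valid choice of $T_0$; if one wants, $T_0$ can be taken to be a Lebesgue point of $E$ or its left endpoint.
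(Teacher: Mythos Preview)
Your proof is correct and follows essentially the same approach as the paper: a pointwise extraction on $[T,2T]$ from the mean-square bound for the first assertion, and exactly the same Cauchy--Schwarz split for the second. The only cosmetic difference is that the paper simply takes $T_0$ to be (a point near) the minimiser of $|\Delta(u)|e^{-u/y}u^{-\alpha}$ on $[T,2T]$, whereas you route through Chebyshev's inequality; both yield the same bound.
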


\begin{proof}

The assumption (\ref{assumption}) implies that
\begin{eqnarray*}
\log^2T &\ge & \int_T^{2T} \frac{|\Delta(u)|^2}{u^{2\alpha+1}}e^{-u/y}du \\
&=& \int_T^{2T} \frac{|\Delta(u)|^2}{u^{2\alpha}}e^{-2u/y}\frac{e^{u/y}}{u} du \\
&\ge & \min_{T\le u\le 2T}\left(\frac{|\Delta(u)|}{u^{\alpha}}e^{-u/y}\right)^2, 
\end{eqnarray*}
which proves the first assertion.
To prove the second assertion, we use the previous assertion and Cauchy- Schwartz inequality along with assumption (\ref{assumption}) to get
\begin{eqnarray*}
\left( \int_{T_0}^{\infty}\frac{|\Delta(u)|^2}{u^{\alpha}}e^{-u/y}du \right)^2
&\le & \left( \int_{T_0}^{\infty}\frac{|\Delta(u)|^2}{u^{2\alpha+1}}e^{-u/y}du \right)
\left(  \int_{T_0}^{\infty} u e^{-u/y}du \right) \\
&\ll & y^2 \log^2 T.
\end{eqnarray*}
This completes the proof of this lemma.
\end{proof}

We now recall a mean value theorem due to Montgomery and Vaughan \cite{MontgomeryVaughan}.
\begin{notation}
 For a real number $\theta$, let $\|\theta\|:=\min_{n\in \mathbb Z}|\theta -n|.$
\end{notation}

\begin{thm}[Montgomery and Vaughan \cite{MontgomeryVaughan}]\label{mean-value}
Let $a_1,\cdots, a_N$ be arbitrary complex numbers, and let $\lambda_1,\cdots,\lambda_N$ be distinct real numbers such that 
\[\delta = \min_{m,n}\| \lambda_m-\lambda_n\|>0.\]
Then
\[ \int_0^T \left| \sum_{n\le N} a_n \exp(i\lambda_n t) \right|^2 dt 
=\left(T +O\left(\frac{1}{\delta}\right)\right)\sum_{n\le N} |a_n|^2.\]
\end{thm}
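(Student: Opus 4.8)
The plan is to reduce the mean value to the Montgomery--Vaughan generalized Hilbert inequality, after isolating the diagonal contribution. First I would expand the square and integrate termwise:
\[
\int_0^T\Big|\sum_{n\le N}a_n e^{i\lambda_n t}\Big|^2\d t
=\sum_{m,n\le N}a_m\overline{a_n}\int_0^T e^{i(\lambda_m-\lambda_n)t}\d t .
\]
The terms with $m=n$ contribute exactly $T\sum_{n\le N}|a_n|^2$, which is the claimed main term, so everything rests on showing that the off-diagonal part
\[
R:=\sum_{m\neq n}a_m\overline{a_n}\,\frac{e^{i(\lambda_m-\lambda_n)T}-1}{i(\lambda_m-\lambda_n)}
\]
is $O(\delta^{-1})\sum_{n\le N}|a_n|^2$; here I used $\lambda_m\neq\lambda_n$ to evaluate the inner integral.

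Next I would split the numerator. Writing $u_n:=a_n e^{i\lambda_n T}$ (so that $|u_n|=|a_n|$ and $\overline{u_n}=\overline{a_n}e^{-i\lambda_n T}$), one has $a_m\overline{a_n}\big(e^{i(\lambda_m-\lambda_n)T}-1\big)=u_m\overline{u_n}-a_m\overline{a_n}$, hence
\[
R=\frac{1}{i}\sum_{m\neq n}\frac{u_m\overline{u_n}}{\lambda_m-\lambda_n}
-\frac{1}{i}\sum_{m\neq n}\frac{a_m\overline{a_n}}{\lambda_m-\lambda_n}.
\]
Both sums are now bilinear forms with the classical Hilbert kernel $(\lambda_m-\lambda_n)^{-1}$, and the essential input is the Montgomery--Vaughan inequality \cite{MontgomeryVaughan}: for distinct reals $\lambda_n$ with $\min_{m\neq n}|\lambda_m-\lambda_n|\ge\delta'$ and arbitrary complex $z_n$,
\[
\Big|\sum_{m\neq n}\frac{z_m\overline{z_n}}{\lambda_m-\lambda_n}\Big|\le \frac{\pi}{\delta'}\sum_{n\le N}|z_n|^2 .
\]
Since $\|x\|\le|x|$ gives $|\lambda_m-\lambda_n|\ge\|\lambda_m-\lambda_n\|\ge\delta$ for every $m\neq n$, I may take $\delta'=\delta$. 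Applying this once with $z_n=u_n$ and once with $z_n=a_n$, and using $|u_n|=|a_n|$, yields $|R|\le\frac{2\pi}{\delta}\sum_{n\le N}|a_n|^2$, which combined with the diagonal term gives the theorem.

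The genuinely hard ingredient is the Hilbert-type inequality displayed above, which I would quote from Montgomery and Vaughan rather than reprove: its proof — via a duality argument turning the bilinear form into a quadratic one, a suitable choice of auxiliary weights, and Cauchy--Schwarz together with the spacing bound $\sum_{n\neq m}(\lambda_m-\lambda_n)^{-2}\ll\delta^{-2}$ for $\delta$-separated points — is the substance of their paper and removes exactly the spurious $\log N$ that a naive termwise bound $|R|\ll\sum_{m\neq n}|a_m||a_n|/|\lambda_m-\lambda_n|$ would produce. The only point requiring care in the reduction itself is the bookkeeping already noted: the integral over $[0,T]$ produces the \emph{linear} kernel $(\lambda_m-\lambda_n)^{-1}$, so one in fact obtains the sharper statement with $\min_{m\neq n}|\lambda_m-\lambda_n|$ in place of $\delta$, and the version stated here (with the weaker separation $\delta=\min_{m\neq n}\|\lambda_m-\lambda_n\|$) follows a fortiori; a routine check shows the implied constant is uniform in $N$ and in the $\lambda_n$.
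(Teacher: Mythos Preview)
Your reduction is correct and is exactly the standard derivation: expand the square, pull out the diagonal, evaluate the off-diagonal integral explicitly, and bound the two resulting bilinear forms by the Montgomery--Vaughan generalized Hilbert inequality. Note, however, that the paper does not prove this theorem at all; it is merely \emph{recalled} from \cite{MontgomeryVaughan} as a black box to be applied in Lemmas~\ref{mean-value-estimate} and~\ref{mean-value-error}. So there is no ``paper's proof'' to compare against --- you have supplied an argument where the paper simply cites the literature.

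One small remark on the statement itself: as written, $\delta=\min_{m,n}\|\lambda_m-\lambda_n\|$ with the minimum over \emph{all} pairs would give $\delta=0$ (take $m=n$), so the intended minimum is over $m\neq n$. Also, the use of the nearest-integer distance $\|\cdot\|$ rather than $|\cdot|$ is somewhat nonstandard here; the original Montgomery--Vaughan result is formulated with $\min_{m\neq n}|\lambda_m-\lambda_n|$, and your observation that the $\|\cdot\|$-version follows a fortiori (since $\|\cdot\|\le|\cdot|$, hence $1/\delta'\le 1/\delta$) is the right way to reconcile the two. In the applications that follow (with $\lambda_n=\log n$ or $\lambda_n=\log(n+x+T_0)$), only the ordinary spacing $|\lambda_m-\lambda_n|$ is actually used.
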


\begin{lem}\label{mean-value-estimate}
For $T\le T_0\le 2T$ and $\Re(s)=\alpha$, we have
$$\int_T^{2T} \left| \sum_{n\le T_0}\frac{|\tau(n,\theta)|^2}{n^s}e^{-n/y}\right|^2 t^{-2} dt \ll 1.$$
\end{lem}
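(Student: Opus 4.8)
The plan is to treat the sum as a short Dirichlet polynomial and hit it with the Montgomery--Vaughan mean value theorem (Theorem~\ref{mean-value}). Writing $s=\alpha+it$ and setting $a_n:=|\tau(n,\theta)|^2 e^{-n/y}n^{-\alpha}$, the integrand equals $|P(t)|^2 t^{-2}$ with $P(t)=\sum_{n\le T_0}a_n n^{-it}$. Since $t\in[T,2T]$ forces $t^{-2}\le T^{-2}$ and the integrand is non-negative, I would first reduce to
\[
\int_T^{2T}|P(t)|^2 t^{-2}\,\d t\ \le\ T^{-2}\int_0^{2T}|P(t)|^2\,\d t .
\]
The frequencies are $\lambda_n=-\log n$ for $n\le T_0$, which are spaced $\gg T_0^{-1}\gg T^{-1}$ apart; Theorem~\ref{mean-value}, in its standard Dirichlet-polynomial form $\int_0^{2T}\bigl|\sum_{n\le N}a_n n^{-it}\bigr|^2\,\d t=\sum_{n\le N}|a_n|^2(2T+O(n))$, therefore gives
\[
\int_0^{2T}|P(t)|^2\,\d t\ \ll\ (T+T_0)\sum_{n\le T_0}|a_n|^2\ \ll\ T\sum_{n\le T_0}\frac{|\tau(n,\theta)|^4 e^{-2n/y}}{n^{2\alpha}},
\]
using $T_0\le 2T$.

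It remains to bound the coefficient sum. Since $e^{-2n/y}\le 1$ and $|\tau(n,\theta)|\le\sum_{d\mid n}1=d(n)$, it suffices to estimate $\sum_{n\le T_0}d(n)^4 n^{-2\alpha}$. Invoking the classical bound $\sum_{n\le x}d(n)^4\ll x(\log x)^{15}$ (the precise power of $\log$ being immaterial) together with partial summation, and using $2\alpha<1$ so that the sum is dominated by its tail, I obtain
\[
\sum_{n\le T_0}\frac{d(n)^4}{n^{2\alpha}}\ \ll\ T_0^{\,1-2\alpha}(\log T_0)^{15}\ \ll\ T^{\,1-2\alpha}(\log T)^{15}.
\]
Combining the displays,
\[
\int_T^{2T}|P(t)|^2 t^{-2}\,\d t\ \ll\ T^{-2}\cdot T\cdot T^{\,1-2\alpha}(\log T)^{15}\ =\ T^{-2\alpha}(\log T)^{15}.
\]
With $\alpha=\tfrac38-c(\log T)^{-1/8}$ (or, more generally, with $\alpha$ bounded below by a fixed constant $>1/4$, as in Proposition~\ref{main-prop}), the right-hand side is $T^{-2\alpha+o(1)}\to 0$, hence in particular $\ll 1$, which is the assertion.

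There is no genuine difficulty here; the only point deserving attention is the application of Theorem~\ref{mean-value}, namely checking that the spacing of the frequencies $\{-\log n:n\le T_0\}$ is large enough. The gap between consecutive $\log n$ is $\gg T^{-1}$, and if one needs the distance-to-nearest-integer form of the spacing as literally stated in Theorem~\ref{mean-value}, one uses in addition that $\min_{m\ne n}\|\log(m/n)\|\gg(\log T)^{-O(1)}$, which follows from a standard lower bound for linear forms in two logarithms. In either case the resulting error term is absorbed into the main term $T$ after enlarging the implied constant, so the conclusion is unchanged. Everything else is the routine mean-square estimate for a Dirichlet polynomial of length $\ll T$.
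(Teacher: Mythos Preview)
Your argument is correct and follows the same route as the paper: pull out $t^{-2}\le T^{-2}$, apply the Montgomery--Vaughan mean value theorem to the Dirichlet polynomial $\sum_{n\le T_0}b(n)n^{-it}$ with $b(n)=|\tau(n,\theta)|^2 e^{-n/y}n^{-\alpha}$, and bound the resulting coefficient sums via $|\tau(n,\theta)|\le d(n)$. The paper uses $d(n)\ll n^{\epsilon}$ to get $\sum_{n\le T_0}d(n)^4 n^{-2\alpha}\ll T_0^{1-2\alpha+\epsilon}$ rather than your partial summation from $\sum_{n\le x}d(n)^4\ll x(\log x)^{15}$, but this is cosmetic.

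Your aside about linear forms in logarithms is unnecessary. The form of the mean value theorem actually being invoked here is the Dirichlet-polynomial version $\int_0^T\bigl|\sum_{n\le N}a_n n^{-it}\bigr|^2\,\d t=\sum_{n\le N}|a_n|^2(T+O(n))$, where the relevant spacing is $\min_{m\ne n}|\log m-\log n|\gg T_0^{-1}$, not the distance-to-nearest-integer quantity; the paper's notation $\|\cdot\|$ in the statement of Theorem~\ref{mean-value} is not meant literally in this application.
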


\begin{proof}
Using  theorem \ref{mean-value}, we get
\begin{eqnarray*}
\int_T^{2T} \left| \sum_{n\le T_0}\frac{|\tau(n,\theta)|^2}{n^s}e^{-n/y}\right|^2 t^{-2} \d t 
&\le&  \frac{1}{T^2} \left( T \sum_{n\le T_0}  |b(n)|^2 
+ O\left( \sum_{n\le T_0} n|b(n)|^2\right)\right),\\
&&\text{where } \quad b(n)=\frac{|\tau(n,\theta)|^2}{n^{\alpha}}e^{-n/y}.
\end{eqnarray*}
Thus
\[ \sum_{n\le T_0}  |b(n)|^2 \le \sum_{n\le T_0}\frac{d(n)^4}{n^{2\alpha}}
\ll T_0^{1-2\alpha+\epsilon}\
\text{ and } 
\ \sum_{n\le T_0}  n|b(n)|^2 \le \sum_{n\le T_0}\frac{d(n)^4}{n^{2\alpha-1}}
\ll T_0^{2-2\alpha + \epsilon}\]
for any $\epsilon>0$, since the divisor function $d(n)\ll n^\epsilon$.  As we have $\alpha>0$, this completes the proof.
\end{proof}

\begin{lem}\label{mean-value-error}
For $\Re(s)=\alpha$ and $T\le T_0 \le 2T$, we have
$$\int_T^{2T} 
\left| \sum_{n\ge 0}\int_0^1 \frac{\Delta(n+x+T_0) 
e^{-(n+x+T_0)/y}}{(n+x+T_0)^{s+1}} \d x \right|^2 \d t
\ll \int_T^{\infty} \frac{|\Delta(x)|^2}{x^{2\alpha+1}}e^{-2x/y} \d x.$$
\end{lem}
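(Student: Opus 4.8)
The plan is to recognise the bracketed expression as a single integral of $\Delta$ and then, after the substitution $u=e^{v}$, to read it as a Fourier transform to which Plancherel's theorem applies.

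First I would write $s=\alpha+it$ and substitute $u=n+x+T_0$ in the inner integral: for each fixed $n\ge 0$ it becomes $\int_{n+T_0}^{n+1+T_0}\Delta(u)e^{-u/y}u^{-s-1}\d u$, and since the intervals $[n+T_0,n+1+T_0)$ tile $[T_0,\infty)$, the whole expression is
\[
F(t):=\sum_{n\ge0}\int_0^1\frac{\Delta(n+x+T_0)e^{-(n+x+T_0)/y}}{(n+x+T_0)^{s+1}}\d x
=\int_{T_0}^{\infty}\frac{\Delta(u)e^{-u/y}}{u^{s+1}}\d u .
\]
This integral converges absolutely: by Note~\ref{note:initial_assumption_consequences} one has $\Delta(u)\ll u^{\sigma_2+\epsilon}$, while $e^{-u/y}$ decays exponentially in $u$ ($y$ being a fixed positive quantity), so the rearrangement of the double sum into one integral is valid.

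Next I would set $u=e^{v}$, so that $u^{-s-1}\d u=e^{-sv}\d v$ and
\[
F(t)=\int_{-\infty}^{\infty}\psi(v)e^{-itv}\d v,\qquad
\psi(v):=\Delta(e^{v})\,e^{-e^{v}/y}\,e^{-\alpha v}\,\mathbf{1}_{[\log T_0,\infty)}(v).
\]
The same growth bound on $\Delta$ together with the rapidly decaying factor $e^{-e^{v}/y}$ shows $\psi\in L^1(\mathbb R)\cap L^2(\mathbb R)$, hence $F$ is the Fourier transform of $\psi$ and Plancherel's theorem yields $\int_{-\infty}^{\infty}|F(t)|^2\d t\ll\int_{-\infty}^{\infty}|\psi(v)|^2\d v$. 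Reverting the substitution via $v=\log u$ turns the right-hand side into $\int_{T_0}^{\infty}|\Delta(u)|^2e^{-2u/y}u^{-2\alpha-1}\d u$.

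Finally, shrinking the range of $t$ from $\mathbb R$ to $[T,2T]$ only decreases the left-hand side, while enlarging the range of $u$ from $[T_0,\infty)$ to $[T,\infty)$ (legitimate since $T\le T_0$) only increases the right-hand side, so
\[
\int_T^{2T}|F(t)|^2\d t\le\int_{-\infty}^{\infty}|F(t)|^2\d t\ll\int_{T}^{\infty}\frac{|\Delta(u)|^2}{u^{2\alpha+1}}e^{-2u/y}\d u ,
\]
which is exactly the assertion. I do not expect a serious obstacle here: the lemma is at bottom an application of Plancherel's theorem, and the only points needing a word of care are the absolute convergence used to collapse the double sum and the membership $\psi\in L^2$, both immediate from the crude polynomial bound on $\Delta$. (If one wishes to use only the machinery already invoked in this section, the same estimate can be obtained by breaking $[T_0,\infty)$ into unit intervals and applying the Montgomery--Vaughan mean value theorem, Theorem~\ref{mean-value}, but the Fourier--Plancherel argument is shorter and gives the bound with an absolute implied constant directly.)
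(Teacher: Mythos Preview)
Your argument is correct and is genuinely different from the paper's. You collapse the expression to the single integral $F(t)=\int_{T_0}^\infty \Delta(u)e^{-u/y}u^{-\alpha-1-it}\,\d u$, recognise it (after $u=e^v$) as the Fourier transform of an $L^2$ function, and apply Plancherel; the paper instead swaps the sum and the $x$-integral, applies Cauchy--Schwarz over $x\in[0,1]$ to obtain $\int_0^1\bigl|\sum_n\cdots\bigr|^2\d x$, and then invokes the Montgomery--Vaughan mean value theorem (Theorem~\ref{mean-value}) on the discrete sum $\sum_n a_n(x)(n+x+T_0)^{-it}$ for each fixed $x$. Your route is shorter and conceptually cleaner: it loses nothing to Cauchy--Schwarz and bypasses the discrete machinery (which, incidentally, the paper uses in its weighted form $\sum \delta_n^{-1}|a_n|^2$ rather than the uniform-$\delta$ version actually stated in Theorem~\ref{mean-value}). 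The paper's route has the modest virtue of reusing only tools already invoked for Lemma~\ref{mean-value-estimate}, where the sum is genuinely discrete and Plancherel is unavailable. Your closing remark already acknowledges this alternative, so nothing is missing.
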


\begin{proof}  
Using Cauchy- Schwarz inequality, we get
\begin{align*}
 & \left| \sum_{n\ge 0}\int_0^1  \frac{\Delta(n+x+T_0)}{(n+x+T_0)^{s+1}}
e^{-(n+x+T_0)/y} \d x \right|^2 \\
\le& \int_0^1 \left| \sum_{n\ge 0} \frac{\Delta(n+x+T_0)}{(n+x+T_0)^{s+1}}e^{-(n+x+T_0)/y} \right|^2 \d x.
\end{align*}
Hence 
\begin{align*}
&\int_T^{2T} 
\left| \int_0^1 \sum_{n\ge 0}\frac{\Delta(n+x+T_0) e^{-(n+x+T_0)/y}}{(n+x+T_0)^{s+1}} \d x \right|^2 \d t \\
\le&  \int_T^{2T}\int_0^1 \left| \sum_{n\ge 0} \frac{\Delta(n+x+T_0)}{(n+x+T_0)^{s+1}}
e^{-(n+x+T_0)/y} \right|^2 \d x \d t \\
=& \int_0^1 \int_T^{2T}\left| \sum_{n\ge 0} \frac{\Delta(n+x+T_0)}{(n+x+T_0)^{s+1}}
e^{-(n+x+T_0)/y} \right|^2 \d t \d x.
\end{align*}
From Theorem \ref{mean-value}, we can get
\begin{eqnarray*}
&&\int_T^{2T}\left| \sum_{n\ge 0} \frac{\Delta(n+x+T_0)}{(n+x+T_0)^{s+1}}e^{-(n+x+T_0)/y} \right|^2 \d t\\
&=& T\sum_{n\ge 0}\frac{ |\Delta(n+x+T_0)|^2}{(n+x+T_0)^{2\alpha+2}}
e^{-2(n+x+T_0)/y} 
+ O\left( \sum_{n\ge 0} \frac{ |\Delta(n+x+T_0)|^2}{(n+x+T_0)^{2\alpha+1}}\right)\\
&\ll & \sum_{n\ge 0} \frac{ |\Delta(n+x+T_0)|^2}{(n+x+T_0)^{2\alpha+1}}e^{-2(n+x+T_0)/y}.
\end{eqnarray*}
Hence
\begin{eqnarray*}
&&\int_T^{2T} 
\left| \sum_{n\ge 0}\int_0^1 \frac{\Delta(n+x+T_0) e^{-(n+x+T_0)/T}}{(n+x+T_0)^{s+1}} \d x \right|^2 \d t \\
&\ll & \int_0^1 \sum_{n\ge 0} \frac{ |\Delta(n+x+T_0)|^2}{(n+x+T_0)^{2\alpha+1}}e^{-2(n+x+T_0)/y}\d x
\ll  \int_T^{\infty} \frac{|\Delta(x)|^2}{x^{2\alpha+1}}e^{-2x/y} \d x,
\end{eqnarray*}
completing the proof.
\end{proof}

\begin{proof}[\textbf{Proof of Proposition \ref{main-prop}.} ]
For $s=\alpha+it$ with $1/4 +\delta /2 < \alpha < 1/2$ and $t\in J_2(T)$, we have
\begin{eqnarray*}
\sum_{n=1}^\infty \frac{|\tau(n,\theta)|^2}{n^s} e^{-n/y}
&=& \frac{1}{2\pi i} \int_{2-i\infty}^{2+i\infty}
D(s+w) \Gamma(w) y^w \d w \\
&=& \frac{1}{2\pi i} \int_{2-i\log^2T}^{2+i\log^2T}
+O\left( y^2\int_{\log^2T}^{\infty} |D(s+w)||\Gamma(w)|\d w \right).
\end{eqnarray*}
The above error term is estimated to be $o(1)$. We move the integral to 
$$\left[\frac{1}{4}+\frac{\delta}{2} -\alpha-i\log^2T, 
\ \frac{1}{4}+\frac{\delta}{2} -\alpha+i\log^2 T\right].$$
Let $\delta'=1/4+\delta/2 -\alpha$.
In this region $\Re(2s+2w)=1/2+\delta$ . So we can apply 
Lemma~\ref{estimate-on-J(T)} to 
conclude that
$D(s+w) =O(T^\kappa)$, for some constant $\kappa>0$. Thus the integrals along horizontal lines are 
$o(1)$.
Since the only pole inside this contour is at $w=0$, we get
\begin{eqnarray*}
\sum_{n=1}^{\infty} \frac{|\tau(n,\theta)|^2}{n^s} e^{-n/y} 
= D(s) + \frac{1}{2\pi i}
\int_{\delta'-i\log^2 T}^{\delta'+i\log^2 T}
D(s+w)\Gamma(w)y^w \d w +o(1).
\end{eqnarray*}
Since $\delta' <0$, the remaining integral can be shown to be $o(1)$ for $\mathfrak b\ge 8$.
Using $T_0$ as in Lemma~\ref{initial-estimates}, we now divide the sum into two parts:
$$D(s)= \sum_{n\le T_0} \frac{|\tau(n, \theta)|^2}{n^s} e^{-n/y} 
+ \sum_{n > T_0} \frac{|\tau(n, \theta)|^2 }{n^s} e^{-n/y}+o(1).$$
To estimate the second sum, we write
\begin{eqnarray*}
\sum_{n > T_0} \frac{|\tau(n, \theta)|^2}{n^s} e^{-n/y} 
&=& \int_{T_0}^{\infty} \frac{ e^{-x/y}}{x^s} \d \left(\sum_{n\le x}|\tau(n, \theta)|^2 \right)\\
 &=& \int_{T_0}^{\infty} \frac{ e^{-x/y}}{x^s} \d (\M(x)+\Delta(x))\\
&=&  \int_{T_0}^{\infty} \frac{ e^{-x/y}}{x^s} \M'(x)\d x 
   + \int_{T_0}^{\infty} \frac{ e^{-x/y}}{x^s} \d (\Delta(x)).
\end{eqnarray*}
Recall that
\[ \M(x)=\omega_1(\theta)x\log x + \omega_2(\theta)x\cos(\theta\log x)
+\omega_3(\theta)x,\]
thus 
\[ \M'(x)=\omega_1(\theta)\log x + \omega_2(\theta)\cos(\theta\log x)
-\theta\omega_2(\theta)\sin(\theta\log x)+\omega_1(\theta)+\omega_3(\theta).\]
 Observe that
\[ \int_{T_0}^{\infty}\frac{ e^{-x/y}}{x^s}\cos(\theta\log x) \d x
=\half \int_{T_0}^{\infty} \frac{ e^{-x/y}}{x^{s+i\theta}} \d x
+\half \int_{T_0}^{\infty} \frac{ e^{-x/y}}{x^{s-i\theta}} \d x.\]
Applying Lemma~\ref{gamma}, we conclude that 
\[ \int_{T_0}^{\infty} \frac{ e^{-x/y}}{x^s} \M'(x)\d x = o(1).\]
Integrating the second integral by parts:
\begin{eqnarray*}
\int_{T_0}^{\infty} \frac{ e^{-x/y}}{x^s} \d (\Delta(x)) 
&=& \frac{e^{-T_0/y} \Delta(T_0)}{T_0^s} \\
&+& \frac{1}{y}\int_{T_0}^\infty \frac{ e^{-x/y}}{x^s}\Delta(x)  \d x
-s\int_{T_0}^{\infty}\frac{ e^{-x/y}}{x^{s+1}}\Delta(x) \d x.
\end{eqnarray*}
Applying Lemma~\ref{initial-estimates}, we get
\begin{eqnarray*}
\sum_{n > T_0} \frac{|\tau(n, \theta)|^2}{n^s} e^{-n/y} &=&
s\int_{T_0}^{\infty}\frac{\Delta(x) e^{-x/y}}{x^{s+1}} \d x  + O(\log T) \\
&=& s\sum_{n\ge 0} \int_{0}^{1}\frac{\Delta(n+x+T_0) e^{-(n+x+T_0)/y}}{(n+x+T_0)^{s+1}} \d x +O(\log T).
\end{eqnarray*}
Hence we have
$$D(s)= \sum_{n\le T_0} \frac{|\tau(n, \theta)|^2}{n^s} e^{-n/y} 
+s\sum_{n\ge 0} \int_{0}^{1}\frac{\Delta(n+x+T_0) e^{-(n+x+T_0)/y}}{(n+x+T_0)^{s+1}} \d x +O(\log T) .$$
Squaring both sides, and then integrating on $J_2(T)$, we get
\begin{align*}
\int_{J_2(T)} \frac{|D(\alpha+it)|^2}{|\alpha+it|^2} \d t 
& \ll \int_T^{2T} \left|  \sum_{n\le T_0} \frac{|\tau(n, \theta)|^2}{n^s} 
  e^{-n/y} \right|^2 \frac{ \d t}{t^2} \\
& + \int_T^{2T} 
\left| \sum_{n\ge 0} \int_{0}^{1}\frac{\Delta(n+x+T_0) e^{-(n+x+T_0)/y}}{(n+x+T_0)^{s+1}} \d x \right|^2 \d t.
\end{align*}
The proposition now follows using Lemma~\ref{mean-value-estimate} and Lemma~\ref{mean-value-error}.
\end{proof}
We are now ready to prove our main theorem of this section.
\begin{proof}[\textbf{Proof of Theorem \ref{omega_integral}}]
We prove by contradiction. Suppose that (\ref{lb-increasing}) does not hold.
Then, given any $N_0>1$, there exists $T>N_0$ such that
\[\int_T^{\infty} \frac{|\Delta(x)|^2}{x^{2\alpha+1}}e^{-2x/y} \d x 
\ll \exp\left( c(\log T)^{7/8} \right),\]
for all $c>0$.
This gives
\[ \int_T^{\infty} \frac{|\Delta(x)|^2}{x^{2\beta+1}}e^{-2x/y} \d x \ll 1, \]
where
\[
\beta=\frac{3}{8}-\frac{c}{2(\log T)^{1/8}} .
\]
We apply Proposition~\ref{main-prop} to get 
\begin{equation}\label{contra}
\int_{J_2(T)} \frac{|D(\beta+it)|^2}{|\beta+it|^2} \d t 
\ll 1.
\end{equation}
Now we compute a lower bound for the last integral over $J_2(T)$.
Write the functional equation for $\zeta(s)$ as
$$\zeta(s) =\pi^{1/2-s}\frac{\Gamma((1-s)/2)}{\Gamma(s/2)}\zeta(1-s).$$
Using the Stirling's formula for $\Gamma$ function, we get
$$|\zeta(s)|=\pi^{1/2-\beta}t^{1/2-\beta}|\zeta(1-s)|\left(1+O\left(\frac{1}{T}\right)\right),$$
for $s=\beta+it$. This implies
$$|D(\beta+it)|=t^{2-4\beta}\frac{|\zeta(1-\beta+it)^2\zeta(1-\beta-it-i\theta)
\zeta(1-\beta-it+i\theta)|}{|\zeta(2\beta+i2t)|}.$$
Let $\delta_0=1/16$, and 
\[\beta=\frac{3}{8} -\frac{c}{2(\log T)^{1/8} }=\half-\delta \]
with 
\[\delta=\frac{1}{8}+\frac{c}{2(\log T)^{1/8} }.\]
Then using Lemma~\ref{estimate-on-J(T)}, we get
\[ |\zeta(1-\beta+it)| = \left|\zeta\left(\half+\delta+it\right)\right|
\gg \exp\left(\log\log t \left(\frac{\log t}{\delta_0}\right)^{\frac{1-2\delta}{1-2\delta_0}}\right).\]
For $t\in J_2(T)$ we observe that $t\pm\theta \in J_1(T)$, and so the same
bounds hold for $\zeta(1-\beta+it+i\theta)$ and $\zeta(1-\beta+it -i\theta)$. 
Further
\[ |\zeta(2\beta+i2t)| = \left|\zeta\left(\half+\left(\half-2\delta\right)+i2t\right)\right|
\ll \exp\left(\log\log t \left(\frac{\log t}{\delta_0}\right)^{\frac{4\delta}{1-2\delta_0}}\right).\]
Combining these bounds, we get
\[ |D(\beta+it)| \gg t^{2-4\beta}
\exp\left(-5\log\log t \left(\frac{\log t}{\delta_0}\right)^{\frac{1-2\delta}{1-2\delta_0}}\right).\]
Therefore
\begin{eqnarray*}
\int_{J_2(T)} |D(\beta+it)|^2 \d t 
&\gg & \int_{J_2(T)} |D(\beta+it)|^2 \d t 
\gg  \int_{J_2(T)} |D(\beta+it)|^2 \d t\\
&\gg & T^{4-8\beta}
 \exp\left(-10\log\log T\left(\frac{\log T}{\delta_0}\right)^{\frac{1-2\delta}{1-2\delta_0}}\right)
\mu(J_2(T)) \\
&\gg & T^{5-8\beta}\exp\left(-10\log\log T \left(\frac{\log T}{\delta_0}\right)^{\frac{1-2\delta}{1-2\delta_0}}\right),
\end{eqnarray*}
where we use Lemma  \ref{size-J(T)} to show that $\mu(J_2(T))\gg T$.
Now putting the values of  $\delta$ and $\delta_0$ as chosen above, we get
$$\int_{J(T)} \frac{|D(\beta+it)|^2}{|\beta+it|^2} dt 
\gg \exp\left(3c(\log T)^{7/8}\right),$$
since $\frac{1-2\delta}{1-2\delta_0}< 7/8$. This contradicts (\ref{contra}), and hence the theorem follows.
\end{proof}

The following two corollaries are immediate.
\begin{coro}\label{coro:balu_ramachandra1}
For any $c>0$ there exists an $\xset$ $\set$, such that for sufficiently large $T$ depending on $c$
there exists an 
\[ X \in \left[ T, \frac{T^{\mathfrak b}}{2}\log^2 T\right]\cap \set, \] for which we have
\[ \int_X^{2X} \frac{|\Delta(x)|^2}{x^{2\alpha+1}} dx \ge \exp\left( (c-\epsilon)(\log X)^{7/8}\right)\]
with $\alpha$ as in Theorem \ref{omega_integral} and for any $\epsilon>0$.
\end{coro}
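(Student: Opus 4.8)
The plan is to deduce Corollary~\ref{coro:balu_ramachandra1} from Theorem~\ref{omega_integral} by stripping the weight $e^{-2x/y}$ off the lower bound there and localising the resulting integral to a single dyadic block $[X,2X]$. Fix $\mathfrak b\ge 8$ and let $c>0$ and $\epsilon>0$ be given. First I would apply Theorem~\ref{omega_integral} not with the constant $c$, but with an enlarged constant $c_0=c_0(c,\epsilon,\mathfrak b)$ chosen so that $c_0>c\,\mathfrak b^{7/8}$ (the reason for this becomes clear at the end). Writing $y=T^{\mathfrak b}$ and $\alpha=\alpha(T)=\tfrac38-c_0(\log T)^{-1/8}$, this gives, for all sufficiently large $T$,
\[
\int_{T}^{\infty}\frac{|\Delta(x)|^2}{x^{2\alpha+1}}\,e^{-2x/y}\,\d x\ \gg_{c_0}\ \exp\!\bigl(c_0(\log T)^{7/8}\bigr).
\]

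Next I would truncate at $X_1:=T^{\mathfrak b}\log^2T=y\log^2T$. For $x\ge X_1$ one has $x/y\ge\log^2T$, so $e^{-2x/y}\le e^{-\log^2T}e^{-x/y}$, while Theorem~\ref{thm:asymp_formula_tau_n_theta} yields $|\Delta(x)|^2\ll_\theta x\log^{12}x$ and hence $|\Delta(x)|^2x^{-2\alpha-1}\ll x^{-3/4+o(1)}\le1$ once $x$ and $T$ are large. Therefore the tail satisfies
\[
\int_{X_1}^{\infty}\frac{|\Delta(x)|^2}{x^{2\alpha+1}}e^{-2x/y}\,\d x\ \ll\ e^{-\log^2T}\int_{X_1}^{\infty}e^{-x/y}\,\d x\ \ll\ y\,e^{-2\log^2T}\ =\ o(1),
\]
so the bulk of the integral sits in $[T,X_1]$; dropping the harmless factor $e^{-2x/y}\le1$ on that range gives
\[
\int_{T}^{X_1}\frac{|\Delta(x)|^2}{x^{2\alpha+1}}\,\d x\ \gg_{c_0}\ \exp\!\bigl(c_0(\log T)^{7/8}\bigr).
\]

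I would then cover $[T,X_1]$ by the $J\asymp\log T$ dyadic intervals $[2^jT,2^{j+1}T]$, $0\le j\le J$, with $2^JT\asymp X_1$, and apply the pigeonhole principle: for some $j$, putting $X:=2^jT\in[T,\tfrac12X_1]=[T,\tfrac12T^{\mathfrak b}\log^2T]$, we obtain
\[
\int_{X}^{2X}\frac{|\Delta(x)|^2}{x^{2\alpha+1}}\,\d x\ \gg\ \frac{\exp\!\bigl(c_0(\log T)^{7/8}\bigr)}{\log T}.
\]
Finally I would pass from $(\log T)^{7/8}$ to $(\log X)^{7/8}$: since $T\le X\le T^{\mathfrak b}\log^2T$ we have $\log X\le(\mathfrak b+o(1))\log T$, so $(\log T)^{7/8}\ge(\log X)^{7/8}/(\mathfrak b+o(1))^{7/8}$; because the pigeonhole loss $\log T$ is subexponential in $(\log X)^{7/8}$ and $c_0>c\,\mathfrak b^{7/8}$, the right-hand side above exceeds $\exp\!\bigl((c-\epsilon)(\log X)^{7/8}\bigr)$ for all sufficiently large $T$. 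Taking $\set$ to be the infinite unbounded set of left endpoints $X=X(T)$ produced in this way, one for each sufficiently large $T$, yields the desired $\xset$, and $X(T)\in[T,\tfrac12T^{\mathfrak b}\log^2T]\cap\set$ holds by construction.

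The step I expect to be the only real obstacle — everything else (the tail bound and the dyadic split) being routine — is the constant bookkeeping in the last paragraph. The weight $e^{-2x/y}$ spreads the relevant mass out to the scale $T^{\mathfrak b}$, so the heavy dyadic block can lie near that far end and $\log X$ can be as large as $(\mathfrak b+o(1))\log T$, inflating $(\log X)^{7/8}$ by a factor up to $\mathfrak b^{7/8}$ relative to $(\log T)^{7/8}$. Feeding Theorem~\ref{omega_integral} the inflated constant $c_0>c\,\mathfrak b^{7/8}$ rather than $c$ is precisely what absorbs this factor, after which the extra $\log T$ from the pigeonhole is negligible.
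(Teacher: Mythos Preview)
Your approach is exactly the intended one: the paper gives no argument beyond calling the corollary ``immediate'', and tail truncation at $y\log^2T$ followed by a dyadic pigeonhole is precisely how one extracts a localised block from the weighted integral in Theorem~\ref{omega_integral}. The estimates you give for the tail and for the number of dyadic blocks are correct.

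There is one small bookkeeping slip worth flagging. When you apply Theorem~\ref{omega_integral} with the enlarged constant $c_0$, the exponent $\alpha$ in your integrals becomes $\alpha_0=\tfrac38-c_0(\log T)^{-1/8}$, not the $\alpha=\tfrac38-c(\log T)^{-1/8}$ that the corollary (read literally) asks for. Since $\alpha_0<\alpha$, the integral $\int_X^{2X}|\Delta|^2 x^{-2\alpha_0-1}\,\d x$ is the \emph{larger} of the two, so your lower bound does not automatically transfer to the $\alpha$-integral. The fix is to insert the conversion factor $x^{2(\alpha_0-\alpha)}\ge(2X)^{-2(c_0-c)(\log T)^{-1/8}}\ge\exp\bigl(-2\mathfrak b(c_0-c)(1+o(1))(\log T)^{7/8}\bigr)$ and then choose $c_0$ to absorb \emph{both} this loss and the $\mathfrak b^{7/8}$ factor from $(\log X)^{7/8}\le(\mathfrak b+o(1))^{7/8}(\log T)^{7/8}$; one finds that these two constraints on $c_0$ pull in opposite directions, and some care is needed (or one simply accepts the weaker but entirely adequate conclusion with $\alpha_0$ in place of $\alpha$, which is what the later application in the proof of Theorem~\ref{thm:tau_theta_omega_pm} actually uses). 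The paper itself is loose on exactly this point, so your argument is at least as rigorous as what is implicit there.
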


\begin{coro}\label{coro:balu_ramachandra2}
For any $c>0$ there exists an $\xset$ $\set$, such that for sufficiently large $T$ depending on $c$
there exists an 
\[ x \in \left[ T, \frac{T^{\mathfrak b}}{2}\log^2 T\right]\cap \set, \] for which we have
\[ \Delta(x) \ge  x^{3/8} \exp\left( - c(\log x)^{7/8}\right). \]
\end{coro}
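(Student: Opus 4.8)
The plan is to deduce Corollary~\ref{coro:balu_ramachandra2} from Corollary~\ref{coro:balu_ramachandra1} by a mean value (pigeonhole) argument on the dyadic block it produces, together with elementary bookkeeping that converts the exponent $\alpha(T)$ into $3/8$. Fix the target constant $c>0$ in the statement and a fixed admissible $\mathfrak b\ge8$ (say $\mathfrak b=8$). I would apply Corollary~\ref{coro:balu_ramachandra1} \emph{not} with $c$ but with a smaller constant $c_1>0$ and an auxiliary $\epsilon_1\in(0,c_1)$, both to be pinned down at the end. This produces an $\xset$ together with, for every sufficiently large $T$, a point $X\in[T,\tfrac{T^{\mathfrak b}}{2}\log^2T]$ satisfying
\[
\int_X^{2X}\frac{|\Delta(x)|^2}{x^{2\alpha+1}}\,\d x\ \ge\ \exp\!\big((c_1-\epsilon_1)(\log X)^{7/8}\big),\qquad \alpha=\alpha(T)=\frac38-\frac{c_1}{(\log T)^{1/8}}.
\]

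First I would extract from this a single large value of $|\Delta|$. Bounding the integrand trivially,
\[
\int_X^{2X}\frac{|\Delta(x)|^2}{x^{2\alpha+1}}\,\d x\ \le\ \Big(\max_{[X,2X]}|\Delta|\Big)^2\int_X^{2X}x^{-2\alpha-1}\,\d x\ \ll\ \Big(\max_{[X,2X]}|\Delta|\Big)^2 X^{-2\alpha},
\]
where the implied constant is harmless because $\alpha$ stays bounded away from $0$ for large $T$. Comparing with the lower bound yields a point $x_0\in[X,2X]$ with $|\Delta(x_0)|\gg X^{\alpha}\exp\big(\tfrac{c_1-\epsilon_1}{2}(\log X)^{7/8}\big)$, and since $x_0\asymp X$, $\log x_0=\log X+O(1)$, I may replace $X$ by $x_0$ at the cost of harmless $(1+o(1))$ factors.

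Next is the conversion $x_0^{\alpha}=x_0^{3/8}\exp\!\big(-\tfrac{c_1\log x_0}{(\log T)^{1/8}}\big)$. Because $T\le x_0\le T^{\mathfrak b}\log^2T$, one has $\log x_0\le\mathfrak b\log T(1+o(1))$, hence $(\log T)^{1/8}\ge(\log x_0)^{1/8}/(2\mathfrak b)^{1/8}$ for large $T$, and therefore $\tfrac{\log x_0}{(\log T)^{1/8}}\le(2\mathfrak b)^{1/8}(\log x_0)^{7/8}$. Combining the above displays,
\[
|\Delta(x_0)|\ \gg\ x_0^{3/8}\exp\!\big(-c_2(\log x_0)^{7/8}(1+o(1))\big),\qquad c_2:=(2\mathfrak b)^{1/8}c_1-\tfrac{c_1-\epsilon_1}{2}>0,
\]
and the point is that $c_2\to0$ as $c_1\to0$ with $\mathfrak b$ fixed. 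Absorbing the implied constant $\kappa>0$ (for $x_0$ large, $\kappa\exp(-c_2(\log x_0)^{7/8}(1+o(1)))\ge\exp(-c(\log x_0)^{7/8})$ whenever $c_2<c$), I would first choose $c_1$, then $\epsilon_1\in(0,c_1)$, small enough that $c_2<c$. Then for all sufficiently large $T$ there is $x_0=x_0(T)\in[T,T^{\mathfrak b}\log^2T]$ with $|\Delta(x_0)|\ge x_0^{3/8}\exp(-c(\log x_0)^{7/8})$; the $\xset$ in the conclusion may be taken to be $\{x_0(T):T\text{ large}\}$, which is unbounded since $x_0(T)\ge T$. To land in $[T,\tfrac{T^{\mathfrak b}}{2}\log^2T]$ rather than $[T,T^{\mathfrak b}\log^2T]$, one simply runs Corollary~\ref{coro:balu_ramachandra1} with $\mathfrak b$ enlarged by $1$.

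The substance of this corollary is already carried by Theorem~\ref{omega_integral} and Corollary~\ref{coro:balu_ramachandra1} (which bundle the Montgomery--Vaughan mean value theorem, the Borel--Carath\'eodory/Hadamard three-circles bound of Lemma~\ref{estimate-on-J(T)}, and the functional equation computation), so the only thing to watch here is the exponent bookkeeping: converting $x^{\alpha(T)}$ to $x^{3/8}$ inflates the coefficient of $(\log x)^{7/8}$ by a factor depending on $\mathfrak b$, so one cannot push the \emph{same} $c$ through; one must enter Corollary~\ref{coro:balu_ramachandra1} with a sufficiently small constant, which is legitimate precisely because that corollary holds for every positive constant. I note in passing that the argument naturally gives the bound for $|\Delta(x_0)|$; this is exactly what is needed, since the corollary serves only to record the $\Omega$-statement $\Delta(x)=\Omega\big(x^{3/8}\exp(-c(\log x)^{7/8})\big)$.
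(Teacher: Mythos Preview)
Your argument is correct and is precisely the mean-value extraction the paper has in mind when it declares the corollary ``immediate'' from Theorem~\ref{omega_integral} and Corollary~\ref{coro:balu_ramachandra1}; no separate proof is given there. One cosmetic quibble: your point $x_0\in[X,2X]$ lands in $[T,T^{\mathfrak b}\log^2T]$ rather than $[T,\tfrac{T^{\mathfrak b}}{2}\log^2T]$, and your proposed fix of enlarging $\mathfrak b$ goes in the wrong direction---but the stray factor of $2$ in the upper endpoint is irrelevant to the $\Omega$-conclusion, which is the only use made of this corollary.
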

\noindent We can now prove a "measure version" of the above result.
\begin{prop}\label{Balu-Ramachandra-measure}
For any $c>0$, let
\[\alpha(x)=\frac{3}{8}- \frac{c}{(\log x)^{1/8} }\] and  
$\A=\{x: |\Delta(x)|\gg x^{\alpha(x)} \}$. 
Then for every sufficiently large $X$ depending on $c$, we have
\[ \mu(\A\cap [X,2X])=\Omega(X^{2\alpha(X)}).\]
\end{prop}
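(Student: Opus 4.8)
The plan is to deduce this measure estimate from the second moment lower bound already furnished by Corollary~\ref{coro:balu_ramachandra1}, by ``unfolding'' that bound with the help of the classical upper bound $\Delta(x)=O_\theta(x^{1/2}\log^6 x)$ of Theorem~\ref{thm:asymp_formula_tau_n_theta}. The mechanism is elementary: a lower bound for $\int_X^{2X}|\Delta(x)|^2x^{-2\alpha(X)-1}\,\d x$, combined with an upper bound for the integrand (valid on all of $[X,2X]$ thanks to the size bound for $\Delta$) and the observation that the part of the integral coming from $[X,2X]\setminus\A$ is only $O(1)$, forces $\mu(\A\cap[X,2X])$ to be large. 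So the proof is essentially a direct conversion of an $L^2$ lower bound into a measure lower bound.

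Concretely, I would fix $c>0$, pick $\epsilon\in(0,c)$, and write $\A=\{x:|\Delta(x)|\ge Mx^{\alpha(x)}\}$ with $M>0$ a fixed constant. By Corollary~\ref{coro:balu_ramachandra1} there is an $\xset$ $\set$ such that for all large $T$ there exists $X\in\bigl[T,\tfrac{T^{\mathfrak b}}{2}\log^2 T\bigr]\cap\set$ with
\[\int_X^{2X}\frac{|\Delta(x)|^2}{x^{2\alpha(X)+1}}\,\d x\ \ge\ \exp\bigl((c-\epsilon)(\log X)^{7/8}\bigr),\qquad \alpha(X)=\tfrac38-\frac{c}{(\log X)^{1/8}}.\]
Since these $X$ form an unbounded set, it suffices to prove $\mu(\A\cap[X,2X])\gg X^{2\alpha(X)}$ for such $X$. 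Splitting $[X,2X]$ into $\A\cap[X,2X]$ and its complement: on the complement $|\Delta(x)|<Mx^{\alpha(x)}$, and because $\alpha(2X)-\alpha(X)=O((\log X)^{-9/8})$ one has $x^{\alpha(x)}\asymp x^{\alpha(X)}$ uniformly on $[X,2X]$, so $\int_{[X,2X]\setminus\A}|\Delta(x)|^2x^{-2\alpha(X)-1}\,\d x\ll M^2\log 2=O(1)$. Hence for $X$ large
\[\int_{\A\cap[X,2X]}\frac{|\Delta(x)|^2}{x^{2\alpha(X)+1}}\,\d x\ \gg\ \exp\bigl((c-\epsilon)(\log X)^{7/8}\bigr).\]
On the other hand, by Theorem~\ref{thm:asymp_formula_tau_n_theta}, $|\Delta(x)|^2\ll x\log^{12}x\ll X\log^{12}X$ for $x\in[X,2X]$, while $x^{2\alpha(X)+1}\ge X^{2\alpha(X)+1}$ there (as $2\alpha(X)+1>0$), so the integrand is $\ll X^{-2\alpha(X)}\log^{12}X$ on the block. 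Combining,
\[\exp\bigl((c-\epsilon)(\log X)^{7/8}\bigr)\ \ll\ \mu(\A\cap[X,2X])\cdot X^{-2\alpha(X)}\log^{12}X,\]
so $\mu(\A\cap[X,2X])\gg X^{2\alpha(X)}\exp\bigl((c-\epsilon)(\log X)^{7/8}\bigr)(\log X)^{-12}\gg X^{2\alpha(X)}$ because the last factor tends to infinity; as this holds for an unbounded set of $X$, we obtain $\mu(\A\cap[X,2X])=\Omega(X^{2\alpha(X)})$.

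The only point requiring care is the passage between the variable exponent $\alpha(x)$ in the definition of $\A$ and the fixed exponent $\alpha(X)$ in Corollary~\ref{coro:balu_ramachandra1}; this is settled by the elementary estimate $\alpha(2X)-\alpha(X)=O((\log X)^{-9/8})$, which gives $x^{\alpha(x)}/x^{\alpha(X)}\to 1$ uniformly for $x\in[X,2X]$, so the constant $M$ can be adjusted harmlessly. All the substantive work --- the oscillation estimate extracted via the Montgomery--Vaughan mean value theorem, the Borel--Carath\'eodory and Hadamard three-circle bounds of Lemma~\ref{estimate-on-J(T)}, and Selberg's zero-density estimate of Lemma~\ref{size-J(T)} --- is already packaged inside Theorem~\ref{omega_integral} and Corollary~\ref{coro:balu_ramachandra1}, so granting those I do not expect any serious obstacle here: the remaining argument is purely a Chebyshev-type manipulation of integrals.
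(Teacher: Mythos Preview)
Your argument is correct. Both your proof and the paper's rest on the same two inputs --- the $L^2$ lower bound coming from Theorem~\ref{omega_integral} and the pointwise upper bound $\Delta(x)\ll x^{1/2}\log^6 x$ --- and both convert these into a measure lower bound by a Chebyshev-type inequality. The organizational difference is that you work directly on a single dyadic block via Corollary~\ref{coro:balu_ramachandra1} and bound the integrand above by $X^{-2\alpha(X)}\log^{12}X$, whereas the paper argues by contradiction from the smoothed integral of Theorem~\ref{omega_integral}: assuming $\mu(\A\cap[X,2X])\ll X^{2\alpha(X)}$ for all large $X$, it introduces $M_0(T)=\sup_{T\le x\le y}|\Delta(x)|^2$, sums dyadically over $[T,y]$ to bound the smoothed integral by $(M_0(T)/T)\log T$, and then reads off $|\Delta(x)|\gg x^{1/2}\exp(\tfrac{c}{4}(\log x)^{7/8})$ for some $x$, contradicting the upper bound. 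Your route is slightly more streamlined since Corollary~\ref{coro:balu_ramachandra1} has already localized the lower bound to a dyadic interval, sparing you the dyadic summation, the tail handling, and the contradiction wrapper; the paper's route has the minor advantage of invoking only Theorem~\ref{omega_integral} without passing through its corollary.
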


\begin{proof} 
Suppose that the conclusion does not hold, hence
\[  \mu(\A\cap [X,2X]) \ll X^{2\alpha(X)}.\] 
Thus for every sufficiently large $X$, we get
\[ \int_{\A\cap [X,2X] }\frac{|\Delta(x)|^2}{x^{2\alpha+1}}dx 
\ll X^{2\alpha}\frac{ M(X)}{X^{2\alpha+1}}=\frac{ M(X)}{X},\]
where $\alpha=\alpha(X)$ and $M(X)=\sup_{X\le x \le 2X} |\Delta(x)|^2$.
Using dyadic partition, we can prove
\[ \int_{\A\cap [T,y] }\frac{|\Delta(x)|^2}{x^{2\alpha+1}}dx \ll \frac{M_0(T)}{T}\log T, \
\text{ where }
\ M_0(T) =\sup_{T\le x\le y} |\Delta(x)|^2 \]
and $y=T^{\mathfrak b}$ for some $\mathfrak b>0$ and $T$ sufficiently large. 
This gives
\[ \int_T^{\infty} \frac{|\Delta(x)|^2}{x^{2\alpha+1}}e^{-2x/y} dx
\ll \frac{M_0(T)}{T}\log T. \]
Along with (\ref{lb-increasing}), this implies
\[ M_0(T)\gg T\exp\left( \frac{c}{2}(\log T)^{7/8} \right).\]
Thus
\[|\Delta(x)| \gg x^{\half} \exp\left(\frac{c}{4}(\log x)^{7/8}\right),\]
for some $x\in [T,y].$
This contradicts the fact that $|\Delta(x)| \ll x^{\half} (\log x)^6.$
\end{proof}
\subsection{Optimality of the Omega Bound }
The following proposition shows the optimality of the omega bound in Corollary~\ref{coro:balu_ramachandra1}.
\begin{prop}\label{prop:upper_bound_second_moment_twisted_divisor}
 Under Riemann Hypothesis (RH), we have
 \[\int_{X}^{2X}\Delta^2(x)\d x\ll X^{7/4+\epsilon},\]
for any $\epsilon>0$. 
\end{prop}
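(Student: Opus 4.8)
The plan is to bound $\int_X^{2X}\Delta^2(x)\,\d x$ by expressing $\Delta(x)$ as a short contour integral of $D(\eta)=\zeta^2(\eta)\zeta(\eta+i\theta)\zeta(\eta-i\theta)/\zeta(2\eta)$ on the critical line $\Re(\eta)=1/2$, and then applying mean-value estimates for $\zeta$ on that line. First I would recall from the proof of Theorem~\ref{thm:asymp_formula_tau_n_theta} (or from the effective Perron formula, Theorem~\ref{thm:effective_perron_formula}) that, after shifting the contour past the poles at $s=1,1\pm i\theta$ to $\Re(s)=1/2$, one gets, for $x\in[X,2X]$ and $T=T(X)$ a suitable parameter (say $T=X^{1+\epsilon}$ or $T\asymp X$),
\[
\Delta(x)\ll x^{\epsilon}+x^{1/2}\int_{-T}^{T}\left|\frac{\zeta^2(\tfrac12+it)\zeta(\tfrac12+i(t+\theta))\zeta(\tfrac12+i(t-\theta))}{\zeta(1+2it)(\tfrac12+it)}\right|\d t .
\]
Under RH we have $1/\zeta(1+2it)\ll (\log t)$ (in fact $\ll\log\log t$), which is harmless; so the essential object is the integral of $|\zeta|^4$-type weight against $|{\tfrac12+it}|^{-1}$.

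The key point is that we do \emph{not} want to bound $\Delta(x)$ pointwise and then integrate in $x$ — that loses too much. Instead I would keep $\Delta(x)$ as an integral, square it, integrate over $x\in[X,2X]$, and exploit the oscillation in $x$. Writing $\Delta(x)$ (up to the negligible $x^\epsilon$ term) via the truncated Perron integral on $\Re(s)=1/2$,
\[
\Delta(x)=\frac{1}{2\pi i}\int_{1/2-iT}^{1/2+iT}\frac{D(s)x^s}{s}\,\d s+(\text{error}),
\]
I would compute $\int_X^{2X}|\Delta(x)|^2\,\d x$ by opening the square, interchanging integrals, and using $\int_X^{2X}x^{i(t_1-t_2)}\,\d x\ll \min(X,|t_1-t_2|^{-1})$. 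This reduces the diagonal contribution to
\[
X^2\int_{-T}^{T}\frac{|D(\tfrac12+it)|^2}{\tfrac14+t^2}\,\d t,
\]
plus an off-diagonal term controlled the same way, so everything comes down to estimating $\int_{1}^{T}|D(\tfrac12+it)|^2 t^{-2}\,\d t$. Under RH, $D(\tfrac12+it)$ has size governed by $|\zeta(\tfrac12+it)|^4$ up to the factor $1/\zeta(1+2it)\ll t^{\epsilon}$, and the fourth moment bound $\int_0^{T}|\zeta(\tfrac12+it)|^4\,\d t\ll T(\log T)^4$ (or even the RH-strength $\int_0^T|\zeta(\tfrac12+it)|^4\ll T\log^4 T$, together with $\int_0^T|\zeta(\tfrac12+it)|^{12}\ll T^{2+\epsilon}$ for the shifted factors via Hölder) feeds directly into this. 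One must also handle the shifts $t\pm\theta$, which is routine since $\theta$ is fixed; splitting by Hölder's inequality exactly as in the proof of Theorem~\ref{thm:asymp_formula_tau_n_theta} (two applications of Cauchy–Schwarz, controlling $\mathcal J(0,\cdot)$, $\mathcal J(\theta,\cdot)$, $\mathcal J(-\theta,\cdot)$) lets each factor be absorbed. Assembling, with $T=X^{1+\epsilon}$ the diagonal gives $\ll X^2\cdot X^{\epsilon}$ from the low range of $t$ but one must be careful: the correct balance is obtained with $T\asymp X$, and then the bound $X^2\int_1^{X}|D(\tfrac12+it)|^2 t^{-2}\,\d t$ together with a dyadic decomposition in $t$ and the RH-moment bounds yields $\ll X^{7/4+\epsilon}$, the extra $X^{-1/4}$ over the trivial $X^2$ coming from the decay $t^{-2}$ weighed against the polynomial growth $|D(\tfrac12+it)|^2\ll t^{1+\epsilon}$ of the fourth-moment-type integrand integrated up to $T\asymp X$ (since $X^2\cdot T^{-1}\cdot T^{\epsilon}=X^{1+\epsilon}$ is even better, so in fact the dominant contribution is from matching the truncation level and the error term in Perron, which dictates $X^{7/4}$).

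The main obstacle, and the step requiring the most care, is getting the power of $X$ exactly right: this is a bookkeeping problem balancing (i) the truncation parameter $T$ in Perron's formula, (ii) the size of the Perron error term $\ll x^{1/2+\epsilon}$ coming from $n$ near $x$ (which alone integrates to $\ll X^{2+\epsilon}$ unless one argues more carefully, e.g.\ via a smoothed sum or by noting this error is itself controlled by a $|\zeta|^4$ integral), and (iii) the main term $X^2\int_1^T |D(\tfrac12+it)|^2 t^{-2}\,\d t$. The honest route is: choose $T=X$, estimate the off-diagonal in the squared-out integral via $\int_X^{2X}x^{i(t_1-t_2)}\,\d x\ll\min(X,\|t_1-t_2\|^{-1})$ so that only $|t_1-t_2|\ll 1$ contributes essentially, reducing to $X\int_1^{X}|D(\tfrac12+it)|^2 t^{-2}\,\d t$, and then this last integral is $\ll X^{3/4+\epsilon}$ by the RH fourth-/twelfth-moment bounds and partial summation on the $t^{-2}$ weight — giving $X^{7/4+\epsilon}$ overall. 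I would also double-check that the contribution of the horizontal segments in the contour shift (already shown to be $\ll x^{\epsilon}$ in the proof of Theorem~\ref{thm:asymp_formula_tau_n_theta}) integrates to something $\ll X^{1+\epsilon}$, hence absorbed. Everything else is a direct assembly using the RH moment estimates for $\zeta(\tfrac12+it)$ and the fact that $\theta$ is a fixed constant.
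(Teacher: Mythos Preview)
Your approach has a genuine gap: you work on the line $\Re(s)=\tfrac12$, but on that line the method cannot produce $X^{7/4+\epsilon}$. When you open the square of
\[
\Delta(x)=\frac{1}{2\pi i}\int_{1/2-iT}^{1/2+iT}\frac{D(s)x^s}{s}\,\d s+\cdots,
\]
the integrand in $x$ is $x^{\,2\sigma+i(t_1\pm t_2)}=x^{\,1+i(t_1\pm t_2)}$, not $x^{i(t_1-t_2)}$. Hence the $x$-integral gives $\ll X^{2}/(1+|t_1\pm t_2|)$, and after decoupling you are left with
\[
X^{2}\log T\int_{-T}^{T}\frac{|D(\tfrac12+it)|^2}{(\tfrac14+t^2)}\,\d t.
\]
Under RH this last integral is $O(1)$ (indeed $|D(\tfrac12+it)|\ll t^{\epsilon}$), so the best you obtain is $X^{2+\epsilon}$, not $X^{7/4+\epsilon}$. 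Your later bookkeeping (the switch from an $X^{2}$ prefactor to $X$, and the claim $\int_1^{X}|D(\tfrac12+it)|^2t^{-2}\,\d t\ll X^{3/4+\epsilon}$) drops exactly this missing factor of $X$; note that accepting your final count would give $\int_X^{2X}\Delta^2\ll X^{1+\epsilon}$, contradicting the unconditional lower bound $\Omega(X^{3/2})$ recorded in~(\ref{result:tau_n_theta_secondmoment}).

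The paper's proof fixes this by shifting the line of integration further left to $\Re(s)=\tfrac38$, which is precisely where RH enters: under RH the zeros of $\zeta(2s)$ lie on $\Re(s)=\tfrac14$, so $D(s)$ is pole-free on $\Re(s)=\tfrac38$ and one may take
\[
\Delta(x)=\frac{1}{2\pi}\int_{-T}^{T}\frac{D(\tfrac38+it)x^{3/8+it}}{\tfrac38+it}\,\d t+O(x^{\epsilon}),\qquad T=X^{2}.
\]
Now the $x$-integral yields the factor $X^{2\cdot 3/8+1}=X^{7/4}$, and after the symmetrisation $|ab|\le\tfrac12(|a|^2+|b|^2)$ and integrating out one variable (a $\log X$), the remaining integral
\[
\int_{-T}^{T}\frac{|D(\tfrac38+it)|^2}{(\tfrac38)^2+t^2}\,\d t
\]
is $\ll T^{\epsilon}$ because under RH one has $|D(\tfrac38+it)|\ll |t|^{1/2+\epsilon}$ (four factors of $\zeta$ each of size $|t|^{1/8+\epsilon}$, and $\zeta(3/4+2it)^{-1}\ll|t|^{\epsilon}$). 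The choice $\sigma=\tfrac38$ is exactly the threshold at which $\int|D(\sigma+it)|^2t^{-2}\,\d t$ is $O(T^{\epsilon})$ while keeping $X^{2\sigma+1}$ as small as possible; staying at $\sigma=\tfrac12$ wastes the full strength of RH.
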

\begin{proof}
 Theorem~\ref{thm:effective_perron_formula} (Perron's formula) gives
\begin{equation*}
\Delta(x)=\frac{1}{2\pi}\int_{-T}^{T}\frac{D(3/8+it)x^{3/8+it}}{3/8+it}\d t + O(x^\epsilon),
\end{equation*}
for any $\epsilon>0$ and for $T=X^2$ with $x\in[X, 2X]$. Using this expression for $\Delta(x)$, we write its second moment as
\begin{align*}
&\int_{X}^{2X}\Delta^2(x)\d x 
= \int_{X}^{2X}\int_{-T}^{T}\int_{-T}^{T}\frac{D(3/8+ it_1)D(3/8+ it_2)}{(3/8+it_1)(3/8+ it_2)}x^{3/4+ i(t_1+t_2)}\d x \ \d t_1 \d t_2 \\
&\hspace{2.5 cm}  
+ O\left(X^{1+\epsilon}(1+|\Delta(x)|)\right)\\
&\ll X^{7/4}\int_{-T}^{T}\int_{-T}^{T}\left|\frac{D(3/8+ it_1)D(3/8+ it_2)}{(3/8+it_1)(3/8+it_2)(7/4+ i(t_1+t_2))}\right|\d t_1 \d t_2 
+ O(X^{3/2+\epsilon}).\\
\end{align*}
In the above calculation, we have used the fact that $\Delta(x)\ll x^{\half+\epsilon}$ as in (\ref{eq:upper_bound_delta}). Also note that for 
complex numbers $a, b$, we have $|ab|\leq \half(|a|^2 + |b|^2)$. We use this inequality with
\[a=\frac{|D(3/8+it_1)|}{|3/8+it_1|\sqrt{|7/4+i(t_1+t_2)|}} \ \text{ and } 
\ b=\frac{|D(3/8+it_2)|}{|3/8+it_2|\sqrt{|7/4+i(t_1+t_2)|}},\]
to get
\begin{align*}
\int_{X}^{2X}\Delta^2(x)\d x
&\ll X^{7/4}\int_{-T}^{T}\int_{-T}^{T}\left|\frac{D(3/8+ it_2)}{(3/8+it_2)}\right|^2\frac{\d t_1}{|7/4+ i(t_1+t_2)|} \d t_2 + O(X^{3/2+\epsilon})\\
&\ll X^{7/4}\log X\int_{-T}^{T}\left|\frac{D(3/8+ it_2)}{(3/8+it_2)}\right|^2 \d t_2 + O(X^{3/2+\epsilon}).
\end{align*}
Under RH, $|D(3/8+it_2)|\ll |t_2|^{\half+\epsilon}$. So we have 
\begin{align*}
 \int_{X}^{2X}\Delta^2(x)\d x 
 \ll X^{7/4+\epsilon} \ \text{ for any } \ \epsilon>0.
\end{align*}
\end{proof}
\noindent 
\textbf{Note.}
The method we have used in Theorem~\ref{omega_integral} has its origin from the  Plancherel's formula in Fourier analysis.
For instance, we may observe from Theorem~\ref{thm:perron_formula} that under Riemann Hypothesis and other suitable conditions
\[\frac{\Delta(e^u)}{e^{u\sigma}}=\frac{1}{2\pi}\int_{-\infty}^{\infty}\frac{D(\sigma+it)e^{iut}}{\sigma + it}\d t \ 
\text{ for } \quater<\sigma\le\half.\]
So $\frac{\Delta(e^u)}{e^{u\sigma}}$ is the Fourier transform of $\frac{D(\sigma+it)}{\sigma+it}$. By Plancherel's formula
\[\int_{-\infty}^{\infty}\frac{|\Delta(e^u)|}{e^{2u\sigma}}\d u
=\frac{1}{4\pi^2}\int_{-\infty}^{\infty}\left|\frac{D(\sigma+it)}{\sigma+it}\right|^2\d t.\]
Now we change the variable $u$ to $\log x$ and use the functional equation for $\zeta(s)$ to get
\begin{align*}
\int_1^\infty\frac{\Delta^2(x)}{x^{2\sigma+1}}\d x \asymp \int_1^{\infty}\left|\frac{D(\sigma+it)}{\sigma+it}\right|^2\d t
\gg \int_1^{\infty} t^{2-8\sigma-16\epsilon}
\end{align*}
for any $\epsilon>0$. We may choose $\sigma=\frac{3}{8}-\epsilon$, then the above integral on the left side is convergent. But if 
$\Delta(x)\ll x^{\frac{3}{8}-\epsilon}$, then the integral in the right diverges. This gives
\[\Delta(x)=\Omega(x^{\frac{3}{8}-\epsilon}).\]
In \cite{BaluRamachandra1} and \cite{BaluRamachandra2}, Balasubramanian and Ramachandra used this insight to obtain 
$\Omega$ bounds for
the error terms in asymptotic formulas for partial sums of 
square-free divisors and counting function for non-isomorphic abelian groups. 
This method requires the Riemann Hypothesis to be assumed in certain cases. Later Balasubramanian, Ramachandra and Subbarao \cite{BaluRamachandraSubbarao} 
modified this technique to apply on error term in the asymptotic formula for the counting function of $k$-full numbers 
without assuming Riemann Hypothesis. This method has been used by several authors including \cite{Nowak} and \cite{srini}.

\section{Influence of Measure on $\Omega_\pm$ Results}
In this section, we shall show that for any $\epsilon>0$,
$$\text{ if } \ \Delta(x)\ll x^{3/8+ \epsilon},
  \ \text{ then } \
\Delta(x)=\Omega_\pm\left(x^{3/8-\epsilon}\right).$$
This improves our earlier result, which says that $\Delta(x)$ is $\Omega_\pm\left(x^{1/4}\right)$.
Now, we state the main theorem of this section.
\begin{thm}\label{thm:tau_theta_omega_pm}
Let $\Delta(x)$ be the error term of the summatory function of the twisted divisor function as in Theorem~\ref{thm:asymp_formula_tau_n_theta}.
For $c>0$, let 
\[\alpha(x)=\frac{3}{8}-\frac{c}{(\log x)^{1/8}} .\]
 Let $\delta$ and $\delta'$ be such that
 \[0<\delta<\delta'<\frac{1}{8}.\]
 Then either 
  \[\Delta(x)=\Omega\left(x^{ \alpha(x)+ \frac{\delta}{2}}\right)  \ \text{ or } \ 
  \Delta(x)=\Omega_{\pm}\left(x^{\frac{3}{8}-\delta'}\right).\]
\end{thm}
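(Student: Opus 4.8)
The plan is to prove the dichotomy by contradiction on the first alternative. Assume $\Delta(x)\neq\Omega\!\left(x^{\alpha(x)+\delta/2}\right)$, so that $|\Delta(x)|=o\!\left(x^{\alpha(x)+\delta/2}\right)$ and in particular $|\Delta(x)|\le x^{\alpha(x)+\delta/2}$ for all large $x$; the goal is to deduce $\Delta(x)=\Omega_{\pm}\!\left(x^{3/8-\delta'}\right)$. The idea is to convert the lower bound on the weighted second moment coming from the Balasubramanian--Ramachandra--Subbarao method (Corollary~\ref{coro:balu_ramachandra1}) into a lower bound on the \emph{measure} of a set where $|\Delta(x)|$ is already large, and then feed that into Theorem~\ref{thm:omega_pm_measure}(ii).

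First I would fix the ambient data. For $D(s)=\zeta^2(s)\zeta(s+i\theta)\zeta(s-i\theta)/\zeta(2s)$ take $\sigma_2=1$ and $\sigma_1=\tfrac12$: on $\Re(s)>\tfrac12$ the only poles of $D$ are $1,1\pm i\theta$ and their residues give $\M(x)$, so Assumptions~\ref{as:for_continuation_mellintran} hold. Since $\delta<\delta'$, I can choose small $\delta'',\epsilon_1>0$ with $\delta''+\delta+\epsilon_1<\delta'$, and set $\alpha_2:=\tfrac38-\delta''$, $\alpha_1:=\tfrac38-\delta'+\epsilon_1$, $h_1(x):=x^{\alpha_2-\delta}$, $l$ any integer $\ge2$, and $\A:=\A(x^{\alpha_2})$. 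By Corollary~\ref{coro:balu_ramachandra1} there is an $\xset$ $\set$ such that for large $X\in\set$ one has $\int_X^{2X}\frac{|\Delta(x)|^2}{x^{2\alpha(X)+1}}\,\d x\ge\Phi(X):=\exp\!\bigl(\tfrac c2(\log X)^{7/8}\bigr)$. Because $\alpha$ is increasing and $\alpha(X)-\alpha_2\ge\delta''/2$ for large $X$, the portion of this integral over $[X,2X]\setminus\A$ is $o(1)$, so $\int_{[X,2X]\cap\A}\frac{|\Delta(x)|^2}{x^{2\alpha(X)+1}}\,\d x\ge\tfrac12\Phi(X)$. On $\A$ the standing hypothesis gives $\frac{|\Delta(x)|^2}{x^{2\alpha(X)+1}}\le x^{2(\alpha(x)-\alpha(X))+\delta-1}\ll X^{\delta-1}$ for $x\in[X,2X]$ (using that $\alpha$ is slowly varying, $x^{\alpha(x)-\alpha(X)}=O(1)$). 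Hence $\mu\bigl(\A\cap[X,2X]\bigr)\gg X^{1-\delta}\Phi(X)$ for all large $X\in\set$.

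Now apply Theorem~\ref{thm:omega_pm_measure}(ii). With the above choices one checks $0<\alpha_1<\alpha_2\le\sigma_1$, $h_1$ is increasing to $\infty$, and $h_1(x)/x^{\alpha_1}=x^{\delta'-\delta''-\delta-\epsilon_1}\to\infty$ (this is exactly where $\delta''+\delta+\epsilon_1<\delta'$ is used). Also $5h_1(5X/2)X^{1-\alpha_2}\asymp X^{1-\delta}$, which is beaten by $\mu\bigl(\A\cap[X,2X]\bigr)\gg X^{1-\delta}\Phi(X)$ for large $X\in\set$; and $D$ has no real pole in $[\alpha_1-\epsilon_0,\infty)\setminus\mathcal P$ for $\epsilon_0=1$ since $\zeta$ has no real zero in $(0,1)$ and its first trivial zero only produces a pole of $D$ at $s=-1$. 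It remains to verify Assumptions~\ref{as:measure_omega_plus_minus_weak} with $\epsilon=\epsilon_1$: if $D$ has no pole in $\tfrac38-\delta'<\Re(s)\le\tfrac12$, $|\Im(s)|\le2X^{2l}$ — equivalently $\zeta$ has no zero in $(3/4-2\delta',1]\times[-4X^{2l},4X^{2l}]$ — then the convexity bound applied to the three zeta factors in the numerator together with a Borel--Carath\'eodory/Hadamard three-circles bound for $\zeta(2s)^{-1}$ in the resulting zero-free region (exactly the argument of Lemma~\ref{estimate-on-J(T)}) yields $|D(\sigma+it)|\ll(|t|+1)^{l-1}$ on $\alpha_1\le\sigma\le\tfrac12$, $|t|\le X^{2l}$, for $l$ chosen large enough. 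Theorem~\ref{thm:omega_pm_measure}(ii) then gives $\Delta(x)=\Omega_{\pm}\!\left(x^{\alpha_1-\epsilon}\right)$ for every $\epsilon>0$; taking $\epsilon=\epsilon_1$ produces $\Delta(x)=\Omega_{\pm}\!\left(x^{3/8-\delta'}\right)$, which is the second alternative.

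The main obstacle I expect is the verification of Assumptions~\ref{as:measure_omega_plus_minus_weak}: the conditional polynomial growth of $D(s)$ just to the right of $\Re(s)=\tfrac38-\delta'$ under the assumption that $\zeta$ has no nearby zeros. This is morally the estimate already proved in Lemma~\ref{estimate-on-J(T)}, but one must re-run it in the half-plane $\tfrac38-\delta'<\Re(s)\le\tfrac12$ and combine it cleanly with the functional equation/convexity bound for the numerator $\zeta^2(s)\zeta(s\pm i\theta)$. A secondary, purely bookkeeping, difficulty is keeping the parameters $\delta,\delta',\delta'',\epsilon_1,l,\mathfrak b$ mutually consistent while carrying the slowly varying exponent $\alpha(x)$ through the Cauchy--Schwarz/measure step.
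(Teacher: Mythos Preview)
Your proposal is correct and follows essentially the same route as the paper: use the second-moment lower bound from Corollary~\ref{coro:balu_ramachandra1} to force a large measure for the set where $|\Delta(x)|$ exceeds a threshold near $x^{3/8}$, then feed this into Theorem~\ref{thm:omega_pm_measure}(ii) after checking the conditional polynomial growth of $D(s)$. The only differences are cosmetic. The paper sets up the dichotomy on the \emph{measure} (either $\mu(\A\cap[T,2T])\le T^{1-\delta}$, whence Proposition~\ref{prop:refine_omega_from_measure} yields the first alternative, or not) and works with $T$-dependent $\alpha_1(T),\alpha_2(T)=\alpha(T)$; you set up the dichotomy on the $\Omega$-statement itself and use constant $\alpha_1,\alpha_2$, which is slightly cleaner bookkeeping. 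For the polynomial-growth verification you point to Lemma~\ref{estimate-on-J(T)}; the paper packages exactly that Borel--Carath\'eodory/three-circles argument as the dedicated Lemma~\ref{lem:polynomial_growth_critical_strip_2}, so your ``main obstacle'' is already handled there and you can cite it directly rather than re-running the estimate.
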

\noindent
To prove the above theorem, we estimate the growth of the Dirichlet series $D(\sigma+it)$ by assuming that it does not have poles 
in a certain region.
\begin{lem}\label{lem:polynomial_growth_critical_strip_2}
Let $\delta$ and $\sigma$ be such that
\begin{align*}
 0<\delta < \frac{1}{8},
 \mbox{ and } \quad \frac{3}{8}-\delta \leq \sigma < \frac{1}{2}.
\end{align*}
If $D(\sigma+it)$ does not have a pole in the above mentioned range of $\sigma$, then for 
\[\frac{3}{8} -\delta +\frac{\delta}{2(1 + \log\log (3 + |t|))}<\sigma < \frac{1}{2},\]
we have 
\[D(\sigma + it)\ll_{\delta, \theta} |t|^{2-2\sigma+\epsilon}\]
for any positive constant $\epsilon$.
\end{lem}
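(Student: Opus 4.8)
The plan is to factor
\[
D(s)=\frac{\zeta^2(s)\,\zeta(s+i\theta)\,\zeta(s-i\theta)}{\zeta(2s)},
\]
and to estimate the numerator and the reciprocal of the denominator separately: the numerator by the classical convexity bound for $\zeta$, and $1/\zeta(2s)$ by the same Borel--Carath\'eodory plus Hadamard three--circles machinery already used in the proof of Lemma~\ref{estimate-on-J(T)}.

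\textbf{The numerator.} Write $N(s):=\zeta^2(s)\zeta(s+i\theta)\zeta(s-i\theta)$. By the Phragm\'en--Lindel\"of convexity bound for the Riemann zeta function (see \cite{Titchmarsh}), $\zeta(\sigma+it)\ll|t|^{(1-\sigma)/2+\epsilon}$ uniformly for $0\le\sigma\le1$ and any $\epsilon>0$. Applying this to each of the four factors and absorbing the fixed shift $\theta$ into the implied constant yields
\[
N(\sigma+it)\ll_{\theta,\epsilon}|t|^{\,2\cdot\frac{1-\sigma}{2}+2\cdot\frac{1-\sigma}{2}+\epsilon}=|t|^{\,2-2\sigma+\epsilon}
\]
throughout $3/8-\delta\le\sigma<1/2$. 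Thus the numerator already produces exactly the exponent claimed in the lemma, and everything reduces to showing $1/|\zeta(2s)|\ll_{\delta,\epsilon}|t|^{\epsilon}$ in the stated range of $\sigma$.

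\textbf{The denominator.} The hypothesis that $D$ has no pole in $3/8-\delta\le\sigma<1/2$ forces $\zeta(2s)$ to be zero--free in the strip $3/4-2\delta\le\Re(2s)<1$ (a zero of $\zeta(2s)$ cancelled by a zero of $N$ would in particular force $\zeta(s)=0$ there, which I would dispose of either by noting such a point is still a pole unless the orders match, in which case the maximum principle on a small disc bounds $D$, or by the same argument run on a slightly shifted contour). Writing $w=2\sigma+2it$, I would then reproduce the proof of Lemma~\ref{estimate-on-J(T)} almost verbatim, the role played there by the hypothesis $t\in J_1(T)$ now being played by this global zero--free strip: take concentric circles centred at $\sigma'+2it$ with $1<\sigma'\le\log(3+|t|)$, use the polynomial growth of $\zeta$ together with the Borel--Carath\'eodory theorem to bound $\log\zeta$ on an intermediate disc, and then Hadamard's three--circles theorem to carry the bound in to the line $\Re(w)=2\sigma$. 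The one genuine difference is that the margin to the left edge, namely the distance of $2\sigma$ from $3/4-2\delta$, is not a fixed constant but only $\ge\delta/(1+\log\log(3+|t|))$ — which is precisely why the threshold in the statement carries a $\log\log$ in the denominator. Tracking the computation, the Hadamard exponent $\nu$ for $\zeta(2s)$ satisfies $\nu\le 2-4\sigma+o(1)\le\tfrac12+4\delta+o(1)<1$ by $\delta<1/8$, and with the margin $\delta_0\asymp 1/\log\log|t|$ one gets $|\log\zeta(2s)|\ll(\log\log|t|)^{O(1)}(\log|t|)^{\nu}=o(\log|t|)$, hence $1/|\zeta(2s)|\ll|t|^{\epsilon}$. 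Combining this with the numerator bound, $D(\sigma+it)\ll_{\delta,\theta}|t|^{2-2\sigma+\epsilon}$ after renaming $\epsilon$.

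\textbf{Main obstacle.} The delicate part is the bookkeeping in the Borel--Carath\'eodory/Hadamard step when the zero--free margin is allowed to shrink like $1/\log\log|t|$: one must check that the factors $(\log\log|t|)^{O(1)}$ generated by the small margin are still absorbed by the saving $(\log|t|)^{\nu-1}$ coming from $\nu$ being bounded away from $1$ uniformly in $t$ (which uses $\delta<1/8$ in an essential way), and that all implied constants depend only on $\delta$ and $\theta$. A secondary, more cosmetic issue is making rigorous the reduction ``no pole of $D$ $\Rightarrow$ the relevant zero--freeness of $\zeta(2s)$,'' handled as indicated above.
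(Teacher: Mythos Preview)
Your strategy is exactly the paper's: bound the numerator by convexity and control $1/\zeta(2s)$ by Borel--Carath\'eodory followed by Hadamard three circles, using the zero-free strip $\Re(w)\ge 3/4-2\delta$ that the hypothesis gives. For the numerator you bound $\zeta(s)$ directly, whereas the paper first applies the functional equation and bounds $\zeta(1-s)$; both routes produce the same exponent $|t|^{2-2\sigma+\epsilon}$, and yours is slightly cleaner.

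One computational slip to fix: your claimed Hadamard exponent $\nu\le 2-4\sigma+o(1)$ is not what the three-circles computation actually gives. With centre $\sigma''+2it$, inner circle touching $\Re=1+\xi$, middle circle touching $\Re=2\sigma$, and outer circle touching $\Re=3/4-2\delta+\text{(margin)}$, letting $\sigma''\to\infty$ and $\xi,\text{margin}\to 0$ yields
\[
\nu \;=\; \frac{1-2\sigma}{\,1/4+2\delta\,}+o(1)\;=\;\frac{4(1-2\sigma)}{1+8\delta}+o(1),
\]
which is \emph{larger} than $2-4\sigma$ (since $\delta<1/8$ makes $4/(1+8\delta)>2$). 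Fortunately this does not damage the argument: the correct $\nu$ is still strictly less than $1$ precisely because $\sigma>3/8-\delta$ gives $4(1-2\sigma)<1+8\delta$, and that is all you need for $(\log\log|t|)^{O(1)}(\log|t|)^{\nu}=o(\log|t|)$ and hence $|\zeta(2s)|^{-1}\ll|t|^{\epsilon}$. Just replace the incorrect inequality by the actual value of $\nu$ and the proof stands.
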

\begin{proof}
Let $s=\sigma+ it$ with $3/8-\delta\leq\sigma<1/2$.
Recall that 
\[D(s)=\frac{\zeta^2(s)\zeta(s+i\theta)\zeta(s-i\theta)}{\zeta(2s)}.\]
Using functional equation, we write
\begin{equation}\label{eq:functional_eq_tau_n_theta}
D(s)=\mathcal{X}(s)\frac{\zeta^2(1-s)\zeta(1-s-i\theta)\zeta(1-s+i\theta)}{\zeta(2s)},
\end{equation}
where $\mathcal{X}(s)$ is of order (can be obtained from Stirling's formula for $\Gamma$)
\begin{equation}\label{eq:upperbound_chi}
\mathcal{X}(\sigma+it)\asymp t^{2-4\sigma}. 
\end{equation}
Using Stirling's formula and Phragm\'en-Lindel\"of principle, we get
\begin{equation*}
 \zeta(1-s)|\ll |t|^{\sigma/2}\log t.
\end{equation*}
So we get
\begin{equation}\label{eq:upperbound_numerator_tauntheta}
|\zeta^2(1-s)\zeta(1-s-i\theta)\zeta(1-s+i\theta)|\ll t^{2\sigma}(\log t)^4.
\end{equation}


Now we shall compute an upper bound for $|\zeta(2s)|^{-1}$. This
can be obtained in a similar way as in Lemma~\ref{estimate-on-J(T)}.
We choose $t\geq 100$. Similar computation can be done when $t$ is negative.

Consider two concentric circles $\mathcal C_{1,1}$ and $\mathcal C_{1, 2}$, centered at $2+ it$ with radii 
\[\frac{5}{4} + 2\delta \quad \text{ and }\quad\frac{5}{4}+2\delta - \frac{\delta}{1+ \log\log(|t| + 3)}.\]
The circle $\mathcal C_{1, 1}$ passes through $3/4-2\delta +i2t$ and $\mathcal C_{1, 2}$ passes through 
$3/4-2\delta + \delta(1 + \log\log(|t| + 3))^{-1} + i2t$. By our assumption, $\zeta(z)$ does not have any zero for
$|z-2-it|\leq 5/4 + 2\delta$. This implies $\log\zeta(z)$ is a holomorphic function in this region. It is easy to see that
on the larger circle $\mathcal C_{1, 1}$, we have $\log|\zeta(z)|<\sigma'\log t$ for some positive constant $\sigma'$. 
We apply Borel-Carath\'eodory theorem to get an upper bound for $\log\zeta(z)$ on $\mathcal C_{1, 2}$ : 
\begin{align*}
|\log\zeta(z)|&\leq 3\delta^{-1}(1 + \log\log(t + 3))\left(\sigma'\log t  + |\log\zeta(2+it)|\right)\\
& \leq 10\delta^{-1} \sigma' (\log\log t) \log t \quad \text{ for } z\in \mathcal C_{1, 2}.
\end{align*}
We may also note that if $\Re(z-3/4-2\delta)>\delta(\log\log t )^{-1}$ 
and $\Im(z)\leq t/2 $, then similar arguments give
\[|\log\zeta(z)|<\delta^{-1} \sigma'(\log\log t) \log t,\]
for some positive constant $\sigma'$ that has changed appropriately.

Now we consider three concentric circles $\mathcal C_{2, 1}, \mathcal C_{2, 2}, \mathcal C_{2, 3}$, centered at $\sigma'' + i2t$ and with radii 
$r_1=\sigma''-1-\eta, r_2=\sigma''-2\sigma$ and $r_3=\sigma''-\delta_0$ respectively. Here 
\[\delta_0=\frac{3}{4}-2\delta + \frac{\delta}{1+\log\log(t+3)}.\]
We shall choose $\sigma''=\eta^{-1}=\log\log t$.
Let $M_1, M_2, M_3$ denote the supremums of $|\log\zeta(z)|$ on $\mathcal C_{2, 1}, \mathcal C_{2, 2}, \mathcal C_{2, 3}$ respectively. 
We have already calculated that
\[M_3\leq \delta^{-1}\sigma'(\log\log t)\log t.\]
It is easy to show that
\[M_1\leq \sigma'\log\log t,\]
where $\sigma'$ is again appropriately adjusted. Applying the three circle theorem 
we conclude
\[M_2\leq \sigma'(\log\log t)\delta^{-a}\log^{a} t,\]
where
\begin{align*}
 a=\frac{\log(r_2/r_1)}{\log(r_3/r_1)}&=\frac{1-2\sigma + \eta}{1-\delta_0 + \eta} + O\left(\frac{1}{\sigma''}\right)\\
 &=\frac{4(1-2\sigma)}{1+8\delta} + O_\delta\left(\frac{1}{\log\log t}\right).
\end{align*}
This gives
\begin{equation}\label{eq:upperbound_denominator_tauntheta}
 |\zeta(2s)|^{-1}\ll \exp\left(c(\log\log t) (\log t)^{\frac{4(1-2\sigma)}{1+8\delta}}\right),
\end{equation}
for a suitable constant $c>0$ depending on $\delta$.
The bound in the lemma follows from (\ref{eq:functional_eq_tau_n_theta}), (\ref{eq:upperbound_chi}), (\ref{eq:upperbound_numerator_tauntheta}) and 
(\ref{eq:upperbound_denominator_tauntheta}).
\end{proof}

Now we complete the proof of Theorem \ref{thm:tau_theta_omega_pm}.
\begin{proof}[Proof of Theorem \ref{thm:tau_theta_omega_pm}]
Let $M$ be any large positive constant, and define
 \[\A:=\A(Mx^{\alpha(x)}).\]
 Then from Corollary~\ref{coro:balu_ramachandra1}, we have 
 \[\int_{[T, 2T]\cap \A}\frac{\Delta^2(x)}{x^{2\alpha(T)} +1} \d x \gg \exp\left(c(\log T)^{7/8}\right).\]
Assuming 
\begin{equation}\label{eq:upper_bound_measure_asump}
\mu([T, 2T]\cap \A)\leq T^{1-\delta} \quad \text{for}  \ T>T_0,
\end{equation}
Proposition~\ref{prop:refine_omega_from_measure} gives 
\[\Delta(x)=\Omega(x^{\alpha(x) +\delta/2})\]
as $h_0(T)=T^{1-\delta}$,
which is the first part of the theorem.
But if (\ref{eq:upper_bound_measure_asump}) does not hold, 
then we have 
\[\mu([T, 2T]\cap \A)> T^{1-\delta} \]
for $T$ in an $\xset$ .
We choose 
\[h_1(T)=T^{\frac{3}{8}-\frac{2c}{(\log T)^{1/8}}-\delta}, \
\alpha_1(T)=\frac{3}{8}-\frac{3c}{(\log T)^{1/8}}-\delta, \ \alpha_2(T)=\alpha(T).\]
Let $\delta''$ be such that $\delta<\delta''<\delta'$. If $D(\sigma + it)$
does not have pole for $\sigma>3/8-\delta''$ then by Lemma~\ref{lem:polynomial_growth_critical_strip_2},
$D(\alpha_1(T) + it)$ has polynomial growth. So Assumptions~\ref{as:measure_omega_plus_minus_weak} is
valid. 
Since 
\[T^{1-\delta}>5h_1(5T/2)T^{1-\alpha_2(T)},\]
by case (ii) of Theorem~\ref{thm:omega_pm_measure} we have
\[\Delta(T)=\Omega_{\pm}\left(T^{\frac{3}{8}-\frac{3c}{(\log T)^{1/8}}-\delta''}\right).\]
The second part of the above theorem follows from the choice $\delta'>\delta''$.
\end{proof}
\newpage
\cleardoublepage
\bibliographystyle{abbrv}
\bibliography{refs_omega}

\begin{thebibliography}{10}

\bibitem{AnderOsci}
R.~J. Anderson and H.~M. Stark.
\newblock Oscillation theorems.
\newblock In {\em Analytic number theory ({P}hiladelphia, {P}a., 1980)}, volume
  899 of {\em Lecture Notes in Math.}, pages 79--106. Springer, Berlin-New
  York, 1981.

\bibitem{baker}
R.~C. Baker.
\newblock The square-free divisor problem. {II}.
\newblock {\em Quart. J. Math. Oxford Ser. (2)}, 47(186):133--146, 1996.

\bibitem{BaluRamachandra1}
R.~Balasubramanian and K.~Ramachandra.
\newblock Effective and noneffective results on certain arithmetical functions.
\newblock {\em J. Number Theory}, 12(1):10--19, 1980.

\bibitem{BaluRamachandra2}
R.~Balasubramanian and K.~Ramachandra.
\newblock Some problems of analytic number theory. {III}.
\newblock {\em Hardy-Ramanujan J.}, 4:13--40, 1981.

\bibitem{BaluRamachandraSubbarao}
R.~Balasubramanian, K.~Ramachandra, and M.~V. Subbarao.
\newblock On the error function in the asymptotic formula for the counting
  function of {$k$}-full numbers.
\newblock {\em Acta Arith.}, 50(2):107--118, 1988.

\bibitem{gautami}
G.~Bhowmik, J.-C. Schlage-Puchta, and O.~Ramar{\'e}.
\newblock Tauberian oscillation theorems and the distribution of goldbach
  numbers.
\newblock {\em J. Th\'eor. Nombres Bordeaux}.
\newblock To appear.

\bibitem{CorradiKatai}
K.~Corr{\'a}di and I.~K{\'a}tai.
\newblock A comment on {K}. {S}. {G}angadharan's paper entitled ``{T}wo
  classical lattice point problems''.
\newblock {\em Magyar Tud. Akad. Mat. Fiz. Oszt. K\"ozl.}, 17:89--97, 1967.

\bibitem{DeitmarHarmonic}
A.~Deitmar and S.~Echterhoff.
\newblock {\em Principles of harmonic analysis}.
\newblock Universitext. Springer, New York, 2009.

\bibitem{ErdosMultTab}
P.~{\`E}rde{\v{s}}.
\newblock An asymptotic inequality in the theory of numbers.
\newblock {\em Vestnik Leningrad. Univ.}, 15(13):41--49, 1960.

\bibitem{FordAnn}
K.~Ford.
\newblock The distribution of integers with a divisor in a given interval.
\newblock {\em Ann. of Math. (2)}, 168(2):367--433, 2008.

\bibitem{Fordy2y}
K.~Ford.
\newblock Integers with a divisor in {$(y,2y]$}.
\newblock In {\em Anatomy of integers}, volume~46 of {\em CRM Proc. Lecture
  Notes}, pages 65--80. Amer. Math. Soc., Providence, RI, 2008.

\bibitem{Hafner}
J.~L. Hafner.
\newblock New omega theorems for two classical lattice point problems.
\newblock {\em Invent. Math.}, 63(2):181--186, 1981.

\bibitem{DivisorsHallTenen}
R.~R. Hall and G.~Tenenbaum.
\newblock {\em Divisors}, volume~90 of {\em Cambridge Tracts in Mathematics}.
\newblock Cambridge University Press, Cambridge, 1988.

\bibitem{HardyDirichletDivisor}
G.~H. Hardy.
\newblock On {D}irichlet's {D}ivisor {P}roblem.
\newblock {\em Proc. London Math. Soc.}, S2-15(1):1--g.

\bibitem{HardyLittlewoodPNTOmegapm}
G.~H. Hardy and J.~E. Littlewood.
\newblock Contributions to the theory of the riemann zeta-function and the
  theory of the distribution of primes.
\newblock {\em Acta Math.}, 41(1):119--196, 1916.

\bibitem{HardyRamanujan}
G.~H. Hardy and S.~Ramanujan.
\newblock The normal number of prime factors of a number {$n$} [{Q}uart. {J}.
  {M}ath. {\bf 48} (1917), 76--92].
\newblock In {\em Collected papers of {S}rinivasa {R}amanujan}, pages 262--275.
  AMS Chelsea Publ., Providence, RI, 2000.

\bibitem{HardyWright}
G.~H. Hardy and E.~M. Wright.
\newblock {\em An introduction to the theory of numbers}.
\newblock Oxford University Press, Oxford, sixth edition, 2008.
\newblock Revised by D. R. Heath-Brown and J. H. Silverman, With a foreword by
  Andrew Wiles.

\bibitem{Brown}
D.~R. Heath-Brown.
\newblock The number of abelian groups of order at most {$x$}.
\newblock {\em Ast\'erisque}, (198-200):153--163 (1992), 1991.
\newblock Journ{\'e}es Arithm{\'e}tiques, 1989 (Luminy, 1989).

\bibitem{holder}
O.~H{\"o}lder.
\newblock \"{U}ber einen asymptotischen {A}usdruck.
\newblock {\em Acta Math.}, 59(1):89--97, 1932.

\bibitem{HuxleyDivisorProblem}
M.~N. Huxley.
\newblock Exponential sums and lattice points. {III}.
\newblock {\em Proc. London Math. Soc. (3)}, 87(3):591--609, 2003.

\bibitem{Ivic_second_moment_divisor_problem}
A.~Ivi{\'c}.
\newblock Large values of the error term in the divisor problem.
\newblock {\em Invent. Math.}, 71(3):513--520, 1983.

\bibitem{Ivic_abelian_group}
A.~Ivi{\'c}.
\newblock The number of finite nonisomorphic abelian groups in mean square.
\newblock {\em Hardy-Ramanujan J.}, 9:17--23, 1986.

\bibitem{IvicBook}
A.~Ivi{\'c}.
\newblock {\em The {R}iemann zeta-function}.
\newblock Dover Publications, Inc., Mineola, NY, 2003.
\newblock Theory and applications, Reprint of the 1985 original [Wiley, New
  York; MR0792089 (87d:11062)].

\bibitem{KaczMeasure}
J.~Kaczorowski and B.~Szyd{\l}o.
\newblock Some {$\Omega$}-results related to the fourth power moment of the
  {R}iemann zeta-function and to the additive divisor problem.
\newblock 9(1):41--50, 1997.

\bibitem{Nowak}
M.~K{\"u}hleitner and W.~G. Nowak.
\newblock On a question of {A}. {S}chinzel: {O}mega estimates for a special
  type of arithmetic functions.
\newblock {\em Cent. Eur. J. Math.}, 11(3):477--486, 2013.

\bibitem{Landau}
E.~Landau.
\newblock \"{U}ber einen {S}atz von {T}schebyschef.
\newblock {\em Math. Ann.}, 61(4):527--550, 1906.

\bibitem{MyExpo}
K.~Mahatab.
\newblock Number of prime factors of an integer.
\newblock {\em Math. Newsl.}, 24(1):7--10, 2013.

\bibitem{MeasureOmega}
K.~Mahatab and A.~Mukhopadhyay.
\newblock Measure theoretic aspects of oscillations of error terms.
\newblock {\em arXiv:1512.03144}, 2016.

\bibitem{Kann}
K.~Mahatab and K.~Sampath.
\newblock Chinese remainder theorem for cyclotomic polynomials in
  {$\mathbf{Z}[X]$}.
\newblock {\em J. Algebra}, 435:223--262, 2015.

\bibitem{MontgomeryVaughan}
H.~L. Montgomery and R.~C. Vaughan.
\newblock Hilbert's inequality.
\newblock {\em J. London Math. Soc. (2)}, 8:73--82, 1974.

\bibitem{Motohashi1}
Y.~Motohashi.
\newblock An explicit formula for the fourth power mean of the {R}iemann
  zeta-function.
\newblock {\em Acta Math.}, 170(2):181--220, 1993.

\bibitem{Motohashi2}
Y.~Motohashi.
\newblock A relation between the {R}iemann zeta-function and the hyperbolic
  {L}aplacian.
\newblock {\em Ann. Scuola Norm. Sup. Pisa Cl. Sci. (4)}, 22(2):299--313, 1995.

\bibitem{sargos}
O.~Robert and P.~Sargos.
\newblock Three-dimensional exponential sums with monomials.
\newblock {\em J. Reine Angew. Math.}, 591:1--20, 2006.

\bibitem{ComplexAnKra}
R.~E. Rodr{\'{\i}}guez, I.~Kra, and J.~P. Gilman.
\newblock {\em Complex analysis}, volume 245 of {\em Graduate Texts in
  Mathematics}.
\newblock Springer, New York, second edition, 2013.

\bibitem{srini}
A.~Sankaranarayanan and K.~Srinivas.
\newblock On a method of {B}alasubramanian and {R}amachandra (on the abelian
  group problem).
\newblock {\em Rend. Sem. Mat. Univ. Padova}, 97:135--161, 1997.

\bibitem{Sound}
K.~Soundararajan.
\newblock Omega results for the divisor and circle problems.
\newblock {\em Int. Math. Res. Not.}, (36):1987--1998, 2003.

\bibitem{TenenAnPr}
G.~Tenenbaum.
\newblock {\em Introduction to analytic and probabilistic number theory},
  volume~46 of {\em Cambridge Studies in Advanced Mathematics}.
\newblock Cambridge University Press, Cambridge, 1995.
\newblock Translated from the second French edition (1995) by C. B. Thomas.

\bibitem{Titchmarsh}
E.~C. Titchmarsh.
\newblock {\em The theory of the {R}iemann zeta-function}.
\newblock The Clarendon Press, Oxford University Press, New York, second
  edition, 1986.
\newblock Edited and with a preface by D. R. Heath-Brown.

\bibitem{Tsang}
K.~M. Tsang.
\newblock Higher-power moments of {$\Delta(x),\;E(t)$} and {$P(x)$}.
\newblock {\em Proc. London Math. Soc. (3)}, 65(1):65--84, 1992.

\bibitem{PNT_Under_RH}
H.~von Koch.
\newblock Sur la distribution des nombres premiers.
\newblock {\em Acta Math.}, 24(1):159--182, 1901.

\end{thebibliography}
\end{document}